\newtheorem{thm}{Theorem}[section]
\newtheorem*{thm*}{Theorem}
\newtheorem{prop}[thm]{Proposition}
\newtheorem{lem}[thm]{Lemma}
\newtheorem{cor}[thm]{Corollary}
\theoremstyle{definition}
\newtheorem{Def}[thm]{Definition}
\theoremstyle{remark}
\newtheorem{rem}[thm]{Remark}
\newtheorem{rems}[thm]{Remarks}
\numberwithin{equation}{section}
\newcommand{\isoto}{\myxrightarrow{\,\sim\,}}
\newcommand{\isofrom}{\myxleftarrow{\,\sim\,}}
\def\myrightarrow{{\setbox\z@\hbox{$\rightarrow$}\dimen0\ht\z@\multiply\dimen0 6\divide\dimen0 10\ht\z@\dimen0\box\z@}}
\def\myrightarrowfill@{\arrowfill@\relbar\relbar\myrightarrow}
\newcommand{\myxrightarrow}[2][]{\ext@arrow 0359\myrightarrowfill@{#1}{#2}}
\def\myleftarrow{{\setbox\z@\hbox{$\leftarrow$}\dimen0\ht\z@\multiply\dimen0 6\divide\dimen0 10\ht\z@\dimen0\box\z@}}
\def\myleftarrowfill@{\arrowfill@\myleftarrow\relbar\relbar}
\newcommand{\myxleftarrow}[2][]{\ext@arrow 3095\myleftarrowfill@{#1}{#2}}
\def\bmu{\boldsymbol\mu}
\newcommand{\GG}{\mathrm{G}}
\newcommand{\K}{\mathrm{K}}
\newcommand{\SK}{\mathrm{SK}}
\newcommand{\bC}{\mathbf{C}}
\newcommand{\Q}{\mathbf{Q}}
\newcommand{\R}{\mathbf{R}}
\newcommand{\Ql}{\mathbf{Q}_{\ell}}
\newcommand{\Zl}{\mathbf{Z}_{\ell}}
\newcommand{\sK}{\mathcal{K}}
\newcommand{\sO}{\mathcal{O}}
\newcommand{\sL}{\mathcal{L}}
\newcommand{\sM}{\mathcal{M}}
\newcommand{\sN}{\mathcal{N}}
\newcommand{\sQ}{\mathcal{Q}}
\newcommand{\sS}{\mathcal{S}}
\newcommand{\sE}{\mathcal{E}}
\newcommand{\sF}{\mathcal{F}}
\newcommand{\wX}{\widetilde{X}}
\newcommand{\et}{\mathrm{\acute{e}t}}
\newcommand{\fppf}{\mathrm{fppf}}
\newcommand{\Ab}{\mathrm{Ab}}
\newcommand{\Zar}{\mathrm{Zar}}
\newcommand{\alg}{\mathrm{alg}}
 \DeclareMathOperator{\Spec}{Spec}
 \DeclareMathOperator{\Coker}{Coker} 
 \DeclareMathOperator{\Gr}{Gr}
\DeclareMathOperator{\Ker}{Ker}
\DeclareMathOperator{\Pic}{Pic}
\DeclareMathOperator{\bPic}{\mathbf{Pic}}
\DeclareMathOperator{\Sym}{Sym}
\newcommand{\Id}{\mathrm{Id}}
\newcommand{\Sch}{\mathrm{Sch}}
\DeclareMathOperator{\Aut}{Aut}
\DeclareMathOperator{\Proj}{Proj}
\newcommand{\bA}{\mathbf{A}}
\newcommand{\bP}{\mathbf{P}}
\DeclareMathOperator{\Hom}{Hom}
\DeclareMathOperator{\NS}{NS}
\DeclareMathOperator{\Alb}{Alb}
\DeclareMathOperator{\Gal}{Gal}
\DeclareMathOperator{\Fitt}{Fitt}
\newcommand{\Z}{\mathbf{Z}}
\newcommand{\Res}{\mathrm{Res}}
\DeclareMathOperator{\CH}{CH}
\DeclareMathOperator{\bCH}{\mathbf{CH}}
\newcommand{\torsion}{\mathrm{torsion}}
\newcommand{\cl}{\mathrm{cl}}
\newcommand{\red}{\mathrm{red}}
\newcommand{\rk}{\mathrm{rk}}
\newcommand{\RR}{{\mathrm{R}}}
\newcommand{\LL}{{\mathrm{L}}}
\newcommand{\p}{{\mathrm{p}}}
\newcommand{\op}{{\mathrm{op}}}
\newcommand{\ok}{\overline{k}}
\newcommand{\Gm}{\mathbf{G}_\mathrm{m}}
\newcommand{\subrat}{{\mathrm{rat}}}
\newcommand{\subirrat}{{\mathrm{irrat}}}
\begin{document}

\date{December 11th, 2019; revised on July 2nd, 2021; incorporated an erratum into the proof
of Theorem~\ref{thuniratcub}
on January 23rd, 2025}
\title[Intermediate Jacobians and rationality over arbitrary fields]{Intermediate Jacobians and rationality\\over arbitrary fields}

\author{Olivier Benoist}
\address{D\'epartement de math\'ematiques et applications, \'Ecole normale sup\'erieure et CNRS, 45~rue d'Ulm, 75230 Paris Cedex 05, France}
\email{olivier.benoist@ens.fr}

\author{Olivier Wittenberg}
\address{Institut Galil\'ee, Universit\'e Sorbonne Paris Nord, 99~avenue Jean-Baptiste Cl\'ement, 93430 Villetaneuse, France}
\email{wittenberg@math.univ-paris13.fr}

\renewcommand{\abstractname}{Abstract}
\begin{abstract}
We prove that a three-dimensional smooth complete intersection of two quadrics over a field $k$ is $k$-rational if and only if it contains a line defined over $k$. To do so, we develop a theory of intermediate Jacobians for geometrically rational threefolds over arbitrary, not necessarily perfect, fields. As a consequence, we obtain the first examples of smooth projective varieties over a field $k$ which have a $k$-point, and are rational over a purely inseparable field extension of $k$, but not over $k$.
\end{abstract}

\maketitle

\section*{Introduction}
{\renewcommand*{\thethm}{\Alph{thm}}

Let $k$ be a field. A variety $X$ of dimension $n$ over $k$ is said to be $k$-\textit{rational} (resp.\ $k$-\textit{unirational}, resp.\ \textit{separably} $k$-\textit{unirational}) if there exists a birational map (resp.\ a dominant rational map, resp.\ a dominant and separable rational map) $\bA^n_k\dashrightarrow X$.

This article is devoted to studying the $k$-rationality of threefolds over~$k$. Our main result answers positively a conjecture of Kuznetsov and
Prokhorov.

\begin{thm}[Theorem \ref{thrat}]
\label{main}
Let $X\subset\bP^5_k$ be a smooth complete intersection of two quadrics. Then $X$ is $k$-rational if and only if it contains a line defined over $k$.
\end{thm}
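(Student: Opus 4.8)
The plan is to prove the two implications separately, the forward one being elementary and characteristic-free. Suppose first that $X$ contains a line $\ell$ defined over $k$, and write $X = Q_0 \cap Q_1$ with $Q_0, Q_1$ quadrics containing $\ell$. I would project away from $\ell$: the linear projection $\bP^5_k \dashrightarrow \bP^3_k$ with centre $\ell$ has as target the space of planes $P \supset \ell$, and I claim its restriction $\pi\colon X \dashrightarrow \bP^3_k$ is birational. Indeed, for a plane $P \supset \ell$ each conic $Q_i \cap P$ contains $\ell$, hence degenerates as $\ell \cup \ell_i$ for a residual line $\ell_i \subset P$; consequently $X \cap P = (\ell \cup \ell_0)\cap(\ell \cup \ell_1)$ equals $\ell$ together with the single point $\ell_0 \cap \ell_1$. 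The general fibre of $\pi$ is thus one reduced point, so $\pi$ is birational and $X$ is $k$-rational.

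For the converse, assume $X$ is $k$-rational. Let $C$ be the genus-$2$ curve attached to $X$, namely the double cover of $\bP^1_k$ recording the degenerate members of the pencil of quadrics through $X$. I would first set up the geometry over $\ok$: after Narasimhan--Ramanan and Reid, the Fano surface of lines $F_1(X)$ is a torsor under the Jacobian $J(C)$, the intermediate Jacobian of $X$ is $J(C)$ as a principally polarized abelian variety, and the Abel--Jacobi map identifies $F_1(X)$ with the intermediate Jacobian torsor of $X$ (here an isomorphism, both sides having dimension $2$). The point is then to descend this picture to $k$: since ``$X$ contains a line over $k$'' is exactly the assertion that $F_1(X)(k) \neq \varnothing$, the theorem reduces to the claim that the torsor $T \cong F_1(X)$ under $\Ab^2_{X/k} = J(C)$ is trivial.

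To trivialise $T$ I would invoke the birational invariance of the intermediate Jacobian torsor. Resolving a birational map $\bP^3_k \dashrightarrow X$ and applying the blow-up formula, the torsor attached to $X$ is assembled from that of $\bP^3_k$ --- which is trivial, as $\CH^2(\bP^3_k)$ is generated by the class $h^2$ defined over $k$ and the intermediate Jacobian of $\bP^3_k$ vanishes --- together with contributions from the smooth curves blown up along the way, each of whose Jacobians enters with a distinguished $k$-point. This forces $T$ to be trivial, so $F_1(X)$ acquires a $k$-point and $X$ contains a line defined over $k$. Observe that the polarization of $\Ab^2_{X/k}$ is here automatically that of a Jacobian, so, in contrast with the classical Clemens--Griffiths obstruction, the entire content of the converse is the arithmetic triviality of $T$.

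The main obstacle is precisely the control of the torsor $T$ --- not merely of the abelian variety $J(C)$ --- over a general, possibly imperfect field. One must (i) construct $C$, the identification $F_1(X) \cong T$, and the comparison of codimension-$2$ cycles through blow-ups and blow-downs, all over $k$ and functorially enough to track $k$-rational points of torsors; and (ii) carry this out without appealing to perfection or to generic smoothness in small characteristic, the pencil-of-quadrics construction being especially delicate when $k$ has characteristic $2$. It is exactly this robustness under inseparable phenomena that will later produce varieties rational over a purely inseparable extension of $k$ yet not over $k$.
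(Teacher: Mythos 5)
Your forward implication (projection away from the $k$-line) is correct and coincides with the paper's argument, and your reduction of the converse to the arithmetic triviality of the torsor of lines $[F]\in H^1\big(k,(\bCH^2_{X/k})^0\big)$ is also the right framing. The genuine gap is the step where you trivialize this torsor. It is not true that resolving a birational map $\bP^3_k\dashrightarrow X$ and applying the blow-up formula ``forces $T$ to be trivial because each Jacobian enters with a distinguished $k$-point''. The Jacobian $\bPic^0_{Z/k}$ of a blown-up curve does have a distinguished point (the origin), but the torsor contributed by a curve $Z$ is not the Jacobian itself: under the blow-up decomposition, a $\Gamma_k$-invariant class of degree $1$ on $X_{\ok}$ decomposes into classes of various degrees $d_j$ on the blown-up curves $Z_j$, so what rationality produces is a sum of classes $[\bPic^{d_j}_{Z_j/k}]$, and $\bPic^{d}_{Z/k}$ with $d\neq 0$ need have no $k$-point. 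Concretely, your argument, if valid, would show that every $k$-rational threefold has all of its torsors $(\bCH^2_{X/k})^\alpha$, $\alpha\in\NS^2(X_{\ok})^{\Gamma_k}$, trivial; this is false: blow up $\bP^3_k$ along a smooth genus-$2$ curve $Z$ with $\bPic^1_{Z/k}(k)=\varnothing$. The resulting threefold is $k$-rational, yet the torsor attached to the class of a line in a fiber of the exceptional divisor is $\bPic^1_{Z/k}$, which is nontrivial in $H^1(k,\bPic^0_{Z/k})$.

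What $k$-rationality actually yields --- this is Theorem~\ref{listobstructions}~(iii), whose proof needs the uniqueness of the decomposition of a principally polarized abelian variety into indecomposable factors and the precise form of the Torelli theorem to identify the relevant component of the auxiliary curve with $D$ --- is only that $\psi_*[F]=[\bPic^{d}_{D/k}]$ for \emph{some} integer $d$. To pass from this to $[F]=0$, the paper uses a second, genuinely geometric input absent from your proposal: the curve $D$ of conic classes embeds into the degree-$2$ component $(\bCH^2_{X/k})^2$ and induces an isomorphism of torsors $\bPic^1_{D/k}\isoto(\bCH^2_{X/k})^2$ (Theorem~\ref{thintjac}~(iii)--(iv), the extension of Wang's theorem to arbitrary characteristic), i.e.\ $[\bPic^1_{D/k}]=2\,\psi_*[F]$. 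Combining the two facts gives $\psi_*[F]=[\bPic^d_{D/k}]=[\bPic^{1-d}_{D/k}]$; since $K_D$ furnishes a $k$-point of $\bPic^{2}_{D/k}$ and one of $d$, $1-d$ is even, this forces $\psi_*[F]=0$, hence $F(k)\neq\varnothing$. Without the conic relation and this parity trick via the canonical class, the rationality hypothesis alone cannot trivialize $[F]$.
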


The question of the validity of Theorem~\ref{main} goes back
to Auel, Bernardara and Bolognesi \cite[Question~5.3.2~(3)]{abb}, who raised
it when~$k$ is a rational function field in one variable over an algebraically closed field.

Using the fact that varieties $X$ as in Theorem \ref{main} are separably $k$-unirational if and only if they have a $k$-point (see Theorem \ref{thunirat}), we obtain new counterexamples to the L\"uroth problem over non-closed fields.

\begin{thm}[Theorem \ref{thmC((t))}]
\label{mainB}
For any algebraically closed field $\kappa$, there exists a three-dimensional smooth complete intersection of two quadrics $X\subset\bP^5_{\kappa(\!(t)\!)}$ which is separably $\kappa(\!(t)\!)$\nobreakdash-unirational, $\kappa(\!(t^{\frac{1}{2}})\!)$-rational, but not $\kappa(\!(t)\!)$-rational.
\end{thm}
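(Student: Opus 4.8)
The plan is to deduce the statement from the rationality criterion of Theorem~\ref{main} together with the separable unirationality criterion of Theorem~\ref{thunirat}, by exhibiting a single smooth complete intersection of two quadrics $X\subset\bP^5_{\kappa(\!(t)\!)}$ enjoying three features: (i)~$X$ has a $\kappa(\!(t)\!)$-point; (ii)~$X$ contains no line defined over $\kappa(\!(t)\!)$; (iii)~$X_{\kappa(\!(t^{\frac{1}{2}})\!)}$ contains a line defined over $\kappa(\!(t^{\frac{1}{2}})\!)$. Indeed, (i) yields separable $\kappa(\!(t)\!)$-unirationality by Theorem~\ref{thunirat}; (ii) yields non-$\kappa(\!(t)\!)$-rationality by Theorem~\ref{main}; and (iii) yields $\kappa(\!(t^{\frac{1}{2}})\!)$-rationality by Theorem~\ref{main} applied over $\kappa(\!(t^{\frac{1}{2}})\!)$.

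I would then use the fact that the Fano variety of lines $F(X)$ is a torsor under the intermediate Jacobian $J$ of $X$, an abelian surface isomorphic to $\mathrm{Jac}(C)$ for the genus-$2$ curve $C$ attached to the pencil: for any field $K\supseteq\kappa(\!(t)\!)$, a line over $K$ exists if and only if $F(X)(K)\neq\emptyset$, i.e.\ if and only if the Weil--Ch\^{a}telet class $[F(X)]\in H^1(K,J)$ vanishes. Thus (ii) and (iii) amount to requiring $[F(X)]\neq 0$ over $\kappa(\!(t)\!)$ but $[F(X)]=0$ over $\kappa(\!(t^{\frac{1}{2}})\!)$, that is, a class nontrivial over $\kappa(\!(t)\!)$ yet killed by the degree-$2$ extension $\kappa(\!(t^{\frac{1}{2}})\!)/\kappa(\!(t)\!)$.

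When $\mathrm{char}(\kappa)\neq 2$ this extension is separable, and one only needs a nonzero class trivialized by a quadratic twist, which is routine. The crux is $\mathrm{char}(\kappa)=2$, where $\kappa(\!(t^{\frac{1}{2}})\!)/\kappa(\!(t)\!)$ is purely inseparable. Here the point is that restriction on Weil--Ch\^{a}telet groups is \emph{not} injective along purely inseparable extensions: its $2$-primary part is governed by the non-smooth $2$-torsion group scheme $J[2]$, which in characteristic $2$ contains multiplicative factors $\bmu_2$, and $H^1_{\fppf}(\kappa(\!(t)\!),\bmu_2)=\kappa(\!(t)\!)^{*}/(\kappa(\!(t)\!)^{*})^{2}$ is not invariant under adjunction of $\sqrt{t}$. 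The plan is therefore to arrange that $[F(X)]$ lifts to an fppf class whose relevant component is the square class of $t$, nonzero over $\kappa(\!(t)\!)$ but trivial over $\kappa(\!(t^{\frac{1}{2}})\!)$ once $t=(t^{\frac{1}{2}})^{2}$ becomes a square.

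Concretely, I would write down an explicit pencil of quadrics over $\kappa(\!(t)\!)$, with $t$-dependent coefficients, so that the pencil is generic enough that $X$ is smooth and $C$ is a smooth genus-$2$ curve, the quadrics vanish at a fixed point where $X$ is smooth (giving (i)), and the $2$-torsion of $\mathrm{Jac}(C)$ together with the twisting of $F(X)$ produces exactly the square class of $t$. One then computes $[F(X)]$ via the intermediate-Jacobian machinery over the imperfect field $\kappa(\!(t)\!)$ and checks that its image in $H^1(\kappa(\!(t)\!),J)[2]$ is nonzero while its pullback to $\kappa(\!(t^{\frac{1}{2}})\!)$ vanishes. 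I expect the main obstacle to be precisely this characteristic-$2$ computation: identifying the rationality obstruction $[F(X)]$ with an explicit square class and tracking it through the non-smooth $2$-torsion of the intermediate Jacobian over $\kappa(\!(t)\!)$, all while guaranteeing that the chosen $X$ is smooth and carries a rational point.
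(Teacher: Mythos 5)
Your reduction is exactly the paper's: by Theorems~\ref{thrat} and~\ref{thunirat} it suffices to exhibit $X$ with a $\kappa(\!(t)\!)$-point, no line over $\kappa(\!(t)\!)$, and a line over $\kappa(\!(t^{\frac{1}{2}})\!)$, and your identification of line existence with vanishing of the class of the Fano surface $F$ in $H^1(-,J)$ is justified by Theorem~\ref{thintjac}. But from there the proposal stops being a proof: you never write down $X$, and the two assertions carrying all the content --- that $[F(X)]\neq 0$ over $\kappa(\!(t)\!)$ and that it dies over $\kappa(\!(t^{\frac{1}{2}})\!)$ --- are left as a plan, with the characteristic-$2$ case explicitly deferred (``I expect the main obstacle to be precisely this characteristic-$2$ computation''). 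That computation \emph{is} the theorem; a heuristic for it, however reasonable (and your mechanism --- non-injectivity of restriction along the purely inseparable extension, seen on the multiplicative part of $J[2]$ via the square class of $t$ --- is the right one), is not a proof. Your plan also hides case distinctions you do not address: in characteristic $2$ the presence of $\bmu_2$ factors in $J[2]$, as opposed to unipotent factors, depends on the Jacobian of the genus-$2$ curve being ordinary, so the explicit pencil would have to be chosen to control this on top of smoothness and the existence of a rational point; and even in characteristic $\neq 2$ the ``routine'' construction is asserted, not given.

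It is worth recording how the paper bridges precisely this gap, because it does so with no Weil--Ch\^atelet or fppf computation at all. One takes $X$ to be, by explicit equations, the twist by the $\bmu_2$-torsor $\Spec(\kappa(\!(u)\!))\to\Spec(\kappa(\!(t)\!))$ (where $u^2=t$) of a \emph{constant} variety $Y\subset\bP^5_{\kappa}$: the map $\phi:(X_0,\dots,X_5)\mapsto(uX_0,uX_1,X_2,\dots,X_5)$ identifies $X_{\kappa(\!(u)\!)}$ with $Y_{\kappa(\!(u)\!)}$. A line over $\kappa(\!(t^{\frac{1}{2}})\!)$ then exists because $Y$ has lines over the algebraically closed field $\kappa$, and a $\kappa(\!(t)\!)$-point exists in $\{X_0=X_1=0\}$. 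For the non-existence of a $\kappa(\!(t)\!)$-line, one assumes $L\subset X$ is such a line, transports it to $L'=\phi(L_{\kappa(\!(u)\!)})$, viewed as a $\kappa(\!(u)\!)$-point of the (proper) variety of lines of $Y$, and specializes at the $u$-adic valuation to a line $L''\subset Y$; since $L$ is defined over $\kappa(\!(t)\!)$, the compatibility of the coordinate $\bmu_2$-action on $\bP^5_{\kappa}$ with the $\bmu_2$-torsor structure of $\Spec(\kappa(\!(u)\!))$ forces $L''$ to be $\bmu_2$-invariant, hence contained in $\{X_0=X_1=0\}$; but $Y\cap\{X_0=X_1=0\}$ is an elliptic curve and contains no line. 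This specialization-plus-equivariance argument is uniform in the characteristic and replaces exactly the cohomological computation you left open.
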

When $\kappa$ has characteristic $2$, Theorem \ref{mainB} yields the first examples of smooth projective varieties over a field $k$ which have a $k$-point and are rational over the perfect closure of $k$, but which are not $k$-rational (see Remarks~\ref{remsLuroth}~(iii) and~(iv)).

\vspace{1em}

Theorem \ref{main} may be compared to the classical fact that a smooth quadric over~$k$ is $k$-rational if and only if it has a $k$-point. However, although it is easy to check that a smooth projective $k$-rational variety has a $k$-point, the fact that a $k$-rational three-dimensional smooth complete intersection of two quadrics $X$ necessarily contains a $k$-line is highly non-trivial.
To prove it, we rely on obstructions to the $k$\nobreakdash-rationality of $X$ arising from a study of its intermediate Jacobian.

Such obstructions go back to the seminal work of Clemens and Griffiths \cite{CG}: if a smooth projective threefold over $\bC$ is $\bC$\nobreakdash-rational, then its intermediate Jacobian is isomorphic, as a principally polarized abelian variety over $\bC$, to the Jacobian of a (not necessarily connected) smooth projective curve.
This implication was used in \cite{CG} to show that smooth cubic threefolds over $\bC$ are never $\bC$-rational,
and was later applied to show the irrationality of several other classes of complex threefolds (see for instance \cite{Beauville}).
The work of Clemens and Griffiths was extended by Murre
\cite{Murrecubic} to algebraically closed fields of any
characteristic different from~$2$.

More recently, we implemented the arguments of Clemens and Griffiths over arbitrary perfect
fields $k$ \cite{CGBW}. By exploiting the fact that the intermediate Jacobian may be isomorphic to
the Jacobian of a smooth projective curve over $\ok$ while not being so
over $k$, we produced new examples of varieties over~$k$ that are
$\ok$-rational but not $k$-rational.

Hassett and Tschinkel \cite{HT1} subsequently noticed that over a
non-closed field~$k$, the intermediate Jacobian carries further
obstructions to $k$\nobreakdash-rationality:
if~$X$ is a smooth projective $k$\nobreakdash-rational threefold,
then not only is its intermediate
Jacobian
isomorphic to the Jacobian $\bPic^0(C)$ of a smooth projective
curve $C$ over~$k$, but in addition, assuming for simplicity that~$C$ is geometrically connected of genus~$\geq 2$,
the $\bPic^0(C)$\nobreakdash-torsors associated
with $\Aut(\ok/k)$\nobreakdash-invariant algebraic equivalence classes of
codimension $2$ cycles on $X$ are of the form $\bPic^{\alpha}(C)$ for
some $\alpha\in\Z$.
When~$X$ is a smooth
three-dimensional complete intersection of two quadrics, they used these
obstructions in combination with the natural identification of the variety
of lines of~$X$ with a torsor under the intermediate Jacobian of $X$,
and with the work of Wang~\cite{Wang},
to
prove Theorem~\ref{main} when $k=\R$ \cite[Theorem 36]{HT1} (and later
\cite{HT2} over subfields of~$\bC$).

The aim of the present article is to extend these arguments to arbitrary
fields.

Applications to $k$-rationality criteria for other classes of $\ok$-rational threefolds
appear in the work of Kuznetsov and Prokhorov \cite{KP}.

\vspace{1em}

So far, we have been imprecise about what we call the intermediate Jacobian of a smooth projective threefold $X$ over $k$.

If $k=\bC$, one can use Griffiths' intermediate Jacobian $J^3X$ constructed by transcendental means. This is the original path taken by Clemens and Griffiths \cite{CG}. The algebraic part of Griffiths' intermediate Jacobian has been shown to descend to subfields $k\subseteq \bC$ by Achter, Casalaina-Martin and Vial \cite[Theorem B]{ACMV1}; the resulting $k$-structure on $J^3X$ is the one used in \cite{HT2}.

Over algebraically closed fields~$k$ of arbitrary characteristic, a different construction of an intermediate Jacobian $\Ab^2(X)$, based on codimension $2$ cycles, was provided by Murre \cite[Theorem A p.\,226]{Murre} (see also \cite{KahnMurre}). This cycle-theoretic approach to intermediate Jacobians had already been applied by him to rationality problems (see \cite{Murrecubic}).

Over a perfect field $k$, the universal property satisfied by Murre's intermediate Jacobian $\Ab^2(X_{\ok})$ induces a Galois descent datum on $\Ab^2(X_{\ok})$, thus yielding a $k$\nobreakdash-form $\Ab^2(X)$ of $\Ab^2(X_{\ok})$ \cite[Theorem 4.4]{ACMV1}. It is this intermediate Jacobian $\Ab^2(X)$, which coincides
with~$J^3X$ when~$k\subseteq \bC$, that we used in \cite{CGBW}.

Over an imperfect field $k$, one runs into the difficulty that Murre's definition of $\Ab^2(X_{\ok})$ does not give rise to a $\ok/k$-descent datum on $\Ab^2(X_{\ok})$. 
Achter, Casalaina-Martin and Vial still prove, in \cite{ACMVfunctorial}, the existence of an
algebraic representative $\Ab^2(X)$ for algebraically trivial codimension $2$ cycles on~$X$
(see \textsection1.2 of \emph{op.\ cit.}\ for the definition). However, when~$k$ is imperfect,
it is not known whether $\Ab^2(X)_{\ok}$ is isomorphic to $\Ab^2(X_{\ok})$. For this reason, we do not know how to construct on $\Ab^2(X)$ the principal polarization that is so crucial to the Clemens--Griffiths method.
 
To overcome this difficulty and prove Theorem \ref{main} in full generality, we provide, over an arbitrary field $k$, an entirely new construction of an intermediate Jacobian.

\vspace{1em}

Our point of view is inspired by Grothendieck's definition of the Picard scheme (for which see \cite{FGA}, \cite[Chapter~8]{BLR}, \cite{FGAPic}). With any smooth projective $\ok$-rational threefold $X$ over~$k$, we associate a functor $\CH^2_{X/k,\fppf}:(\Sch/k)^{\op}\to(\Ab)$ endowed with a natural bijection $\CH^2(X_{\ok})\isoto\CH^2_{X/k,\fppf}(\ok)$ (see Definition \ref{defch2xk} and (\ref{okpointsbis})).  The functor $\CH^2_{X/k,\fppf}$ is an analogue, for codimension $2$ cycles, of the Picard functor $\Pic_{X/k,\fppf}$.

Too naive attempts to define the functor $\CH^2_{X/k,\fppf}$ on the category of $k$\nobreakdash-schemes,
such as the formula ``$T\mapsto \CH^2(X_T)$'', fail as Chow groups of possibly singular schemes are not even contravariant with respect to arbitrary morphisms: one would need to use a contravariant variant of Chow groups (see Remark \ref{remvariantech2xk}~(ii)).
To solve this issue, we view Chow groups of codimension~$\leq 2$ as subquotients of $K$-theory by means of the Chern character, and we define $\CH^2_{X/k,\fppf}$ as an appropriate subquotient of (the fppf sheafification of)
the functor $T\mapsto \K_0(X_T)$. That this procedure gives rise to the correct functor, even integrally, is a consequence of the Riemann--Roch theorem without denominators \cite{RRsansden}.

We show that $\CH^2_{X/k,\fppf}$ is represented by a smooth $k$-group scheme $\bCH^2_{X/k}$ (Theorem~\ref{threp}~(i)). Our functorial approach is crucial for this,
as it allows us to argue by fppf descent from a possibly inseparable finite extension $l$ of $k$ such that $X$ is $l$-rational. By construction, there is a natural isomorphism $\bCH^2_{X_l/l}\simeq(\bCH^2_{X/k})_l$ for all field extensions $l$ of $k$.

The $k$-group scheme that we use as a substitute for the intermediate Jacobian of $X$ is then the identity component~$(\bCH^2_{X/k})^0$ of $\bCH^2_{X/k}$, which is an abelian variety (Theorem~\ref{threp}~(ii)).
 We hope that this functorial perspective on intermediate Jacobians may have other applications (to intermediate Jacobians in families, to deformations of algebraic cycles).

 Establishing an identification $(\bCH^2_{X/k})^0_{\ok}\simeq \Ab^2(X_{\ok})$ (Theorem \ref{threp}~(vi)) and using the principal polarization on $\Ab^2(X_{\ok})$ constructed in \cite{CGBW}, we endow $(\bCH^2_{X/k})^0$ with a canonical principal polarization, which paves the way for applications to rationality questions.
Let us now state the most general obstruction to the $k$-rationality of a smooth projective threefold that we obtain by analyzing~$\bCH^2_{X/k}$.

\begin{thm}[Theorem \ref{threp} (vii)]
\label{main2}
Let $X$ be a smooth projective $k$-rational threefold over $k$. Then there exists a smooth projective curve $B$ over $k$ such that the $k$-group scheme $\bCH^2_{X/k}$ can be realized as a direct factor of $\bPic_{B/k}$ in a way that respects the canonical principal polarizations.
\end{thm}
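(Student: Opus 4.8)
The plan is to reduce the statement to the behaviour of $\bCH^2_{X/k}$ under blow-ups, and then to invoke the weak factorization of the birational map witnessing $k$-rationality. First I would record the functorial compatibility $\bCH^2_{X_l/l}\simeq(\bCH^2_{X/k})_l$ (stated in the excerpt) together with the identification of the identity component $(\bCH^2_{X/k})^0$ as a principally polarized abelian variety (Theorem~\ref{threp}), so that the assertion to be proved concerns the whole group scheme $\bCH^2_{X/k}$ and its polarization. The key input is a blow-up formula: for the blow-up $X'=\mathrm{Bl}_Z X$ of a smooth projective threefold $X$ along a smooth closed subscheme $Z$ (a point or a smooth projective curve), there should be a canonical isomorphism of $k$-group schemes
\begin{equation}
\bCH^2_{X'/k}\simeq \bCH^2_{X/k}\times\bPic_{Z/k},
\end{equation}
compatible with the principal polarizations in the sense that the polarization on the left is the orthogonal direct sum of the two polarizations on the right. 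When $Z$ is a point this factor is trivial, and when $Z$ is a smooth curve the factor $\bPic_{Z/k}$ carries its canonical (autoduality) principal polarization.

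With this formula in hand, the argument proceeds as follows. Since $X$ is $k$-rational, there is a birational map $\bP^3_k\dashrightarrow X$. By the weak factorization theorem (available over arbitrary fields by passing to a suitable model, or by working over a finite extension $l$ where $X$ becomes $l$-rational and descending via the isomorphism $\bCH^2_{X_l/l}\simeq(\bCH^2_{X/k})_l$), this birational map can be decomposed into a chain of blow-ups and blow-downs along smooth centres, connecting $\bP^3_k$ to $X$. Applying the blow-up formula repeatedly and using that $\bCH^2_{\bP^3_k/k}=0$ with trivial polarization, one obtains that $\bCH^2_{X/k}$ appears, up to adding and cancelling factors of the form $\bPic_{Z/k}$, as a direct summand of a product $\prod_i \bPic_{Z_i/k}$ of Picard schemes of smooth projective curves $Z_i$ appearing as centres in the factorization (the point centres contributing nothing). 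Setting $B=\coprod_i Z_i$, a disjoint union of smooth projective curves, gives $\bPic_{B/k}\simeq\prod_i\bPic_{Z_i/k}$, and $\bCH^2_{X/k}$ is realized as a direct factor respecting the polarizations; if one insists on a connected $B$ one may replace the disjoint union by a suitable connected curve of the same Picard scheme, or simply allow $B$ to be a smooth projective curve in the sense permitting several components.

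The main obstacle will be establishing the blow-up formula over an imperfect base field, \emph{integrally} and \emph{with the principal polarization}, rather than merely for the abelian-variety parts over $\ok$. The cycle-theoretic blow-up formula is classical at the level of groups $\CH^2(X_{\ok})$, but here one needs it as an isomorphism of representable fppf functors $\CH^2_{X/k,\fppf}$, which requires tracking the projective bundle and exceptional divisor contributions through the $K$-theoretic definition and the Riemann--Roch without denominators used to define $\CH^2_{X/k,\fppf}$. The compatibility with polarizations is subtler still: one must check that the polarization on $(\bCH^2_{X/k})^0$, pinned down through the identification $(\bCH^2_{X/k})^0_{\ok}\simeq\Ab^2(X_{\ok})$ and the construction of \cite{CGBW}, transforms under blow-up by an orthogonal direct sum with the canonical polarization of $\bPic_{Z/k}$. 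I would handle the polarization compatibility by base-changing to $\ok$, where it reduces to the known behaviour of the Clemens--Griffiths intermediate Jacobian and its theta divisor under blow-ups, and then descend the statement using the functoriality already built into the construction.
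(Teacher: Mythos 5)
There is a genuine gap, and it sits exactly where the paper has to work hardest: your factorization step does not exist over the fields the theorem is about. The weak factorization theorem of Abramovich--Karu--Matsuki--W\l{}odarczyk is only known in characteristic $0$; the paper itself emphasizes this in Remark~\ref{remobs}~(i), where weak factorization is invoked precisely under a characteristic-$0$ hypothesis to get a \emph{stronger} conclusion. Over an arbitrary field there is no decomposition of $\bP^3_k\dashrightarrow X$ into blow-ups and blow-downs with smooth centres, and neither of your proposed workarounds repairs this: passing to a finite extension $l$ does not change the characteristic, and in any case the conclusion of the theorem (a smooth projective curve $B$ over $k$ together with a polarized direct-factor decomposition of $\bPic_{B/k}$ defined over $k$) is not a statement that descends from $l$ to $k$ --- in the paper, descent along $l/k$ is used only to prove representability of the functor, never for part~(vii), whose proof must exploit $k$-rationality over $k$ itself. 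Two smaller slips point in the same direction: $\bCH^2_{\bP^3_k/k}\simeq\Z$, not $0$, and the blow-up of a point contributes a factor $\Z$, not a trivial factor; the paper absorbs these constant factors into $B$ via copies of $\bP^1_k$.

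What the paper does instead is use resolution of indeterminacies in dimension $3$, valid in all characteristics by Cossart--Piltant (Proposition~\ref{resolindet}): this produces a single chain of blow-ups $X'\to\dots\to\bP^3_k$ whose centres $Z_j$ are integral and regular but possibly \emph{not smooth} over $k$, followed by a projective birational morphism $h:X'\to X$ that is not a composition of smooth blow-downs; the contribution of $h$ is split off using $h_*\circ h^*=\Id$ (Proposition~\ref{pushfwdSK0}, resting on Chatzistamatiou--R\"ulling), giving $\bCH^2_{X/k}\times G\simeq \Z\times\prod_{c_j=2}\bPic_{Z_j/k}\times\prod_{c_j=3}\Z_{Z_j/k}$. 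The two hard points are then: (a) compatibility of this decomposition with the principal polarizations (Proposition~\ref{propppdirfact}), which cannot be reduced to the smooth-centre computation of \cite{CGBW} because the $(Z_j)_{\ok}$ may be singular and even non-reduced, and requires a long $\ell$-adic argument in which the multiplicities $m_j$ of $(Z_j)_{k_\p}$ enter and are shown, via Debarre's uniqueness of decompositions of principally polarized abelian varieties, to equal $1$ where it matters; and (b), decisively, the fact that the regular-but-non-smooth centres contribute nothing to $\bCH^2_{X/k}$ (Lemma~\ref{inducedmapzero}): if $\bPic^0_{Z_j/k}$ mapped nontrivially to the abelian variety $(\bCH^2_{X/k})^0$, it would split as the product of an abelian variety and an affine group scheme, which Theorem~\ref{thmsplit} and Corollary~\ref{corsplit} rule out for a regular, non-smooth, non-rational curve. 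Your proposal contains no substitute for either step; the blow-up formula you posit is the paper's Proposition~\ref{calculblowup}, proved only for smooth centres, and it is not the bottleneck.
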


In Theorem~\ref{listobstructions}, we deduce from Theorem \ref{main2} more concrete obstructions to the $k$\nobreakdash-rationality of $X$, pertaining to the N\'eron--Severi group $\NS^2(X_{\ok})$ of algebraic equivalence classes of codimension $2$ cycles on $X_{\ok}$, to the principally polarized abelian variety $(\bCH^2_{X/k})^0$, and to the $(\bCH^2_{X/k})^0$-torsors that are of the form $(\bCH^2_{X/k})^{\alpha}$ for some $\alpha\in \big(\mkern-2.5mu\bCH^2_{X/k}/(\bCH^2_{X/k})^0\big)(k)=\NS^2(X_{\ok})^{\Aut(\ok/k)}$.

The principle of the proof of Theorem \ref{main2} goes back to Clemens and Griffiths. Since $X$ is $k$-rational,
it can be obtained from $\bP^3_k$ by a composition of blow-ups of regular curves and of closed points, followed by a contraction. The curve $B$ whose existence is predicted by Theorem \ref{main2} is roughly the union of the blown-up curves. This works perfectly well if $k$ is perfect. If $k$ is imperfect, however, some of the blown-up curves may be regular but not smooth over $k$. It is nevertheless very important, in view of the application to Theorem \ref{main}, that the curve $B$ appearing in the statement of Theorem~\ref{main2} be smooth over $k$. To prove Theorem \ref{main2} as stated, one thus has to show that the contributions of these non-smooth regular curves are cancelled out by the final contraction. This non-trivial fact relies on a complete understanding of which Jacobians of proper reduced curves over $k$ split as the product of an affine group scheme and of an abelian variety over~$k$ (Theorem~\ref{thmsplit}). 

\vspace{1em}

The text is organized as follows. Sections \ref{sec1} and \ref{sec2} gather preliminaries, concerning respectively group schemes and $K$-theory. Section \ref{sec2} contains in particular the definition of the above-mentioned functor $\CH^2_{X/k,\fppf}$ associated with a smooth projective $\ok$-rational threefold $X$ over $k$ (Definition \ref{defch2xk}). In Section \ref{sec3}, we prove that this functor is representable and study the $k$-group scheme $\bCH^2_{X/k}$ that represents it (Theorem~\ref{threp}).
A number of obstructions to the $k$-rationality of~$X$ are then derived (Theorem \ref{threp} (vii) and Theorem~\ref{listobstructions}).
Section \ref{sec4} is devoted to applications to three-dimensional smooth complete intersections of two quadrics: we compute
$\bCH^2_{X/k}$ entirely (Theorem \ref{thintjac}), deduce the irrationality criterion that is our main theorem (Theorem \ref{thrat}), and apply the criterion to examples over Laurent series fields (Theorem \ref{thmC((t))}).

\subsection*{Acknowledgements}

Over a perfect field, the results of this article were obtained during the
``Rationality of Algebraic Varieties'' conference held on Schiermonnikoog
Island in April 2019 after reading the arguments of Hassett and
Tschinkel~\cite{HT1} over the reals.  We are grateful to Jean-Louis
Colliot-Thélène and to Sasha Kuznetsov for encouraging us to write them up.
Finally, we thank Asher Auel, Marcello Bernardara and Michele Bolognesi for their comments and their interest,
and the referees for their work and their helpful suggestions.

\subsection*{Notation and conventions}

We fix a field $k$ and an algebraic closure $\ok$ of~$k$. Let~$k_\p$ be the 
perfect closure of $k$ in $\ok$ and $\Gamma_k:=\Gal(\ok/k_{\p})=\Aut(\ok/k)$ be the absolute Galois group of $k_{\p}$. 
 If $X$ and $T$ are two $k$-schemes, we set $X_T:=X\times_k T$.
A~\textit{variety} over~$k$ is a separated scheme of finite type over $k$;
a~\emph{curve} is a variety of pure dimension~$1$.
If $G$ is a group scheme locally of finite type over $k$, we denote by $G^0$ its identity component.
If $X$ is a smooth proper variety over $k$, we let $\CH^c(X_{\ok})_{\alg}\subseteq\CH^c(X_{\ok})$ be the subgroup
of algebraically trivial codimension~$c$ cycle classes and define $\NS^c(X_{\ok}):=\CH^c(X_{\ok})/\CH^c(X_{\ok})_{\alg}$.

We use \textit{qcqs} as a shorthand for quasi-compact and quasi-separated.
We denote by $(\Sch/ k)$ 
the category of qcqs 
$k$-schemes
and by $(\Ab)$ the category of abelian groups. 
If $X$ is a commutative $k$-group scheme, we let $\Phi_X:(\Sch/ k)^{\op}\to(\Ab)$ be the functor given by $\Phi_X(T)=\Hom_k(T,X)$. The functor $\Phi:X\mapsto \Phi_X$, from the category of commutative $k$-group schemes to the category of
functors $(\Sch/ k)^{\op}\to(\Ab)$, is fully faithful, by Yoneda's lemma and since all schemes are covered by affine (hence qcqs) open subschemes. 
We say that a functor $(\Sch/ k)^{\op}\to(\Ab)$ is \emph{representable} if it is isomorphic to~$\Phi_X$ for some commutative $k$\nobreakdash-group scheme~$X$, which need not be qcqs.
The functor $\Z:(\Sch/k)^{\op}\to (\Ab)$ sending $T\in (\Sch/k)$ to the group $\Z(T)$ of locally constant maps $T\to \Z$ is represented by the constant $k$-group scheme $\Z$.

 We refer to \cite[VII, D\'efinition 1.4]{SGA6} for the definition of a \textit{regular immersion} of schemes that we use, based on the Koszul complex.
A closed immersion of regular schemes is always regular. 
A morphism of schemes $f:X\to Y$ is said to be \textit{a local complete intersection} or \textit{lci} if
it can be factored, locally on $X$, as the composition of a regular immersion and of a smooth morphism \cite[VIII, D\'efinition~1.1]{SGA6}. 

If $\ell$ is a prime number and $M$ is a $\Z$-module, we let $M\{\ell\}\subseteq M$ be the $\ell$-primary torsion subgroup of $M$, and $T_{\ell}(M):=\Hom(\Ql/\Zl,M)$ and $V_{\ell}(M):=T_{\ell}(M)[1/\ell]$ be the $\ell$-adic Tate modules of $M$. If $G$ is a commutative $k$-group scheme, we set $T_{\ell}G:=T_{\ell}(G(\ok))$ and $V_{\ell}G:=V_{\ell}(G(\ok))$.
}

\section{Group schemes}
\label{sec1}

We first collect miscellaneous information concerning group schemes. The main new result of this section is Theorem \ref{thmsplit}.

\subsection{Chevalley's theorem}
If~$G$ is a connected smooth group scheme over~$k$, there
is a unique short exact sequence
\begin{equation}
\label{Chevalley}
0\to L(G_{k_\p})\to G_{k_\p}\to A(G_{k_\p})\to 0
\end{equation}
of group schemes over $k_\p$, where $L(G_{k_\p})$ is smooth, connected and affine and where $A(G_{k_\p})$ is an abelian variety.
This statement was proved by Chevalley~\cite{Chevalley},
see \cite[Theorem 1.1]{Conrad} for a modern proof.

\subsection{Principal polarizations}
\label{pp}

Recall that a \textit{principal polarization} on an abelian variety $A$ over $k_{\p}$ is an ample class $\theta\in \NS(A_{\ok})^{\Gamma_k}$ whose associated isogeny $A_{\ok}\to\hat{A}_{\ok}$ is an isomorphism, that a principally polarized abelian variety~$A$ over $k_\p$ is a product of indecomposable principally polarized abelian varieties over~$k_\p$ and that the factors of this decomposition are unique as subvarieties of $A$ (see \cite[\S 2.1]{CGBW}).

We define a \textit{polarization} (resp.\ a \textit{principal polarization})
of a smooth commutative group scheme $G$ over~$k$
to be a polarization (resp.\ a principal polarization) of the abelian variety $A(G^0_{k_\p})$ over~$k_\p$. If $G$ is endowed with a polarization $\theta$, we say that a smooth
subgroup scheme $H\subseteq G$ is a \textit{polarized direct factor} of $G$ if there exists a subgroup scheme $H'\subseteq G$ such that the canonical morphism $\iota:H\times H'\isoto G$ is an isomorphism
and such that
$\iota^*\theta=\pi^*\theta|_{A(H^0_{k_\p})}+\pi'^*\theta|_{A(H'^0_{k_\p})}$,
where~$\pi$ and~$\pi'$ denote the projections of $A(H^0_{k_\p})\times A(H'^0_{k_\p})$
onto $A(H^0_{k_\p})$ and $A(H'^0_{k_\p})$.
If in addition~$\theta$ is a principal polarization, then
so are $\pi^*\theta|_{A(H^0_{k_\p})}$ and $\pi'^*\theta|_{A(H'^0_{k_\p})}$;
in this case, we also speak of a \textit{principally polarized direct factor}.

\subsection{Locally constant functions}
\label{locconstpar}

For a variety $X$ over $k$, consider the functor
\begin{alignat*}{4}
\Z_{X/k}:\hspace{1em}(\Sch&/k)^{\op}&\hspace{1em}\to\hspace{1em}&(\Ab)\\
&T&\hspace{1em}\mapsto\hspace{1em}&\Z(X_T)\rlap{.}
\end{alignat*}
Equivalently $\Z_{X/k}$ is the push-forward of the constant sheaf~$\Z$ by
the structural morphism $X \to\Spec(k)$ (and hence is an fpqc sheaf).
Let $\pi_0(X/k)$ denote the \'etale $k$-scheme of connected components of $X$, defined in \cite[\S I.4, D\'efinition~6.6]{DemaGa}.
The Weil restriction of scalars $\Res_{\pi_0(X/k)/k}\Z$ exists
as a $k$\nobreakdash-scheme by \cite[7.6/4]{BLR}.

\begin{prop}
\label{locconsrep}
Let $X$ be a variety over $k$. Then the functor $\Z_{X/k}$ is canonically represented by the 
Weil restriction of scalars $\Res_{\pi_0(X/k)/k}\Z$.
\end{prop}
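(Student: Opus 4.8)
The plan is to exhibit a canonical isomorphism of functors $\Z_{X/k}\simeq\Phi_{\Res_{\pi_0(X/k)/k}\Z}$ by reducing the computation of locally constant functions on $X_T$ to locally constant functions on the fibre product of $\pi_0(X/k)$ with $T$. First I would unwind the two sides. On the left, for $T\in(\Sch/k)$, the group $\Z_{X/k}(T)=\Z(X_T)$ consists of locally constant maps $X_T\to\Z$, equivalently continuous maps from the topological space of $X_T$ to the discrete space $\Z$. On the right, by the defining adjunction of Weil restriction together with the description of $\Z$ as the constant group scheme, one has
\begin{equation*}
\Phi_{\Res_{\pi_0(X/k)/k}\Z}(T)=\Hom_k(T,\Res_{\pi_0(X/k)/k}\Z)\simeq\Hom_{\pi_0(X/k)}\bigl(T\times_k\pi_0(X/k),\Z\bigr)=\Z\bigl(\pi_0(X/k)_T\bigr),
\end{equation*}
so the target functor is canonically $T\mapsto\Z(\pi_0(X/k)_T)$. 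Thus the statement reduces to producing a natural bijection $\Z(X_T)\isoto\Z(\pi_0(X/k)_T)$.

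The key geometric input is the universal property of $\pi_0(X/k)$: the structural morphism factors as $X\to\pi_0(X/k)\to\Spec(k)$ with $X\to\pi_0(X/k)$ having geometrically connected fibres (this is what \cite[\S I.4, D\'efinition~6.6]{DemaGa} furnishes for $X$ a variety, i.e.\ finite type over $k$). The central step, which I expect to be the main obstacle, is to check that base change along an arbitrary $T\in(\Sch/k)$ is compatible with the formation of $\pi_0$, in the precise sense that the natural map $X_T\to\pi_0(X/k)_T$ still has connected fibres over the points of $\pi_0(X/k)_T$ and induces a bijection on locally constant $\Z$-valued functions. More concretely, I would show that pullback along $X_T\to\pi_0(X/k)_T$ gives an isomorphism $\Z(\pi_0(X/k)_T)\isoto\Z(X_T)$: injectivity follows because $X_T\to\pi_0(X/k)_T$ is surjective, and surjectivity is the assertion that every locally constant function on $X_T$ is constant along the fibres of this map, which in turn rests on those fibres being connected. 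Since $\pi_0(X/k)$ is finite \'etale over $k$, the fibre of $X_T\to\pi_0(X/k)_T$ over a point lying over a component $Z\subseteq\pi_0(X/k)$ is the base change of a geometrically connected $k$-scheme, and geometric connectedness is preserved by arbitrary base change; this is exactly why geometric (rather than mere) connectedness of the fibres of $X\to\pi_0(X/k)$ is the right hypothesis to invoke.

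With this in hand, the verification is completed by naturality. The isomorphism $\Z(\pi_0(X/k)_T)\isoto\Z(X_T)$ is contravariantly functorial in $T$ because all the maps involved are obtained by base change from the fixed morphism $X\to\pi_0(X/k)$, and composing with the adjunction isomorphism above yields the desired natural isomorphism $\Phi_{\Res_{\pi_0(X/k)/k}\Z}\simeq\Z_{X/k}$. By the full faithfulness of $\Phi$ recorded in the Notation and conventions, this identifies $\Z_{X/k}$ as represented by $\Res_{\pi_0(X/k)/k}\Z$, and the construction is canonical since every step—the factorization through $\pi_0(X/k)$, the Weil restriction adjunction, and the pullback on locally constant functions—is canonical. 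The only subtlety to treat with care is that $T$ ranges over qcqs schemes that may be highly non-reduced or non-Noetherian, so I would phrase the connectedness-of-fibres argument purely topologically (locally constant functions depend only on the underlying topological space, and $X_T$ and $(X_{\red})_T$ have the same topology), which sidesteps any scheme-theoretic pathology.
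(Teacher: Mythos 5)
Your reduction is the same as the paper's: via the Weil restriction adjunction, the statement becomes the claim that the pullback $\Z(\pi_0(X/k)_T)\to\Z(X_T)$ along $X_T\to\pi_0(X/k)_T$ is bijective for every $T$, and you try to deduce this from surjectivity plus connectedness of the fibres. The gap is in your surjectivity step. You assert that surjectivity of the pullback ``is the assertion that every locally constant function on $X_T$ is constant along the fibres'', resting only on connectedness of those fibres. But constancy along fibres, together with surjectivity of $f\colon X_T\to\pi_0(X/k)_T$, only produces a set-theoretic function $h$ on $\pi_0(X/k)_T$ with $g=h\circ f$; nothing so far makes $h$ locally constant. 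For that one needs $f$ to be submersive (a topological quotient map), and a surjective map with connected fibres need not be one. Concretely: take $B=\Spec(k[x]_{(x)})$, with generic point $\eta$ and closed point $s$, and let $A=\Spec(k(\eta))\sqcup\Spec(k(s))\to B$ be the obvious morphism. It is surjective and its fibres are single points, yet $B$ is connected, so $\Z(B)=\Z$ while $\Z(A)=\Z^2$: the pullback is not surjective. Your closing remark that you would phrase the argument ``purely topologically'' does not help; the failure above is purely topological.

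The paper closes exactly this gap with one additional input that your proposal never uses: the canonical morphism $q_X\colon X\to\pi_0(X/k)$ is faithfully flat \cite[\S I.4, Propositions~6.5 and~6.7]{DemaGa}, and flatness together with finite presentation is stable under base change, so $X_T\to\pi_0(X/k)_T$ is \emph{open} \cite[Th\'eor\`eme~2.4.6]{EGA42}, besides being surjective with connected fibres. An open surjection is a quotient map, and a quotient map with connected fibres does induce a bijection on locally constant $\Z$-valued functions (equivalently, on connected components, which is how the paper phrases it). With this flatness/openness ingredient added, the rest of your argument --- the adjunction computation, injectivity, preservation of connectedness of the fibres under base change via geometric connectedness, and naturality in $T$ --- goes through and recovers the paper's proof.
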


\begin{proof}
Recall that there is a canonical faithfully flat
morphism $q_X:X \to \pi_0(X/k)$ whose fibres are geometrically connected
 \cite[\S I.4, Propositions~6.5 and~6.7]{DemaGa}.
For any $T\in(\Sch/k)$,
the resulting morphism $X_T \to \pi_0(X/k)_T$ is
surjective, has connected fibres, and is open \cite[Th\'eor\`eme~2.4.6]{EGA42};
therefore~$q_X$ induces a bijection between the sets of connected components
of~$X_T$ and of~$\pi_0(X/k)_T$.
As a consequence, the homomorphism 
$\Z(\pi_0(X/k)_T) \to \Z(X_T)$
is an isomorphism,
and hence so is the morphism of functors
$\Res_{\pi_0(X/k)/k}\Z \to \Z_{X/k}$
that it induces when~$T$ varies.
\end{proof}

\begin{rems}
\label{isoloc}
(i) 
We will usually denote by $\Z_{X/k}$, rather than by $\Res_{\pi_0(X/k)/k}\Z$, the group scheme that $\Z_{X/k}$ represents.

(ii) Proposition \ref{locconsrep} shows that a morphism $p:X'\to X$ of varieties over $k$ that induces a bijection between the sets of connected components of $X_{\ok}$ and of $X'_{\ok}$ gives rise to an isomorphism $\Z_{X/k}\isoto\Z_{X'/k}$.
\end{rems}

\subsection{Picard schemes}
\label{Pic}

The \textit{absolute Picard functor} of a proper variety $X$ over $k$~is
\begin{alignat*}{4}
\Pic_{X/k}:\hspace{1em}(\Sch&/k)^{\op}&\hspace{1em}\to\hspace{1em}&(\Ab)\\
&T&\hspace{1em}\mapsto\hspace{1em}&\Pic(X_T).
\end{alignat*}
Beware that our notation differs slightly from that of \cite[\S 9.2]{FGAPic}.

If $\tau\in\{\Zar,\et,\fppf\}$, we denote the sheafification of $\Pic_{X/k}$ for the corresponding (Zariski, \'etale or fppf) topology by $\Pic_{X/k,\tau}$.
The functors $\Pic_{X/k,\et}$ and $\Pic_{X/k,\fppf}$ are equal \cite[8.1 p.\,203]{BLR}
and are represented by a group scheme locally of finite type over~$k$
\cite[8.2/3]{BLR} which we denote by $\bPic_{X/k}$: the \textit{Picard scheme} of~$X$.
These two functors contain $\Pic_{X/k,\Zar}$
and $T\mapsto\Pic(X_T)/\Pic(T)$ as subfunctors if $H^0(X,\sO_X)=k$, and they coincide with them
if in addition $X(k)\neq\varnothing$
(see \cite[Theorem~9.2.5]{FGAPic} and \cite[Proposition~7.8.6]{EGA32}), for instance if $X$ is connected and reduced and~$k=\ok$.

\subsubsection{Picard schemes of blow-ups}
\label{parpic}

In \S\ref{parpic}, we consider the following situation.
We fix a regular closed immersion $i:Y\to X$ of qcqs schemes of pure codimension $c\geq 2$, we let $p:X'\to X$ be the blow-up of $X$ along $Y$ with exceptional divisor~$Y'$, and we let $p':Y'\to Y$ and $i':Y'\to X'$ be the  natural morphisms. The morphism $p':Y'\to Y$ is a projective bundle of relative dimension $c-1\geq 1$
(see \cite[\S 1.2]{Thomason}).
In this setting,  we study the group morphism
\begin{alignat}{4}
\label{formulaPicblup}
\begin{aligned}
\Pic(X&)\times\Z(Y)&\hspace{1em}\to\hspace{1em}&\hspace{1.5em}\Pic(X')\\
&(\sL,\psi)&\hspace{1em}\mapsto\hspace{1em}& p^*\sL\otimes\sO_{X'} \Big(-\sum_{n\in\Z}n\mkern-1mu\Big[p^{-1}(\psi^{-1}(n))\Big]\Big).
\end{aligned}
\end{alignat}

\begin{prop}
\label{propblup}
Under the hypotheses of \S\ref{parpic}, the map (\ref{formulaPicblup}) is bijective.
\end{prop}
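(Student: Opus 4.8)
The plan is to analyze the Picard group of the blow-up $X'$ directly, using the standard structure theory of blow-ups together with the fact that $p':Y'\to Y$ is a projective bundle. First I would recall that for a projective bundle of relative dimension $c-1\geq 1$, the projective bundle formula gives $\Pic(Y')\cong p'^*\Pic(Y)\oplus\Z\cdot[\sO_{Y'}(1)]$, where $\sO_{Y'}(1)$ is the relative hyperplane class; since $Y'$ is the exceptional divisor of the blow-up, this relative hyperplane class is identified with $\sO_{Y'}(-1)=\sO_{X'}(Y')|_{Y'}$ (up to sign conventions). The key geometric input is that away from the exceptional divisor, $p$ is an isomorphism $X'\setminus Y'\isoto X\setminus Y$, so every line bundle on $X'$ restricted to this open set comes from $X\setminus Y$.

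The core of the argument is the excision/localization exact sequence relating $\Pic(X')$ to $\Pic(X'\setminus Y')=\Pic(X\setminus Y)$ and the divisor classes supported on $Y'$. Concretely, I would establish an exact sequence
\begin{equation*}
\Z(Y)\xrightarrow{\ \alpha\ }\Pic(X')\xrightarrow{\ \rho\ }\Pic(X\setminus Y)\to 0,
\end{equation*}
where $\rho$ is restriction to the open complement and $\alpha$ sends a locally constant function $\psi$ to the class $\sO_{X'}(\sum_n n[p^{-1}(\psi^{-1}(n))])$ of the corresponding combination of components of $Y'$ (here one uses that the irreducible components of $Y'$, or rather its decomposition over connected components of $Y$, index the divisors supported on the exceptional locus). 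The surjectivity of $\rho$ follows from the fact that $X'\setminus Y'\hookrightarrow X'$ is an open immersion whose complement has pure codimension $1$, so line bundles extend after twisting by divisors supported on $Y'$; exactness in the middle expresses that a line bundle trivial on the complement is supported on $Y'$.

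To finish, I would combine this with the restriction sequence for $X$ itself, namely $\Z(Y)\to\Pic(X)\xrightarrow{\,\mathrm{res}\,}\Pic(X\setminus Y)\to 0$, and use the fact that $Y$ has codimension $c\geq 2$ in the regular (hence locally factorial in the relevant sense) scheme $X$, so that restriction $\Pic(X)\isoto\Pic(X\setminus Y)$ is an \emph{isomorphism} — codimension $\geq 2$ subschemes do not affect the Picard group. The pullback $p^*$ then splits the sequence for $X'$: one checks that $\rho\circ p^*=\mathrm{res}$ is an isomorphism, giving $\Pic(X')\cong p^*\Pic(X)\oplus(\text{classes supported on }Y')$, and the projective bundle formula identifies the second summand precisely with $\Z(Y)$ via the exceptional components. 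Matching these identifications with the explicit formula (\ref{formulaPicblup}) yields bijectivity.

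The main obstacle, and the step requiring genuine care, is the injectivity of the map on the kernel side — that is, showing the classes $\sO_{X'}(\sum_n n[p^{-1}(\psi^{-1}(n))])$ together with the pullbacks $p^*\sL$ have no nontrivial relations, and that every class supported on $Y'$ arises this way with the correct integer coefficients. This is where the projective bundle structure of $p':Y'\to Y$ is essential: I would compute $i'^*$ of the exceptional divisor classes and use that the restriction map detects the relative degree along the fibres of $p'$, so that a combination $\sum_n n[p^{-1}(\psi^{-1}(n))]$ is principal only when $\psi=0$. One subtlety to handle carefully is that $Y$ need not be connected and the components of $Y'$ correspond to connected components of $Y$ rather than geometric ones, so the bookkeeping is governed by $\Z(Y)$ exactly as in the statement; the qcqs hypothesis ensures $\Z(Y)$ is the honest group of locally constant $\Z$-valued functions and that the localization sequences are available.
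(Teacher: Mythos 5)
Your proof has a genuine gap: both of its excision claims are false under the hypotheses of \S\ref{parpic}. You assert (a) that restriction gives an isomorphism $\Pic(X)\isoto\Pic(X\setminus Y)$ because $Y$ has codimension $\geq 2$, and (b) that there is a right-exact localization sequence $\Z(Y)\to\Pic(X')\to\Pic(X'\setminus Y')\to 0$. These are statements about \emph{locally factorial} (or, via Weil divisor class groups, normal integral) schemes. But in \S\ref{parpic} the scheme $X$ is only assumed qcqs with $i:Y\to X$ a regular immersion of pure codimension $c\geq 2$: $X$ need not be regular, normal, integral, or even reduced, and this generality is essential, since Corollary~\ref{corblup} applies the proposition to $i_T:Y_T\to X_T$ for an arbitrary qcqs $k$-scheme $T$ (for which $X_T$ is almost never normal, e.g.\ when $T$ is non-reduced). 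The regular-immersion hypothesis does give depth $\geq 2$ along $Y$, so \emph{injectivity} of $\Pic(X)\to\Pic(X\setminus Y)$ holds; it is the surjectivity that fails: on a non-factorial scheme, a line bundle on an open subset need not extend, even after twisting by divisors supported on the boundary.

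Concretely, take $X=\Spec\big(k[u,v,s,t]/(uv)\big)$ and $Y=V(s,u-v)$. Since $(s,u-v)$ is a regular sequence, $Y\cong\Spec\big(k[u,t]/(u^2)\big)$ is regularly immersed of pure codimension $2$. Write $X=H_1\cup H_2$ with $H_i\cong\bA^3_k$ meeting along $\Sigma=V(u,v)\cong\bA^2_{s,t}$. A Mayer--Vietoris computation as in \eqref{se:mayervietoris} gives $\Pic(X)=0$; but $Y\cap\Sigma=V(s)$ is a \emph{divisor} in $\Sigma$, so $\Gm(\Sigma\setminus Y)$ contains the new unit $s$, while $\Gm(H_i\setminus Y)=k^*$ and $\Pic(H_i\setminus Y)=0$, whence $\Pic(X\setminus Y)\cong\Z$, generated by the bundle glued along $\Sigma\setminus Y$ via $s$. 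This contradicts (a) outright; and since (as the paper proves) $\Pic(X')\cong\Pic(X)\times\Z(Y)=\Z$ is generated by $\sO_{X'}(Y')$, whose restriction to $X'\setminus Y'\cong X\setminus Y$ is trivial, the map $\Pic(X')\to\Pic(X'\setminus Y')$ is the zero map to $\Z$, contradicting (b). This is precisely why the paper's argument avoids excision altogether: it attaches to $\sN\in\Pic(X')$ the locally constant function $\psi:Y\to\Z$ recording the degree of $\sN$ on the fibers $X'_y\cong\bP^{c-1}$, twists by exceptional components to make $\sN$ trivial on all fibers of $p$, and then proves by cohomology and base change (Lemma~\ref{lemlbbl}) that a fiberwise-trivial bundle equals $p^*p_*\sN$, injectivity of $p^*$ being Thomason's result. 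Your strategy would be fine if $X$ were regular (then $X'$ is locally factorial and the divisorial sequences you invoke are available), but not in the generality in which the proposition is stated and used.
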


\begin{proof}
By absolute noetherian approximation \cite[Theorem C.9]{TT} and the limit arguments of \cite[\S8]{EGA43}, we may assume that $X$ is noetherian.
If $\sN\in \Pic(X')$, the function $\psi:Y\to\Z$ such that $\sN|_{X'_y}\simeq\sO_{X'_y}(\psi(y))$ for all $y\in Y$ is locally constant.
(Indeed, for $n\gg 0$, the $\sO_Y$\nobreakdash-module $p'_*((\sN|_{Y'})(n))$ is locally free and its formation commutes with base change, by \cite[III, Theorems 8.8 and 12.11]{Hartshorne}.)
Since $\sO_{X'}(-Y')|_{X'_y}\simeq\sO_{X'_y}(1)$ for all $y\in Y$ (see \cite[\S 1.2]{Thomason}), it follows that $\sN\otimes\sO_{X'}(\sum_{n\in\Z}n[p^{-1}(\psi^{-1}(n))])$ is trivial on the fibers of~$p$, and that $\psi$ is the unique function with this property.
It remains to show that $p^*:\Pic(X)\to\Pic(X')$ is injective with image the subgroup of isomorphism classes of line bundles that are trivial on the fibers of~$p$. The injectivity follows from \cite[Lemme 2.3 (a)]{Thomason}, and the description of the image from Lemma \ref{lemlbbl}~(iii) below.
\end{proof}

\begin{lem}
\label{lemlbbl}
Under the hypotheses of \S\ref{parpic}, assume that $X$ is noetherian and let $\sN$ be a line bundle on $X'$ such that $\sN|_{X'_y}\simeq \sO_{X'_y}$ for all $y\in Y$. For any integer~$n$, set $\sN(n)=\sN \otimes \sO_{X'}(-nY')$. Then:
\begin{enumerate}[(i)]
\item For all $j\geq 1$ and $n\geq 0$, the sheaf $R^jp_*(\sN(n))$ vanishes.
\item For all $n\geq 0$, the natural morphism $p^*p_*(\sN(n))\to\sN(n)$ is surjective.
\item The sheaf $p_*\sN$ is invertible and $p^*p_*\sN\to\sN$ is an isomorphism.
\end{enumerate}
\end{lem}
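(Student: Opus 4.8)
The plan is to prove the three statements by descending induction (in the sense of first controlling cohomology, then global generation, then the case $n=0$), using the projective-bundle structure of $p':Y'\to Y$ and the fact that $\sN$ is fiberwise trivial on the fibers of $p$. Throughout I would work locally on $Y$ (hence on $X$), since all three assertions are local on the base $X$, and $p'$ is a projective bundle $\bP(\sE)\to Y$ of relative dimension $c-1$ for some locally free sheaf $\sE$ on $Y$.

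First I would prove (i). The key point is that $\sN$ restricted to any fiber $X'_y$ of $p$ is $\sO_{X'_y}$, and each such fiber is the projective space $\bP^{c-1}$ sitting as the fiber of $p'$ over $y$. Since $\sN(n)|_{X'_y}\simeq\sO_{X'_y}(-n)$ — here I use $\sO_{X'}(-Y')|_{X'_y}\simeq\sO_{X'_y}(1)$ as recorded in \S\ref{parpic} — the higher cohomology $H^j(X'_y,\sN(n)|_{X'_y})$ vanishes for all $j\geq 1$ and all $n\geq 0$, because $H^j(\bP^{c-1},\sO(-n))=0$ in that range (the only nonvanishing higher cohomology of line bundles on $\bP^{c-1}$ occurs in degree $c-1$ for $\sO(d)$ with $d\leq -c$). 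By the theorem on cohomology and base change \cite[III, Theorems~8.8 and~12.11]{Hartshorne}, applied fiberwise, the vanishing of cohomology on every fiber forces $R^jp_*(\sN(n))=0$ for $j\geq 1$ and $n\geq 0$. This is where I expect most of the bookkeeping to lie: one must run the cohomology-and-base-change machine downward from top degree, checking at each stage that the relevant sheaf is locally free with base-change-compatible formation so that fiberwise vanishing propagates to vanishing of the direct image.

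Next, (ii) follows from (i) together with a relative Castelnuovo--Mumford regularity or direct fiberwise surjectivity argument. The natural evaluation map $p^*p_*(\sN(n))\to\sN(n)$ is surjective if and only if it is surjective on every fiber of $p$, and on the fiber $X'_y$ it becomes the map $H^0(\bP^{c-1},\sO)\otimes\sO_{\bP^{c-1}}\to\sO_{\bP^{c-1}}$, which is visibly surjective (in fact an isomorphism) since $\sN(n)|_{X'_y}\simeq\sO_{\bP^{c-1}}(-n)$ is globally generated for $n\leq 0$, and for $n=0$ it is the trivial bundle. Using (i) to guarantee that $p_*(\sN(n))$ has the correct fibers via base change, Nakayama's lemma then upgrades fiberwise surjectivity to surjectivity of the map of sheaves.

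Finally, for (iii) I would take $n=0$ and combine the two previous parts. By (i) with $n=0$ we have $R^jp_*\sN=0$ for $j\geq 1$, and cohomology and base change gives that $p_*\sN$ is locally free of rank $h^0(\bP^{c-1},\sO)=1$, hence invertible. The surjection $p^*p_*\sN\to\sN$ of (ii) is then a surjection of line bundles on $X'$, and a surjection between invertible sheaves is automatically an isomorphism. The main obstacle, as noted, is the careful deployment of base change in (i): one has to ensure the hypotheses of \cite[III, Theorem~12.11]{Hartshorne} are met at each degree so that the fiberwise computations on projective spaces genuinely control the direct image sheaves, and in particular that $p_*\sN$ has constant fiber dimension $1$.
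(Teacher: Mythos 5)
Your plan founders on one decisive point: the cohomology-and-base-change machinery cannot be applied to the blow-up morphism $p:X'\to X$ at all. The statements you need --- ``vanishing of $H^j$ on every fiber forces $R^jp_*(\sN(n))=0$'', ``$p_*(\sN(n))$ has the correct fibers via base change'', ``$p_*\sN$ is locally free of rank $h^0(\bP^{c-1},\sO)=1$'' --- are exactly the conclusions of \cite[III, Theorem~12.11]{Hartshorne} and of Grauert's theorem, and those results require the coherent sheaf to be \emph{flat over the base}. Here the sheaf is a line bundle on $X'$, so its flatness over $X$ is the flatness of $p$ itself; but a blow-up along a center of codimension $c\geq 2$ is never flat, since its fiber dimension jumps from $0$ over $X\setminus Y$ to $c-1\geq 1$ over $Y$. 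This is not bookkeeping that can be fixed by ``running the machine downward from top degree'': without flatness the conclusions genuinely fail for blow-ups. For instance, for the blow-up $p$ of a point $y$ in a smooth surface, with exceptional curve $Y'$, the line bundle $\sM=\sO_{X'}(Y')$ satisfies $\sM|_{X'_y}\simeq\sO_{\bP^1}(-1)$, so $H^0(X'_y,\sM|_{X'_y})=0$, while $p_*\sM=\sO_X$ has a $1$-dimensional fiber at $y$; hence fiberwise computations do not compute direct images along $p$. The paper's proof is structured precisely to avoid this trap: (i) and (ii) are first proved for $n\gg 0$ using \cite[III, Theorem~8.8~(a), (c)]{Hartshorne} (relative Serre vanishing and generation, which carry no flatness hypothesis), and then extended to all $n\geq 0$ by descending induction on $n$ via the exact sequence $0\to\sN(n)\to\sN(n-1)\to\sN(n-1)|_{Y'}\to 0$; cohomology and base change is invoked only for sheaves on $Y'$ pushed forward along $p':Y'\to Y$, which \emph{is} flat (a projective bundle). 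Part (iii) is then obtained without ever computing the rank of $p_*\sN$: (ii) for $n=0$ produces, locally over $X$, a section of $\sN$ not vanishing identically on $X'_y$, hence (as $\sN|_{X'_y}\simeq\sO_{X'_y}$ and $X'_y$ is a projective space) nowhere vanishing near $X'_y$, so $\sN\simeq\sO_{X'}$ locally over $X$, and one concludes with the isomorphism $\sO_X\isoto p_*\sO_{X'}$ of \cite[VII, Lemme~3.5]{SGA6}.

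A secondary but real error is the sign of the twist. Since $\sN(n)=\sN\otimes\sO_{X'}(-nY')$ and $\sO_{X'}(-Y')|_{X'_y}\simeq\sO_{X'_y}(1)$, one has $\sN(n)|_{X'_y}\simeq\sO_{X'_y}(n)$, not $\sO_{X'_y}(-n)$. With your sign, the fiberwise vanishing you assert is false for $n\geq c$ (by your own parenthetical criterion, $H^{c-1}(\bP^{c-1},\sO(-n))\neq 0$ once $n\geq c$), and your global-generation range in (ii) (``globally generated for $n\leq 0$'') is incompatible with the range $n\geq 0$ of the lemma. The facts you need are true for the positive twists $\sO_{X'_y}(n)$, $n\geq 0$, but as written your fiberwise computations are incorrect --- and even after correcting the sign, the flatness gap above remains and requires an argument of the paper's type to close.
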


\begin{proof}
By \cite[III, Theorem 8.8 (c)]{Hartshorne},
assertion (i) holds for $n\gg 0$. To prove (i) for all $n\geq 0$ by descending induction on $n$, we consider for $j\geq 1$ the  exact sequence
$$R^jp_*\sN(n)\to R^jp_*\sN(n-1)\to R^jp_*(\sN(n-1)|_{Y'})$$
and note that $R^jp'_*(\sN(n-1)|_{Y'})=0$ for $n\geq 1$ by cohomology and base change \cite[III, Theorem 12.11]{Hartshorne}.

Assertion (ii) holds for all $n\gg 0$ by \cite[III, Theorem 8.8 (a)]{Hartshorne}.
To prove~(ii) for all $n\geq 0$ by descending induction on $n$, we consider the natural commutative diagram with exact rows
\begin{equation*}
\begin{aligned}
\xymatrix
@R=0.4cm
{
&p^*p_*\sN(n)\ar^{}[r]\ar[d]^{} &p^*p_*\sN(n-1)\ar^{}[r]\ar[d]^{} & p^*p_*(\sN(n-1)|_{Y'})\ar[d]^{}  \ar^{}[r]&0\\
0\ar[r]&\sN(n)\ar^{}[r]&\sN(n-1) \ar^{}[r]&\sN(n-1)|_{Y'}\ar^{}[r]&0,
}
\end{aligned}
\end{equation*}
in which the exactness of the upper row follows from the vanishing of $R^1p_*(\sN(n))$ proved in (i), and note that $p^*p_*(\sN(n-1)|_{Y'})\to\sN(n-1)|_{Y'}$ is surjective for $n\geq 1$ in view of Nakayama's lemma,
since its restriction to the fibers of~$p$ is surjective by cohomology and base change \cite[III, Theorem 12.11]{Hartshorne}.

To prove (iii), we work Zariski-locally around a point $y\in X$.  In view of (ii) for $n=0$, we can assume after shrinking $X$ the existence of a section $\sigma\in H^0(X',\sN)$ that does not vanish identically on $X'_y$. Since $\sN|_{X'_y}\simeq \sO_{X'_y}$ and $X'_y$ is a projective space, the section $\sigma$ vanishes nowhere on $X'_y$ and, after shrinking $X$ again, it induces an isomorphism $\sigma:\sO_{X'}\isoto\sN$. Assertion (iii) now follows from the fact that the natural morphism $\sO_{X}\to p_*\sO_{X'}$ is an isomorphism \cite[VII, Lemme 3.5]{SGA6}.
\end{proof}

\begin{cor}
\label{corblup}
Under the hypotheses of \S\ref{parpic}, if $X$ is moreover a proper variety over~$k$, the formula (\ref{formulaPicblup}) induces an isomorphism of functors
\begin{equation}
\label{decbl}
\Pic_{X/k} \times\mkern2.5mu \Z_{Y/k}\isoto\Pic_{X'/k}\rlap{.}
\end{equation}
\end{cor}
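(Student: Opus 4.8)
The plan is to deduce the corollary from Proposition \ref{propblup} by applying the latter not to $X$ alone but to every base change $X_T$, $T\in(\Sch/k)$, and then checking that the resulting bijections are natural in $T$. Since $X$ is a proper variety over $k$, its blow-up $X'$ is again a proper variety, and $Y$ is a closed subscheme of $X$, hence also a variety over $k$. Consequently all three functors in (\ref{decbl}) are defined, and on a test scheme $T$ they take the values $\Pic_{X/k}(T)=\Pic(X_T)$, $\Pic_{X'/k}(T)=\Pic(X'_T)$, and, by Proposition \ref{locconsrep}, $\Z_{Y/k}(T)=\Z(Y_T)$. The source and target of (\ref{decbl}), evaluated at $T$, are therefore exactly the groups $\Pic(X_T)\times\Z(Y_T)$ and $\Pic(X'_T)$ appearing in (\ref{formulaPicblup}), now for the scheme $X_T$.

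First I would check that the geometric hypotheses of \S\ref{parpic} are stable under the base change $(-)\times_k T$. As every scheme over a field is flat over $\Spec(k)$, the projection $X_T\to X$ is flat, and since $X$ is proper (hence qcqs) over $k$ and $T$ is qcqs, the scheme $X_T$ is qcqs. Blow-ups commute with flat base change, so $p_T\colon X'_T\to X_T$ is the blow-up of $X_T$ along $Y_T$, with exceptional divisor $Y'_T$. Moreover, the Koszul-complex notion of regular immersion used in this paper is preserved by flat base change: tensoring the Koszul resolution of $\sO_Y$ with the flat $\sO_X$-algebra $\sO_{X_T}$ yields the Koszul resolution of $\sO_{Y_T}$, so $i_T\colon Y_T\to X_T$ is again a regular closed immersion of pure codimension $c\geq 2$, and $p'_T\colon Y'_T\to Y_T$ remains a projective bundle of relative dimension $c-1$. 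Thus $(X_T,Y_T,X'_T,Y'_T)$ satisfies the hypotheses of \S\ref{parpic}, and Proposition \ref{propblup} applied to it shows that (\ref{formulaPicblup}) for $X_T$ is a bijection $\Pic(X_T)\times\Z(Y_T)\isoto\Pic(X'_T)$, i.e.\ a bijection $(\Pic_{X/k}\times\Z_{Y/k})(T)\isoto\Pic_{X'/k}(T)$.

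It then remains to verify naturality, namely that (\ref{formulaPicblup}) commutes with pullback along $X'_{T'}\to X'_T$ for every morphism $T'\to T$ in $(\Sch/k)$. Pullback of line bundles is visibly compatible with the term $p^*\sL$, and if $\psi\in\Z(Y_T)$ restricts to $\psi'\in\Z(Y_{T'})$ then each clopen piece $\psi^{-1}(n)\subseteq Y_T$ pulls back to $(\psi')^{-1}(n)\subseteq Y_{T'}$, so the Cartier divisor $\sum_{n}n\bigl[p^{-1}(\psi^{-1}(n))\bigr]$ pulls back to $\sum_{n}n\bigl[p^{-1}((\psi')^{-1}(n))\bigr]$; hence the two terms of (\ref{formulaPicblup}) are both natural. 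Being a bijection for every $T$, the resulting morphism of functors is the isomorphism (\ref{decbl}). The only genuinely non-formal point is the base-change stability of the blow-up geometry in the second paragraph—the identification of $X'_T$ with the blow-up of $X_T$, the persistence of the regular immersion of codimension $c$, and of the projective-bundle structure of the exceptional divisor—all of which rest on the flatness of $T\to\Spec(k)$ together with the Koszul definition of regular immersions; the remainder is the naturality bookkeeping above and a direct appeal to Proposition \ref{propblup}.
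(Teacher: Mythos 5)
Your proof is correct and follows essentially the same route as the paper: the paper's own proof consists precisely of applying Proposition \ref{propblup} to the base-changed immersions $i_T:Y_T\to X_T$, citing SGA6 (VII, Propositions 1.5 et 1.8 i)) for the facts you verify by hand — that regular immersions in the Koszul sense and the formation of the blow-up are preserved by the flat base change $X_T\to X$. Your additional naturality bookkeeping is the routine verification the paper leaves implicit.
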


\begin{proof}
Since the formation of the blow-up of a regular closed immersion commutes with flat base change (see \cite[VII, Propositions 1.5 et 1.8 i)]{SGA6}), we can apply Proposition~\ref{propblup} to the morphisms $i_T:Y_T\to X_T$ for $T\in (\Sch/k)$.
\end{proof}

\subsubsection{Picard schemes of curves}
\label{Piccurves}

If $C$ is a proper curve over $k$, then $\bPic_{C/k}$ is smooth over $k$ by \cite[8.4/2]{BLR}. Moreover, letting $D:=\widetilde{C_{k_{\p}}^{\red}}$ be the normalization of the reduction of $C_{k_{\p}}$, which is a smooth proper curve over $k_{\p}$, the 
pull-back morphism $\bPic^0_{C_{k_{\p}}/k_{\p}}\to \bPic^0_{D/k_{\p}}$ induces an isomorphism 
\begin{equation}
\label{Chevanorm}
A(\bPic^0_{C_{k_{\p}}/k_{\p}})\isoto\bPic^0_{D/k_{\p}},
\end{equation}
as \hbox{\cite[9.2/11]{BLR}} shows. The principal polarization of $\bPic^0_{D/k_{\p}}$ given by the theta divisor thus induces a canonical principal polarization on $\bPic_{C/k}$ in the sense of~\S\ref{pp}. If $C$ is irreducible, then so is $D$ \cite[Proposition~2.4.5]{EGA42}
and the principally polarized abelian variety $\bPic^0_{D/k_{\p}}$ over $k_{\p}$ is thus indecomposable if it is non-zero (see \cite[\S 2.1]{CGBW}).

In preparation for the proof of Proposition~\ref{NS} and, later, for the statement of Corollary~\ref{corsplit},
let us recall
that if~$V$ is a variety over~$k$ and~$V'$ denotes the normalization of~$V^{\red}$,
then~$V$ is \emph{geometrically unibranch} if and only if the natural morphism
$V' \to V$ is a universal homeomorphism
\cite[Proposition~2.4.5, (6.15.1), end of (6.15.3)]{EGA42}.
In particular, normal varieties are geometrically unibranch.
We also recall that the property of being geometrically unibranch is invariant under extension of scalars
\cite[Proposition~6.15.7]{EGA42}.

\begin{prop}
\label{NS}
Let $C$ be a proper curve over $k$ and let $C':=\widetilde{C^{\red}}$ be the normalization of its reduction. Then there is a short exact sequence
\begin{align}
\label{se:pic0picz}
0\to \bPic^0_{C/k}\to\bPic_{C/k}\to \Z_{C'/k}\to 0\rlap{.}
\end{align}
\end{prop}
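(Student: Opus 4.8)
The plan is to identify the étale component group of $\bPic_{C/k}$ with $\Z_{C'/k}$. Since $\bPic_{C/k}$ is smooth over $k$ (see \S\ref{Piccurves}), its identity component $\bPic^0_{C/k}$ is open and closed, the quotient $Q:=\bPic_{C/k}/\bPic^0_{C/k}$ is an étale $k$-group scheme, and the sequence $0\to\bPic^0_{C/k}\to\bPic_{C/k}\to Q\to 0$ is exact for the fppf topology. It therefore suffices to produce a canonical isomorphism $Q\isoto\Z_{C'/k}$, after which (\ref{se:pic0picz}) results from the tautological sequence by transport along it. Note that $\Z_{C'/k}=\Res_{\pi_0(C'/k)/k}\Z$ (Proposition \ref{locconsrep}) is étale over $k$, because $\pi_0(C'/k)$ is finite étale ($C'$ being proper) and Weil restriction along a finite étale morphism preserves étaleness.

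First I would construct a multidegree morphism $\phi:\bPic_{C/k}\to\Z_{C'/k}$. Let $\nu:C'\to C$ be the composite of the normalization $C'\to C^{\red}$ with $C^{\red}\hookrightarrow C$, and let $q_{C'}:C'\to\pi_0(C'/k)$ be the canonical morphism, whose fibres are geometrically connected. For $T\in(\Sch/k)$ and a line bundle $\sL$ on $C_T$, I pull back to $\nu_T^*\sL$ on $C'_T$ and take its fibrewise degree along $q_{C',T}:C'_T\to\pi_0(C'/k)_T$; as the Euler characteristic is locally constant in this flat proper family, the result is a locally constant function in $\Z(\pi_0(C'/k)_T)=\Z_{C'/k}(T)$. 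Care is needed here so that on $\ok$-points this fibrewise degree recovers the reduced multidegree rather than an Euler characteristic weighted by inseparable multiplicities; this normalization, available because $C'$ is regular, is what will make $\phi$ surjective. Since $\Z_{C'/k}$ is an fppf sheaf, the prescription factors through the sheafification $\bPic_{C/k}$, and since $\bPic^0_{C/k}$ is connected whereas $\Z_{C'/k}$ is étale, $\phi$ annihilates $\bPic^0_{C/k}$ and descends to $\bar\phi:Q\to\Z_{C'/k}$.

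To prove $\bar\phi$ is an isomorphism I would use that a morphism of étale $k$-group schemes is an isomorphism as soon as it induces an isomorphism of $\Gamma_k$-modules on $\ok$-points. On the source, $Q(\ok)$ is the Néron--Severi group $\NS^1(C_{\ok})$ of algebraic equivalence classes of divisors on $C_{\ok}$, which is classically free abelian on the irreducible components of $C_{\ok}$, the isomorphism being the reduced multidegree (pull back to $\widetilde{C_{\ok}^{\red}}$ and take the degree on each component). On the target, $\Z_{C'/k}(\ok)=\Z(\pi_0(C'_{\ok}))$ is free abelian on the connected components of $C'_{\ok}$. Thus $\bar\phi(\ok)$ carries one basis to the other, $\Gamma_k$-equivariantly, provided I exhibit a natural bijection between $\pi_0(C'_{\ok})$ and the set of irreducible components of $C_{\ok}$.

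The hard part is exactly this bijection, and it is where the imperfection of $k$ intervenes: $C'_{\ok}$ may be non-reduced and its components need not be geometrically reduced, so $\pi_0(C'_{\ok})$ is not literally the set of components of $\widetilde{C_{\ok}^{\red}}$. I would resolve this using the geometric unibranchness recalled before the statement. As $C'$ is normal it is geometrically unibranch, and this property is preserved by the extension $\ok/k$, so $C'_{\ok}$ is geometrically unibranch; hence the normalization of $(C'_{\ok})^{\red}$ — which coincides with $\widetilde{C_{\ok}^{\red}}$ since $\nu$ is birational on reductions — is a universal homeomorphism onto $C'_{\ok}$, and in particular induces a bijection $\pi_0(\widetilde{C_{\ok}^{\red}})\isoto\pi_0(C'_{\ok})$. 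Combined with the identification of $\pi_0(\widetilde{C_{\ok}^{\red}})$ with the set of irreducible components of $C_{\ok}$, this yields the required bijection, and tracing degrees through it shows it is the one realized by $\bar\phi$. Functoriality of all the constructions in $k$ makes the bijection $\Gamma_k$-equivariant, so $\bar\phi$ is an isomorphism and (\ref{se:pic0picz}) follows.
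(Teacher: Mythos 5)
Your proof is correct, but it follows a genuinely different, more constructive route than the paper's. The paper never constructs a morphism $\bPic_{C/k}\to\Z_{C'/k}$ over $k$ at all: it notes that both $\Z_{C'/k}$ and the component group $\bPic_{C/k}/\bPic^0_{C/k}$ are \'etale $k$-group schemes, hence are isomorphic as soon as their base changes to $k_\p$ are (invariance of \'etale schemes under the purely inseparable extension $k_\p/k$); over $k_\p$ it replaces everything by the smooth curve $D=\widetilde{C_{k_\p}^{\red}}$ --- using, exactly as you do, that the normal curve $C'$ is geometrically unibranch, so that $D\to C'_{k_\p}$ is a universal homeomorphism --- and then invokes \cite[9.2/11]{BLR} and simply matches the two $\Gamma_k$-modules of $\ok$-points, both equal to $\Z(D_{\ok})$. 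Thus the paper's isomorphism is abstract, whereas yours is a canonical multidegree morphism defined over $k$ itself; that is a real merit of your approach, but it is also where it incurs extra technical debt, namely in the ``normalization'' you only gesture at. To carry it out, you should divide the fibrewise Euler-characteristic degree by the locally constant function on $\pi_0(C'/k)$ recording the geometric multiplicity of the fibres, and you must prove the divisibility that makes this integral: every geometric fibre of $C'\to\pi_0(C'/k)$ is \emph{irreducible} (this follows from the same unibranchness you invoke later, since the normalization of its reduction is connected, normal, and universally homeomorphic to it), so the Euler degree of any line bundle on such a fibre equals the multiplicity times the degree of its pullback to the normalized reduction, hence is divisible by the multiplicity. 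With that lemma in place, your $\phi$ restricts on $\ok$-points to the reduced multidegree, the identification of $Q(\ok)=\NS^1(C_{\ok})$ with the free group on irreducible components of $C_{\ok}$ is the same classical input \cite[9.2/11]{BLR} the paper uses over $k_\p$, and your conclusion (a morphism of \'etale $k$-group schemes bijective on $\ok$-points is an isomorphism) is sound. In short: the paper's route is shorter and sidesteps the divisibility issue entirely, at the price of a non-canonical isomorphism; yours yields a canonical exact sequence, at the price of one more lemma that your write-up asserts rather than proves.
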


\begin{proof}
Both $\Z_{C'/k}$ and $\bPic_{C/k}/\bPic^0_{C/k}$ are \'etale group schemes over $k$. They are thus isomorphic if and only if so are their base changes $G:=\Z_{C'_{k_{\p}}/k_{\p}}$ and $H:=\bPic_{C_{k_{\p}}/k_{\p}}/\bPic^0_{C_{k_{\p}}/k_{\p}}$ to $k_{\p}$.
Letting $D:=\widetilde{C_{k_{\p}}^{\red}}$ be the normalization of the reduction of~$C_{k_{\p}}$, which is a smooth proper curve over $k_{\p}$, one has $G=\Z_{D/k_{\p}}$ by Remark~\ref{isoloc}~(ii)
(indeed
the map $D\to C'_{k_{\p}}$ is a universal homeomorphism
since $C'_{k_{\p}}$ is geometrically unibranch),
and $H=\bPic_{D/k_{\p}}/\bPic^0_{D/k_{\p}}$ by \cite[9.2/11]{BLR}.
That $G\simeq H$ now follows from the fact that $G(\ok)$ and $H(\ok)$ are both isomorphic, as $\Gamma_k$-modules, to $\Z(D_{\ok})$.
\end{proof}

\subsection{When do Jacobians split?}

We now provide, in Theorem~\ref{thmsplit}, a criterion for the Jacobian of a proper reduced curve
to be the product of an abelian variety and of an affine group scheme.
We will use Theorem~\ref{thmsplit}
in Lemma~\ref{inducedmapzero}, which plays a key role in the proof of Theorem~\ref{main2}.

\subsubsection{Statement}

Let us introduce some notation.
Whenever~$D$ is a smooth proper integral curve over~$k$,
the \emph{genus} of~$D$ is the dimension of the abelian variety $\bPic^0_{D/k}$.
We note that~$D$ has genus~$0$ if and only if the irreducible components of~$D_{\ok}$ (which are all isomorphic)
are rational.
Given a proper reduced curve~$C$ over~$k$, we denote by $C_{\subrat}$ (resp.~$C_{\subirrat}$)
the union of those irreducible
components~$B$ of~$C$ such that the normalization of $(B_{k_{\p}})^{\red}$ has genus~$0$
(resp.\ has genus~$\geq 1$).  We  view~$C_{\subrat}$ and~$C_{\subirrat}$ as reduced closed subschemes
of~$C$.
We define a \emph{strict cycle of components of~$C_{\ok}$}
to be a sequence
of pairwise distinct irreducible components
 $B_1,\dots,B_n$ of~$C_{\ok}$ for some integer $n\geq 2$, such that there exist
pairwise distinct points $x_1,\dots,x_n$ of~$C_{\ok}$ with $x_i \in B_i \cap B_{i+1}$
for all $i\in \{1,\dots,n-1\}$ and $x_n \in B_n \cap B_1$.
Finally, we recall that a reduced curve over~$\ok$ is \emph{seminormal} if it is \'etale locally isomorphic
to the union of the coordinate axes in an affine space over~$\ok$ \cite[Chapter~I, 7.2.2]{Kollarbook}.

\begin{thm}
\label{thmsplit}
Let $C$ be a proper reduced curve over $k$. The group
scheme $\bPic^0_{C/k}$ is the product of an abelian variety and of an affine group scheme over~$k$
if and only if the following conditions all hold:
\begin{enumerate}[(i)]
\item the scheme $(C_\subirrat)_{\ok}$ is reduced and seminormal and its irreducible components are smooth;
\item any strict cycle of components of~$C_{\ok}$ is contained in~$(C_{\subrat})_{\ok}$;
\item for every connected component~$B$ of~$C_{\subrat}$,
either the scheme $B \cap C_{\subirrat}$ is \'etale over~$k$,
or it is of the form $\Spec(k')$ for some field~$k'$
and the restriction map
$H^0(B,\sO_B) \to k'$ is bijective.
\end{enumerate}
In this case, the natural map $\bPic_{C/k} \to \bPic_{C_{\subrat}/k} \times \bPic_{C_{\subirrat}/k}$
is an isomorphism
and
 $\bPic^0_{C_{\subrat}/k}$ is affine while
$\bPic^0_{C_{\subirrat}/k}$
is an abelian variety.
\end{thm}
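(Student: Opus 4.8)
The plan is to study $\bPic_{C/k}$ via the decomposition $C=C_{\subrat}\cup C_{\subirrat}$ together with a Mayer--Vietoris sequence, aiming to identify the abelian variety factor with $\bPic^0_{C_{\subirrat}/k}$ and the affine factor with $\bPic^0_{C_{\subrat}/k}$. Two preliminary observations organise the argument. First, every component of $C_{\subrat}$ has genus $0$, so by (\ref{Chevanorm}) the abelian part $A(\bPic^0_{(C_{\subrat})_{k_{\p}}/k_{\p}})$ vanishes and $\bPic^0_{C_{\subrat}/k}$ is automatically affine. Second, in any decomposition $\bPic^0_{C/k}\simeq A\times L$ with $A$ an abelian variety and $L$ affine, uniqueness of the Chevalley decomposition (\ref{Chevalley}) and its compatibility with base change force $A_{k_{\p}}\simeq A(\bPic^0_{C_{k_{\p}}/k_{\p}})$, which by (\ref{Chevanorm}) is the Jacobian of the normalization of $(C_{\subirrat})_{k_{\p}}^{\red}$. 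Hence the abelian factor is pinned down in advance, and the theorem amounts to showing that this Jacobian is realised by the honest abelian variety $\bPic^0_{C_{\subirrat}/k}$ and that $\bPic_{C/k}$ truly splits off as the stated product.

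For the sufficiency direction I would first show that conditions (i) and (ii) make $\bPic^0_{C_{\subirrat}/k}$ an abelian variety. This can be checked after base change to $\ok$, where the generalized Jacobian of the reduced curve $(C_{\subirrat})_{\ok}$ is an extension of the Jacobian of its normalization by an affine group $T\times U$: the unipotent part $U$ is governed by the failure of seminormality and by non-smoothness or non-geometric-reducedness of the components, while the torus $T$ has character group the first homology of the dual graph. Condition (i) kills $U$, and the part of condition (ii) asserting that no strict cycle lies in $(C_{\subirrat})_{\ok}$ makes that dual graph a forest and kills $T$; thus $\bPic^0_{(C_{\subirrat})_{\ok}}$, hence $\bPic^0_{C_{\subirrat}/k}$, is an abelian variety.

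Next I would exploit the scheme-theoretic cover $C=C_{\subrat}\cup C_{\subirrat}$ with intersection $Z:=C_{\subrat}\cap C_{\subirrat}$. Since $C$ is reduced with no common component, the conductor sequence $0\to\mathcal{O}_C\to\mathcal{O}_{C_{\subrat}}\oplus\mathcal{O}_{C_{\subirrat}}\to\mathcal{O}_Z\to 0$ is exact and yields a Mayer--Vietoris sequence of fppf sheaves
\begin{equation*}
\Res_{C_{\subrat}/k}\Gm\times\Res_{C_{\subirrat}/k}\Gm\xrightarrow{\ r\ }\Res_{Z/k}\Gm\xrightarrow{\ \partial\ }\bPic_{C/k}\xrightarrow{\ f\ }\bPic_{C_{\subrat}/k}\times\bPic_{C_{\subirrat}/k}\to\bPic_{Z/k}.
\end{equation*}
Condition (iii) forces $Z$ to be reduced, so $\bPic_{Z/k}=0$ and $f$ is surjective; and as $\ker f=\Coker r$, the map $f$ is an isomorphism once $r$ is surjective. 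The surjectivity of $r$ is where (ii) and (iii) act together. Global units on the proper curves $B$ (a connected component of $C_{\subrat}$) and on $C_{\subirrat}$ are constants, so the image of $r$ over $B$ is built from the constant field $H^0(B,\mathcal{O}_B)$ and from the constant fields of the irrational components meeting $B$. Condition (ii), in forbidding mixed cycles, guarantees that distinct points of $B\cap C_{\subirrat}$ lie on distinct connected components of $C_{\subirrat}$, so those irrational contributions are independent; condition (iii) then ensures each point is actually reached --- in the étale case through separability and the combined constants, and in the inseparable case $B\cap C_{\subirrat}=\Spec(k')$ through the hypothesis $H^0(B,\mathcal{O}_B)\simeq k'$, which makes $\Res_{H^0(B,\mathcal{O}_B)/k}\Gm$ surject onto $\Res_{k'/k}\Gm$. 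Taking identity components in the isomorphism $f$ then exhibits $\bPic^0_{C/k}$ as the product of the affine group $\bPic^0_{C_{\subrat}/k}$ and the abelian variety $\bPic^0_{C_{\subirrat}/k}$.

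For necessity I would reverse these implications: given a splitting $\bPic^0_{C/k}\simeq A\times L$, the first paragraph identifies $A_{k_{\p}}$ with the Jacobian of the normalization of the irrational components, so $\bPic^0_{C_{\subirrat}/k}$ can have no affine part; a cusp, a non-smooth or non-geometrically-reduced component, or a cycle among the irrational components would contribute a unipotent or toric summand that, by uniqueness of Chevalley, cannot sit inside an abelian-variety factor, which yields (i) and the irrational-cycle case of (ii). A mixed cycle, a non-reduced $Z$, or an inseparability mismatch would break the injectivity or surjectivity of $f$ and so obstruct the product decomposition, forcing the remaining parts of (ii) and (iii). The step I expect to be the main obstacle is precisely the surjectivity analysis of $r$ over an imperfect field: proving that it is equivalent to condition (iii), justifying that the étale case and the single-inseparable-point case are the only possibilities, and correctly tracking the constant fields $H^0(B,\mathcal{O}_B)$ and the passage through $k_{\p}$. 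The abelian-variety and dual-graph computations, being geometric, can be carried out after base change to $\ok$; it is the entanglement of inseparability with the unit-sheaf structure that carries the real difficulty.
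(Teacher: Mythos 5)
Your sufficiency direction is essentially the paper's own argument: your one\nobreakdash-shot Mayer--Vietoris sequence for the cover $C=C_{\subrat}\cup C_{\subirrat}$ is the same device as Lemma~\ref{lem:picaddcomponent} (which the paper applies inductively, one irreducible component of $C_{\subirrat}$ at a time, precisely so that the unit-surjectivity and reducedness verifications remain manageable), and identifying $\bPic^0_{C_{\subirrat}/k}$ as an abelian variety over $\ok$ via the structure theory of generalized Jacobians is a legitimate variant of Step~1. The genuine gap is in the necessity direction.

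There, your central assertion --- that a cusp, a non-smooth or non-geometrically reduced component, or a cycle among the irrational components ``would contribute a unipotent or toric summand that, by uniqueness of Chevalley, cannot sit inside an abelian-variety factor'' --- proves nothing, because it conflates the existence of an affine part with the non-splitting of the Chevalley extension. The hypothesis is exactly that $\bPic^0_{C/k}\simeq A\times L$ with $L$ affine: any unipotent or toric subgroup is perfectly welcome to sit inside the factor~$L$. Uniqueness of the Chevalley decomposition only says that such a product decomposition is equivalent to the splitting (over $k$, after descent from $k_{\p}$) of the extension
\begin{equation*}
0\to L(\bPic^0_{C_{k_\p}/k_\p})\to \bPic^0_{C_{k_\p}/k_\p}\to A(\bPic^0_{C_{k_\p}/k_\p})\to 0\rlap{,}
\end{equation*}
so necessity requires showing that each failure of (i)--(iii) makes this extension \emph{non-split}. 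For instance, for an irrational component with a node one must argue that the class of the resulting extension of $J=\bPic^0_{\widetilde{C}/\ok}$ by $\Gm$, which under $\mathrm{Ext}^1(J,\Gm)\simeq \hat{J}(\ok)$ is $[\sO_{\widetilde{C}}(p-q)]$ for the two preimages $p,q$ of the node, is non-zero because the normalization has genus $\geq 1$ --- a geometric input your argument never invokes, and which has analogues (requiring separate proofs) for cusps, non-seminormal points, and non-reduced geometric fibers.

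The same problem affects your plan to ``reverse'' the Mayer--Vietoris argument: the splitting of $\bPic^0_{C/k}$ does not formally imply that $f$ is an isomorphism, so failures of (ii) or (iii) cannot be read off from a failure of $f$. And the genuinely hard imperfect-field content --- that the irreducible components of $C_{\subirrat}$ are geometrically reduced, and the \'etale-or-$\Spec(k')$ dichotomy in (iii) --- is exactly what you flag as ``the main obstacle'' and leave unresolved. The paper's necessity proof runs along entirely different lines: geometric reducedness is obtained in Step~2 from Tanaka's Lemma~\ref{lem:tanaka} together with the Weil-restriction Lemma~\ref{lem:k0=k} and Lemmas~\ref{lem:quotientchevalley} and~\ref{Picnorm}; then Step~3 uses the splitting hypothesis to produce a section of $\nu^*:\bPic_{C/k}\to\bPic_{D/k}$ and, via Lemma~\ref{lem:D to Pic factors through C}, morphisms $\rho_i:C\to\bPic^1_{D_i/k}$ that are closed immersions on $C_i$ and have finite image on the remaining components; conditions (i)--(iii) are extracted in Steps~4 and~5 from the geometry of these $\rho_i$ (tangent-space and field-theoretic arguments, Lemma~\ref{lem:droledelemme}). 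Nothing in your proposal plays the role of the $\rho_i$, and without a substitute the necessity direction does not go through.
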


Condition~(iii) holds if $(C_{\subrat} \cap C_{\subirrat})_{\ok}$ is reduced,
and it implies, in turn, that
$C_{\subrat} \cap C_{\subirrat}$ is reduced.  The reverse implications are true if~$k$ is perfect.

When~$C$ is integral and geometrically locally irreducible,
for instance when~$C$ is integral and geometrically unibranch,
Theorem~\ref{thmsplit}
 takes on a particularly simple form,
which we now state.
We recall that normal varieties are geometrically unibranch
(see \textsection\ref{Piccurves} for more reminders on this property).
In the sequel, we shall only apply Theorem~\ref{thmsplit}
to normal curves,
through Corollary~\ref{corsplit}.

\begin{cor}
\label{corsplit}
Let $C$ be a proper integral curve over~$k$.
Assume that the connected components of $C_{\ok}$ are irreducible;
such is the case, for instance, if~$C$ is geometrically unibranch.
Then the group
scheme $\bPic^0_{C/k}$ is the product of an abelian variety and of an affine group scheme over~$k$
if and only if at least one of the following two conditions holds:
\begin{enumerate}[(i)]
\item $C$ is smooth over $k$
(in which case $\bPic^0_{C/k}$ is an abelian variety);
\item the normalization $D$ of $(C_{k_{\p}})^{\red}$ has genus $0$
(in which case  $\bPic^0_{C/k}$ is affine).
\end{enumerate}
\end{cor}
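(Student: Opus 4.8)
The plan is to derive the corollary directly from Theorem~\ref{thmsplit} by showing that, for an integral curve all of whose geometric connected components are irreducible, conditions~(ii) and~(iii) of that theorem become vacuous while condition~(i) collapses to smoothness. First I would dispose of the sufficiency of geometric unibranchness: if $C$ is geometrically unibranch then so is $C_{\ok}$ (this property is invariant under extension of scalars, \S\ref{Piccurves}), and the natural map from the normalization of $C_{\ok}$ to $C_{\ok}$ is then a universal homeomorphism; since the connected components of a normal scheme are irreducible and a universal homeomorphism transports this property, the connected components of $C_{\ok}$ are irreducible. It therefore suffices to argue under the displayed hypothesis. The first consequence I would extract from that hypothesis is that each connected component of $C_{\ok}$ is a single irreducible component, so that distinct irreducible components of $C_{\ok}$ are disjoint. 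In particular $C_{\ok}$ admits no strict cycle of components, whence condition~(ii) of Theorem~\ref{thmsplit} holds automatically.

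Next I would record the dichotomy furnished by the integrality of $C$. As $k_{\p}/k$ is purely inseparable, $C_{k_{\p}}$ is irreducible and $(C_{k_{\p}})^{\red}$ is integral, so $D=\widetilde{(C_{k_{\p}})^{\red}}$ is a smooth proper integral curve over $k_{\p}$ of well-defined genus. Since $C$ is its own unique irreducible component, either $C=C_{\subrat}$ and $C_{\subirrat}=\varnothing$ (when $D$ has genus~$0$), or $C=C_{\subirrat}$ and $C_{\subrat}=\varnothing$ (when $D$ has genus~$\geq 1$). In the genus-$0$ case, condition~(i) holds because $(C_{\subirrat})_{\ok}=\varnothing$, condition~(iii) holds because the unique connected component of $C_{\subrat}$ meets $C_{\subirrat}=\varnothing$ in the empty scheme, which is \'etale over $k$, and condition~(ii) holds by the previous paragraph; hence $\bPic^0_{C/k}$ splits and, by the last assertion of Theorem~\ref{thmsplit}, equals $\bPic^0_{C_{\subrat}/k}$, which is affine. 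This yields alternative~(ii) of the corollary.

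In the genus-$\geq 1$ case I would observe that condition~(iii) is vacuous (since $C_{\subrat}=\varnothing$) and condition~(ii) again holds, so $\bPic^0_{C/k}$ splits if and only if condition~(i) holds for $(C_{\subirrat})_{\ok}=C_{\ok}$. Here disjointness of the irreducible components is the crux: a disjoint union of smooth curves is smooth, hence reduced and seminormal, and conversely, so condition~(i) is equivalent to the smoothness of $C_{\ok}$ over $\ok$, that is, by faithfully flat descent, to the smoothness of $C$ over $k$. In this case Theorem~\ref{thmsplit} identifies $\bPic^0_{C/k}$ with $\bPic^0_{C_{\subirrat}/k}$, an abelian variety, giving alternative~(i). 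Combining the two cases, $\bPic^0_{C/k}$ is the product of an abelian variety and of an affine group scheme precisely when $C$ is smooth over $k$ or $D$ has genus~$0$, as asserted.

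I expect the only genuinely delicate point to be the reduction of Theorem~\ref{thmsplit}~(i) to smoothness: one must verify that, once the irreducible components of $C_{\ok}$ are known to be pairwise disjoint, the three separate requirements that $(C_{\subirrat})_{\ok}$ be reduced, be seminormal, and have smooth irreducible components really amount to the single condition that $C_{\ok}$ be smooth over $\ok$. Everything else is a bookkeeping matter of checking which of the conditions in Theorem~\ref{thmsplit} survive under the corollary's hypothesis.
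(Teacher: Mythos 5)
Your proof is correct and follows essentially the same route as the paper's: both deduce the corollary from Theorem~\ref{thmsplit} by observing that integrality forces $C_{\subrat}=\varnothing$ or $C_{\subirrat}=\varnothing$, that irreducibility of the connected components of $C_{\ok}$ rules out strict cycles, and that condition~(i) of the theorem then reduces (seminormality being automatic for disjoint smooth components) to smoothness of $C$ over~$k$. You merely spell out details the paper leaves implicit, such as the unibranch remark and the use of the theorem's final assertion for the parenthetical claims.
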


\begin{proof}
Our assumptions imply that $C_{\subrat}=\varnothing$ or $C_{\subirrat}=\varnothing$,
that there is no strict cycle of components of~$C_{\ok}$
and that~$C$ is smooth over~$k$ if and only if~$C_{\ok}$ is reduced with smooth irreducible components.
Thus, the conditions of Theorem~\ref{thmsplit} are all met
if and only if at least one of~(i) and~(ii) holds.
\end{proof}

\begin{rem}
Corollary~\ref{corsplit} applies to integral curves that may not be geometrically reduced. It would however fail in general for irreducible but non-reduced curves, as the following example shows. Let $E$ be an elliptic curve over $\ok$, and let $\sL$ be an ample line bundle on $E$. Define $C:=\Proj_{E}(\sO_E\oplus\sL)$, where sections of~$\sL$ square to~$0$. The natural closed immersion $i:E=C^{\red}\to C$ then induces an isomorphism $i^*:\bPic^0_{C/k}\isoto \bPic^0_{E/k}\simeq E$. Indeed, in view of (\ref{Chevanorm}), it suffices to show that the kernel of $(i^*)_{k_{\p}}$ has trivial tangent space at the identity, which follows from the fact that the pull-back $i^*:H^1(C,\sO_C)\isoto H^1(E,\sO_E)$ is an isomorphism.
We note that~$C$ is geometrically unibranch since~$C^{\red}$ is normal.
\end{rem}

\subsubsection{A few general lemmas}

We establish a series of lemmas on which the proof of Theorem~\ref{thmsplit} will rely.
The first one is due to Tanaka, see \cite[Lemma~3.3]{Tanaka}.

\begin{lem}
\label{lem:tanaka}
Let~$F$ be a field extension of~$k$.
If~$F/k$ is not separable,
there exist finite purely inseparable field extensions $k \subseteq k' \subseteq k''$
and a $k'$\nobreakdash-linear embedding $k'' \hookrightarrow F \otimes_k k'$
such that $F \otimes_k k'$ is a field and $k''\neq k'$.
\end{lem}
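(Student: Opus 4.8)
The plan is to work entirely inside $\ok$ and to pinpoint, by a minimality argument, a finite purely inseparable extension of $k$ over which $F$ stays a field but just barely fails to absorb a further $p$-th root.

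First, since every extension of a characteristic-zero field is separable, the hypothesis forces $\mathrm{char}(k)=p>0$. By the standard reducedness criterion for separability, $F\otimes_k k^{1/p}$ is not reduced; a nonzero nilpotent there involves only finitely many elements of $k^{1/p}$, so there is a finite purely inseparable extension $K/k$ (inside $\ok$) for which $F\otimes_k K$ fails to be reduced, and in particular fails to be a field. I would then choose such a $K$ with $[K:k]$ \emph{minimal}. As $F\otimes_k k=F$ is a field we have $K\neq k$, and by minimality $F\otimes_k L$ is a field for every intermediate $k\subseteq L\subsetneq K$ (here I use that $-\otimes_k F$ is exact, so that $F\otimes_k L\hookrightarrow F\otimes_k K$).

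Next I would invoke the elementary structural fact that a nontrivial finite purely inseparable extension admits an intermediate field of index $p$: the subfield $M:=kK^p$ satisfies $K^p\subseteq M$, and $M\neq K$ (otherwise $K=kK^{p^N}=k$ for $N\gg 0$), so the exponent-one extension $K/M$ contains a field $k'$ with $M\subseteq k'\subsetneq K$ and $[K:k']=p$. Write $K=k'(\theta)$ with $\theta\notin k'$ and $\pi:=\theta^p\in k'$, and set $E:=F\otimes_k k'$, which is a field by the previous paragraph. Then $F\otimes_k K=E\otimes_{k'}K=E[t]/(t^p-\pi)$ is \emph{not} a field; since over a field of characteristic $p$ the polynomial $t^p-\pi$ is reducible exactly when $\pi$ is a $p$-th power, there is $\eta\in E$ with $\eta^p=\pi$. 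Finally $\eta\notin k'$: otherwise $\pi=\eta^p\in(k')^p$ would force $\theta\in k'$. Thus $k'':=k'(\eta)\subseteq E=F\otimes_k k'$ is a purely inseparable extension of $k'$ of degree $p$, hence finite purely inseparable over $k$ with $k''\neq k'$, and it embeds $k'$-linearly into $F\otimes_k k'$ by construction. This yields the tower $k\subseteq k'\subseteq k''$ with all the required properties.

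The crux — and the one point that needs genuine care rather than bookkeeping — is the simultaneous demand that $F\otimes_k k'$ be a \emph{field} while inseparability persists as an honest purely inseparable subextension $k''$. A too-large base change (e.g.\ $k'=K$ or $k'=k^{1/p}$) destroys the field property by introducing nilpotents, whereas $k'=k$ may see no new $p$-th roots at all when $F$ contains no purely inseparable element over $k$. Choosing $K$ of minimal degree and descending exactly one index-$p$ step is what threads this needle: minimality keeps $E$ a field, and the failure of $E\otimes_{k'}K$ to be a field is precisely what produces the new $p$-th root $\eta$ realizing $k''$. I expect the remaining verifications (the reducedness criterion for separability, the existence of the index-$p$ subfield, and the irreducibility of $t^p-\pi$) to be routine.
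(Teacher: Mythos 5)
Your proof is correct and is essentially the paper's own argument: choose a minimal finite purely inseparable extension $K/k$ over which $F$ becomes non-reduced, pass to a subfield $k'\subsetneq K$ with $[K:k']=p$, use minimality to see that $F\otimes_k k'$ is a field, and extract a $p$-th root of $\pi$ from the non-reducedness of $F\otimes_k K=(F\otimes_k k')[t]/(t^p-\pi)$, the only cosmetic difference being that you realize $k''$ as the subfield $k'(\eta)\subseteq F\otimes_k k'$ rather than observing that $K$ itself embeds. One small repair: your parenthetical justification that $F\otimes_k L$ is a field for $L\subsetneq K$ should invoke that this ring is finite, reduced (by minimality) and \emph{connected} (because $L/k$ is purely inseparable), rather than the injection $F\otimes_k L\hookrightarrow F\otimes_k K$, which proves nothing since the target is not reduced.
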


\begin{proof}
Let~$k''$ be a minimal finite purely inseparable field extension of~$k$ such that $F \otimes_k k''$ fails to be reduced.
Let~$p$ denote the characteristic of~$k$.
As $k''/k$ is finite, purely inseparable and non-trivial,
there exists a subextension $k'/k$ such that $[k'':k']=p$.
Fix $x \in k'$ such that $k''=k'(x^{1/p})$.
By the minimality of~$k''$, the finite connected non-zero $F$\nobreakdash-algebra $F'=F \otimes_k k'$ is reduced,
hence is a field.
On the other hand, as $F \otimes_k k'' = F'[t]/(t^p-x)$ is not reduced,
we see that $x$ must be a $p$\nobreakdash-th power in~$F'$, so that $k''$ embeds $k'$\nobreakdash-linearly
into~$F'$.
\end{proof}

\begin{lem}
\label{lem:quotientchevalley}
Let $f:G \to G'$ be a surjective morphism between connected smooth group schemes over~$k$
such that the kernel of $A(f_{k_{\p}}):A(G_{k_{\p}}) \to A(G'_{k_{\p}})$ is smooth and connected.
If~$G$ is isomorphic to the product of an abelian variety and of an affine group scheme over~$k$,
then so is~$G'$.
\end{lem}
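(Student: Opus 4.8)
The plan is to exhibit the decomposition of $G'$ as the image under $f$ of the given decomposition of $G$, and to check that it really is a product by descending to the perfect closure $k_\p$, where Chevalley's theorem (\ref{Chevalley}) is available.

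First I would write $G\isoto A\times H$ with $A$ an abelian variety and $H$ a smooth connected affine $k$-group scheme (connected because $G$ is), and set $A':=f(A)$ and $H':=f(H)$, formed as closed subgroup schemes of $G'$, i.e.\ $A'=A/\Ker(f|_A)$ and $H'=H/\Ker(f|_H)$. Then $A'$ is an abelian variety and $H'$ is affine, both over $k$. Moreover $A$ is central in $G$, so $A'$ is central (hence normal) in $G'$, while $H'$ is normal; and $A'\cdot H'=f(G)=G'$. As $A'$ and $H'$ commute, the multiplication $m\colon A'\times H'\to G'$ is a homomorphism, and the whole point is to prove it is an isomorphism. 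Since being an isomorphism descends along the faithfully flat morphism $\Spec k_\p\to\Spec k$, and the formation of $A'$ and $H'$ commutes with this base change, it suffices to treat $m_{k_\p}$.

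Over $k_\p$, Chevalley's sequence for $G_{k_\p}=A_{k_\p}\times H_{k_\p}$ gives $L(G_{k_\p})=H_{k_\p}$ and $A(G_{k_\p})=A_{k_\p}$, and the quotient $q\colon G_{k_\p}\to A(G_{k_\p})$ restricts to the identity of $A_{k_\p}$. Writing $q'\colon G'_{k_\p}\to A(G'_{k_\p})$ for the analogous quotient, the functoriality of (\ref{Chevalley}) yields $A(f_{k_\p})=q'|_{A'_{k_\p}}\circ f|_{A_{k_\p}}$; in particular $q'|_{A'_{k_\p}}\colon A'_{k_\p}\to A(G'_{k_\p})$ is surjective, because $A(f_{k_\p})$ is. As $f|_{A_{k_\p}}$ is faithfully flat, $K:=\Ker(A(f_{k_\p}))$ is the preimage $(f|_{A_{k_\p}})^{-1}(\Ker(q'|_{A'_{k_\p}}))$, whence $f(K)=\Ker(q'|_{A'_{k_\p}})$.

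The crux, and the step I expect to be the main obstacle, is to show this kernel is trivial, and it is exactly here that the hypothesis is used. On the one hand $\Ker(q'|_{A'_{k_\p}})=A'_{k_\p}\cap L(G'_{k_\p})$ is finite, being the intersection of the proper group $A'_{k_\p}$ with the affine group $L(G'_{k_\p})$. On the other hand $K$, being smooth and connected by hypothesis, is an abelian variety (a smooth connected closed subgroup of the abelian variety $A_{k_\p}$); hence its faithfully flat quotient $f(K)$ is smooth and connected, and being also finite it is trivial. Therefore $q'|_{A'_{k_\p}}$ is a surjective homomorphism of abelian varieties with trivial kernel, i.e.\ an isomorphism. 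It is thus a section of Chevalley's sequence for $G'_{k_\p}$, giving an internal product $G'_{k_\p}=A'_{k_\p}\times L(G'_{k_\p})$; projecting the equality $G'_{k_\p}=A'_{k_\p}\cdot H'_{k_\p}$ onto the factor $L(G'_{k_\p})$, and using $H'_{k_\p}\subseteq L(G'_{k_\p})$ (as $H'_{k_\p}$ is connected, affine and normal), identifies $L(G'_{k_\p})=H'_{k_\p}$. Hence $m_{k_\p}$ is an isomorphism, and so is $m$ by descent, which is the assertion. Without the hypothesis on $K$ the map $q'|_{A'_{k_\p}}$ would only be an isogeny, and $m$ only an isogeny onto $G'$.
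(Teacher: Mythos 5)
Your proof is correct, and it rests on exactly the same key mechanism as the paper's proof --- the hypothesis makes $\Ker(A(f_{k_\p}))$ a smooth connected closed subgroup of an abelian variety, hence an abelian variety, whose image in anything affine must be trivial --- but it is organized differently. The paper works upstairs with $K=\Ker(f)$: writing $G=L\times A$, with projection $p:G\to A$ and inclusion $i:A\to G$, it shows that $f\circ i$ kills $p(K)$ (because $p(K)_{k_\p}\subseteq\Ker(A(f_{k_\p}))$ and the induced map $\Ker(A(f_{k_\p}))\to G'_{k_\p}$ factors through the affine group $L(G'_{k_\p})$), deduces $i(p(K))\subseteq K$, hence $K=(K\cap L)\times i(p(K))$, and concludes at once that $G'=G/K=(L/(K\cap L))\times(A/p(K))$; only the vanishing statement is checked over $k_\p$, and no descent is needed for the final isomorphism. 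You instead work downstairs: you form the candidate factors $A'=f(A)$ and $H'=f(H)$ (which coincide with the paper's two factors, since $A/p(K)=f(A)$ and $L/(K\cap L)=f(L)$), you compare them with the Chevalley pieces of $G'_{k_\p}$ --- the hypothesis serving to show that $A'_{k_\p}\cap L(G'_{k_\p})$, being at once a quotient of the abelian variety $\Ker(A(f_{k_\p}))$ and finite, is trivial --- and you then descend the isomorphism $m_{k_\p}$ to $k$. Your route costs an extra fpqc descent step and the explicit identification of both Chevalley pieces of $G'_{k_\p}$, but it buys a slightly finer output: it exhibits $A'$ and $H'$ as $k$\nobreakdash-forms of $A(G'_{k_\p})$ and $L(G'_{k_\p})$.

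One justification should be tightened. The inclusion $H'_{k_\p}\subseteq L(G'_{k_\p})$ does not follow from $H'_{k_\p}$ being ``connected, affine and normal'': in characteristic $p$, the Frobenius kernel of an abelian variety is connected, affine and normal (even central), yet the affine part of an abelian variety is trivial. What is needed in addition is that $H'$ be smooth; this does hold here, since $H'=H/\Ker(f|_H)$ is a quotient of the smooth group scheme $H$ (equivalently, $H'$ is geometrically reduced, being the image of $H$, and a geometrically reduced group scheme of finite type over a field is smooth). Then the image of $H'_{k_\p}$ in $A(G'_{k_\p})$ is smooth, connected, affine and proper, hence trivial, which gives the desired inclusion. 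With this one-line repair your argument is complete.
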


\begin{proof}
Suppose $G=L\times A$ with~$L$ affine and~$A$ an abelian variety.
Let $K=\Ker(f)$.
Let $p:G\to A$ be the projection, let $i:A\to G$ be the inclusion, 
and let $p(K)$ denote the scheme-theoretic image of~$K$ by~$p$;
it is a subgroup scheme of~$A$ \cite[VI$_{\mathrm{A}}$, Proposition~6.4]{SGA31r}.
We note that $A(G_{k_{\p}})=A_{k_{\p}}$
and that $p(K)_{k_{\p}} \subseteq \Ker(A(f_{k_{\p}}))$.
The morphism $\Ker(A(f_{k_{\p}})) \to G'_{k_{\p}}$
induced by~$f \circ i$
factors through $L(G'_{k_{\p}})$ and hence vanishes
since $\Ker(A(f_{k_{\p}}))$ is an abelian variety
and $L(G'_{k_{\p}})$ is affine.
Therefore the morphism $p(K) \to G'$ induced by~$f\circ i$ also vanishes.
We deduce that $i(p(K)) \subseteq K$, hence $K=(K \cap L) \times i(p(K))$.
On the other hand, the closed immersion $G/K \to G'$ induced by~$f$ is an isomorphism as it
is surjective and~$G'$ is reduced.
It follows that $G'=(L/(K\cap L)) \times (A/p(K))$, and the lemma is proved.
\end{proof}

\begin{lem}
\label{lem:picaddcomponent}
Let~$C$ be a proper reduced curve over~$k$ and~$B$ be a geometrically connected reduced closed subscheme
of~$C$ of pure dimension~$1$.  View the union of the irreducible components of~$C$ not contained in~$B$
as a reduced closed subscheme~$B'$ of~$C$.
If the natural morphism $B \cap B' \to \Spec\mkern1mu(H^0(B',\sO_{B'}))$ is \'etale
and if any strict cycle of components of~$C_{\ok}$ is contained in~$B'_{\ok}$,
then the natural morphism
 $\bPic_{C/k}\to \bPic_{B/k} \times \bPic_{B'/k}$
is an isomorphism.
\end{lem}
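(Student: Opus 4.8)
The plan is to compare line bundles on $C$, $B$ and $B'$ through the Mayer--Vietoris sequence attached to the decomposition $C=B\cup B'$, sheafified for the fppf topology. Set $Z:=B\cap B'$ for the scheme-theoretic intersection. As $B$ and $B'$ are reduced of pure dimension~$1$ and share no irreducible component of~$C$, the scheme $Z$ is finite over~$k$ and $\sO_C\isoto\sO_B\times_{\sO_Z}\sO_{B'}$. Since every $T\to\Spec(k)$ is flat, this persists after base change, giving on $C_T$ a short exact sequence of sheaves of units
\[
1\to\sO_{C_T}^{*}\to\iota_{B*}\sO_{B_T}^{*}\oplus\iota_{B'*}\sO_{B'_T}^{*}\to\iota_{Z*}\sO_{Z_T}^{*}\to1,
\]
surjectivity on the right being checked on stalks from the fact that $\sO_{B_T}\to\sO_{Z_T}$ is onto. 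The $\iota_*$ being exact, taking cohomology (and using $\Pic=H^1(-,\sO^*)$) yields, functorially in~$T$, an exact sequence
\[
\Gm(B_T)\times\Gm(B'_T)\xrightarrow{\ \delta\ }\Gm(Z_T)\xrightarrow{\ \partial\ }\Pic(C_T)\xrightarrow{\ \rho\ }\Pic(B_T)\times\Pic(B'_T)\to\Pic(Z_T).
\]
Sheafifying for the fppf topology (an exact operation) turns this into an exact sequence of fppf sheaves: its first two terms are the representable functors $\Res_{K_B/k}\Gm\times\Res_{R'/k}\Gm$ and $\Res_{A_Z/k}\Gm$, where $K_B=H^0(B,\sO_B)$, $R'=H^0(B',\sO_{B'})$ and $A_Z=H^0(Z,\sO_Z)$ (using that $H^0$ commutes with the flat base changes $T/k$), while its last three terms are $\bPic_{C/k}$, $\bPic_{B/k}\times\bPic_{B'/k}$ and $\bPic_{Z/k}$, and the middle arrow is the map~$\rho$ of the statement. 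By exactness, $\rho$ is an isomorphism provided that \textbf{(A)}~$\delta$ is an fppf epimorphism and \textbf{(B)}~$\bPic_{Z/k}=0$. Condition~(B) holds with no hypothesis: as $Z$ is finite over~$k$, the scheme $Z_{\ok}$ is a finite disjoint union of spectra of Artinian local rings, so $\Pic(Z_{\ok})=0$, while $H^1(Z,\sO_Z)=0$ makes $\bPic_{Z/k}$ étale, and an étale $k$-group scheme with no $\ok$-point vanishes.

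Claim~(A) is the crux, and the step I expect to be the main obstacle, since it is here that both hypotheses enter. As $\delta$ is a homomorphism of smooth affine $k$-group schemes of finite type, it is an fppf epimorphism as soon as it is surjective on $\ok$-points: its scheme-theoretic image is a closed subgroup scheme, and if this contains the dense set of $\ok$-points of the smooth target, it must be everything. I would therefore analyse
\[
\delta_{\ok}\colon (K_B\otimes_k\ok)^{*}\times(R'\otimes_k\ok)^{*}\longrightarrow(A_Z\otimes_k\ok)^{*}.
\]
Write $R'\otimes_k\ok=\prod_v\Lambda_v$ and $A_Z\otimes_k\ok=\prod_e\Theta_e$ as products of Artinian local rings, indexed by the connected components $v$ of $B'_{\ok}$ and $e$ of $Z_{\ok}$ respectively. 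Since $Z\subseteq B'$, each $e$ lies over a unique $v(e)$, and the $R'$-algebra structure provides ring maps $\Lambda_{v(e)}\to\Theta_e$. Here the étale hypothesis is decisive: $A_Z$ is finite étale over~$R'$, so $\Theta_e$ is a \emph{connected} finite étale algebra over the Henselian local ring $\Lambda_{v(e)}$, whose residue field~$\ok$ is algebraically closed; hence $\Lambda_{v(e)}\to\Theta_e$ is an isomorphism, and in particular $\Lambda_{v(e)}^{*}\to\Theta_e^{*}$ is onto.

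It remains to organize these surjections so as to hit every element of $\prod_e\Theta_e^{*}$. Regard the connected components of $B_{\ok}\sqcup B'_{\ok}$ as the vertices of a graph and the components~$e$ of $Z_{\ok}$ as edges, $e$ joining $v(e)$ to the component of $B_{\ok}$ containing it. As $B$ is geometrically connected, $B_{\ok}$ is a single vertex to which every edge is incident; thus the graph contains a cycle if and only if some component~$v$ of $B'_{\ok}$ meets $B_{\ok}$ along at least two components of $Z_{\ok}$. Using the connectedness of $B_{\ok}$ and of~$v$, such a configuration would produce a strict cycle of components of $C_{\ok}$ that meets $B_{\ok}$, hence is not contained in $B'_{\ok}$, contradicting the second hypothesis. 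So the graph is a forest, distinct edges have distinct $v(e)$, and already the partial map $\prod_v\Lambda_v^{*}\to\prod_e\Theta_e^{*}$ is surjective (factor by factor, through the isomorphisms above). Hence $\delta_{\ok}$ is surjective, claim~(A) follows, and $\rho$ is an isomorphism.
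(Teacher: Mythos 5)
Your proof is correct, and its backbone --- the Mayer--Vietoris sequence attached to $C=B\cup B'$, sheafified so that it suffices to kill the connecting map and the Picard functor of $B\cap B'$ --- is exactly the paper's; moreover both hypotheses enter for the same purposes (the strict-cycle hypothesis to show that distinct points of $(B\cap B')_{\ok}$ lie on distinct connected components of $B'_{\ok}$, the \'etale hypothesis to exploit this). The genuine difference is the mechanism for the key surjectivity. The paper proves the stronger, presheaf-level statement that $\Gm(B'_T)\to\Gm(B_T\cap B'_T)$ is onto for \emph{every} $T$: \'etale plus injective on $\ok$-points makes $B\cap B'\to\Spec\mkern1mu(H^0(B',\sO_{B'}))$ an open and closed immersion (SGA1, I, Th\'eor\`eme 5.1), so that units extend by $1$ on the complement. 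You prove only that $\delta$ is an epimorphism of fppf sheaves --- which is indeed all that sheafification requires --- by representing the unit functors by smooth affine Weil restrictions of $\Gm$, computing on $\ok$-points via finite \'etale algebras over Henselian local rings (your isomorphisms $\Lambda_{v(e)}\isoto\Theta_e$, combined with the injectivity of $e\mapsto v(e)$, in effect re-prove, after base change to $\ok$, that the map above is an open and closed immersion), and then invoking the closed-image theorem for homomorphisms of algebraic groups together with density of $\ok$-points in a smooth group. The paper's route is more economical: no representability of the unit functors, no algebraic-group theory, and a conclusion valid for every $T$; yours isolates the minimal sheaf-level input and shows how \'etaleness can be exploited through Henselization, at the cost of heavier machinery. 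Two small points to make airtight: your deduction of $\bPic_{B\cap B'/k}=0$ tacitly uses that $\Pic((B\cap B')_{\ok})\to \bPic_{B\cap B'/k}(\ok)$ is onto (true, e.g.\ because a line bundle on $(B\cap B')_T$ is Zariski-locally on $T$ trivial, $B\cap B'$ being finite over $k$), and the claim that the bad configuration yields a strict cycle needs the standard argument that two distinct simple paths between the same two vertices of a graph force a cycle, which here necessarily uses a component of $B_{\ok}$ --- though the paper is equally terse at both of the corresponding points.
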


\begin{proof}
If $i:B_T \hookrightarrow C_T$, $i':B'_T\hookrightarrow C_T$, $i'':B_T\cap B'_T \hookrightarrow C_T$
denote the inclusions, the short exact sequence $1 \to \Gm \to i_*\Gm \times i'_*\Gm \to i''_*\Gm \to 1$
of sheaves for the Zariski topology on~$C_T$ induces,
for any $T\in(\Sch/k)$,
 an exact sequence of groups
\begin{multline}
\label{se:mayervietoris}
1 \to \Gm(C_T) \to \Gm(B_T) \times \Gm(B'_T) \to \Gm(B_T \cap B'_T)\\ \to \Pic(C_T)
\to \Pic(B_T) \times \Pic(B'_T) \to \Pic(B_T \cap B'_T)\rlap{.}
\end{multline}
The natural morphism $B \cap B' \to \Spec\mkern1mu(H^0(B',\sO_{B'}))$
is an \'etale morphism between finite $k$\nobreakdash-schemes
that
induces an injection on $\ok$\nobreakdash-points,
in view of the assumption about strict cycles of components.
(Note that $\Spec\mkern1mu(H^0(B',\sO_{B'}))(\ok)$ is the set of connected components of~$B'_{\ok}$.)
It is therefore an open and closed immersion \cite[I, Th\'eor\`eme~5.1]{SGA1}.
In particular, the
resulting restriction map
\begin{align*}
\Gm\big(\Spec\mkern1mu(H^0(B',\sO_{B'})) \times_k T\big) \to \Gm\big(B_T \cap B'_T\big)
\end{align*}
is onto, and hence so is
the restriction map $\Gm(B'_T) \to \Gm(B_T \cap B'_T)$.
Noting that $\bPic_{B\cap B'/k}=0$,
the conclusion of the lemma now results from
the exact sequence of fppf sheaves
obtained by
sheafifying~\eqref{se:mayervietoris}
with respect to~$T$.
\end{proof}

\begin{lem}
\label{lem:k0=k}
Let~$k'$ be a finite purely inseparable extension of~$k$.
Let~$C$ be a proper integral curve over~$k'$. We view~$C$ as a curve over~$k$.
If  $\bPic^0_{C/k}$ is the product
 of an abelian variety and of an affine group scheme over~$k$,
then $k'=k$
or $C=C_{\subrat}$.
\end{lem}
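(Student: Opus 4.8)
The plan is to prove the disjunction by assuming $k'\neq k$ and deducing $C=C_{\subrat}$. Concretely, I will show that the abelian variety $A\big((\bPic^0_{C/k})_{k_{\p}}\big)$ is trivial, which by~(\ref{Chevanorm}) means that the normalization $D$ of $(C_{k_{\p}})^{\red}$ has genus~$0$, i.e.\ that $C=C_{\subrat}$. Everything rests on the canonical identification $\bPic^0_{C/k}\cong\Res_{k'/k}\bPic^0_{C/k'}$ of $k$-group schemes. This comes from the tautological presheaf equality $\Pic_{C/k}(T)=\Pic(C\times_k T)=\Pic(C\times_{k'}T_{k'})=\Pic_{C/k'}(T_{k'})$ for $T\in(\Sch/k)$; one then fppf-sheafifies and passes to identity components, the only delicate point being that Weil restriction along the finite flat extension $k'/k$ carries fppf sheaves to fppf sheaves compatibly with sheafification. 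I set $P:=\bPic^0_{C/k'}$, so that $\bPic^0_{C/k}=\Res_{k'/k}P$, and abbreviate $a:=\dim A\big((\bPic^0_{C/k})_{k_{\p}}\big)$.

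The heart of the argument is the following claim: \emph{for every abelian variety $B$ over $k'$, the $k$-group scheme $\Res_{k'/k}B$ contains no nontrivial smooth connected affine subgroup}. Indeed, a smooth connected affine $k$-subgroup $V\inj\Res_{k'/k}B$ corresponds, by the universal property of Weil restriction, to a $k'$-homomorphism $V_{k'}\to B$; its image is a subgroup of $B$, hence proper, and a smooth connected quotient of the affine group $V_{k'}$, hence affine, so it is trivial. As the adjunction $\Hom_{k}(V,\Res_{k'/k}B)\cong\Hom_{k'}(V_{k'},B)$ is a bijection, the immersion $V\inj\Res_{k'/k}B$ is itself trivial, so $V=0$. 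It is essential here that $k'$ be a \emph{field}: after base change to $k_{\p}$ the ring $k'\otimes_k k_{\p}$ becomes non-reduced and $(\Res_{k'/k}B)_{k_{\p}}$ does acquire a positive-dimensional unipotent subgroup, so the claim genuinely reflects the inseparability of $k'/k$.

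Now suppose $\bPic^0_{C/k}=A_0\times V_0$ with $A_0$ an abelian variety and $V_0$ affine; since $\bPic^0_{C/k}$ is smooth and connected, so is its direct factor $V_0$, and $\dim A_0=a$. The inclusion $A_0\inj\Res_{k'/k}P$ corresponds to a $k'$-homomorphism $u\colon(A_0)_{k'}\to P$; put $A':=\Ima(u)\subseteq P$, an abelian subvariety over $k'$. By naturality of the adjunction, the inclusion factors as $A_0\inj\Res_{k'/k}A'\inj\Res_{k'/k}P$, and the first map is a section of $\phi:=\mathrm{pr}_{A_0}|_{\Res_{k'/k}A'}\colon\Res_{k'/k}A'\to A_0$. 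Consequently $\Res_{k'/k}A'\cong A_0\times\Ker\phi$; as $\Res_{k'/k}A'$ is smooth, so is its direct factor $\Ker\phi=\Res_{k'/k}A'\cap V_0$. Its identity component is therefore a smooth connected affine subgroup of $\Res_{k'/k}A'$, hence trivial by the claim, so $\phi$ is an isogeny.

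Two dimension computations finish the proof. First, $\dim\Res_{k'/k}A'=[k':k]\dim A'$, while the isogeny $\phi$ gives $\dim\Res_{k'/k}A'=\dim A_0=a$; hence $[k':k]\dim A'=a$. Second, base-changing $\phi$ to $k_{\p}$ and taking abelian parts yields an isogeny $A'_{k_{\p}}\to(A_0)_{k_{\p}}$ — here $A\big((\Res_{k'/k}A')_{k_{\p}}\big)=A'_{k_{\p}}$, because the reduction map onto $A'_{k_{\p}}$ has unipotent kernel — so $\dim A'=a$. Combining the two gives $[k':k]\,a=a$, and since $[k':k]\geq 2$ we conclude $a=0$, that is, $C=C_{\subrat}$. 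The main obstacle is the claim of the second paragraph: its proof hinges on $k'$ being a field, so that affine groups admit no nonconstant homomorphisms to abelian varieties, and this is precisely the point at which the non-triviality of the purely inseparable extension is used.
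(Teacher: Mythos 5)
Your proof splits into two halves: (A) the identification $\bPic^0_{C/k}\cong\Res_{k'/k}\bPic^0_{C/k'}$, and (B) the group-theoretic argument that kills the abelian factor. Half (A) is where you have a genuine gap. You reduce it to the assertion that Weil restriction along $k'/k$ carries fppf sheaves to fppf sheaves ``compatibly with sheafification''. Preservation of sheaves is true, but the compatibility with sheafification --- which is exactly what you need to pass from the tautological presheaf equality to an equality of sheafifications --- is \emph{false} in general. Counterexample: let $p=\mathrm{char}\,k$, let $k'=k(a^{1/p})$ with $a\in k\setminus k^p$, and let $F'$ be the presheaf cokernel of $x\mapsto x^p\colon \bG_{\mathrm{a}}\to\bG_{\mathrm{a}}$ on $(\Sch/k')$. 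Its fppf sheafification vanishes, since $x\mapsto x^p$ is finite flat and surjective, hence an fppf covering; so $\Res_{k'/k}$ of the sheafification is $0$. But the fppf sheafification of $\Res_{k'/k}F'$, i.e.\ the fppf cokernel of $x\mapsto x^p$ on $\Res_{k'/k}\bG_{\mathrm{a}}$, is nonzero: the section $a^{1/p}\in k'=(\Res_{k'/k}\bG_{\mathrm{a}})(k)$ never becomes a $p$-th power in $R\otimes_k k'=R[T]/(T^p-a)$ for any nonzero $k$-algebra $R$, because every $p$-th power in that ring lies in $R\cdot 1$. So no general sheaf-theoretic principle yields your identification. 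The statement you need is nevertheless true, and the paper proves it by working with the \emph{\'etale} sheafification instead (legitimate, since $\bPic_{C/k}$ also represents $\Pic_{C/k,\et}$, by \cite[8.1 p.\,203]{BLR}): there the commutation does hold, precisely because $k'/k$ is purely inseparable, so that $T\mapsto T_{k'}$ induces an equivalence of \'etale sites \cite[IX, Th\'eor\`eme~4.10]{SGA1}; one then passes to identity components via \cite[Proposition~A.5.9]{CGP}. In particular, pure inseparability is essential already at this step, contrary to your closing claim that it enters only through your second-paragraph claim.

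Granting the identification, half (B) is correct, and it genuinely differs from the paper, which at this point simply invokes \cite[XVII, Appendice~III, Proposition 5.1]{SGA32} (or \cite[Proposition~A.7.8]{CGP}). Your replacement --- the adjunction argument showing that $\Res_{k'/k}B$ has no nontrivial smooth connected affine $k$-subgroup, the factorization of $A_0\inj\Res_{k'/k}P$ through $\Res_{k'/k}A'$, and the two dimension counts $[k':k]\dim A'=\dim A_0$ and $\dim A'=\dim A_0$ --- is a clean, self-contained alternative to that citation. One correction, though, to your final sentence: your second-paragraph claim holds for \emph{every} finite extension $k'/k$ (its proof nowhere uses inseparability). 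In your argument, inseparability actually does its work in the second dimension count, where $A\big((\Res_{k'/k}A')_{k_\p}\big)=A'_{k_\p}$ because the kernel of reduction is unipotent; for separable $k'/k$ that abelian part has dimension $[k':k]\dim A'$ and no contradiction arises --- as indeed it must not, since the lemma fails for separable extensions (take $C$ smooth over $k'$, so that $\bPic^0_{C/k}=\Res_{k'/k}\bPic^0_{C/k'}$ is an abelian variety).
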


\begin{proof}
For all $T\in (\Sch/k)$, pull-back induces an equivalence between the categories of \'etale $T$\nobreakdash-schemes and of \'etale $T_{k'}$\nobreakdash-schemes \cite[IX, Th\'eor\`eme~4.10]{SGA1}.
 It follows at once that the natural morphism
$$\Res_{k'/k}(\Pic_{C/k'})\to \Res_{k'/k}(\Pic_{C/k',\et})$$
of functors $(\Sch/k)^{\op}\to(\Ab)$ becomes an isomorphism after \'etale sheafification.
On the other hand,
by the very definition
of the absolute Picard functor (see~\textsection\ref{Pic})
and of Weil restriction of scalars,
one has
$\Pic_{C/k}=\Res_{k'/k}(\Pic_{C/k'})$.
By these two remarks, we get, after sheafification, an isomorphism $\bPic_{C/k}= \Res_{k'/k}(\bPic_{C/k'})$, which restricts to an isomorphism $\bPic^0_{C/k}= \Res_{k'/k}(\bPic^0_{C/k'})$ in view of \cite[Proposition~A.5.9]{CGP}.
Let us write  $\bPic^0_{C/k}=L\times A$ with~$L$ affine and~$A$ an abelian variety.
Applying \cite[XVII, Appendice~III, Proposition 5.1]{SGA32} with $U=L$
(or \cite[Proposition~A.7.8]{CGP})
now shows that if $k'\neq k$, then $\bPic^0_{C/k}=L$, so that $A((\bPic^0_{C/k})_{k_{\p}})=0$
and $C=C_{\subrat}$.
\end{proof}

\begin{lem}
\label{lem:D to Pic factors through C}
Let~$D$ be the disjoint union of smooth proper geometrically integral curves $D_1,\dots,D_n$ over~$k$.
Let~$C$ be a proper curve over~$k$ such that $H^0(C,\sO_C)=k$
and $C(k)\neq\varnothing$.
Let $\nu:D\to C$ be a morphism.
If $\nu^*:\bPic_{C/k} \to \bPic_{D/k}$
admits a section,
then there exist morphisms $\rho_i:C \to \bPic_{D_i/k}$ for $i \in \{1,\dots,n\}$
 such that
 $\rho_i\circ\nu|_{D_i}$ is the canonical morphism for all~$i$
while $\rho_i\circ\nu|_{D_j}$ is constant for all $j\neq i$.
\end{lem}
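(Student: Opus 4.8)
The plan is to exploit the symmetry of the Picard functor: the section gives, for each $i$, a morphism $D_i\to\bPic_{C/k}$ whose pullbacks to the $D_j$ are controlled, and transposing the underlying line bundle (reading it as a family over $C$ instead of over $D_i$) will produce $\rho_i$.

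First I would record the consequences of the hypothesis. Since $D=\coprod_iD_i$, there is a canonical decomposition $\bPic_{D/k}=\prod_i\bPic_{D_i/k}$; write $\mathrm{pr}_j$ for the $j$-th projection and $\iota_i$ for the inclusion of the $i$-th factor (the identity in slot $i$, the zero section elsewhere). Then $\mathrm{pr}_j\circ\nu^*=\nu_j^*$ with $\nu_j:=\nu|_{D_j}$, so a section $s$ of $\nu^*$ satisfies $\nu_j^*\circ s=\mathrm{pr}_j$ for every $j$ (this uses only that $s$ is a section of schemes). Letting $c_i:D_i\to\bPic_{D_i/k}$ be the canonical morphism, attached to the line bundle $\sO(\Delta_i)$ of the diagonal of $D_i\times_kD_i$, I set $\sigma_i:=s\circ\iota_i\circ c_i:D_i\to\bPic_{C/k}$. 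As $\mathrm{pr}_j\circ\iota_i=\delta_{ij}\,\mathrm{id}$, this yields $\nu_j^*\circ\sigma_i=c_i$ when $j=i$, and $\nu_j^*\circ\sigma_i=0$ (the constant morphism at the trivial class) when $j\neq i$.

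Next I would transpose $\sigma_i$. Because $C(k)\neq\varnothing$ and $H^0(C,\sO_C)=k$, \S\ref{Pic} identifies $\bPic_{C/k}(D_i)$ with $\Pic(C\times_kD_i)/\mathrm{pr}_{D_i}^*\Pic(D_i)$, so $\sigma_i$ lifts to an actual line bundle $\sM_i$ on $C\times_kD_i$. Reading $\sM_i$ as a family over $C$, i.e.\ applying the natural map $\Pic(D_i\times_kC)\to\bPic_{D_i/k}(C)$, produces a morphism $\rho_i:C\to\bPic_{D_i/k}$; replacing $\sM_i$ by $\sM_i\otimes\mathrm{pr}_{D_i}^*\sE$ translates $\rho_i$ by the $k$-point $[\sE]\in\bPic_{D_i/k}(k)$, so $\rho_i$ is well defined up to translation. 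The crucial observation is that for each $j$ both $\rho_i\circ\nu_j$ and $\nu_j^*\circ\sigma_i$ are induced by one and the same line bundle $\sN_{ij}:=(\nu_j\times\mathrm{id}_{D_i})^*\sM_i$ on $D_j\times_kD_i$, viewed as a family over $D_j$ and over $D_i$ respectively.

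To conclude I would invoke the injectivity of $\Pic(D_j\times_kD_i)/\mathrm{pr}_{D_i}^*\Pic(D_i)\hookrightarrow\bPic_{D_j/k}(D_i)$ supplied by \S\ref{Pic} (valid since $H^0(D_j,\sO_{D_j})=k$; note that $D_i$ need not have a $k$-point, so only injectivity, not a rigidification, is available). For $j\neq i$, the vanishing of $\nu_j^*\circ\sigma_i$ forces $\sN_{ij}\cong\mathrm{pr}_{D_i}^*\sF$ for some $\sF\in\Pic(D_i)$, and reading this over $D_j$ shows $\rho_i\circ\nu_j$ is the constant morphism at $[\sF]$. For $j=i$, the equality $\nu_i^*\circ\sigma_i=c_i$ together with the symmetry of the diagonal gives $\sN_{ii}\cong\sO(\Delta_i)\otimes\mathrm{pr}_2^*\sE$ for some $\sE\in\Pic(D_i)$ ($\mathrm{pr}_2$ being the factor over which $\sigma_i$ is read), whence reading over the other factor yields $\rho_i\circ\nu_i=t_{[\sE]}\circ c_i$, the canonical morphism up to translation by the $k$-point $[\sE]$. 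Finally I would replace $\rho_i$ by $t_{-[\sE]}\circ\rho_i$: this forces $\rho_i\circ\nu_i=c_i$ exactly while keeping every $\rho_i\circ\nu_j$ ($j\neq i$) constant. I expect the main obstacle to be precisely this transpose step at $j=i$: since $D_i$ carries no $k$-point in general one cannot rigidify, and the canonical morphism is recovered only after absorbing the residual translation, which is the structural reason the statement asserts ``the canonical morphism'' for $j=i$ but merely ``constant'' for $j\neq i$.
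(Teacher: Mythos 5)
Your proposal is correct and follows essentially the same route as the paper's proof: decompose $\bPic_{D/k}=\prod_i\bPic_{D_i/k}$, use the section to get $\sigma_i$ with $\nu_j^*\circ\sigma_i=\delta_{ij}$ (composed with the canonical morphism), lift to an actual line bundle on $C\times_k D_i$ via the bijection $\Pic(C\times_k D_i)/\Pic(D_i)\isoto\bPic_{C/k}(D_i)$, and transpose, using the injection $\Pic((D_j)_{T})/\Pic(T)\hookrightarrow\bPic_{D_j/k}(T)$ to identify the pullbacks. The only (immaterial) difference is bookkeeping: the paper normalizes the lift $\alpha_i$ so that $(\nu_i\times 1)^*\alpha_i$ is exactly the diagonal class before transposing, whereas you transpose first and absorb the residual translation by $[\sE]$ at the end.
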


\begin{proof}
We let $\nu_i:D_i\to C$ be the restriction of~$\nu$,
fix
a section $\sigma:\bPic_{D/k}\to \bPic_{C/k}$ of~$\nu^*$
and,
noting that $\bPic_{D/k}=\prod_{i=1}^n \bPic_{D_i/k}$,
 let $\sigma_i:\bPic_{D_i/k}\to \bPic_{C/k}$ be the restriction of~$\sigma$,
so that $\nu_i^* \circ \sigma_j=\delta_{ij}$.
Let $\iota_i:D_i\to \bPic_{D_i/k}$ be the canonical morphism.
Then~$\nu_j^*$ maps $\sigma_i \circ \iota_i \in \bPic_{C/k}(D_i)$
to $\iota_i \in \bPic_{D_i/k}(D_i)$ if $j=i$, to $0 \in \bPic_{D_j/k}(D_i)$ otherwise.
As $H^0(C,\sO_C)=H^0(D_i,\sO_{D_i})=k$
and $C(k)\neq\varnothing$,
there are a canonical bijection
$\Pic(C_T)/\Pic(T) \isoto \bPic_{C/k}(T)$ and a canonical injection
$\Pic((D_i)_T)/\Pic(T) \hookrightarrow \bPic_{D_i/k}(T)$
for all~$T$.
We deduce, for each~$i$, the existence of $\alpha_i \in \Pic(C\times_k D_i)$ such that
$(\nu_j \times 1)^*\alpha_i \in \Pic(D_j \times_k D_i)$ is the class of the diagonal
for $j=i$ and comes by pull-back from $\Pic(D_i)$ for all $j\neq i$.
Switching the factors, the class~$\alpha_i$ gives rise to the desired element
$\rho_i \in \bPic_{D_i/k}(C)$.
\end{proof}

\begin{lem}
\label{Picnorm}
Let $C$ be a proper curve over $k$.  Let $\nu:C'\to C$ be the normalization of $C^{\red}$. The pull-back morphism
$\nu^*:\bPic_{C/k}^0\to \bPic_{C'/k}^0$ is surjective.
\end{lem}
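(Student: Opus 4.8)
The plan is to verify surjectivity after the faithfully flat base change along $\Spec(\ok)\to\Spec(k)$. Since the formation of $\bPic_{C/k}$ and of identity components both commute with field extension, $\nu^*$ is an fppf epimorphism if and only if $(\nu_{\ok})^*:\bPic^0_{C_{\ok}/\ok}\to\bPic^0_{C'_{\ok}/\ok}$ is one; as these are smooth connected group schemes of finite type over an algebraically closed field, this in turn holds if and only if the induced map $\Pic^0(C_{\ok})\to\Pic^0(C'_{\ok})$ on $\ok$-points is surjective. The delicate point, and the reason one may \emph{not} pass to~$\ok$ at the very start, is that when~$k$ is imperfect $C_{\ok}$ may fail to be reduced and $C'_{\ok}$ need not be the normalization of $(C_{\ok})^{\red}$.

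I would first reduce to the case where~$C$ is reduced. The closed immersion $C^{\red}\hookrightarrow C$ is defined by a nilpotent ideal and the fibres of $C\to\Spec(k)$ have dimension~$\leq 1$, so line bundles lift (the relevant obstructions, living in an $R^2$ of a coherent sheaf, vanish): the restriction $\bPic_{C/k}\to\bPic_{C^{\red}/k}$ is surjective, hence so is $\bPic^0_{C/k}\to\bPic^0_{C^{\red}/k}$. Since~$C'$ is also the normalization of $(C^{\red})^{\red}$, the morphism under study factors as $\bPic^0_{C/k}\to\bPic^0_{C^{\red}/k}\to\bPic^0_{C'/k}$, and I may assume $C=C^{\red}$.

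With~$C$ reduced, $\nu$ is an isomorphism over the dense open normal locus of~$C$, whose complement~$Z$ is finite over~$k$. Crucially, the inclusion $\sO_C\hookrightarrow\nu_*\sO_{C'}$ then remains injective after the flat base change to~$\ok$, \emph{even though} $C_{\ok}$ may be non-reduced; so the sequence $1\to\sO_{C_{\ok}}^{*}\to(\nu_{\ok})_*\sO_{C'_{\ok}}^{*}\to\sQ\to 1$ of sheaves on $C_{\ok}$ is exact, with~$\sQ$ supported on the finite set of closed points~$Z_{\ok}$. As~$\sQ$ is a skyscraper sheaf, $H^1(C_{\ok},\sQ)=0$; combined with $H^1(C_{\ok},(\nu_{\ok})_*\sO_{C'_{\ok}}^{*})=\Pic(C'_{\ok})$ (valid because~$\nu_{\ok}$ is finite), the long exact cohomology sequence shows that $\Pic(C_{\ok})\to\Pic(C'_{\ok})$ is surjective.

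It remains to deduce surjectivity on degree-zero parts. By Proposition~\ref{NS} applied to~$C$ and to~$C'$ (whose reduction is normal, so that its own normalization is~$C'$), both quotients $\bPic_{C/k}/\bPic^0_{C/k}$ and $\bPic_{C'/k}/\bPic^0_{C'/k}$ are canonically~$\Z_{C'/k}$; since~$\nu^*$ is compatible with these projections (each being the multidegree along the components of the normalization~$C'$), it induces the identity of~$\Z_{C'/k}$, so that $\NS(C_{\ok})\to\NS(C'_{\ok})$ is an isomorphism. Therefore any line bundle on~$C_{\ok}$ whose pull-back lies in $\Pic^0(C'_{\ok})$ already lies in $\Pic^0(C_{\ok})$, and the surjectivity of $\Pic(C_{\ok})\to\Pic(C'_{\ok})$ yields that of $\Pic^0(C_{\ok})\to\Pic^0(C'_{\ok})$, as wanted. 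I expect the genuine obstacle to be the one flagged in the first paragraph: reconciling the imperfection of~$k$ with the sheaf-theoretic computation, which is overcome by reducing to reduced~$C$ and observing that the injection $\sO_C\hookrightarrow\nu_*\sO_{C'}$ survives base change to~$\ok$.
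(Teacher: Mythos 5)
Your proof is correct, and it takes a genuinely different route from the paper's. The paper never base-changes to $\ok$: it reduces to the case of a \emph{separably} closed ground field, which is exactly the device that keeps inseparability invisible, since reduction and normalization commute with separable extensions — so after invoking \cite[9.2/5]{BLR} the curve $C$ stays reduced and $C'$ stays regular. It then proves $\Pic(C)\surj\Pic(C')$ by moving any divisor on the regular curve $C'$ off the preimage of the non-normal locus, and converts surjectivity on rational points into surjectivity of the group-scheme morphism via dominance (density of rational points of smooth schemes over separably closed fields, Proposition~\ref{NS} to pass to identity components, and the fact that a dominant homomorphism of group schemes is surjective). You instead pass to $\ok$, accept that $C_{\ok}$ may be non-reduced and that $C'_{\ok}$ is no longer a normalization, and compensate by flatness: the injection $\sO_C\hookrightarrow\nu_*\sO_{C'}$ survives base change, the quotient of unit sheaves is a skyscraper, and the cohomology sequence yields $\Pic(C_{\ok})\surj\Pic(C'_{\ok})$; identity components are then handled through the multidegree description of the component groups. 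Two of your steps are compressed but true: the identification $H^1(C_{\ok},(\nu_{\ok})_*\Gm)=\Pic(C'_{\ok})$ requires $R^1(\nu_{\ok})_*\Gm=0$ for the Zariski topology, which follows from finiteness of $\nu_{\ok}$ together with the triviality of Picard groups of semi-local rings (plus commutation of $\Pic$ with limits), not from finiteness alone; and the compatibility of $\nu^*$ with the two projections onto $\Z_{C'/k}$ is not part of the (abstract) statement or proof of Proposition~\ref{NS}, so you must invoke the standard fact that over an algebraically closed field the identity component of the Picard scheme of a proper curve consists exactly of the classes of partial degree zero on each irreducible component (\cite[\S 9.2]{BLR}). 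In the end both arguments rest on the semi-locality of curves near finite sets; the paper's is shorter because its choice of base field keeps $C'$ regular, while yours makes explicit that the mechanism is insensitive to the nilpotents created by inseparable base change, at the cost of the extra bookkeeping above.
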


\begin{proof}
We may assume that~$k$ is separably closed.
We then claim that the map $\nu^*:\bPic_{C/k}(k)\to \bPic_{C'/k}(k)$ is onto.
As $\bPic_{C'/k}$ is smooth over~$k$,
it will follow  that the morphism
$\nu^*:\bPic_{C/k}\to \bPic_{C'/k}$
is dominant,
by \cite[2.2/13]{BLR},
so that the morphism~$\nu^*:\bPic_{C/k}^0\to \bPic_{C'/k}^0$
is dominant, by Proposition~\ref{NS}, and hence surjective,
by \cite[VI$_\mathrm{A}$, Corollaire~6.2~(i)]{SGA31r}.
To prove the claim, we may assume that~$C$ is reduced, by \cite[9.2/5]{BLR}.
Letting $C^0 \subseteq C$ be a dense open normal subset,
we then remark that $\nu^*:\Pic(C) \to \Pic(C')$ is onto
since any divisor on~$C'$ is linearly equivalent to a divisor
supported on $\nu^{-1}(C^0)$.
As~$k$ is separably closed, it follows that $\nu^*:\Pic_{C/k,\et}(k)\to \Pic_{C'/k,\et}(k)$ is onto as well, as
desired.
\end{proof}

\subsubsection{Proof of Theorem~\ref{thmsplit}}

As the formation of~$C_{\subrat}$ and~$C_{\subirrat}$ is compatible
with separable extensions of scalars and as the assertions of the last sentence
of the theorem
are of a geometric nature, we may, and will henceforth, assume that~$k$ is separably closed.
For later use, we note that thanks to this assumption, if condition~(i)
of Theorem~\ref{thmsplit} holds, then
the irreducible components of~$C_{\subirrat}$, viewed as reduced schemes,
are geometrically reduced (being both reduced and generically geometrically reduced)
and therefore smooth over~$k$, and
the non-smooth locus of~$C_{\subirrat}$ over~$k$ consists of $k$\nobreakdash-points
(as the intersection of any two irreducible components of~$C_{\subirrat}$ is \'etale).

\medskip
\emph{Step 1}: we assume that (i)--(iii) hold and deduce the remaining assertions.

From~\eqref{Chevanorm} applied to $C_{\subrat}$,
we deduce that
 $(\bPic^0_{C_{\subrat}/k})_{k_{\p}}$ is affine;
hence $\bPic^0_{C_{\subrat}/k}$ is also affine.
To prove that
 $\bPic^0_{C_{\subirrat}/k}$ is an abelian variety
and that
the natural map $\bPic_{C/k} \to \bPic_{C_{\subrat}/k} \times \bPic_{C_{\subirrat}/k}$
is an isomorphism,
we argue by induction on the number of irreducible components of~$C_{\subirrat}$.
When $C_{\subirrat}=\varnothing$, there is nothing to prove.
Otherwise,
let us choose an irreducible component~$B$ of $C_{\subirrat}$
and denote by~$B'$ (resp.~$B'_{\subirrat}$) the union of the irreducible components of~$C$
(resp.~of $C_{\subirrat}$) that are distinct from~$B$.
We view~$B$, $B'$ and $B'_{\subirrat}$ as reduced closed subschemes of~$C$.
The equalities 
 $C = B \cup B' = C_{\subrat} \cup C_{\subirrat} = B \cup C_{\subrat} \cup B'_{\subirrat}$
and $B' = C_{\subrat} \cup B'_{\subirrat}$ induce a commutative square
\begin{align}
\begin{aligned}
\label{eq:commsquare induction pic}
\xymatrix@R=3ex{
\bPic_{C/k} \ar[d] \ar[r] & \bPic_{B/k} \times \bPic_{B'/k} \ar[d]^\wr \\
\bPic_{C_{\subrat}/k} \times \bPic_{C_{\subirrat}/k} \ar[r] & \bPic_{B/k} \times \bPic_{C_{\subrat}/k} \times \bPic_{B'_{\subirrat}/k}
}
\end{aligned}
\end{align}
whose right vertical arrow is an isomorphism by the induction hypothesis.
To conclude the proof, we need only verify that
Lemma~\ref{lem:picaddcomponent}
can be applied, on the one hand, to~$C$, $B$ and~$B'$,
and on the other hand, to $C_{\subirrat}$, $B$ and $B'_{\subirrat}$.
Indeed,  it will  follow that the horizontal
arrows of~\eqref{eq:commsquare induction pic} are isomorphisms, which 
implies,
on the one hand, that the left vertical arrow
of this square is an isomorphism,
and on the other hand, that  $\bPic^0_{C_{\subirrat}/k}$ is an abelian variety,
since so are $\bPic^0_{B'_{\subirrat}/k}$ (by the induction hypothesis)
and $\bPic^0_{B/k}$ (as~$B$ is smooth over~$k$ \cite[9.2/3]{BLR}).

To this end, letting~$F$ stand either for~$B'$ or for~$B'_{\subirrat}$, we fix $x \in B \cap F$,
and denote by~$E$ the connected component of~$x$ in~$F$
and by $C_1,\dots,C_m$ the irreducible components of~$C_{\subirrat}$ containing~$x$,
numbered so that $B=C_1$.
The finite $k$\nobreakdash-algebra $H^0(E,\sO_E)$, being non-zero, connected and reduced, is a field;
it embeds into
the residue field~$k(x)$ of~$x$.
We now have to prove that
the natural morphism $B \cap E \to \Spec\mkern1mu(H^0(E,\sO_{E}))$ is \'etale at~$x$.

If $k(x)\neq k$, then $m=1$ and therefore
$C_{\subrat}\cap C_{\subirrat}$ is not \'etale over~$k$ at~$x$.
We deduce from~(iii)
that~$E$ coincides with the connected component of~$x$ in~$C_{\subrat}$,
that $B \cap E$ is reduced at~$x$ and that $H^0(E,\sO_E)=k(x)$;
hence the desired result.

If $k(x)=k$, then $H^0(E,\sO_E)=k$ and it suffices to see
that $B \cap E$ is reduced at~$x$.
In the Zariski tangent space $T_xC$ of~$C$ at~$x$,
we have $T_xE=V + T_xC_2 + \dots + T_xC_m$
where $V=T_xC_{\subrat}$ if $x \in C_{\subrat}$ and $F=B'$,
and $V=0$ otherwise.
It follows from~(iii) that $C_{\subrat}\cap C_{\subirrat}$ is reduced,
so that $V \cap (T_xC_1+\dots+T_xC_m)=0$,
and from~(i) that the lines $T_xC_1,\dots,T_xC_m$
are in direct sum.  Hence $T_x(B\cap E)=T_xC_1 \cap T_xE = 0$, and $B \cap E$ is indeed reduced at~$x$.
Step~1 is complete.

\medskip
We now assume that $\bPic^0_{C/k}$
is the product of an abelian variety and of an affine group scheme over~$k$, and prove (i)--(iii) in four more steps.
By Lemma~\ref{lem:quotientchevalley}, we may assume that~$C$ is connected.
In addition, we may assume that $C_{\subirrat}\neq\varnothing$.
Let $C_1,\dots,C_n$ be the irreducible components of $C_{\subirrat}$, viewed as reduced schemes.
Let~$D_i$ be the normalization of~$C_i$,
let~$D$ be the disjoint union of the~$D_i$
and let $\nu_i:D_i\to C$
and $\nu:D\to C$ be the natural morphisms.

\medskip
\emph{Step 2}: we prove that $C_1,\dots,C_n$ and $C_{\subirrat}$ are geometrically reduced over~$k$.

As $C_{\subirrat}=C_1 \cup \dots \cup C_n$, it suffices to check that the~$C_i$
are geometrically reduced.
Assume that some $B \in \{C_1,\dots,C_n\}$
is not geometrically reduced.
By Lemma~\ref{lem:tanaka} applied to~$k(B)$,
there exist subfields $k \subseteq k' \subseteq k'' \subseteq k_{\p}$,
with $k''\neq k'$ and $k''/k$ finite,
such that $B_{k'}$ is integral and such that
if $B'$ denotes its normalization, the natural morphism
$B' \to \Spec(k')$ factors through $\Spec(k'')$.
Let $\omega:C'\to C_{k'}$ be the normalization of $(C_{k'})^{\red}$.
By Lemma~\ref{Picnorm},
the pull-back map $\omega^*:\bPic^0_{C_{k'}/k'} \to \bPic^0_{C'/k'}$
is surjective,
and~\eqref{Chevanorm} shows that
$A(\omega^*_{k_{\p}})$ is an isomorphism.
As~$B'$ is a connected component of~$C'$,
we deduce, thanks to Lemma~\ref{lem:quotientchevalley}, that $\bPic^0_{B'/k'}$
is the product of an abelian variety and of an affine group scheme over~$k'$.
Lemma~\ref{lem:k0=k} implies that $k''=k'$ or $B'=B'_{\subrat}$, a contradiction.

\medskip
\emph{Step 3}: assuming that~$D$ is smooth over~$k$, we construct
$\rho_i:C \to \bPic^1_{D_i/k}$
 such that
 $\rho_i\circ\nu_i$ is a closed immersion 
while $\rho_i(C_{\subrat} \cup \bigcup_{j\neq i} C_j)$ is finite,
for all~$i$.

It suffices to check that the hypotheses of Lemma~\ref{lem:D to Pic factors through C} are satisfied.
Indeed,
the canonical morphism $D_i \to \bPic^1_{D_i/k}$ is a closed immersion if~$D_i$ is a smooth
proper integral curve of genus~$\geq 1$ over~$k$  \cite[Propositions 6.1 and 2.3]{MilneJac},
and $\rho_i(C_{\subrat})$ is finite
since any morphism from a rational curve
to an abelian variety is constant.

We recall that $C_{\subirrat}\neq\varnothing$. As~$C$ is proper, connected and reduced, the restriction map
$H^0(C,\sO_C) \to H^0(C_1,\sO_{C_1})$ has to be injective;
as~$C_1$ is geometrically integral, we deduce that $H^0(C,\sO_C)=k$.
In addition, as~$k$ is separably closed,
we have $C_1(k)\neq\varnothing$
\cite[2.2/13]{BLR}, hence $C(k)\neq\varnothing$.

As~$D$ is smooth,
the group scheme $\bPic^0_{D/k}$ is an abelian variety
and the morphism $\nu^*:\bPic^0_{C/k} \to \bPic^0_{D/k}$
induces an isomorphism $A((\bPic^0_{C/k})_{k_{\p}}) \isoto (\bPic^0_{D/k})_{k_{\p}}$
(see~\eqref{Chevanorm}).
Our assumption on
$\bPic^0_{C/k}$
therefore implies that
 $\nu^*:\bPic^0_{C/k} \to \bPic^0_{D/k}$ admits a section.
Letting~$C'$ be the normalization of $C^{\red}$,
the natural map $\nu^*:\Z_{C'/k} \to \Z_{D/k}$ also admits a section.
In addition, the sequence~\eqref{se:pic0picz}
splits as~$k$ is separably closed and $\bPic^0_{C/k}$ is smooth,
and the choice of a splitting of~\eqref{se:pic0picz} and of
a section of $\nu^*:\Z_{C'/k} \to \Z_{D/k}$
determines a splitting of the sequence~\eqref{se:pic0picz}
associated with~$D$.
Applying Proposition~\ref{NS} to~$C$ and to~$D$,
we now conclude that $\nu^*:\bPic_{C/k} \to \bPic_{D/k}$ admits a section.
This completes Step~3.

\medskip
\emph{Step 4}: we prove~(i) and~(ii) of Theorem~\ref{thmsplit}.

Step~2 ensures that
$((C_{\ok})^{\red})_{\subirrat}=(C_{\subirrat})_{\ok}$
and it follows from Lemma~\ref{lem:quotientchevalley}, in view of \cite[9.2/5]{BLR},
that $\bPic^0_{(C_{\ok})^{\red}/\ok}$ is the product of an abelian variety and
of an affine group scheme over~$\ok$.
Thus,
after replacing~$C$ with $(C_{\ok})^{\red}$,
we may, and will until the end of Step~4, assume that $k=\ok$.
In this case~$D$ is smooth over~$k$ and Step~3 becomes applicable.

To complete the proof of~(i), it suffices to show that
for all $i\in \{1,\dots,n\}$,
the curve~$C_i$ is smooth
and the scheme $C_i \cap \big(\bigcup_{j\neq i} C_j\big)$ is reduced.
We fix~$i$.
As $\rho_i \circ \nu_i$ is an immersion,
so is~$\nu_i$; as $\nu_i(D_i)=C_i$ is a reduced closed subscheme of~$C$,
we deduce that~$\nu_i$ restricts to an isomorphism $D_i \to C_i$,
so that~$C_i$ is smooth.
Now the restriction of~$\rho_i$
to the subscheme
 $C_i \cap \big(\bigcup_{j\neq i} C_j\big)$
is both a closed immersion (since so is $\rho_i|_{C_i}$)
and a morphism whose scheme-theoretic image
is finite and reduced (since so is $\rho_i|_{\bigcup_{j\neq i} C_j}$),
hence this scheme is reduced.

To prove~(ii), we pick
a strict cycle of components $B_1,\dots,B_n$ of~$C$
and pairwise distinct points $x_i \in B_i \cap B_{i+1}$
for $i\in \{1,\dots,n-1\}$ and $x_n \in B_n \cap B_1$.
If one of the~$B_i$ were contained in~$C_{\subirrat}$,
say $B_1=C_1$, then $\rho_1(x_1)$ and~$\rho_1(x_n)$ would have to be distinct,
since $\rho_1|_{C_1}$ is injective, and equal, since $\rho_1(B_2 \cup \dots \cup B_n)$ is
a point (being finite and connected).  This is absurd.

\medskip
\emph{Step 5}: we prove~(iii) of Theorem~\ref{thmsplit}.

We now know that~(i) holds, hence~$D$ is smooth over~$k$:
we can apply Step~3~again.

\begin{lem}
\label{lem:droledelemme}
Let $i\in \{1,\dots,n\}$.
For any purely $1$\nobreakdash-dimensional connected reduced closed subscheme~$E$ of~$C$
such that $E\cap C_i$ is finite and non-empty,
the restriction map
 $H^0(E,\sO_{E}) \to H^0(E \cap C_i, \sO_{E \cap C_i})$
is an isomorphism of fields.
\end{lem}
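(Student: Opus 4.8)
The plan is to exploit the morphisms $\rho_i:C\to\bPic^1_{D_i/k}$ produced in Step~3, whose relevant features are that $\rho_i|_{C_i}$ is a closed immersion (here one uses that $C_i$ is smooth over~$k$, which holds by~(i), so that $\nu_i$ induces an isomorphism $D_i\isoto C_i$ and hence $\rho_i|_{C_i}=(\rho_i\circ\nu_i)\circ(D_i\isoto C_i)^{-1}$ is a closed immersion) and that $\rho_i(C_{\subrat}\cup\bigcup_{j\neq i}C_j)$ is finite. First I would record that, $E$ being proper, connected and reduced over the separably closed field~$k$, the ring $H^0(E,\sO_E)$ is a finite reduced connected $k$-algebra, hence a field.

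Next I would locate~$E$ among the irreducible components of~$C$. The irreducible components of~$E$ are one-dimensional reduced irreducible closed subschemes of the reduced curve~$C$, hence irreducible components of~$C$. Were~$C_i$ one of them, we would have $C_i\subseteq E$ and therefore $E\cap C_i=E\times_C C_i=C_i$, contradicting the finiteness of $E\cap C_i$; thus~$E$ is contained, as a reduced closed subscheme, in $C_{\subrat}\cup\bigcup_{j\neq i}C_j$. Consequently $\rho_i(E)$ is finite. Letting $Z\subseteq\bPic^1_{D_i/k}$ be the scheme-theoretic image of $\rho_i|_E$, the scheme~$Z$ is then finite over~$k$; it is moreover reduced (being the scheme-theoretic image of the reduced, quasi-compact scheme~$E$) and connected (its support being the continuous image of the connected~$E$), so that $Z=\Spec(K)$ for some finite field extension~$K$ of~$k$. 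By construction $\rho_i|_E$ factors as $E\xrightarrow{g}Z\hookrightarrow\bPic^1_{D_i/k}$.

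The key step is to identify $E\cap C_i$ with~$Z$. Since $E\cap C_i$ is a closed subscheme of~$C_i$ and $\rho_i|_{C_i}$ is a closed immersion, the restriction $\rho_i|_{E\cap C_i}$ is a closed immersion, so its scheme-theoretic image is isomorphic to $E\cap C_i$ itself. On the other hand, as $E\cap C_i\subseteq E$ and $\rho_i|_E$ factors through~$Z$, this scheme-theoretic image is contained in~$Z$. A non-empty closed subscheme of $Z=\Spec(K)$ equals~$Z$, and $E\cap C_i$ is non-empty by hypothesis; hence $\rho_i|_{E\cap C_i}$ identifies $E\cap C_i$ with~$Z$, and in particular $g\circ\iota:E\cap C_i\hookrightarrow E\xrightarrow{g}Z=\Spec(K)$ is an isomorphism, so $H^0(E\cap C_i,\sO_{E\cap C_i})\simeq K$ is a field. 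Passing to global sections, this isomorphism shows that the composite $K\to H^0(E,\sO_E)\to H^0(E\cap C_i,\sO_{E\cap C_i})$, whose second arrow is the restriction map of the lemma, is an isomorphism; hence that restriction map is surjective. Being a nonzero homomorphism out of the field $H^0(E,\sO_E)$, it is also injective, and therefore the desired isomorphism of fields.

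The only delicate point is the bookkeeping at the level of scheme structures rather than of underlying topological spaces: one must check that $\rho_i|_{E\cap C_i}$ is genuinely a closed immersion (so that its scheme-theoretic image recovers $E\cap C_i$ scheme-theoretically, not merely its reduction) and that scheme-theoretic images are compatible with passage to the closed subscheme $E\cap C_i\subseteq E$. Both are formal: the first because closed immersions are stable under composition and $\rho_i|_{C_i}$ is a closed immersion by Step~3, the second because any closed subscheme through which $\rho_i|_E$ factors also receives the restriction $\rho_i|_{E\cap C_i}$.
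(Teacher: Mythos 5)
Your proof is correct and takes essentially the same route as the paper's: both rest on the morphism $\rho_i$ from Step~3, the observation that $E$ contains no component equal to $C_i$ so that $\rho_i(E)$ is finite, the fact that $\rho_i|_{E\cap C_i}$ is a closed immersion to get surjectivity of the restriction map, and the field property of $H^0(E,\sO_E)$ together with $E\cap C_i\neq\varnothing$ to get injectivity. The only difference is bookkeeping: the paper factors $\rho_i|_E$ through an affine open $\Spec(R)\subset\bPic^1_{D_i/k}$ containing the finite image, whereas you factor it through the scheme-theoretic image $Z$ and identify $E\cap C_i\isoto Z$, a slightly stronger intermediate statement that is not needed but harmless.
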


\begin{proof}
The morphism~$\rho_i|_E$ has finite image (by Step~3),
hence it factors through an affine open $\Spec(R) \subset \bPic^1_{D_i/k}$.
To see that the map of the lemma is surjective,
we note that
its composition
with
$R \to H^0(E,\sO_{E})$
is surjective
as $\rho_i|_{E \cap C_i}$ is a closed immersion.
It is also injective,
 as $H^0(E,\sO_E)$ is a field
(being a
non-zero, connected, reduced, finite $k$\nobreakdash-algebra)
and $E \cap C_i \neq\varnothing$.
\end{proof}

To prove~(iii), let~$B$ be a connected component of~$C_{\subrat}$
such that $B\cap C_{\subirrat}$ is not \'etale over~$k$, say at a point~$x$.

After renumbering, we may assume that the~$C_i$ containing~$x$
are $C_1,\dots,C_m$,
for some $m \in \{1,\dots,n\}$.
If~$x$ were a $k$\nobreakdash-point, the subspace $T_xB \cap T_xC_{\subirrat}$ of the Zariski tangent space $T_xC$ would be non-zero.
After renumbering $C_1,\dots,C_m$ appropriately
and setting $E=B \cup C_2 \cup \dots \cup C_m$,
the vector space $T_xE \cap T_xC_1$
would be non-zero.  The scheme $E \cap C_1$ would then be non-reduced;
this would contradict Lemma~\ref{lem:droledelemme}.
Thus $k(x)\neq k$ and hence $m=1$.

Let~$E'$ be the connected component of~$x$ in $C_{\subrat}\cup C_2\cup \dots\cup C_n$.
As the evaluation map
$H^0(E' \cap C_1, \sO_{E' \cap C_1}) \to k(x)$
is surjective,
 Lemma~\ref{lem:droledelemme} shows
that
$H^0(E',\sO_{E'})=H^0(E' \cap C_1, \sO_{E' \cap C_1})=k(x)$;
hence, by Step~2, there is no $k$\nobreakdash-algebra morphism
$H^0(E',\sO_{E'})\to H^0(C_j,\sO_{C_j})$ for any~$j$.
Therefore $E'=B$ and $B \cap C_{\subirrat}=E' \cap C_1$.
This completes Step~5
as well as the proof of Theorem~\ref{thmsplit}.

\subsection{Murre's intermediate Jacobian}
\label{parMurre}

If $X$ is a smooth projective variety over~$k$, 
Murre defines an \textit{algebraic representative for algebraically trivial codimension $2$ cycles on $X_{\ok}$} to be an abelian variety $\Ab^2(X_{\ok})$ over $\ok$ endowed with a morphism
\begin{equation}
\label{AJk}
\phi^2_X:\CH^2(X_{\overline{k}})_{\alg}\to \Ab^2(X_{\ok})(\overline{k})
\end{equation}
that is initial among regular homomorphisms with values in an abelian variety (see \cite[Definition 1.6.1, \S 1.8]{Murre}). It is obviously unique up to a unique isomorphism, Murre has shown its existence (see \cite[Theorem~A~p.\,226]{Murre} and \cite{KahnMurre}), and Achter, Casalaina-Martin and Vial have shown that it descends uniquely to an abelian variety $\Ab^2(X_{k_\p})$ over $k_\p$ in such a way that (\ref{AJk}) is $\Gamma_k$\nobreakdash-equivariant \cite[Theorem~4.4]{ACMV1}.
If $X$ is a smooth projective $\ok$-rational threefold over $k$, we endow $\Ab^2(X_{k_\p})$ with the principal polarization $\theta_X\in\NS^1(\Ab^2(X_{\ok}))^{\Gamma_k}$ 
constructed in \cite[Property~2.4, Corollary 2.8]{CGBW}. 

\subsection{Representability lemmas}

Here are three lemmas to be used later.

\begin{lem}
\label{repfact}
Let $F$ and $F'$ be two functors $(\Sch/k)^{\op}\to(\Ab)$. If $F\times F'$ is represented by a group scheme locally of finite type over $k$, then so is $F$.
\end{lem}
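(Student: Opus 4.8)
The plan is to realize $F$ as a direct factor of $F \times F'$ in a way that is preserved by passing to a representing group scheme. Write $G$ for the group scheme locally of finite type over $k$ representing $F \times F'$, so that $\Phi_G \simeq F \times F'$. The two projections and the two inclusions of the product of functors $F \times F'$ give, via Yoneda (the functor $\Phi$ being fully faithful, as recalled in the Notation section), idempotent endomorphisms of $G$; concretely, let $e \colon G \to G$ be the endomorphism corresponding under $\Phi$ to the composite $F \times F' \to F \hookrightarrow F \times F'$, where the first map is the projection onto $F$ and the second is $(\mathrm{id}_F, 0)$. Since $\Phi$ is faithful and $e$ is idempotent on the level of functors, $e$ is an idempotent endomorphism of the commutative $k$-group scheme $G$.

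The key step is then to show that the kernel $H := \Ker(e)$, or equivalently the image of the complementary idempotent $\mathrm{id} - e$, is a commutative $k$-group scheme representing $F$. First I would check that $\Ker(e)$ exists as a group scheme: kernels of homomorphisms of commutative group schemes locally of finite type over a field exist and are again group schemes locally of finite type (being closed subgroup schemes of $G$). Alternatively, and perhaps more cleanly, I would use that for a commutative group scheme $G$ over $k$ with an idempotent endomorphism $e$, the functor $T \mapsto \Ker\bigl(e_T \colon G(T) \to G(T)\bigr)$ is represented by a closed subgroup scheme. One then identifies this kernel functor with $F$: on $T$-points, $G(T) = F(T) \times F'(T)$, the endomorphism $e_T$ is projection onto the $F(T)$-factor followed by reinclusion, so $\mathrm{id} - e_T$ is projection onto $F'(T)$, and hence $\Ker(\mathrm{id}-e_T) = F(T) \times 0 \simeq F(T)$ functorially in $T$. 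Thus the closed subgroup scheme $\Ker(\mathrm{id}-e) = \Ima(e)$ represents $F$.

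The main obstacle I anticipate is purely a matter of being careful about representability of the relevant kernel and about the passage between idempotents of functors and idempotents of group schemes. The functor $\Phi$ is only known to be fully faithful onto its essential image (representable functors), so to produce $e$ as an honest endomorphism of $G$ I must first know that the endomorphism of $F \times F'$ I have in mind lands in the representable world, which it does because $F \times F' \simeq \Phi_G$ and an endomorphism of $\Phi_G$ is, by full faithfulness, $\Phi$ of a unique endomorphism of $G$. After that, the only substantive point is the existence of the kernel of a homomorphism $G \to G$ of group schemes locally of finite type over a field as a group scheme of the same type; this is standard (see for instance \cite[VI$_{\mathrm{A}}$]{SGA31r}), and the remaining verifications are the routine functorial identifications sketched above. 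I would therefore expect the write-up to be short, with the representability of the kernel being the one external fact to cite carefully.
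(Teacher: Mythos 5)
Your proof is correct and is essentially the paper's own argument: the paper defines $\mu:G\to G$ as the morphism induced by the endomorphism $(x,y)\mapsto(0,y)$ of $F\times F'$ — exactly your complementary idempotent $\mathrm{id}-e$ — and represents $F$ by $\Ker(\mu)$, a group scheme locally of finite type over $k$. The only blemish is the early slip where you set $H:=\Ker(e)$ as the candidate representing $F$; as your own final verification shows, it is $\Ker(\mathrm{id}-e)=\Ima(e)$ that represents $F$ (while $\Ker(e)$ would represent $F'$).
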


\begin{proof}
Let $G$ be the group scheme represented by $F\times F'$ and let $\mu:G\to G$ be the morphism induced 
by the endomorphism $(x,y)\mapsto (0,y)$ of $F\times F'$. Then $F$ is represented by $\Ker(\mu)$, which is a group scheme locally of finite type over~$k$.
\end{proof}

\begin{lem}
\label{repquotZ}
Let $G$ be a commutative group scheme locally of finite type over $k$.
 Let $\nu:\Z\to G$ be a morphism of $k$-group schemes such that $\nu(n)\notin G^0(k)$ if $n\neq 0$. Then the cokernel functor $Q:(\Sch/k)^{\op}\to(\Ab)$ defined by $Q(T)=G(T)/\nu(\Z(T))$ is represented by a group scheme locally  of finite type over $k$.
\end{lem}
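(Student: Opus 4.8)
The plan is to construct the quotient group scheme explicitly as a direct limit (or union) of quotients of $G$ by finitely generated subgroups of the constant group scheme $\Z$, exploiting the hypothesis that $\nu$ embeds $\Z$ discretely. First I would reduce to understanding the image $\nu(\Z)$: the hypothesis $\nu(n)\notin G^0(k)$ for $n\neq 0$ guarantees that the composite $\Z\to G\to G/G^0$ is injective, so $\nu$ identifies $\Z$ with a closed subgroup scheme of $G$ that meets $G^0$ only in the identity. Here $G/G^0$ is an \'etale group scheme, and the image of $\Z$ in it is a copy of the constant group scheme $\Z$ sitting inside $G/G^0$ as a closed (discrete) subgroup.

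\medskip

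The key step is to realize $Q$ as the quotient of $G$ by this closed subgroup scheme $\nu(\Z)\cong\Z$. The standard way to take quotients of group schemes locally of finite type by flat closed subgroup schemes is via the theory of fppf quotients; since $\nu(\Z)$ is a closed subgroup scheme that is flat (indeed \'etale) over $k$, the fppf quotient sheaf $G/\nu(\Z)$ exists as a group scheme locally of finite type over $k$, by the general representability results for quotients (for instance by the method of passing to $G/G^0$ and $G^0$ separately, or by invoking the representability of fppf quotients of group schemes locally of finite type by closed subgroup schemes). The point is then to check that the fppf sheaf quotient computes the presheaf cokernel correctly, i.e.\ that the natural map from the presheaf $T\mapsto G(T)/\nu(\Z(T))$ to its fppf sheafification is already an isomorphism, or that in any case $Q$ is represented by $G/\nu(\Z)$. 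Since $\nu(\Z)$ is \'etale, fppf-locally on $T$ every $\nu(\Z)$-torsor is trivial, but one must verify that the relevant $H^1$ vanishes on enough schemes; this is where the discreteness of $\Z$ enters, ensuring that $\nu(\Z)$-torsors over a qcqs base are controlled by the locally constant structure.

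\medskip

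The main obstacle I anticipate is the interplay between the presheaf cokernel $Q$ and the fppf sheaf quotient $G/\nu(\Z)$: a priori $Q$ as defined is only a presheaf, and representability is a statement about its fppf sheafification (in the sense of representability adopted in the paper, where $\Phi_X$ need not be an fppf sheaf but is determined by its values on qcqs schemes via Yoneda). I would handle this by first establishing that the fppf quotient $\bar{G}:=G/\nu(\Z)$ exists as a group scheme locally of finite type, giving an exact sequence $0\to\Z\to G\to\bar G\to 0$ of fppf sheaves, and then arguing that $G(T)\to\bar G(T)$ is surjective with kernel exactly $\nu(\Z(T))$ for all $T\in(\Sch/k)$. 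Surjectivity on $T$-points reduces, via the fppf exact sequence, to the vanishing of $H^1_{\fppf}(T,\Z)$; since $\Z$ is the constant group scheme and $T$ is qcqs, any such torsor is a $\Z$-torsor for the \'etale topology, and one reduces to showing these are Zariski-locally trivial on the noetherian-approximated pieces of $T$, which holds because a $\Z$-torsor over a connected scheme is trivial. I would be careful to treat the general qcqs (possibly disconnected, non-noetherian) case by the limit arguments already invoked elsewhere in the paper. Once surjectivity and the kernel computation are in hand, the identification $Q\simeq\Phi_{\bar G}$ follows, completing the proof.
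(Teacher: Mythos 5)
Your strategy --- construct the fppf quotient $\bar G := G/\nu(\Z)$ by general theory and then identify the presheaf cokernel $Q$ with $\Phi_{\bar G}$ --- founders on the key vanishing claim. You assert that surjectivity of $G(T)\to\bar G(T)$ follows because ``a $\Z$-torsor over a connected scheme is trivial'', i.e.\ that $H^1_{\et}(T,\Z)=0$ for connected qcqs $T$. This is false: the category $(\Sch/k)$ contains non-normal schemes, and already over a nodal cubic curve $C$ over $\ok$ there is a nontrivial \'etale $\Z$-torsor, namely the infinite chain of copies of $\bP^1_{\ok}$ (the point $\infty$ of the $n$-th copy glued to the point $0$ of the $(n+1)$-st), which maps to $C$ as a Galois cover with deck group $\Z$ and admits no section; hence $H^1_{\et}(C,\Z)\neq 0$. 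The vanishing you want does hold for connected \emph{normal} schemes (profiniteness of the \'etale fundamental group), but nothing restricts the test objects $T$ to be normal, and noetherian approximation does not help since the counterexamples are already noetherian. A secondary weak point is the appeal to ``general representability results for fppf quotients'': the subgroup scheme $\nu(\Z)\simeq\Z$ is closed and \'etale but not quasi-compact, and the standard quotient theorems do not cover this situation off the shelf.

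Both gaps are repaired by an elementary argument (the one the paper uses), which works entirely at the presheaf level so that no sheafification or torsor question ever arises. The hypothesis $\nu(n)\notin G^0(k)$ for $n\neq 0$ implies that translation by $\nu(n)$ acts \emph{freely} on the set of connected components of $G$: if a component were fixed by some $n\neq 0$, then, looking at the finite set of $\ok$-points of the corresponding point of the \'etale scheme $G/G^0$, some multiple $\nu(mn)$ with $mn\neq 0$ would lie in $G^0(\ok)\cap G(k)=G^0(k)$, a contradiction. Choose one component in each orbit and let $G'\subseteq G$ be their disjoint union, an open and closed subscheme. Given $T\in(\Sch/k)$ and $f\in G(T)$, the preimages of the components of $G$ decompose $T$ into clopen pieces, and subtracting $\nu$ of the resulting locally constant $\Z$-valued function moves $f$ into $G'(T)$, uniquely modulo $\nu(\Z(T))$. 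Thus $G'$ represents the presheaf $Q$ itself. In particular $Q$ is automatically an fppf sheaf and coincides with the quotient sheaf you wanted, and the torsor $G\to \bar G\simeq G'$ is trivial because it has the scheme-theoretic section $G'\hookrightarrow G$; but these are conclusions of the component construction, not facts one can obtain from a general vanishing of $H^1_{\et}(-,\Z)$.
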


\begin{proof}
Translation by $\nu(n)$ for $n\in\Z$ induces an action of the group $\Z$ on the set of connected components of $G$. Choose one connected component of $G$ in each orbit of this action. Their disjoint union represents $Q$.
\end{proof}

\begin{lem}
\label{lem:locrep=>rep}
Let $F:(\Sch/k)^{\op}\to(\Ab)$ be a functor and~$k'$ be a finite extension of~$k$.
Let $\tau\in\{\et,\fppf\}$.  Assume that $k'/k$ is separable if $\tau=\et$.
If~$F$ is a sheaf for the topology~$\tau$
and the functor $(\Sch/k')^{\op}\to(\Ab)$ obtained by restricting~$F$
is represented by a group scheme locally of finite type over~$k'$,
then~$F$ is represented by a group scheme locally of finite type over~$k$.
\end{lem}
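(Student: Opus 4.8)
The plan is to descend the representing object along the $\tau$\nobreakdash-cover $\Spec(k')\to\Spec(k)$, presenting $F$ as the kernel of a morphism between two $k$-group schemes built by Weil restriction. Since $k'/k$ is finite, the morphism $\Spec(k')\to\Spec(k)$ is finite and faithfully flat, hence an fppf cover; and it is finite étale, hence an étale cover, as soon as $k'/k$ is separable. Thus in both cases under consideration it is a cover for the topology~$\tau$. Writing $k''=k'\otimes_k k'$, which is again a finite $k$-algebra, and letting $p_1,p_2\colon T_{k''}\to T_{k'}$ be the two projections, the sheaf axiom applied to the cover $T_{k'}\to T$ gives, for every $T\in(\Sch/k)$, an exact sequence of abelian groups
\begin{equation*}
0\to F(T)\to F(T_{k'})\xrightarrow{\,p_1^*-p_2^*\,}F(T_{k''}).
\end{equation*}
It therefore suffices to represent the functors $T\mapsto F(T_{k'})$ and $T\mapsto F(T_{k''})$ by $k$-group schemes locally of finite type and to recognize $p_1^*-p_2^*$ as a morphism between them; then $F$ will be its kernel.

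Let $G'$ be the commutative $k'$-group scheme locally of finite type with $F|_{(\Sch/k')}\simeq\Phi_{G'}$. Because $G'$ is locally of finite type over a field, each of its connected components is a translate of the identity component $(G')^0$, which is of finite type and hence quasi-projective; so every finite set of points of $G'$ lies in an affine open subscheme, and the Weil restriction $\Res_{k'/k}G'$ exists as a $k$-group scheme locally of finite type \cite[\S 7.6]{BLR}. The defining adjunction of Weil restriction, combined with the identification $F|_{(\Sch/k')}\simeq\Phi_{G'}$, then yields
\begin{equation*}
F(T_{k'})=\Hom_{k'}(T_{k'},G')=\Hom_k(T,\Res_{k'/k}G')=\Phi_{\Res_{k'/k}G'}(T)
\end{equation*}
naturally in $T$. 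Viewing $T_{k''}$ as a $k'$-scheme through one of the two projections $\Spec(k'')\to\Spec(k')$ and letting $G'_{k''}$ be the corresponding base change of $G'$ to $k''$, the same computation---now with the finite $k$-algebra $k''$ in place of $k'$---identifies $T\mapsto F(T_{k''})$ with $\Phi_{\Res_{k''/k}G'_{k''}}$, where $\Res_{k''/k}G'_{k''}$ is again a $k$-group scheme locally of finite type.

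Since the functor $\Phi$ is fully faithful, the natural transformations $p_1^*$ and $p_2^*$ between these two representable functors are induced by morphisms of $k$-group schemes, and hence so is their difference $\delta$. The sheaf exact sequence above then identifies $F$ with the kernel functor of $\delta$, so that $F\simeq\Phi_{\Ker(\delta)}$; as $\Ker(\delta)$ is a closed subgroup scheme of $\Res_{k'/k}G'$, it is a $k$-group scheme locally of finite type, which is what we want. I expect the only genuine obstacle to be the existence of the two Weil restrictions: this is the point at which one must verify that a group scheme locally of finite type over a field meets the affine-open hypothesis required to apply \cite[\S 7.6]{BLR}. Once this is in hand, the remainder is a formal consequence of the sheaf property of~$F$ and of the Weil-restriction adjunction, and the separable/étale versus fppf dichotomy enters only through the observation that $\Spec(k')\to\Spec(k)$ is a $\tau$-cover.
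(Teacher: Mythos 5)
Your argument is correct in substance and follows a genuinely different route from the paper's. The paper descends the representing object itself: since the restriction $F'$ comes from a functor defined over $k$, it carries a canonical descent datum relative to $p:\Spec(k')\to\Spec(k)$; this induces a descent datum on $G'$, which is effective by a lemma of Murre, so $G'$ descends to a $k$-group scheme $G$ locally of finite type, and one then checks $F\simeq\Phi_G$ by comparing two $\tau$-sheaves that are identified after pullback along the $\tau$-covering $p$, compatibly with the cocycle condition. You avoid effectivity of descent entirely and instead exhibit $F$ as the kernel of $\delta=p_1^*-p_2^*:\Res_{k'/k}G'\to\Res_{k''/k}(G'_{k''})$, i.e., you use the \v{C}ech (equalizer) presentation of the sheaf $F$ for the cover $\Spec(k')\to\Spec(k)$. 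This works, and the trade-off is transparent: where the paper needs effectivity of descent for group schemes locally of finite type over a field, you need existence (and local finiteness) of Weil restrictions of such group schemes along $\Spec(k')\to\Spec(k)$ and $\Spec(k'')\to\Spec(k)$; both ultimately rest on the same geometric input, namely that every finite set of points of $G'$ lies in an affine open.

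Two steps of your write-up need repair, though both are fixable. First, your justification of that affine-open condition is incorrect as stated: over $k'$ the connected components of $G'$ need not be translates of $(G')^0$ (for $G'=\bmu_3$ over $\Q$, the component $\Spec\Q(\zeta_3)$ is not even isomorphic to the identity component), and ``of finite type, hence quasi-projective'' is false for general schemes. The correct argument is that the finitely many components meeting a given finite set of points become translates of $(G')^0_L$ after a finite extension $L/k'$ splitting them and supplying rational points; since group schemes of finite type over a field are quasi-projective (a theorem of Raynaud), this union of components is quasi-projective over $L$, hence over $k'$ (descend an ample line bundle, e.g.\ by a norm), and finite subsets of quasi-projective schemes lie in affine opens. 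The condition for $G'_{k''}$ is then inherited, because $G'_{k''}\to G'$ is affine and preimages of affine opens are affine. Second, that $\Res_{k'/k}G'$ is locally of finite type over $k$ is asserted but not proved; it follows, for instance, from the fact that $T\mapsto G'(T_{k'})$ commutes with filtered colimits of affine $k$-schemes because $G'$ is locally of finite presentation over $k'$. Note that only this first Weil restriction needs to be locally of finite type: $\Ker(\delta)$ is a closed subscheme of it because the target is separated (Weil restriction preserves separatedness), and closed subschemes of locally finite type schemes are locally of finite type. With these two repairs, your proof is complete.
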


\begin{proof}
Let $F':(\Sch/k')^{\op}\to(\Ab)$ denote the restriction of~$F$
and~$G'$ the $k$\nobreakdash-group scheme that represents~$F'$.
As~$F'$ is the restriction of~$F$, there is a canonical
descent datum on~$F'$ with respect to $p:\Spec(k')\to \Spec(k)$.
This descent datum
induces a descent datum on~$G'$.
By \cite[Lemma~I.8.6]{MurreContravariant},
the latter is effective, and~$G'$ descends to a group scheme~$G$ locally of finite type over~$k$.
It remains to note that~$F$ and~$\Phi_G:(\Sch/k)^{\op}\to(\Ab)$ are two $\tau$\nobreakdash-sheaves,
that~$p$ is a $\tau$\nobreakdash-covering
and that there is by construction an isomorphism of $\tau$\nobreakdash-sheaves
$p^*F \simeq p^*\Phi_G$ that satisfies the cocycle condition;
from this, it follows that $F\simeq \Phi_G$.
\end{proof}

\section{\texorpdfstring{$K$}{K}-theory functors}
\label{sec2}

We now define and study several functors $(\Sch/k)^{\op}\to(\Ab)$ built from $K$-theory. Our goal, met in \S\ref{CH2def}, is to define, for a smooth projective $\ok$-rational threefold $X$ over $k$, a functor $\CH^2_{X/k,\fppf}:(\Sch/k)^{\op}\to(\Ab)$ that will serve as a substitute for its intermediate Jacobian.

\subsection{\texorpdfstring{$K$}{K}-theory}

We follow the conventions of \cite{SGA6,TT}.

\subsubsection{Definition}
\label{defK0}

 If $X$ is a qcqs scheme, we let $\K_0(X)$ be the Grothendieck group of the triangulated
category of perfect complexes of $\sO_X$-modules.
 (This group is denoted by $\K^{\boldsymbol{\cdot}}(X)$ in \cite[IV, D\'efinition 2.2]{SGA6} and coincides with the one defined in \cite[\S 3.1]{TT}, as indicated in \cite[\S 3.1.1]{TT}.) We endow $\K_0(X)$ with the ring structure induced by the tensor product (\cite[IV, \S 2.7 b)]{SGA6}, \cite[\S 3.15]{TT}). 

If $X$ admits an ample family of line bundles \cite[\S 2.1.1]{TT} (for instance, if $X$ is quasi-projective over an affine scheme \cite[\S 2.1.2 (c)]{TT}),
then $\K_0(X)$ is naturally isomorphic to the Grothendieck group of the exact category of vector bundles on~$X$ (combine \hbox{\cite[Corollary 3.9]{TT}} and \cite[Theorem 1.11.7]{TT}).

\subsubsection{Functoriality}
\label{funct}

 Let $f:X\to Y$ be a morphism of qcqs schemes. 

The derived pull-back $\LL f^*$ of perfect complexes along $f$
 induces a morphism $f^*:\K_0(Y)\to \K_0(X)$ (\hbox{\cite[IV, \S 2.7 b)]{SGA6}}, \cite[\S 3.14]{TT}).

  The derived push-forward $\RR f_*$ induces a morphism $f_*:\K_0(X)\to \K_0(Y)$ if $\RR f_*$ preserves perfect complexes \cite[\S 3.16]{TT}.
This condition is satisfied if $f$ is a proper and perfect morphism
\cite[Proposition 2.1, Example 2.2~(a)]{LipNee},
for instance if $f$ is a proper lci morphism 
(see \cite[VIII, Proposition 1.7]{SGA6}).
In this case, the projection formula \hbox{\cite[IV, (2.12.4)]{SGA6}}
stemming from \cite[III, Proposition~3.7]{SGA6} shows that 
\begin{equation}
\label{projformula}
f_*(x\otimes f^*y)=f_*x\otimes y
\end{equation}
for all $x\in \K_0(X)$ and $y\in \K_0(Y)$.

\subsubsection{Rank and determinant} 
\label{rkdet}

The \textit{rank} $\rk:\K_0(X)\to \Z(X)$ and the \textit{determinant}
$\det:\K_0(X)\to \Pic(X)$ are group homomorphisms that are functorial with
respect to pull-backs and are such that if $x\in \K_0(X)$ is represented by
a bounded complex of vector bundles on $X$, then $\rk(x)$ is the
alternating sum of the ranks of its terms and $\det(x)$ is the alternating
tensor product of the determinants of its terms (see
\cite[Theorem~2~p.\,42]{MumKnu}).

We define $\SK_0(X)$ to be the kernel of $(\rk,\det):\K_0(X)\to \Z(X)\times\Pic(X)$.

\subsubsection{Projective bundles and blow-ups}

If $X$ is a qcqs scheme and $\pi:\bP_X\sE\to X$ is the projective bundle associated with a vector bundle $\sE$ of rank $r$ on $X$, the morphism $\K_0(X)^r\to \K_0(\bP_X\sE)$ given by the formula
\begin{equation}
\label{proj}
(x_0,\dots,x_{r-1})\mapsto \sum_{j=0}^{r-1} \pi^*x_j\otimes [\sO_{\bP_X\sE}(-j)]
\end{equation}
is an isomorphism of $\K_0(X)$-modules (\cite[VI, Th\'eor\`eme 1.1]{SGA6}, \cite[Theorem~4.1]{TT}).

If $i:Y\to X$ is a regular closed immersion of qcqs schemes of pure codimension $c\geq 1$, if $p:X'\to X$ is the blow-up of $X$ along $Y$ with exceptional divisor $Y'$, and if we denote by $p':Y'\to Y$ and $i':Y'\to X'$ the natural morphisms, Thomason has shown that the morphism $\K_0(X)\times \K_0(Y)^{c-1}\to \K_0(X')$ given by the formula
\begin{equation}
\label{blowup}
(x,y_1,\dots,y_{c-1})\mapsto p^*x+\sum_{j=1}^{c-1}i'_*p'^*y_j\otimes[\sO_{X'}(jY')]
\end{equation}
is an isomorphism of $\K_0(X)$-modules \cite[Th\'eor\`eme 2.1]{Thomason}.

\subsubsection{Coherent sheaves}
\label{coherent}

If $X$ is a qcqs scheme, we let $\GG_0(X)$ be the Grothendieck group of the triangulated category of pseudo-coherent complexes of $\sO_X$\nobreakdash-modules with bounded cohomology. (This group is denoted by $K_{\boldsymbol{\cdot}}(X)$ in \cite[IV, D\'efinition~2.2]{SGA6}, see \cite[\S 3.3]{TT}.)

If $X$ is noetherian, the group $\GG_0(X)$ is naturally isomorphic to the Grothendieck group of the abelian category of coherent $\sO_X$-modules \cite[IV, \S 2.4]{SGA6}.
In this case, letting $F_d\GG_0(X)\subseteq \GG_0(X)$ be the subgroup generated by classes of coherent sheaves whose support has dimension $\leq d$ \cite[X, D\'efinition 1.1.1]{SGA6} defines a filtration $F_{\bullet}$ on $\GG_0(X)$. 

If $X$ is noetherian and regular,
the natural morphism $\K_0(X)\to \GG_0(X)$ is an isomorphism \cite[Theorem 3.21]{TT}. 
This allows one to speak of the class of a coherent $\sO_X$\nobreakdash-module in $\K_0(X)$, and of the filtration $F_\bullet \K_0(X)$ of $\K_0(X)$ by dimension of the support.

If $X$ is a smooth variety of pure dimension $n$ over $k$, we use the notation $F^c\K_0(X)=F_{n-c}\K_0(X)$
and $\Gr^c_F\K_0(X)=F^c\K_0(X)/F^{c+1}\K_0(X)$.
According to \cite[Example~15.1.5]{Fulton}, associating with an integral closed subscheme $Z\subseteq X$ of codimension~$c$ the class $[\sO_Z]\in \K_0(X)$ induces a surjective morphism 
\begin{equation}
\label{CHK}
\varphi^c:\CH^c(X)\to \Gr^c_F\K_0(X).
\end{equation}
As explained in \cite[Example~15.3.6]{Fulton}, Jouanolou's Riemann--Roch theorem without denominators \cite{RRsansden} shows that $\varphi^0$, $\varphi^1$ and $\varphi^{2}$ are isomorphisms, with inverses given by the rank $\rk$, the determinant $\det$ and the opposite $-c_2$ of the second Chern class. In particular,
$F^2\K_0(X)=\SK_0(X)$.

\begin{lem}
\label{lemCH2SK0}
Let $f:X\to Y$ be a morphism of smooth equidimensional varieties over $k$. There exists a commutative diagram
\begin{equation}
\label{pullbackCH2SK0}
\begin{aligned}
\xymatrix
@C=1.2cm
@R=0.4cm
{
\CH^2(Y)\ar_{\sim}^{\varphi^2}[r]\ar[d]^(.45){f^*} & \ar[d]^{}  \Gr^2_F\K_0(Y) \\
\CH^2(X)\ar_{\sim}^{\varphi^2}[r]& \Gr^2_F\K_0(X)
}
\end{aligned}
\end{equation}
in which the right vertical arrow is induced by $f^*: \K_0(Y)\to \K_0(X)$.
\end{lem}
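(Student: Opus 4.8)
The plan is to read off both the right vertical arrow and the commutativity of~\eqref{pullbackCH2SK0} from the explicit description of $(\varphi^2)^{-1}$ through Chern classes, combined with the functoriality of $\rk$, $\det$ and $c_2$. Since~$X$ and~$Y$ are smooth over~$k$, the morphism~$f$ is lci: it factors as the graph $\gamma_f\colon X\to X\times_k Y$, a closed immersion of regular schemes and hence a regular immersion, followed by the smooth projection $X\times_k Y\to Y$. Consequently the Gysin pullback $f^*\colon\CH^2(Y)\to\CH^2(X)$ serving as the left vertical arrow is defined, and the Chern classes of $\K_0$-classes are functorial along it: $c_i(f^*x)=f^*c_i(x)$ in $\CH^i(X)$ for every $x\in\K_0(Y)$ \cite{Fulton}.

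First I would verify that $f^*\colon\K_0(Y)\to\K_0(X)$ respects the filtration $F^\bullet$ closely enough to induce a homomorphism on $\Gr^2_F$. As $\varphi^0$ and $\varphi^1$ are isomorphisms with inverses $\rk$ and $\det$, one has $F^1\K_0=\Ker(\rk)$ and $F^2\K_0=\Ker(\rk,\det)=\SK_0$; since $\rk$ and $\det$ are functorial (\S\ref{rkdet}), $f^*$ maps $F^2\K_0(Y)$ into $F^2\K_0(X)$. Moreover $c_2$ restricted to $F^2\K_0=\SK_0$ descends to $\Gr^2_F\K_0$, and $-c_2$ induces there the inverse of $\varphi^2$; hence $-c_2\colon\SK_0\to\CH^2$ is surjective with kernel exactly $F^3\K_0$. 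Therefore, for $x\in F^3\K_0(Y)$ one has $c_2(x)=0$, so $f^*x\in\SK_0(X)$ satisfies $c_2(f^*x)=f^*c_2(x)=0$ and thus $f^*x\in F^3\K_0(X)$. This shows $f^*$ carries $F^3\K_0(Y)$ into $F^3\K_0(X)$ and induces the desired right vertical arrow $\Gr^2_F\K_0(Y)\to\Gr^2_F\K_0(X)$.

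It then remains to establish commutativity. Given $\alpha\in\CH^2(Y)$, I would pick a lift $\tilde\alpha\in\SK_0(Y)$ of $\varphi^2_Y(\alpha)$, so that $-c_2(\tilde\alpha)=\alpha$. By construction the right vertical arrow sends $\varphi^2_Y(\alpha)$ to the class of $f^*\tilde\alpha\in\SK_0(X)$, and functoriality of $c_2$ yields $-c_2(f^*\tilde\alpha)=f^*(-c_2(\tilde\alpha))=f^*\alpha$; thus $f^*\tilde\alpha$ is a lift of $\varphi^2_X(f^*\alpha)$, and the two composites coincide. The single non-formal ingredient, which I expect to be the main obstacle, is precisely the functoriality $c_2(f^*x)=f^*c_2(x)$: it encodes the compatibility of the derived $K$-theoretic pullback $\LL f^*$ with the lci Gysin pullback on Chow groups, a Riemann--Roch-type statement. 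Once this is granted via \cite{Fulton}, everything else is bookkeeping with the filtration $F^\bullet\K_0$.
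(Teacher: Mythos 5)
Your proposal is correct and follows essentially the same route as the paper's own proof: use $F^2\K_0=\SK_0$ together with the functoriality of $\rk$ and $\det$ to see that $f^*$ preserves $F^2$, then use that $\varphi^2$ is bijective with inverse induced by $-c_2$ and the functoriality of $c_2$ to conclude that $f^*$ preserves $F^3$, descends to $\Gr^2_F$, and makes the diagram commute. The paper states this more tersely, but the underlying argument — including identifying $F^3\K_0$ as the kernel of $c_2$ on $\SK_0$ — is the same.
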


\begin{proof}
The pull-back $f^*: \K_0(Y)\to \K_0(X)$ restricts to $f^*: \SK_0(Y)\to \SK_0(X)$, hence induces $f^*:F^2\K_0(Y)\to F^2\K_0(X)$. Since the morphisms $\varphi^2$ are bijective with inverse given by $-c_2$, and since the second Chern class is functorial, we see that $f^*:F^2\K_0(Y)\to F^2\K_0(X)$ induces a morphism $f^*:\Gr^2_F\K_0(Y)\to \Gr^2_F\K_0(X)$ making the diagram (\ref{pullbackCH2SK0}) commute.
\end{proof}

\subsection{The functor \texorpdfstring{$K_{0,X/k}$}{K\_\{0,X/k\}} and its sheafifications}
\label{functK0}

\subsubsection{Definition}

If $X$ is a proper variety over $k$, the \textit{absolute $\K_0$ functor} of $X$ is
\begin{alignat*}{4}
\K_{0,X/k}:\hspace{1em}(\Sch&/k)^{\op}&\hspace{1em}\to\hspace{1em}&(\Ab)\\
&T&\hspace{1em}\mapsto\hspace{1em}&\K_0(X_ T).
\end{alignat*}
The rank and the determinant (see \S\ref{rkdet}) give rise to morphisms of functors $\rk:\K_{0,X/k}\to\Z_{X/k}$ and 
$\det:\K_{0,X/k}\to\Pic_{X/k}$.
We let $\SK_{0,X/k}$
be the kernel of $(\rk,\det):\K_{0,X/k}\to\Z_{X/k}\times\Pic_{X/k}$.
If $\tau\in\{\Zar,\et,\fppf\}$, 
we let $\K_{0,X/k,\tau}$ (resp.\ $\SK_{0,X/k,\tau}$)
be the sheafification of $\K_{0,X/k}$ (resp.\ $\SK_{0,X/k}$)
for the corresponding (Zariski, \'etale, fppf) topology.

\begin{rem}
We do not know whether $\K_{0,X/k,\et}$ and $\K_{0,X/k,\fppf}$ coincide.
\end{rem}

\subsubsection{Functoriality}
\label{functfunct}

Let $f:X\to Y$ be a morphism of proper varieties over $k$. The pull-backs $(f_T)^*$ for $T\in (\Sch/k)$ induce a natural transformation of functors $f^*:\K_{0,Y/k}\to \K_{0,X/k}$. 

Similarly, if $f:X\to Y$ is a perfect (for instance lci) morphism of proper varieties over $k$, the push-forwards 
$(f_T)_*$ for $T\in (\Sch/k)$ (which exist by \S\ref{funct} since $f_T$ is perfect by \cite[III, Corollaire 4.7.2]{SGA6})
induce a natural transformation of functors $f_*:\K_{0,X/k}\to \K_{0,Y/k}$,
by the base change theorem \cite[Theorem 3.10.3]{Lipman}
(which can be applied since $f_T:X_T\to Y_T$ and $Y_U\to Y_T$ are Tor-independent for all morphisms $U\to T$ in $(\Sch/k)$).

\begin{prop}
\label{pushfwdSK0}
Let $f:X\to Y$ be a perfect birational morphism between proper integral varieties over $k$.
\begin{enumerate}[(i)]
\item If $Y$ is normal, then $f_*$ restricts to a morphism $f_*:\SK_{0,X/k}\to \SK_{0,Y/k}$.
\item If $X$ and $Y$ are regular, then $f_*\circ f^*$ is the identity of $\K_{0,Y/k}$.
\end{enumerate}
\end{prop}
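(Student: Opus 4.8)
Let me set up the proof strategy for the two statements.

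The plan is to work at the level of the functor-valued pushforward $f_*:\K_{0,X/k}\to\K_{0,Y/k}$ constructed in \S\ref{functfunct}, and to check the compatibility of $f_*$ with the rank and determinant morphisms, so that it restricts to the kernels. For part~(i), since everything can be checked on $T$-points for $T\in(\Sch/k)$, it suffices to show that for the morphism $f_T:X_T\to Y_T$ the pushforward $(f_T)_*:\K_0(X_T)\to\K_0(Y_T)$ carries $\SK_0(X_T)$ into $\SK_0(Y_T)$. Because $f$ is birational between integral varieties with $Y$ normal, the morphism $f$ has geometrically connected fibers and $f_*\sO_X=\sO_Y$ by Zariski's main theorem (and this is preserved after the flat base change $Y_T\to Y$, so that $(f_T)_*\sO_{X_T}=\sO_{Y_T}$). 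The key computation is then that $\rk$ and $\det$ interact correctly with $(f_T)_*$: for a perfect birational morphism the generic fiber is a point, so $\rk((f_T)_*x)=\rk(x)$ (the rank is computed generically, where $f_T$ is an isomorphism), and an analogous statement holds for the determinant. Concretely, I would use the projection formula~(\ref{projformula}) to reduce the determinant computation to the class $(f_T)_*\sO_{X_T}=\sO_{Y_T}$ and to the fact that $R^qf_{T*}$ vanishes in positive degrees up to a contribution that is trivial in $\Pic$. The upshot is that if $x\in\SK_0(X_T)$, i.e.\ $\rk(x)=0$ and $\det(x)$ is trivial, then both $\rk((f_T)_*x)=0$ and $\det((f_T)_*x)$ is trivial, giving $(f_T)_*x\in\SK_0(Y_T)$ as desired.

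For part~(ii), under the stronger hypothesis that $X$ and $Y$ are both regular, the statement $f_*\circ f^*=\Id$ on $\K_{0,Y/k}$ is again checked on $T$-points, so it reduces to showing $(f_T)_*\,(f_T)^*=\Id$ on $\K_0(Y_T)$ for each $T$. Here the projection formula~(\ref{projformula}) is decisive: for $y\in\K_0(Y_T)$ one has $(f_T)_*(f_T)^*y=(f_T)_*(\sO_{X_T}\otimes (f_T)^*y)=(f_T)_*\sO_{X_T}\otimes y$, so the whole assertion collapses to the single identity $(f_T)_*\sO_{X_T}=\sO_{Y_T}$ in $\K_0(Y_T)$. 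Since $f$ is a perfect proper birational morphism with $Y$ (hence $Y_T$) normal, this identity is precisely the statement that $Rf_{T*}\sO_{X_T}=\sO_{Y_T}$ in the derived category, i.e.\ $f_{T*}\sO_{X_T}=\sO_{Y_T}$ and $R^qf_{T*}\sO_{X_T}=0$ for $q>0$; this is a standard consequence of regularity (or rationality of singularities) of $Y$ together with birationality, compatible with the flat base change to $T$.

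The main obstacle I anticipate is the verification that $\det$ is compatible with $(f_T)_*$ in part~(i), since unlike the rank — which is a purely generic invariant and therefore obviously preserved under a birational pushforward — the determinant is a global object and one must control the higher direct image contributions. The clean way to handle this is to prove the single derived-category identity $Rf_{T*}\sO_{X_T}=\sO_{Y_T}$ first, and then to deduce the compatibility of both $\rk$ and $\det$ with pushforward from it, so that parts~(i) and~(ii) share this common technical core; part~(ii) is then essentially a formal consequence via the projection formula, while part~(i) additionally requires tracking the rank and determinant of $(f_T)_*x$ for general $x$, which one reduces to the case of structure sheaves of closed subschemes using the compatibility established above.
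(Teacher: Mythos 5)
Your strategy for part (i) does not work. It hinges on the derived identity $\RR (f_T)_*\sO_{X_T}=\sO_{Y_T}$, but normality of $Y$ only yields the underived equality $f_*\sO_X=\sO_Y$; it gives no control whatsoever on $R^qf_*\sO_X$ for $q>0$, and these higher direct images are exactly what enters the class $(f_T)_*[\sO_{X_T}]\in\K_0(Y_T)$. Indeed the derived identity is false for a merely normal $Y$: for a resolution of the projective cone over a smooth plane cubic one has $R^1f_*\sO_X\neq 0$ (a non-rational normal singularity). Moreover, even granting that identity, the projection formula~(\ref{projformula}) computes $(f_T)_*$ only on classes of the form $(f_T)^*y$; it says nothing about $\det((f_T)_*x)$ for a general $x\in\SK_0(X_T)$ — for instance a class supported on the exceptional locus — and your proposed reduction ``to structure sheaves of closed subschemes'' is unavailable, since $X_T$ is neither noetherian nor regular and no d\'evissage applies. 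The paper's proof of (i) is of a different nature: letting $U\subseteq Y$ be the largest open subset over which $f$ is an isomorphism, base change along the flat morphism $U_T\hookrightarrow Y_T$ shows that $\rk((f_T)_*x)$ and $\det((f_T)_*x)$ are trivial over $U_T$, and one concludes from the injectivity of the restriction maps $\Z(Y_T)\to\Z(U_T)$ and $\Pic(Y_T)\to\Pic(U_T)$. This last point, a Hartogs-type statement, is where normality is genuinely used: $\sO_{Y,y}$ has depth $\geq 2$ at every $y\in Y\setminus U$, and after reducing to noetherian $T$ one applies \cite[XI, Lemme~3.4]{SGA2} together with the behaviour of depth under flat base change.

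For part (ii) your route agrees with the paper's (projection formula plus flat base change reduce everything to $\RR f_*\sO_X=\sO_Y$), but the step you call ``standard'' is precisely the hard point, and it requires regularity of $X$ and $Y$, not normality as your write-up says. Over an arbitrary field — in particular one of positive characteristic, possibly imperfect, which is the case this paper exists to handle — the statement that $\sO_Y\to\RR f_*\sO_X$ is a quasi-isomorphism for a proper birational morphism of regular varieties is the theorem of Chatzistamatiou and R\"ulling \cite[Theorem~1.1]{Charu}; the characteristic-zero arguments via rational singularities that you allude to are not available there (regular does not even imply smooth over an imperfect field, and resolution of singularities is unknown). So (ii) is correct in outline but rests on an input you assert rather than prove, and (i) requires an entirely different argument.
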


\begin{proof}[Proof of~(i)]
Let $U\subseteq Y$ be the biggest open subset above which $f$ is an isomorphism. Since $Y$ is normal, the  depth of $\sO_{Y,y}$ is $\geq 2$ for all $y\in Y\setminus U$.

Fix $T\in(\Sch/k)$ and a class $x\in \SK_0(X_T)$. Since $(\rk(x),\det(x))|_{U_T}$ is trivial, so is 
$(\rk((f_T)_*x),\det((f_T)_*x))|_{U_T}\in\Z(U_T)\times\Pic(U_T)$. To deduce the triviality of $(\rk((f_T)_*x),\det((f_T)_*x))$, it suffices to show the injectivity of the restriction morphisms $\Z(Y_T)\to \Z(U_T)$ and $\Pic(Y_T)\to \Pic(U_T)$. 

The morphism $\Z(Y_T)\to \Z(U_T)$ is actually bijective by Remark~\ref{isoloc} (ii) applied to the injection $U\hookrightarrow Y$.
To show the injectivity of $\Pic(Y_T)\to \Pic(U_T)$, we can assume that 
$T$ is noetherian by absolute noetherian approximation \cite[Theorem~C.9]{TT} and by the limit arguments of \cite[\S8.5]{EGA43}. It then suffices to combine \cite[XI, Lemme~3.4]{SGA2} and \cite[Proposition 6.3.1]{EGA42}.
\let\qed\relax\end{proof}
\begin{proof}[Proof of~(ii)]
Fix $T\in(\Sch/k)$. Chatzistamatiou and R\"ulling \cite[Theorem 1.1]{Charu} have shown that the natural morphism $\sO_Y\to \RR f_*\sO_{X}$ is a quasi-isomorphism. The base change theorem \cite[Theorem 3.10.3]{Lipman} (which can be applied as $Y_T$ is flat over~$Y$) implies that the natural morphism $\sO_{Y_T}\to \RR (f_T)_*\sO_{X_T}$ is also a quasi-isomorphism. That $(f_T)_*\circ(f_T)^*$ is the identity of $\K_0(Y_T)$ then follows from the projection formula~(\ref{projformula}).
\end{proof}

\subsubsection{Curves} 
\label{curves}

We can entirely compute $\K_{0,X/k,\tau}$ if $X$ is a curve.

\begin{prop}
\label{K0curve}
If $\tau\in\{\Zar,\et,\fppf\}$ and if $X$ is a projective variety of dimension~${\leq 1}$ over $k$, then $(\rk,\det):\K_{0,X/k,\tau}\to\Z_{X/k}\times\Pic_{X/k,\tau}$ is an isomorphism.
\end{prop}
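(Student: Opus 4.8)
The plan is to show that $(\rk,\det)$ is an isomorphism of $\tau$-sheaves by exhibiting an explicit inverse, building on the fact that for the trivial topology the target functor $\Z_{X/k}\times\Pic_{X/k}$ already records all of $\K_0$ when $X$ has dimension $\leq 1$. The essential input is the dimension filtration $F_\bullet\K_0$ from \textsection\ref{coherent}: for a noetherian scheme of dimension $\leq 1$ one has $F^2=0$, so the associated graded of $\K_0$ is concentrated in codimensions $0$ and $1$. Since $\Gr^0_F\K_0$ is computed by the rank and $\Gr^1_F\K_0$ by the determinant (recall from \textsection\ref{coherent} that $\varphi^0$ and $\varphi^1$ are isomorphisms, inverse to $\rk$ and $\det$), the map $(\rk,\det)$ should already be an isomorphism at the level of the presheaf $\K_{0,X/k}$ evaluated on \emph{regular} test schemes $X_T$.

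First I would reduce to a pointwise statement. Because sheafification is exact and compatible with finite products, it suffices to produce, for each $T\in(\Sch/k)$, a natural inverse to $(\rk,\det)\colon\K_0(X_T)\to\Z(X_T)\times\Pic(X_T)$ after passing to $\tau$-local sections; equivalently, it suffices to check that $(\rk,\det)$ induces an isomorphism on $\tau$-stalks. A $\tau$-stalk of $\K_{0,X/k}$ at a geometric point corresponds to evaluating $\K_0(X_T)$ where $T$ is a strictly henselian (for $\tau=\et$) or suitably local (for $\tau=\fppf$) base, so the geometric fibres $X_T$ are base changes of the one-dimensional $X$ over a local base. The surjectivity of $(\rk,\det)$ on stalks is the point where the topology is used: I would construct a section of $\rk$ from the class of a rank-one free module, and a section of $\det$ by sending a line bundle $\sL$ to its class $[\sL]\in\K_0$, which has rank $1$; subtracting $[\sO]$ normalizes the rank to $0$, giving a splitting over any base and hence a morphism $\Z_{X/k}\times\Pic_{X/k}\to\K_{0,X/k}$ of presheaves that one checks is a two-sided inverse to $(\rk,\det)$ on stalks.

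For injectivity the key is that the kernel of $(\rk,\det)$ is $\SK_{0,X/k}=F^2\K_{0,X/k}$, and I would argue that this sheafifies to zero when $\dim X\leq 1$. The cleanest route is to use that, $\tau$-locally, $X_T$ is covered by spectra of regular local rings where $F^2\K_0$ vanishes for dimension reasons; combined with the projective-bundle and localization machinery of \textsection\ref{coherent} this forces $\SK_{0,X/k,\tau}=0$. The mild subtlety is that $X_T$ need not be regular for general $T$, so I cannot apply the identification $F^2\K_0=\SK_0$ directly; instead I would work with the graded pieces of the dimension filtration on $\GG_0$ and the comparison $\K_0\to\GG_0$ after a faithfully flat local base change that resolves singularities of the base, or more simply reduce to the case where $T$ is the spectrum of a field via $\tau$-localization, where $X_T$ is a one-dimensional scheme over a field and the filtration argument is unobstructed.

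\textbf{Expected main obstacle.} The hardest point is controlling $\SK_{0,X/k}$ over \emph{non-regular} test schemes $X_T$: the clean identity $\SK_0(X_T)=F^2\K_0(X_T)=0$ is only immediate when $X_T$ is regular of dimension $\leq 1$, and for arbitrary $T\in(\Sch/k)$ the product $X_T$ can be badly singular, so the vanishing of the kernel must be extracted $\tau$-locally rather than globally. I expect the real work to lie in showing that $\SK_{0,X/k}$ sheafifies to $0$, i.e.\ that every class of rank $0$ and trivial determinant dies after an fppf (or étale) cover; this is where Proposition~\ref{K0curve} genuinely needs the topology and cannot be proved at the presheaf level.
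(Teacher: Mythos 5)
Your overall skeleton (reduce to stalks/local bases, surjectivity via $(n,\sL)\mapsto[\sL]+(n-1)[\sO]$) matches the paper's first reduction: the paper notes that it suffices to treat $\tau=\Zar$ (sheafification for $\et$ and $\fppf$ factors through Zariski sheafification), and then uses commutation of $\K_0$ and $\Pic$ with filtered limits plus noetherian approximation to reduce to proving that $(\rk,\det)\colon\K_0(X_T)\to\Z(X_T)\times\Pic(X_T)$ is bijective for $T$ local noetherian; in particular no étale or fppf covers are ever needed, contrary to your closing claim. But the argument you propose for injectivity --- the heart of the matter --- has genuine gaps. First, the identifications $\SK_0=F^2\K_0$ and $\Gr^0_F\simeq\Z$, $\Gr^1_F\simeq\Pic$ via $\varphi^0,\varphi^1,\varphi^2$ rest on the comparison $\K_0\isoto\GG_0$ and the Riemann--Roch theorem without denominators, which require regularity (in the paper, smoothness over $k$). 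These fail here twice over: the proposition allows $X$ itself to be a singular projective curve (for the cuspidal cubic, $\K_0\to\GG_0$ is not an isomorphism, and $\Pic$ contains a copy of the additive group that $\GG_0$ cannot see), and, more importantly, $X_T$ has dimension $1+\dim T$, so even when $X_T$ is regular there is no vanishing of $F^2$ ``for dimension reasons'': e.g.\ $X_T=\bP^1_R$ for a discrete valuation ring $R$ is a regular $2$-dimensional scheme with plenty of codimension-$2$ points, and the vanishing of $[\sO_x]\in\K_0(\bP^1_R)$ for a closed point $x$ of the closed fiber is a nontrivial fact about the local base, not a dimension count (it is false over a non-local base, where $[\sO_x]$ has nonzero second Chern class). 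Second, your escape routes do not exist: $\tau$-stalks live on (strictly henselian) local rings of arbitrary dimension, never on fields --- no fppf covering family of $\Spec(k[[t]])$ can consist of spectra of fields, since any member hitting the closed point fails flatness --- and there is no faithfully flat base change that ``resolves singularities of the base''.

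What is actually needed, and what the paper supplies, is a relative geometric argument over the local noetherian base: for $Y=X_T$ (connected) with a suitable relatively ample $\sO_Y(1)$, every vector bundle $\sE$ of rank $\geq 2$ sits in a short exact sequence $0\to\sO_Y(-l)\to\sE\to\sF\to 0$ with $\sF$ a vector bundle (the paper's Lemma~\ref{vblocalcurve}); the proof constructs a section of $\sE(l)$ that vanishes nowhere on the closed fiber --- a curve over a field --- and uses Serre vanishing to lift it, properness and locality of $T$ then guaranteeing it vanishes nowhere on $Y$. By induction on the rank, $\K_0(Y)$ is generated by line bundles, and applying the same lemma to $\sL\oplus\sM$ and to $\sO_Y\oplus(\sL\otimes\sM)$ with the same $l$ yields $[\sL]+[\sM]=[\sO_Y]+[\sL\otimes\sM]$, whence $x=[\det(x)]+(\rk(x)-1)[\sO_Y]$ for all $x\in\K_0(Y)$, which is exactly injectivity. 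Note also that without this relation your proposed inverse $(n,\sL)\mapsto[\sL]+(n-1)[\sO]$ is not even visibly a group homomorphism, so it cannot serve as a definition of an inverse morphism of sheaves; establishing that relation is precisely where the work lies, and it is absent from your proposal.
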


\begin{proof}
It suffices to prove the proposition for $\tau=\Zar$.
The commutation of $\K_0$ and $\Pic$ with directed inverse limits of qcqs schemes with affine transition maps (see \hbox{\cite[Proposition 3.20]{TT}} and \cite[\S 8.5]{EGA43}),
applied to the system of affine neighbourhoods of a point in a qcqs $k$-scheme, shows that it suffices to prove the bijectivity of $(\rk,\det):\K_{0}(X_T)\to\Z(X_T)\times\Pic(X_T)$ for any local $k$-scheme~$T$. 
By absolute noetherian approximation \cite[Theorem C.9]{TT}, we can write $T$ as the limit of a directed inverse system $(T_i)_{i\in I}$ of noetherian $k$-schemes with affine transition maps. Replacing the $T_i$ by their localizations at the images of the closed point of~$T$, we may assume that they are local. A limit argument as above then shows that we may assume $T$ to be noetherian.
This case  follows from Lemma \ref{redlocal} below applied to the connected components of $X_T$.
\end{proof}

\begin{lem}
\label{redlocal}
Let $\pi:Y\to T$ be a projective morphism with $T$ local noetherian, $Y$ non-empty and connected, and fibers of dimension $\leq 1$. Then the morphism $(\rk,\det):\K_{0}(Y)\to\Z\times\Pic(Y)$ is bijective.
\end{lem}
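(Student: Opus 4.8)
The plan is to prove bijectivity by showing that the homomorphism $\sigma\colon\Z\times\Pic(Y)\to\K_0(Y)$, $(n,L)\mapsto(n-1)[\sO_Y]+[L]$, is inverse to $(\rk,\det)$. Since $Y$ is connected the source is indeed $\Z\times\Pic(Y)$, and since $Y$ is projective over the affine scheme $T$ it carries an ample family of line bundles, so $\K_0(Y)$ is generated by classes of vector bundles (\S\ref{defK0}). Surjectivity of $(\rk,\det)$, that is $(\rk,\det)\circ\sigma=\mathrm{id}$, is immediate from $\rk[L]=1$, $\det[L]=L$. The whole content is injectivity, and I would extract it from two relations. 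Writing $\lambda(L):=[L]-[\sO_Y]$, these are: (1) $\lambda(M)\lambda(N)=0$ in $\K_0(Y)$ for all $M,N\in\Pic(Y)$, which is exactly what makes $\lambda$ additive and hence $\sigma$ a homomorphism; and (2) $[E]=(\rk E)[\sO_Y]+\lambda(\det E)$ for every vector bundle $E$. Granting these, any $x=\sum_i a_i[E_i]$ satisfies $x=\rk(x)[\sO_Y]+\lambda(\det x)=\sigma(\rk x,\det x)$, so $\sigma$ is surjective; combined with $(\rk,\det)\circ\sigma=\mathrm{id}$, this forces $(\rk,\det)$ to be bijective.

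The geometric heart is a separation statement for divisors. If $D,E$ are effective Cartier divisors on $Y$ with $D\cap E=\varnothing$, then $\lambda(\sO_Y(-D))\lambda(\sO_Y(-E))=[\sO_D][\sO_E]=0$: this class is computed by the Koszul complex $\sO_Y(-D-E)\to\sO_Y(-D)\oplus\sO_Y(-E)\to\sO_Y$ on the two defining sections, and such a complex is acyclic precisely when the sections have no common zero. Disjointness can be forced as follows. An effective Cartier divisor $D$ is finite over $T$, because one equation cuts the at most $1$-dimensional fibres of $Y\to T$ down to dimension $0$; hence $D\cap E$ is proper over $T$, and as $T$ is local, a nonempty closed subset of $T$ must contain the closed point $s$. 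Therefore $D\cap E=\varnothing$ as soon as $D_s\cap E_s=\varnothing$ in the closed fibre $Y_s$. Since $Y_s$ has dimension $\leq 1$ over $\kappa(s)$, two members of a sufficiently positive linear system have disjoint supports, the expected dimension of their intersection being negative; concretely, a section avoiding a prescribed finite subscheme exists because $H^0(Y_s,L^{\otimes k})$ surjects onto the product of the residue fields at the finitely many relevant points for $k\gg0$, over any field.

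To obtain (1) for arbitrary $M,N$ I would fix a relatively very ample $A$ and write each line bundle $L$ as $\sO_Y(-D_1)\otimes\sO_Y(D_2)$ with $D_1\in\lvert L^{-1}\otimes A^m\rvert$ and $D_2\in\lvert A^m\rvert$ very ample for $m\gg0$, so that $\lambda(L)$ becomes a combination of classes $\lambda(\sO_Y(-D_i))$; choosing all divisors in play, together with a linearly equivalent disjoint companion of each (needed to kill the self-intersections $\lambda(\sO_Y(-D))^2$), in general position on $Y_s$, every product of two of these elementary classes vanishes by the separation step, whence $\lambda(M)\lambda(N)=0$. For (2) I would twist so that $E\otimes A^n$ is globally generated: a general section of $E\otimes A^n$ has zero locus of expected codimension $\rk E\geq 2$ (the case $\rk E=1$ being trivial), hence empty on $Y_s$ and so, by the local base argument, empty on $Y$; this nowhere-vanishing section exhibits $\sO_Y$ as a subbundle, and induction on the rank yields $[E\otimes A^n]=(\rk E)[\sO_Y]+\lambda(\det(E\otimes A^n))$ exactly. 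Multiplying by $[A^{-n}]=[\sO_Y]+\lambda(A^{-n})$ in the ring $\K_0(Y)$ and discarding the products of $\lambda$'s by means of (1) then gives (2) for $E$.

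The main obstacle is that $Y$ need not be regular, so one cannot identify $\SK_0(Y)=\ker(\rk,\det)$ with $F^2\K_0(Y)$ and thence with $\CH^2(Y)$ through the Riemann--Roch isomorphism~(\ref{CHK}) and conclude by dimension of support; the vanishing has to be produced by hand through acyclic Koszul complexes, which is exactly why the separation of divisors on the $\leq 1$-dimensional closed fibre, combined with properness over the local base, is indispensable. A secondary difficulty is the absence of a splitting principle in $\K_0(Y)$ itself: the second Chern classes of non-split bundles and the passage from very positive to arbitrary line bundles cannot be reduced formally to $\Pic(Y)$, and instead call for the nowhere-vanishing-section construction of~(2) and the general-position argument of~(1).
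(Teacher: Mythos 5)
Your overall route is essentially the paper's: both proofs reduce $\K_0(Y)$ to classes of line bundles and then exploit the relation $[\sL]+[\sM]=[\sO_Y]+[\sL\otimes\sM]$ (your relation (1) is exactly this, since $\lambda(\sL)\lambda(\sM)=[\sL\otimes\sM]-[\sL]-[\sM]+[\sO_Y]$). Your Koszul-complex proof of (1) --- two sections with zero loci that are disjoint on the closed fibre $Y_s$, hence disjoint on $Y$ by properness over the local base --- is a correct variant of the paper's argument, which obtains the same relation from a nowhere-vanishing section of $(\sL\oplus\sM)(l)$, i.e.\ from the same geometric input; and your mechanism for producing the required sections of \emph{line} bundles (surjectivity of $H^0(Y_s,-)$ onto the residue fields at finitely many points, which must include a point of every irreducible component of $Y_s$ so that the zero loci are finite) is valid over arbitrary, in particular finite, residue fields. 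Two harmless slips: an effective Cartier divisor on $Y$ need not be finite over $T$ (it may contain the whole closed fibre), but all you actually use is that $D\cap E$ is closed in $Y$, hence proper over $T$; and your divisors need not be Cartier, but Koszul acyclicity only needs sections with no common zero.

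The genuine gap is in your step (2), the reduction of vector bundles to line bundles. You justify the key point --- a nowhere-vanishing section of $(E\otimes A^n)|_{Y_s}$ when $\rk E\geq 2$ --- by asserting that ``a general section has zero locus of expected codimension $\rk E\geq 2$, hence empty on $Y_s$''. This general-position argument is available only when the residue field $\kappa(s)$ of the closed point of $T$ is infinite, whereas the lemma as stated allows $\kappa(s)$ to be finite (e.g.\ $T=\Spec(\bF_p)$ or $T=\Spec(\Z_p)$), and this case genuinely occurs in the paper's application (Proposition \ref{K0curve} over a finite ground field). Over a finite field $H^0(Y_s,E\otimes A^n)$ is a finite set, there is no ``general'' section, and the naive count (the sections vanishing at a closed point $p$ form a subspace of codimension $2\deg p$) does not obviously suffice when $Y_s$ has many irreducible components; some genuine device is needed. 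This is precisely the content, and the main technical work, of the paper's Lemma \ref{vblocalcurve}: for every bundle $\sE$ of rank $\geq 2$ and all $l\gg0$ it produces a section of $\sE(l)$ vanishing nowhere on the closed fibre by an explicit construction ($\tau=\alpha^{l-m}\beta+\gamma\delta$, with $\alpha,\delta$ sections of line bundles and $\beta,\gamma$ sections of twists of $\sE$ arranged to be independent outside a finite set), not by general position; the remark following that lemma states explicitly that the shortcut you invoke works only when the residue field of $T$ is infinite. Without such an argument, your proof of (2) --- and with it the generation of $\K_0(Y)$ by line bundles and the injectivity of $(\rk,\det)$ --- is incomplete.
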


\begin{proof}
The surjectivity of $(\rk,\det)$ is obvious and we prove its injectivity. 

As explained in \S\ref{defK0}, $\K_0(Y)$ is generated by classes of vector bundles on~$Y$. Lemma \ref{vblocalcurve} below and induction on the rank of vector bundles show that it is even generated by classes of line bundles on $Y$.
Let $\sL$ and $\sM$ be two line bundles on~$Y$. Applying Lemma \ref{vblocalcurve} twice
with the same $l\gg0$ and with the same very ample line bundle $\sO_Y(1)$ on $Y$ yields exact sequences $0\to \sO_{Y}(-l)\to\sL\oplus\sM\to\sF_1\to 0$ and  $0\to \sO_{Y}(-l)\to\sO_{Y}\oplus(\sL\otimes\sM)\to\sF_2\to 0$. Since the line bundles $\sF_1$ and~$\sF_2$ have the same determinant, they are isomorphic, and we deduce the identity
$[\sL]+[\sM]=[\sO_{Y}]+[\sL\otimes\sM]\in \K_0(Y)$. This identity and the fact that $\K_0(Y)$ is generated by line bundles implies that $x=[\det(x)]+(\rk(x)-1)[\sO_{Y}]$ for all $x\in \K_0(Y)$. This shows at once the required injectivity.
\end{proof}

\begin{lem}
\label{vblocalcurve}
In the setting of Lemma \ref{redlocal}, there exists a $\pi$-ample line bundle $\sO_Y(1)$ on~$Y$ with the following property. For all vector bundles $\sE$ of rank~$\geq 2$ on $Y$ and all $l\gg 0$, there exists a short exact sequence of vector bundles on $Y$ of the form
\begin{equation}
\label{sesvb}
0\to\sO_{Y}(-l)\to\sE\to\sF\to 0\rlap{.}
\end{equation}
\end{lem}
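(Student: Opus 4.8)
The goal is to produce, for a projective morphism $\pi:Y\to T$ with $T$ local noetherian, $Y$ non-empty connected with fibers of dimension $\leq 1$, a $\pi$-ample line bundle $\sO_Y(1)$ such that every rank-$\geq 2$ vector bundle $\sE$ on $Y$ fits, for all $l\gg 0$, into a short exact sequence $0\to \sO_Y(-l)\to\sE\to\sF\to 0$ with $\sF$ a vector bundle. The plan is to fix once and for all a $\pi$-very ample line bundle $\sO_Y(1)$ relative to $T$, and to construct the injection $\sO_Y(-l)\hookrightarrow\sE$ by choosing a global section $s$ of $\sE(l)=\sE\otimes\sO_Y(l)$ whose vanishing locus is empty; dually, what one really wants is a surjection $\sE^\vee\twoheadrightarrow\sO_Y(l)$, equivalently a nowhere-vanishing section of $\sE(l)$, and then $\sF$ will automatically be locally free precisely because the cokernel of a sub-line-bundle that is locally a direct summand is locally free.

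First I would twist: for $l\gg 0$ the sheaf $\sE(l)$ is $\pi$-globally generated (relative Serre vanishing/generation, using that $T$ is noetherian and $\pi$ projective, \cite[III, Theorem 8.8]{Hartshorne}), so its sections generate it at every point. The key point is then to find a \emph{single} global section $s\in H^0(Y,\sE(l))$ that does not vanish at any point of $Y$. Because the fibers of $\pi$ have dimension $\leq 1$ and $\sE(l)$ has rank $\geq 2$, a general section will avoid the degeneracy locus where $s$ vanishes: the locus of non-free choices has expected codimension equal to the rank, which is $\geq 2 > \dim(\text{fiber})$, so a generic section is nowhere zero along each fiber. The subtlety is that $T$ is only local, not a field, so ``generic'' must be made precise. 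I would work fiber by fiber over the closed point $t_0$ of $T$, produce a nowhere-vanishing section of $\sE(l)|_{Y_{t_0}}$ first, and then lift it.

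The heart of the argument, and the step I expect to be the main obstacle, is this lifting and the reduction to the closed fiber. Over the closed point, $Y_{t_0}$ is a projective scheme of dimension $\leq 1$ over the residue field $\kappa(t_0)$, and for a globally generated bundle of rank $\geq 2$ on a scheme of dimension $\leq 1$ a nowhere-vanishing section exists by a standard prime-avoidance/Bertini-type count (one successively chooses the section to avoid the finitely many associated points where the previous choices vanished; this is where rank $\geq 2$ over dimension $\leq 1$ is used). Having such a section $\bar s$ on $Y_{t_0}$, I would lift it to $s\in H^0(Y,\sE(l))$; the lifting is possible for $l\gg 0$ because the restriction map $H^0(Y,\sE(l))\to H^0(Y_{t_0},\sE(l)|_{Y_{t_0}})$ is surjective for large $l$ (again $\pi$-Serre vanishing of the ideal-sheaf-twisted bundle, using that $\mathfrak{m}_{t_0}$ is finitely generated and $T$ is local noetherian). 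The nowhere-vanishing of $s$ on the whole of $Y$ then follows from its nowhere-vanishing on $Y_{t_0}$ together with Nakayama's lemma: the vanishing locus $V(s)$ is closed and, being contained in no fiber over $t_0$, is in fact empty because $\pi(V(s))$ is a closed subset of the local scheme $T$ not containing $t_0$, hence empty.

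Finally, from a nowhere-vanishing $s\in H^0(Y,\sE(l))$ I obtain a sub-line-bundle $\sO_Y\hookrightarrow \sE(l)$ that is a local direct summand (its image is a line sub-bundle since $s$ is nowhere zero), so the quotient $\sE(l)/\sO_Y$ is a vector bundle $\sF'$; untwisting by $\sO_Y(-l)$ gives exactly the desired sequence $0\to\sO_Y(-l)\to\sE\to\sF\to 0$ with $\sF=\sF'(-l)$ locally free. The one point requiring care throughout is that all the ``$l\gg 0$'' statements (global generation of $\sE(l)$ on $Y$, surjectivity of restriction to $Y_{t_0}$) must be arranged with a \emph{single} threshold valid simultaneously, which is harmless since there are finitely many conditions; and that the chosen $\sO_Y(1)$ must be independent of $\sE$, as the statement demands, which is fine because $\sO_Y(1)$ is fixed before $\sE$ is introduced and only the twisting exponent $l$ is allowed to depend on $\sE$.
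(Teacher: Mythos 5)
Your proposal has the same skeleton as the paper's proof: fix $\sO_Y(1)$ independently of $\sE$, produce a nowhere-vanishing section of $\sE_{Y_t}(l)$ on the closed fiber $Y_t$, lift it to $H^0(Y,\sE(l))$ using surjectivity of the restriction map for $l\gg 0$ (Serre vanishing, where noetherianity of $T$ enters), deduce that the lifted section vanishes nowhere on all of $Y$ from properness of $\pi$ and locality of $T$, and note that the cokernel of the resulting sub-line bundle is locally free. All of those steps are correct and coincide with what the paper does.

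The gap is in the step you yourself single out as the heart of the matter: the existence of a nowhere-vanishing section on the closed fiber. Your justification --- a general section of a globally generated bundle of rank $\geq 2$ on a scheme of dimension $\leq 1$ misses the vanishing locus, by the codimension count $\mathrm{rank}\geq 2>\dim Y_t$ --- is valid only when the residue field $\kappa(t)$ is infinite. Such dimension counts show that the ``bad'' sections form a proper closed (or constructible) subset of the space of sections, but over a finite field a proper closed subset can contain \emph{every} rational point, so genericity produces no actual section. Finite residue fields genuinely occur here: the paper works over an arbitrary field $k$, possibly finite, and $T$ runs over local noetherian $k$-schemes, e.g.\ $T=\Spec(k)$ with $k=\bF_2$. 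This is exactly the difficulty the paper's proof is built to avoid: it constructs the section on $Y_t$ explicitly as $\tau=\alpha^{l-m}\beta+\gamma\delta$, where $\alpha$, $\beta$, $\gamma$ are sections (of $\sO_{Y_t}(1)$ and of twists of $\sE_{Y_t}$) with finite, controlled degeneracy loci, and $\delta$ is prescribed at the finitely many bad points via surjectivity of restriction maps --- prescribing a value that avoids a single forbidden value only requires the residue field at that point to have at least two elements. The paper's remark after the lemma makes the dichotomy explicit: when the residue field is infinite one can indeed conclude with Kleiman's general-position theorem \cite[Corollary~3.6]{KleGrass}, i.e.\ essentially your argument; it is the finite case that forces the extra work (alternatively one could invoke a Poonen-type closed-point sieve, a far heavier tool). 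Your fallback sketch of ``successively choosing sections avoiding the finitely many points where previous choices vanished'' does not repair this: replacing $s$ by $s+\lambda s'$ kills the old zeros but can create new ones, and over a finite field there are only finitely many $\lambda$ to try.
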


\begin{proof}
Let $\sO_Y(1)$ be a $\pi$-ample line bundle on $Y$, let $t$ be the closed point of $T$, and let $A\subseteq Y_t$ be a finite subset meeting all the irreducible components of $Y_t$. If $m\gg 0$, then $H^0(Y_t, \mathcal{O}_{Y_t}(m))\to H^0(A, \mathcal{O}_A(m))$ is surjective. As a consequence, after replacing $\sO_Y(1)$ with $\sO_Y(m)$, we may assume the existence of a section $\alpha\in H^0(Y_t, \mathcal{O}_{Y_t}(1))$ that vanishes at only finitely many points.

The same argument yields, for some $m\geq 0$, a section $\beta\in H^0(Y_t, \mathcal{E}_{Y_t}(m))$ that vanishes at only finitely many points. Let $B\subseteq Y_t$ be a finite subset meeting every irreducible
component of~$Y_t$ and such that $\alpha\beta$ does not vanish at any point of~$B$.
Since $H^0(Y_t, \mathcal{E}_{Y_t}(n))\to H^0(B, \mathcal{E}_B(n))$ is surjective for $n\gg 0$, we can choose $n\geq 0$ and a section $\gamma\in H^0(Y_t, \mathcal{E}_{Y_t}(n))$ such that $\alpha\beta$ and $\gamma$  are linearly dependent at only finitely many points of $Y_t$. Let $C\subseteq Y_t$ be this finite set of points. 

Choose any $l\geq \max(m,n)$ such that $H^0(Y_t, \mathcal{O}_{Y_t}(l-n))\to H^0(C, \mathcal{O}_C(l-n))$
and  $H^0(Y,\sE(l)) \to  H^0(Y_t, \mathcal{E}_{Y_t}(l))$
are surjective.  (Such~$l$ exist by Serre vanishing \cite[Th\'eor\`eme~2.2.1]{EGA31}; this is where
we use the noetherianity of~$T$.)
 Then there exists $\delta\in H^0(Y_t, \mathcal{O}_{Y_t}(l-n))$ such that $\tau:=\alpha^{l-m}\beta+\gamma\delta\in H^0(Y_t, \mathcal{E}_{Y_t}(l))$ vanishes nowhere. Lift $\tau$ to a section $\sigma\in H^0(Y,\sE(l))$. Since $\pi$ is proper and $T$ is local, $\sigma$ does not vanish on $Y$, thus giving rise to a short exact sequence of the form~(\ref{sesvb}).
\end{proof}

\begin{rem}
If the residue field of $T$ is infinite, the proof of Lemma \ref{vblocalcurve} can be simplified as one can then apply \cite[Corollary~3.6]{KleGrass} on $Y_t$ to construct $\tau$.
\end{rem}

\subsubsection{Projective bundles and blow-ups}
\label{prblsheaf}

If $X$ is a proper variety over $k$ and if $\sE$ is a vector bundle of rank $r$ on $X$, the formula (\ref{proj}) induces an isomorphism of functors 
\begin{equation}
\label{projfunct}
\K_{0,X/k}^r\isoto \K_{0,\bP_X\sE/k}.
\end{equation}
In view of the isomorphism $\rk:\K_{0,\Spec(k)/k,\Zar}\isoto\Z$ given by Proposition~\ref{K0curve}, we deduce that $\K_{0,\bP^{r-1}_k/k,\Zar}\simeq\Z^r$ and that $([\sO_{\bP^{r-1}_k}(-j)])_{0\leq j\leq r-1}$ forms a basis of the $\Z$\nobreakdash-module $\K_{0,\bP^{r-1}_k/k,\Zar}(k)$. The family $([\sO_{\bP^{d}_k}])_{0\leq d\leq r-1}$ is another basis.
For $r\geq 2$, identifying the morphism $(\rk,\det):\K_{0,\bP^{r-1}_k/k,\Zar}\to\Z_{\bP^{r-1}_k/k}\times\Pic_{\bP^{r-1}_k/k,\Zar}=\Z^2$ yields an isomorphism $\SK_{0,\bP^{r-1}_k/k,\Zar}\simeq \Z^{r-2}$ and shows that $([\sO_{\bP^{d}_k}])_{0\leq d\leq r-3}$ is a basis of the $\Z$\nobreakdash-module $\SK_{0,\bP^{r-1}_k/k,\Zar}(k)$.

Let $X$ be a proper variety over $k$ and $i:Y\to X$ be a regular closed immersion of pure codimension $c\geq 1$. Define $p:X'\to X$ to be the blow-up of $X$ along $Y$ with exceptional divisor $Y'$, and $p':Y'\to Y$ and $i':Y'\to X'$ to be the natural morphisms. Then the formula (\ref{blowup}) induces an isomorphism of functors
\begin{equation}
\label{blowupfunct}
\K_{0,X/k}\times \K_{0,Y/k}^{c-1}\isoto \K_{0,X'/k},
\end{equation}
in view of the functorialities described in \S\ref{functfunct}.

\subsection{The functor \texorpdfstring{$\CH^2_{X/k,\fppf}$}{CH²\_\{X/k,fppf\}}} 
\label{functCH2}

We introduce, for a smooth proper geometrically connected threefold $X$ over $k$ with geometrically trivial Chow group of zero-cycles, the functor $\CH^2_{X/k,\fppf}$. It will play for codimension $2$ cycles the same role as the Picard functor $\Pic_{X/k,\fppf}$ does for codimension $1$ cycles.

\subsubsection{The class of a point}
\label{classpointpar}
We first exhibit a canonical class $\nu_X\in \K_{0,X/k,\fppf}(k)$.

\begin{prop}
\label{classofpoint}
Let $X$ be a smooth proper geometrically connected variety over $k$ whose degree map $\deg:\CH_0(X_{\ok})\to\Z$ is an isomorphism.
 Choose $\tau\in\{\Zar,\et,\fppf\}$. Assume that $k=\ok$ if $\tau=\Zar$ and that $k=k_{\p}$ if $\tau=\et$. There exists a unique $\nu_X\in \K_{0,X/k,\tau}(k)$ such that for all finite extensions $k'$ of $k$ and all coherent sheaves $\sF$ on $X_{k'}$ whose support has dimension $0$, one has 
\begin{equation}
\label{eqnu}
[\sF]=h^0(X_{k'},\sF)\cdot \nu_X\in \K_{0,X/k,\tau}(k').
\end{equation}
\end{prop}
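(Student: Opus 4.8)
The plan is to construct $\nu_X$ explicitly as the class of a well-chosen zero-cycle and then verify the two assertions (uniqueness and the defining property~\eqref{eqnu}) separately. The hypothesis that $\deg:\CH_0(X_{\ok})\to\Z$ is an isomorphism is the crucial input: it says that over $\ok$, and hence (after a degree argument) over finite extensions of $k$, the class in $\CH_0$ of a length\nobreakdash-$d$ zero\nobreakdash-dimensional subscheme depends only on $d$. First I would observe that since $X$ is geometrically connected, it has a closed point $P$, of some degree $\delta=[k(P):k]$ over $k$; the class $[\sO_P]\in\K_0(X)$ lies in $F^3\K_0(X)$, and via the isomorphism $\varphi^3:\CH^3(X)\isoto\Gr^3_F\K_0(X)$ of \textsection\ref{coherent} one can measure its image. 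The candidate for $\nu_X$ will be $\frac{1}{\delta}[\sO_P]$, interpreted correctly; the point is that in $\K_{0,X/k,\tau}(k)$, which is a sheafification, such a ``fractional'' class becomes available after passing to a $\tau$\nobreakdash-cover trivializing the residue extension.

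More concretely, I would first establish the statement over $\ok$ (equivalently, for $\tau=\Zar$ with $k=\ok$), where $\CH_0(X_{\ok})\isoto\Z$ means that for any closed point $x\in X_{\ok}$ one has $[\sO_x]\in\Gr^3_F\K_0(X_{\ok})$ equal to a fixed generator. Setting $\nu_X:=[\sO_x]\in\K_0(X_{\ok})=\K_{0,X/k,\Zar}(\ok)$ for a rational point $x$, the key claim is that for any coherent sheaf $\sF$ with zero\nobreakdash-dimensional support, $[\sF]=h^0(\sF)\cdot\nu_X$ in $\K_0(X_{\ok})$, \emph{not merely in the associated graded}. To upgrade the equality from $\Gr^3_F$ to $\K_0$ itself, I would use that $F^4\K_0(X_{\ok})=0$ on a threefold (no negative\nobreakdash-dimensional support), so $F^3\K_0(X_{\ok})=\Gr^3_F\K_0(X_{\ok})\isoto\CH^3(X_{\ok})\isoto\Z$ via $\varphi^3$; since $[\sF]$ and $h^0(\sF)\cdot\nu_X$ both lie in $F^3$ and have the same image under the isomorphism $\varphi^{3,-1}=$ (a Chern\nobreakdash-class/length computation), they are equal. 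Here $h^0(X_{\ok},\sF)=\dim_{\ok}H^0(\sF)$ is exactly the length, which is what $\CH_0\isoto\Z$ computes.

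Next I would descend to a general field $k$ and the appropriate topology $\tau$. For $\tau=\et$ (with $k=k_\p$) or $\tau=\fppf$, I would pick a finite field extension $k_0/k$ (separable when $\tau=\et$, arbitrary when $\tau=\fppf$) over which $X$ acquires a rational point, so that $\Spec(k_0)\to\Spec(k)$ is a $\tau$\nobreakdash-cover; over $k_0$ the class $\nu_{X_{k_0}}:=[\sO_x]$ is defined as above. The content is then that these local classes glue: the two pullbacks of $\nu_{X_{k_0}}$ to $X_{k_0\otimes_k k_0}$ agree, because by the $k_0\otimes_k k_0$\nobreakdash-version of the preceding paragraph each is characterized by the same property~\eqref{eqnu} applied to the structure sheaf of a point, and~\eqref{eqnu} determines the class uniquely in $\K_0$ of a threefold over any field by the $F^4=0$ argument. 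This cocycle compatibility is precisely what lets $\nu_{X_{k_0}}$ descend to an element $\nu_X\in\K_{0,X/k,\tau}(k)$, since $\K_{0,X/k,\tau}$ is by definition a $\tau$\nobreakdash-sheaf. The defining identity~\eqref{eqnu} for a general finite extension $k'/k$ then follows by checking it after the faithfully flat base change to $k'\otimes_k k_0$, where it reduces to the already\nobreakdash-established geometric case.

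The uniqueness is the easiest part and I would dispatch it first or last: if $\nu,\nu'$ both satisfy~\eqref{eqnu}, then taking $\sF=\sO_P$ for a single closed point $P$ of $X_{k'}$ with $h^0=\delta$ forces $\delta\cdot\nu=\delta\cdot\nu'$ in $\K_{0,X/k,\tau}(k')$; but $\K_{0,X/k,\tau}(k')$ injects into $\K_0(X_{\ok})$ (through the geometric fiber, using that $\tau$\nobreakdash-sheafification does not create $\ok$\nobreakdash-torsion here, as $\K_0(X_{\ok})\isoto\CH^0\oplus\CH^1\oplus\CH^2\oplus\CH^3$ is torsion\nobreakdash-free in the relevant graded pieces for such $X$), and $\K_0(X_{\ok})$ is torsion\nobreakdash-free in degree~$3$, so $\nu=\nu'$. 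I expect the main obstacle to be the \emph{gluing/cocycle step} over $k_0\otimes_k k_0$: one must argue carefully that the characterization~\eqref{eqnu} pins down a genuine element of $\K_0$ (and not just of $\Gr^3_F$) over the possibly non\nobreakdash-reduced, non\nobreakdash-field base $k_0\otimes_k k_0$, so that the two pullbacks coincide on the nose. The cleanest route is to prove a uniqueness-of-the-class lemma valid over an arbitrary base in $(\Sch/k)$ for which $X_T$ is a regular threefold with $F^4\K_0=0$, and then feed $T=\Spec(k_0\otimes_k k_0)$ into it.
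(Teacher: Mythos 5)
Your candidate class and the overall shape (define the class of a point over a finite cover, then descend via the sheaf property) match the paper's proof, and your verification of~\eqref{eqnu} over $\ok$ is essentially the paper's computation. But the gluing step, which is the heart of the matter over imperfect fields, has a genuine gap. When $\tau=\fppf$ and $k_0/k$ is inseparable (the case this paper exists to handle), the ring $k_0\otimes_k k_0$ is non-reduced --- e.g.\ for $k_0=k(a^{1/p})$ one gets $k_0\otimes_k k_0\simeq k_0[\epsilon]/(\epsilon^p)$ --- so $X_{k_0\otimes_k k_0}$ is \emph{not} regular, and your proposed ``uniqueness-of-the-class lemma valid over an arbitrary base $T$ for which $X_T$ is a regular threefold with $F^4\K_0=0$'' is vacuous precisely where you need it: the whole apparatus ($\K_0=\GG_0$, the filtration by dimension of support, $\varphi^c$) requires regularity. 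Nor can you escape by refining the cover: $\Spec$ of a non-reduced Artin local ring admits no fppf cover by a field, so there is no way to ``reduce to the field case'' over $k_0\otimes_k k_0$. There is also a conceptual mismatch: descent for the sheaf $\K_{0,X/k,\tau}$ requires the two pullbacks of $\nu_{X_{k_0}}$ to agree \emph{in the sheafification} evaluated at $k_0\otimes_k k_0$, not in the presheaf $\K_0(X_{k_0\otimes_k k_0})$; the two pullbacks are the classes of two genuinely different points, and they should only be expected to agree after a further cover --- this is exactly where the hypothesis $\CH_0(X_{\ok})\simeq\Z$ enters, since rational equivalence over $\ok$ is witnessed only over some finite (possibly inseparable) extension. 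The paper's proof is built to avoid your difficulty: since $X$ is smooth, it has a point over a finite \emph{separable} Galois extension $l/k$ (\cite[2.2/13]{BLR}), so the descent cover can always be taken Galois, $l\otimes_k l$ is a product of fields, and the cocycle condition becomes Galois-invariance of $\varphi^n([x])$ in $\K_{0,X/k,\tau}(l)$; that invariance is then proved by passing to a finite, possibly inseparable, extension $l'/l$ over which the conjugate points become rationally equivalent, and using that restriction of a $\tau$-sheaf along the fppf covering $\Spec(l')\to\Spec(l)$ is injective. Inseparable extensions are thus only ever used to \emph{check an equality} inside the sheaf, never as the cover along which one descends.

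Two secondary points. First, your uniqueness argument is shakier than necessary: the claimed injection $\K_{0,X/k,\tau}(k')\hookrightarrow \K_0(X_{\ok})$ is unjustified (for $\tau=\fppf$ even $\Spec(\ok)$ has non-trivial coverings such as $\Spec(\ok[t]/(t^2))$, so comparing sheaf sections over $\ok$ with $\K_0(X_{\ok})$ is itself a non-trivial statement, settled in the paper only via representability), and the asserted splitting $\K_0(X_{\ok})\simeq\CH^0\oplus\CH^1\oplus\CH^2\oplus\CH^3$ is false in general. Uniqueness is immediate by the paper's route: apply~\eqref{eqnu} to the structure sheaf of an $l$-rational point (so $h^0=1$, no multiplication by a degree $\delta$), which pins down $\nu_X|_l$, and conclude by injectivity of $\K_{0,X/k,\tau}(k)\to\K_{0,X/k,\tau}(l)$ along the covering $\Spec(l)\to\Spec(k)$. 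Second, the proposition is stated for $X$ of arbitrary dimension, while your argument is written only for threefolds; the correct general substitute for your ``$F^4=0$'' observation is that $\varphi^n:\CH_0\to F_0\K_0$ is well defined (rational equivalence dies in $F_0\K_0$, \cite[X,~(1.1.3)]{SGA6}), which is what the paper uses.
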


\begin{proof}
Using \cite[2.2/13]{BLR}, choose a finite Galois extension $l$ of $k$ 
and a point $x\in X(l)$.
Let $n$ be the dimension of $X$.
For all field extensions $l'$ of~$l$, the definition of the flat pull-back of a cycle \cite[\S 1.5, \S 1.7]{Fulton} and the formula \cite[X,~(1.1.3)]{SGA6} show the commutativity of the natural diagram
\begin{equation}
\label{commzerocycles}
\begin{aligned}
\xymatrix
@R=0.4cm
{
\CH_0(X_l)\ar^{\varphi^n}[r]\ar[d]^{} & F_0\K_0(X_l)\ar[d]^{}  \\
\CH_0(X_{l'})\ar^{\varphi^{n}}[r]& F_0\K_0(X_{l'})\rlap{,}
}
\end{aligned}
\end{equation}
where the morphisms $\varphi^n$ are defined in \S\ref{coherent}.

Assume first that $\tau=\fppf$.

 Let $\{x_1,\dots,x_m\}$ be the $\Gal(l/k)$-orbit of $x$.
Since $\deg:\CH_0(X_{\ok})\isoto\Z$ is an isomorphism, there exists a finite extension $l'$ of $l$ such that the $x_i$ all have the same class in $\CH_0(X_{l'})$. Since $\Spec(l')\to \Spec(l)$ is an fppf covering, we deduce from~(\ref{commzerocycles}) that $\varphi^n([x])\in \K_{0,X/k,\tau}(l)$ is $\Gal(l/k)$\nobreakdash-invariant, hence descends to a class $\nu_X\in \K_{0,X/k,\tau}(k)$ since $\Spec(l)\to\Spec(k)$ is an \'etale covering.
Applying~(\ref{eqnu}) with $k'=l$ and $\sF=\sO_x$ shows that this is the only possible choice for $\nu_X$ and proves the uniqueness assertion of Proposition \ref{classofpoint}.

Let us now show that $\nu_X$ satisfies (\ref{eqnu}). Let $k'$ be a finite extension of $k$ and let $\sF$ be a coherent sheaf on $X_{k'}$ whose support has dimension $0$. Let $l'$ be a finite extension of $k$ containing both $k'$ and $l$, with the property that all the points in the support of $\sF_{l'}$ have residue field $l'$, and are rationally equivalent to $x$. (Such an $l'$ exists since $\deg:\CH_0(X_{\ok})\isoto\Z$.) The formula \cite[X,~(1.1.3)]{SGA6} and the commutativity of (\ref{commzerocycles}) show that $[\sF_{l'}]=h^0(X_{l'},\sF_{l'})\cdot \nu_X\in \K_{0,X/k,\tau}(l')$. Since $\Spec(l') \to \Spec(k')$ is an fppf covering, identity (\ref{eqnu}) follows.

If $\tau=\et$ (resp.\ $\tau=\Zar$), all the fppf (resp.\ fppf or \'etale) coverings that
appear above are \'etale (resp.\ Zariski) coverings, proving the proposition in these cases.
\end{proof}

\subsubsection{Codimension $2$ cycles on a threefold}
\label{CH2def}

Let us fix in \S\ref{CH2def} a smooth proper geometrically connected threefold $X$ over~$k$ whose degree map $\deg:\CH_0(X_{\Omega})\to\Z$ is an isomorphism for all algebraically closed field extensions $k\subseteq\Omega$ (an assumption that is satisfied if $X$ is $\ok$-rational).

Choose $\tau\in\{\Zar,\et,\fppf\}$, and assume that $k=\ok$ if $\tau=\Zar$ and that $k=k_{\p}$ if $\tau=\et$, so that $\Spec(l)\to\Spec(k)$ is a $\tau$\nobreakdash-covering for any finite extension~$l$ of~$k$.
Let $\nu_X\in \K_{0,X/k,\tau}(k)$ be the class defined in Proposition~\ref{classofpoint}. 
If $x\in X(l)$ for some finite extension $l$ of $k$, then $(\rk,\det)([\sO_x])=(0,\sO_X)$ (see \S\ref{coherent}).
In view of (\ref{eqnu}), one therefore has $\nu_X\in \SK_{0,X/k,\tau}(k)\subseteq \SK_{0,X/k,\tau}(l)$.

We still denote by~$\nu_X$ the morphism of $\tau$\nobreakdash-sheaves
$\nu_X:\Z\to \SK_{0,X/k,\tau}$ such that $\nu_X(1)=\nu_X$.
We view $\nu_X$ as a morphism of presheaves of abelian groups.

\begin{Def} 
\label{defch2xk}
We let $\CH^2_{X/k,\tau}:(\Sch/k)^{\op}\to(\Ab)$ be the (presheaf) cokernel of~$\nu_X:\Z\to \SK_{0,X/k,\tau}$. When $\CH^2_{X/k,\fppf}$ is representable, we let $\bCH^2_{X/k}$ be the group scheme over~$k$ that represents it.
\end{Def}

\begin{rem}
Let $k'/k$ be a field extension. There is an obvious identification $\K_{0,X_{k'}/k'}(T)=\K_{0,X/k}(T)$ for all $T\in(\Sch/k')$. We thus obtain natural isomorphisms $\K_{0,X_{k'}/k',\tau}(T)=\K_{0,X/k,\tau}(T)$, $\SK_{0,X_{k'}/k',\tau}(T)=\SK_{0,X/k,\tau}(T)$ and $\CH^2_{X_{k'}/k',\tau}(T)=\CH^2_{X/k,\tau}(T)$ for all $T\in(\Sch/k')$ and $\tau\in\{\Zar,\et,\fppf\}$.
In particular, if $\CH^2_{X/k,\fppf}$ is representable, so is $\CH^2_{X_{k'}/k',\fppf}$, and $\bCH^2_{X_{k'}/k'}=(\bCH^2_{X/k})_{k'}$.
\end{rem}

 The following proposition justifies these definitions.

\begin{prop}
\label{propK0CH2}
Associating with the class $[Z]\in \CH^2(X_{\ok})$ of a codimension $2$ integral closed subvariety $Z\subset X_{\ok}$ the class $[\sO_Z]\in \K_0(X_{\ok})$ of its structure sheaf induces a $\Gamma_k$-equivariant isomorphism
\begin{equation}
\label{okpoints}
\CH^2(X_{\ok})\isoto \CH^2_{X_{\ok}/\ok,\Zar}(\ok)=\CH^2_{X/k,\Zar}(\ok)\rlap{.}
\end{equation}
\end{prop}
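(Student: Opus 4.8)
The plan is to establish the isomorphism (\ref{okpoints}) by working over the algebraically closed field $\ok$, where the machinery developed earlier becomes fully applicable. First I would recall that Proposition~\ref{classofpoint} with $\tau=\Zar$ applies over $\ok$, so the class $\nu_X\in\SK_{0,X/k,\Zar}(\ok)$ is available and $\CH^2_{X/k,\Zar}(\ok)$ is by Definition~\ref{defch2xk} the cokernel of $\nu_X:\Z\to\SK_{0,X/k,\Zar}$ evaluated at $\ok$. Since $\ok$ is algebraically closed, every finite extension is trivial, so $\Spec(\ok)$ is a point and the Zariski sheafification does not change the values on $\ok$-points of the scheme $X_{\ok}$; concretely, $\SK_{0,X/k,\Zar}(\ok)=\SK_0(X_{\ok})$ and $\CH^2_{X/k,\Zar}(\ok)=\SK_0(X_{\ok})/\Z\cdot\nu_X$.

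Next I would invoke the identification $F^2\K_0(X_{\ok})=\SK_0(X_{\ok})$ recorded in \S\ref{coherent}, together with the fact that $\varphi^2:\CH^2(X_{\ok})\to\Gr^2_F\K_0(X_{\ok})$ is an isomorphism (via the Riemann--Roch theorem without denominators \cite{RRsansden}). Since $X_{\ok}$ is a smooth threefold, $F^3\K_0(X_{\ok})=F_0\K_0(X_{\ok})$ is the subgroup generated by classes of zero-dimensional sheaves. By the defining property (\ref{eqnu}) of $\nu_X$, this bottom piece $F^3\K_0(X_{\ok})$ is exactly the image of $\Z\cdot\nu_X$, because over $\ok$ the degree map $\deg:\CH_0(X_{\ok})\isoto\Z$ is an isomorphism and every zero-cycle is rationally equivalent to a multiple of a fixed point. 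Therefore the quotient $\SK_0(X_{\ok})/\Z\cdot\nu_X=F^2\K_0(X_{\ok})/F^3\K_0(X_{\ok})=\Gr^2_F\K_0(X_{\ok})$, and $\varphi^2$ supplies the desired isomorphism $\CH^2(X_{\ok})\isoto\CH^2_{X/k,\Zar}(\ok)$, sending $[Z]$ to $[\sO_Z]$ by the very definition of $\varphi^2$.

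The $\Gamma_k$-equivariance is then a formal consequence: the construction of $\nu_X$ is canonical, the maps $\varphi^2$ and the filtration $F_\bullet$ are functorial with respect to the pullback action of $\Gamma_k=\Aut(\ok/k)$ on $X_{\ok}$, and the identification $F^2=\SK_0$ together with the second Chern class inverse to $\varphi^2$ are all Galois-natural. The equality $\CH^2_{X_{\ok}/\ok,\Zar}(\ok)=\CH^2_{X/k,\Zar}(\ok)$ is the base-change remark following Definition~\ref{defch2xk}.

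The main obstacle I anticipate is verifying precisely that the image of $\nu_X$ coincides with all of $F^3\K_0(X_{\ok})$, i.e.\ that $\nu_X$ generates the bottom graded piece and not merely a subgroup of it. This requires combining the surjectivity of $\varphi^3:\CH_0(X_{\ok})\to\Gr^3_F\K_0(X_{\ok})$ (from \S\ref{coherent}) with the isomorphism $\deg:\CH_0(X_{\ok})\isoto\Z$ and the normalization (\ref{eqnu}) that pins down the class of a length-one sheaf as $\nu_X$; one must check these identifications are mutually compatible so that $F^3\K_0(X_{\ok})=\Z\cdot\nu_X$ holds on the nose rather than up to finite index. Everything else is bookkeeping about sheafification over an algebraically closed field and the functoriality already assembled in \S\ref{coherent}.
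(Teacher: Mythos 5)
Your proof is correct and is essentially the paper's own argument: the paper likewise identifies $\CH^2_{X/k,\Zar}(\ok)$ with $\SK_0(X_{\ok})/F_0\K_0(X_{\ok})$ using (\ref{eqnu}) (which gives both inclusions between $\Z\cdot\nu_X$ and $F_0\K_0(X_{\ok})$ on the nose, settling the ``finite index'' worry you raise), and then precomposes with $\varphi^2:\CH^2(X_{\ok})\isoto\Gr^2_F\K_0(X_{\ok})$ from the Riemann--Roch theorem without denominators, with $\Gamma_k$-equivariance holding by construction. The only cosmetic slip is your phrase about ``values on $\ok$-points of the scheme $X_{\ok}$''; what is meant (and what you then correctly use) is that sheafification does not change the value of the functor at $T=\Spec(\ok)$.
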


\begin{proof}
In view of (\ref{eqnu}), one has a natural isomorphism
\begin{equation}
\label{okpointsint}
\SK_0(X_{\ok})/F_0\K_0(X_{\ok})\isoto\CH^2_{X_{\ok}/\ok,\Zar}(\ok)\rlap{.}
\end{equation}
Precomposing (\ref{okpointsint}) with  $\varphi^2:\CH^2(X_{\ok})\isoto\Gr^2_F\K_0(X_{\ok})=\SK_0(X_{\ok})/F_0\K_0(X_{\ok})$ (see \S\ref{coherent}) yields the isomorphism (\ref{okpoints}). It is $\Gamma_k$\nobreakdash-equivariant by construction.
\end{proof}

The next lemma will be used in the proof of Theorem \ref{threp} (iv).
For the definition of $\alpha|_{X_t} \in \CH^2(X_t)$ in its statement,
see \cite[Example~5.2.1]{Fulton}.

\begin{lem}
\label{functforMurre}
Let $T$ be a smooth connected variety over $\ok$.
\begin{enumerate}[(i)]
\item For all $\alpha\in \CH^2(X_T)$, there exists a class $\beta\in \SK_0(X_T)$ with the property that for all $t\in T(\ok)$, the image of $\alpha|_{X_t}$ by (\ref{okpoints}) is induced by $\beta|_{X_t}$.
\item For all $\beta\in \SK_0(X_T)$, there exists a class $\alpha\in\CH^2(X_T)$ with the property that for all $t\in T(\ok)$, the image of $\alpha|_{X_t}$ by (\ref{okpoints}) is induced by $\beta|_{X_t}$.
\end{enumerate}
\end{lem}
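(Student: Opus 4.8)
The plan is to establish the two statements essentially in parallel, exploiting the isomorphism $\varphi^2:\CH^2(X_T)\isoto\Gr^2_F\K_0(X_T)$ (valid since $X_T$ is smooth over $\ok$, as $X$ is smooth and $T$ is smooth over $\ok$) together with the compatibility of $\varphi^2$ with the pull-back maps along the fibre inclusions $X_t\hookrightarrow X_T$ furnished by Lemma~\ref{lemCH2SK0}. The key point is that the isomorphism~(\ref{okpoints}) was constructed in Proposition~\ref{propK0CH2} precisely as the composition of $\varphi^2$ with the tautological map $\SK_0(X_{\ok})/F_0\K_0(X_{\ok})\isoto\CH^2_{X/k,\Zar}(\ok)$, so that checking ``the image of $\alpha|_{X_t}$ by~(\ref{okpoints}) is induced by $\beta|_{X_t}$'' amounts to checking that $\varphi^2(\alpha|_{X_t})$ and the class of $\beta|_{X_t}$ in $\SK_0(X_t)/F_0\K_0(X_t)$ agree.

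For part~(i), given $\alpha\in\CH^2(X_T)$, I would first lift it through $\varphi^2:\CH^2(X_T)\isoto\Gr^2_F\K_0(X_T)=F^2\K_0(X_T)/F^3\K_0(X_T)$ to an actual element of $F^2\K_0(X_T)=\SK_0(X_T)$, and take $\beta$ to be any such lift. The verification then reduces to a compatibility of two pull-back operations along $j_t:X_t\hookrightarrow X_T$: on the one hand $\alpha\mapsto\alpha|_{X_t}$ in Chow groups, on the other $\beta\mapsto\beta|_{X_t}=j_t^*\beta$ in $K$-theory. Since $j_t$ is a regular (indeed lci) closed immersion, $j_t^*$ exists on $\K_0$ and, by Lemma~\ref{lemCH2SK0} applied to $j_t$, the square relating $\varphi^2$ on $X_T$ and on $X_t$ commutes. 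Hence $\varphi^2(\alpha|_{X_t})$ is the image of $\beta$ under $j_t^*$ followed by the projection to $\Gr^2_F$, which is exactly the class induced by $\beta|_{X_t}$; composing with the tautological map gives the image of $\alpha|_{X_t}$ under~(\ref{okpoints}), as required.

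For part~(ii), given $\beta\in\SK_0(X_T)=F^2\K_0(X_T)$, I would set $\alpha:=(\varphi^2)^{-1}(\overline{\beta})\in\CH^2(X_T)$, where $\overline{\beta}$ is the image of $\beta$ in $\Gr^2_F\K_0(X_T)$; the inverse $(\varphi^2)^{-1}$ is $-c_2$, as recorded in \S\ref{coherent}. The same commutative-square argument via Lemma~\ref{lemCH2SK0} then shows that $\varphi^2(\alpha|_{X_t})=\overline{j_t^*\beta}$, i.e.\ the class induced by $\beta|_{X_t}$, so that~(\ref{okpoints}) sends $\alpha|_{X_t}$ to the class induced by $\beta|_{X_t}$. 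In both parts the only genuine input beyond formal manipulation is the functoriality of $\varphi^2$ under the regular immersions $j_t$, which is supplied by Lemma~\ref{lemCH2SK0}; I do not expect any obstacle beyond verifying that this lemma applies to $j_t$ (it does, since $X_t$ and $X_T$ are smooth over $\ok$ and $j_t$ is lci).

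The step I would flag as requiring the most care is the passage from the identity of classes in $\Gr^2_F\K_0$ to the statement ``induced by $\beta|_{X_t}$'' in the phrasing of the lemma: one must be explicit that~(\ref{okpoints}) factors through $\SK_0(X_t)/F_0\K_0(X_t)=\Gr^2_F\K_0(X_t)$ and that ``induced by $\beta|_{X_t}$'' refers precisely to the image of $\beta|_{X_t}$ under this quotient map. Once this bookkeeping is in place, the proof is immediate. I would therefore write it compactly, invoking Proposition~\ref{propK0CH2} for the description of~(\ref{okpoints}) and Lemma~\ref{lemCH2SK0} for the compatibility of $\varphi^2$ with $j_t^*$.
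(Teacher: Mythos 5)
Your proof is correct and follows essentially the same route as the paper's: the paper reduces to $\alpha=[Z]$, takes $\beta=[\sO_Z]$, and applies the Riemann--Roch theorem without denominators together with the functoriality of $c_2$ (respectively sets $\alpha=-c_2(\beta)$ for part~(ii)), which is exactly the content you invoke in packaged form via Lemma~\ref{lemCH2SK0} and the description of~(\ref{okpoints}) from Proposition~\ref{propK0CH2}. The only cosmetic difference is that you choose $\beta$ as an arbitrary lift of $\varphi^2(\alpha)$ rather than as $[\sO_Z]$ for an integral representative $Z$, which changes nothing of substance.
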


\begin{proof}[Proof of~(i)]
We can assume that $\alpha$ is the class of an integral subvariety $Z\subset X_T$ of codimension $2$. Define $\beta:=[\sO_Z]\in \SK_0(X_T)$. 
By the Riemann--Roch theorem without denominators, one has $\alpha=-c_2(\beta)$ (see \S\ref{coherent}). For $t\in T(\ok)$, one has $\alpha|_{X_t}=-c_2(\beta|_{X_t})$. Applying the Riemann--Roch theorem without denominators again shows that the image of $\alpha|_{X_t}$ by (\ref{okpoints}) is the class induced by $\beta|_{X_t}$.
\let\qed\relax\end{proof}
\begin{proof}[Proof of~(ii)]
Define $\alpha=-c_2(\beta)\in\CH^2(X_T)$ and argue as in (i).
\end{proof}

\section{Geometrically rational threefolds}
\label{sec3}

In Section \ref{sec3}, we prove the representability of the functor $\CH^2_{X/k,\fppf}$ defined in~\S\ref{CH2def} if $X$ is a smooth projective $\ok$-rational threefold, and study the group scheme $\bCH^2_{X/k}$ that represents it.

\subsection{Main statement}

Our goal is the following theorem.

\begin{thm}
\label{threp}
Let $X$ be a smooth projective
 $\ok$-rational threefold over $k$. Then:
\begin{enumerate}[(i)]
\item $\CH^2_{X/k,\fppf}$ is represented by a smooth group scheme $\bCH^2_{X/k}$ over~$k$.
\item $(\bCH^2_{X/k})^0$ is an abelian variety over~$k$.
\item $\CH^2_{X_{k_{\p}}/k_{\p},\fppf}=\CH^2_{X_{k_{\p}}/k_{\p},\et}$ and $\CH^2_{X_{\ok}/\ok,\fppf}=\CH^2_{X_{\ok}/\ok,\Zar}$.
\item The $\Gamma_k$-equivariant isomorphism 
\begin{equation}
\label{okpointsbis}
\psi^2_X:\CH^2(X_{\ok})\isoto \bCH^2_{X/k}(\ok)
\end{equation}
 obtained by combining (iii) and (\ref{okpoints}) restricts to a $\Gamma_k$-equivariant bijective regular homomorphism (in the sense of~\S\ref{parMurre})
\begin{equation}
\label{okopoints}
\psi^2_X:\CH^2(X_{\ok})_{\alg}\isoto (\bCH^2_{X/k})^0(\ok).
\end{equation}
\item The \'etale group scheme $\bCH^2_{X/k}/(\bCH^2_{X/k})^0$ over $k$ is associated with the $\Gamma_k$\nobreakdash-module $\NS^2(X_{\ok})$, which as a $\Z$\nobreakdash-module is free of finite rank.
\item The isomorphism (\ref{okopoints}) induces an isomorphism $\Ab^2(X_{k_\p})\isoto(\bCH^2_{X_{k_\p}/k_\p})^0$ (where $\Ab^2(X_{k_\p})$ denotes Murre's intermediate
Jacobian, introduced in~\S\ref{parMurre}).
\end{enumerate}
We endow $\bCH^2_{X/k}$ with the principal polarization (in the sense of \S\ref{pp}) induced by the principal polarization $\theta_X$ of $\Ab^2(X_{k_\p})$ (see \S\ref{parMurre}).
\begin{enumerate}[(i)]
\item[(vii)] If $X$ is $k$-rational, there exists a smooth projective curve $B$ over $k$ such that $\bCH^2_{X/k}$ is a principally polarized direct factor of $\bPic_{B/k}$ (in the sense of \S\ref{pp}).
\end{enumerate}
\end{thm}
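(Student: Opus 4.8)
The plan is to adapt the Clemens--Griffiths method \cite{CG}, tracking the functor $\CH^2_{X/k,\fppf}$ through a factorization of the birational map between $X$ and $\bP^3_k$, and to use Theorem~\ref{thmsplit} to overcome the imperfection of~$k$. First I would choose a regular proper variety $\tilde X$ over~$k$ dominating both $X$ and $\bP^3_k$, with birational morphisms $g\colon\tilde X\to\bP^3_k$ and $f\colon\tilde X\to X$ that are compositions of blow-ups along regular centres, namely closed points and regular integral curves; since the blow-up of a regular scheme along a regular centre is again regular, all the intermediate varieties are regular, which is all the $K$-theoretic formalism of \S\ref{sec2} requires (whereas smoothness over~$k$ fails in general when~$k$ is imperfect). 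This is the precise meaning of the statement that $X$ is obtained from $\bP^3_k$ by blowing up regular curves and points and then contracting.

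Next I would record the effect of a blow-up on the cokernel functor $\CH^2_{\bullet/k,\fppf}=\SK_{0,\bullet/k,\fppf}/\nu$. Blowing up a regular closed point only modifies the discrete part: by~\eqref{blowupfunct} with $c=3$ the exceptional $\bP^2$ contributes classes affecting $\NS^2$ but not the identity component. Blowing up a regular integral curve~$Y$ (the case $c=2$) introduces a factor $\bPic_{Y/k}$, because \eqref{blowupfunct} reads $\K_{0,X/k}\times\K_{0,Y/k}\isoto\K_{0,X'/k}$ while $\K_{0,Y/k,\fppf}\simeq\Z_{Y/k}\times\Pic_{Y/k,\fppf}$ by Proposition~\ref{K0curve}, so that passing to $\SK_0$ and dividing by the class of a point gives an isomorphism $\CH^2_{X'/k,\fppf}\simeq\CH^2_{X/k,\fppf}\times\bPic_{Y/k}$. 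Telescoping along~$g$, and absorbing $\CH^2_{\bP^3_k/k,\fppf}\simeq\Z\simeq\bPic_{\bP^1_k/k}$ into an auxiliary projective line (adding further lines to realize the discrete part $\NS^2(X_{\ok})$), I would obtain an isomorphism $\CH^2_{\tilde X/k,\fppf}\simeq\bPic_{B'/k}$, where $B'$ is the disjoint union of the regular curves blown up by~$g$ and of finitely many projective lines. Applying Proposition~\ref{pushfwdSK0} to~$f$ (so that $f_*\circ f^*=\mathrm{id}$ and $f_*$ preserves $\SK_0$) then realizes $\bCH^2_{X/k}$ as a direct factor of $\CH^2_{\tilde X/k,\fppf}\simeq\bPic_{B'/k}$.

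I would match the principal polarizations by descending to the perfect closure. By Theorem~\ref{threp}~(vi) one identifies $(\bCH^2_{X/k})^0_{k_\p}$ with Murre's intermediate Jacobian $\Ab^2(X_{k_\p})$ and its polarization $\theta_X$ (see~\S\ref{parMurre}); over the perfect field $k_\p$ the orthogonality of the theta polarization under blow-ups is exactly the computation of \cite{CGBW}, and the factor attached to each blown-up curve carries the theta polarization of its abelian part over $k_\p$ through~\eqref{Chevanorm}. This makes the decompositions above compatible with principal polarizations, so that $\bCH^2_{X/k}$ is a \emph{principally polarized} direct factor of $\bPic_{B'/k}$.

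The main obstacle is that, when $k$ is imperfect, some components of $B'$ are regular but not smooth over~$k$, so $B'$ is not smooth. By Corollary~\ref{corsplit}, for such a curve $Y$ the group $\bPic^0_{Y/k}$ splits as a product of an abelian variety and an affine group precisely when the normalization of $(Y_{k_\p})^{\red}$ has genus~$0$; the dangerous curves are thus the non-smooth regular ones of positive genus, for which $\bPic^0_{Y/k}$ is a \emph{non-split} extension. To remove them I would compare the two descriptions of $\bPic_{B'/k}$: the one coming from~$g$, in which its abelian part is the product of the abelian parts of the blown-up curves $Y_i$, and the one coming from~$f$, namely $\bPic_{B'/k}\simeq\bCH^2_{X/k}\times\prod_j\bPic_{Z_j/k}$ indexed by the curves $Z_j$ contracted by~$f$. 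Passing to identity components and to their abelian parts over $k_\p$, and using that $(\bCH^2_{X/k})^0$ is a genuine abelian variety (Theorem~\ref{threp}~(ii)) while each $Y_i$ and $Z_j$ is integral, so that its abelian part is indecomposable or zero (\S\ref{Piccurves}), the uniqueness of the decomposition of a principally polarized abelian variety into indecomposables (\S\ref{pp}) forces the indecomposable factors of $(\bCH^2_{X/k})^0$ to be the abelian parts of the $Y_i$ that are not cancelled by the $Z_j$. The crux, and the step I expect to be hardest, is to show that the contributions of the non-smooth curves cancel between the two sides, so that no such factor survives in $(\bCH^2_{X/k})^0$: this is precisely where Theorem~\ref{thmsplit} enters, through the forthcoming Lemma~\ref{inducedmapzero}, the point being that the non-split extension structure of $\bPic^0_{Y/k}$ for a dangerous curve (Corollary~\ref{corsplit}) prevents its abelian part from being split off inside the abelian variety $(\bCH^2_{X/k})^0$, and hence forces it entirely into the complementary summand coming from the contraction. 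Discarding the cancelled non-smooth components and keeping the smooth curves together with the auxiliary projective lines then yields a smooth projective curve~$B$ with $\bPic_{B/k}\simeq\bCH^2_{X/k}\times H'$ respecting the polarizations, as required.
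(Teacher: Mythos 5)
Your outline reproduces the paper's architecture in broad strokes (blow-up decompositions of $\K_0$, the splitting $f_*\circ f^*=\Id$ from Proposition~\ref{pushfwdSK0}, and the cancellation of non-smooth regular curves via Theorem~\ref{thmsplit}/Corollary~\ref{corsplit}, which is indeed the paper's Lemma~\ref{inducedmapzero}), but two steps as you formulate them would fail. First, you assume a \emph{strong} factorization: that both $g\colon\tilde X\to\bP^3_k$ and $f\colon\tilde X\to X$ are compositions of blow-ups along regular centres. No such result is available — strong factorization is open even in characteristic $0$, and in positive characteristic over imperfect fields the only known statement is the one-sided Proposition~\ref{resolindet} (Cossart--Piltant \cite{CPI}): a tower of blow-ups $X'\to\dots\to\bP^3_k$ together with a projective birational morphism $h\colon X'\to X$ that need not factor into blow-ups. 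The paper treats $h$ via Chatzistamatiou--R\"ulling \cite{Charu} (Proposition~\ref{pushfwdSK0}~(ii)), so the complement of $\bCH^2_{X/k}$ in $\bCH^2_{X'/k}$ is an \emph{abstract} group scheme $G=\Ker(h_*)$, not a product $\prod_j\bPic_{Z_j/k}$ over ``curves contracted by $f$''. Your cancellation argument compares two product decompositions of $\bPic_{B'/k}$ and therefore rests on this unavailable identification; the paper's Lemma~\ref{inducedmapzero} instead runs the Corollary~\ref{corsplit} contradiction directly against the abstract factor $G$, using that $G$ inherits a polarized direct-factor structure.

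Second, the polarization compatibility cannot be obtained by ``descending to $k_\p$ and quoting \cite{CGBW}''. The centres $Z_j$ are regular but possibly not smooth over $k$, so $(Z_j)_{k_\p}$ and $(Z_j)_{\ok}$ can be singular and even non-reduced (and the ambient $(Y_j)_{\ok}$ singular); the smooth blow-up computations of \cite{CGBW} simply do not apply to the base-changed tower. This is exactly what Proposition~\ref{propppdirfact} supplies, and its proof is the technical heart of the paper: $\ell$-adic Borel--Moore homology, a comparison lemma valid for non-reduced curves (Lemma~\ref{lem:lambda1lambda2}, proved via alterations), a weight argument for compatibility with $h^*$, and crucially Debarre's uniqueness theorem \cite{Debarre} to eliminate the multiplicities $m_j$ of $(Z_j)_{k_\p}$, which enter the induced pairings as $m_j\theta_j$. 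Without this, neither the claim that $\bCH^2_{X/k}$ is a \emph{principally polarized} direct factor nor the indecomposability/uniqueness step in your cancellation has a foundation. Finally, note that your sketch only engages with part~(vii) and the $k$-rational case of~(i): for a merely $\ok$-rational $X$, representability (parts~(i)--(iii)) requires fppf descent from a finite, possibly inseparable, extension $l$ over which $X_l$ is $l$-rational (Lemma~\ref{lem:locrep=>rep}) — this is the very reason the functor is defined by fppf sheafification — and parts~(iv)--(vi), which you invoke to define the polarization, also require proof.
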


The proof of Theorem \ref{threp} is given in \S\ref{parproof}. Theorem \ref{threp} is complemented
in~\S\ref{parblowup}
 by a computation of $\bCH^2_{X/k}$ for varieties constructed as blow\nobreakdash-ups, and
 in~\S\ref{parobs}
 by an analysis of the obstructions to $k$-rationality arising from Theorem \ref{threp} (vii).

\begin{rems}
\label{remvariantech2xk}
(i) Let $X$ be a smooth projective variety over $k$. As recalled in~\S\ref{parMurre},  Achter, Casalaina-Martin and Vial have endowed $\Ab^2(X_{\ok})$ with a natural $k_\p$\nobreakdash-structure.
If~$X$ is moreover a $\ok$-rational threefold, Theorem \ref{threp} (vi) further endows $\Ab^2(X_{\ok})$ with a natural $k$-structure $(\bCH^2_{X/k})^0$. Trying to descend $\Ab^2(X_{\ok})$ to $k$ under more general hypotheses gives an incentive to define $\CH^2_{X/k,\fppf}$ and to prove its representability in a greater generality.

(ii) For instance it would be nice to define and study a functor $\CH^2_{X/k,\fppf}$ for all smooth proper varieties $X$ over~$k$ such that $\CH_0(X)_{\Q}$ is supported in dimension~$1$ in the sense of Bloch
and Srinivas (see \cite[Definition 2.1]{CGBW}).
If a good enough theory of motivic cohomology $H^*_{\sM}$
over a field of characteristic $p>0$ were available without having to invert~$p$ in
the coefficients,
a~natural choice would be the fppf sheafification of the functor $T\mapsto H^4_{\sM}(X_T,\Z(2))$.
One could also consider the fppf sheafification of the functor $T \mapsto H^2(X_T,\sK_{2})$,
where~$\sK_{2}$ denotes Quillen's $K$\nobreakdash-theory sheaf \cite{blochformula},
or of the functor $T \mapsto A^2(X_T)$, where~$A^2$
denotes Fulton's cohomological Chow group \cite[Definition~17.3]{Fulton}.

(iii) Even with our definition of $\CH^2_{X/k,\fppf}$, it would be interesting to show the representability of $\CH^2_{X/k,\fppf}$ under the hypothesis, weaker than $\ok$\nobreakdash-rationality, that $X$ is a smooth, proper and geometrically connected threefold over $k$ such that $\deg:\CH_0(X_{\Omega})\isoto\Z$ is an isomorphism for all algebraically closed field extensions $k\subseteq\Omega$. We note, however, that it is the proof of representability that we give here, and which is specific to $\bar{k}$-rational threefolds, that yields
the crucial Theorem~\ref{threp}~(vii) and thus provides obstructions to $k$-rationality.

(iv) We still denote by $\theta_X$ the principal polarization of $\bCH^2_{X/k}$ induced by that of $\Ab^2(X_{k_{\p}})$, as in Theorem \ref{threp} (vi). For the sake of completeness, we
extract a characterization of $\theta_X$ from the definition of the isomorphism~(\ref{okopoints}) and from \cite[Property 2.4, Corollary 2.8]{CGBW}. Let $\ell$ be a prime number invertible in~$k$.
Consider the diagram
\begin{align*}
\xymatrix{
(\bCH^2_{X/k})^0(\ok)\{\ell\}
&
\ar[l]_(.47){\psi^2_X}^(.47)\sim
\CH^2(X_{\ok})_{\alg}\{\ell\}
\ar[r]^(.42){\lambda^2} &
H^3(X_{\ok},\Z_{\ell}(2))\otimes\Q_{\ell}/\Z_{\ell}\rlap{,}
}
\end{align*}
where $\lambda^2$ is Bloch's Abel--Jacobi map (see \cite[\S 2]{Bloch}, \cite[(2.3)]{CGBW}) and $\psi^2_X$ is the map (\ref{okopoints}).
 Then $c_1(\theta_X)\in H^2((\bCH^2_{X/k})^0_{\ok},\Z_{\ell}(1))=\big(\bigwedge^2H^1((\bCH^2_{X/k})^0_{\ok},\Z_{\ell})\big)(1)$ corresponds, via the identification
$$H^1((\bCH^2_{X/k})^0_{\ok},\Z_{\ell})^\vee\xrightarrow{T_{\ell}(\lambda^2\circ(\psi^2_X)^{-1})}H^3(X_{\ok},\Z_{\ell}(2))/(\torsion)$$
(in which ${}^\vee$ stands for $\Hom(-,\Zl)$),
to the opposite of the cup product pairing $\bigwedge^2\hspace{-.2em}H^3(X_{\ok},\Z_{\ell}(2))\to
 H^6(X_{\ok},\Z_{\ell}(4))\isoto
\Z_{\ell}(1)$.
\end{rems}

\subsection{Proof of Theorem \ref{threp}}
\label{parproof}

\subsubsection{Resolution of indeterminacies}
\label{resindet}

Our main tool is a resolution of indeterminacies result that is due to Abhyankar \cite{Abhyankar} if $k$ is perfect, and to Cossart and Piltant~\cite{CPI} in general. 

\begin{prop}
\label{resolindet}
Let $X$ and $Y$ be smooth projective threefolds over $k$. Let $f:Y\dashrightarrow X$ be a birational map.
Then there exists a diagram
\begin{equation}
\label{resindetdiag}
X\xleftarrow{h} X'=Y_{N+1}\to \dots\to Y_{j+1}\xrightarrow{p_j} Y_j\to\dots\to Y_1=Y
\end{equation}
of regular projective varieties over $k$ such that $f=h\circ p_{N}^{-1}\circ\dots\circ p_{1}^{-1}$, where $p_j$ is the blow-up of an integral regular closed subscheme $Z_j\subset Y_j$ of codimension $c_j$ and where $h$ is projective and birational.
\end{prop}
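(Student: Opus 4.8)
The plan is to deduce the proposition from the resolution of indeterminacies of birational maps of threefolds, which is exactly the content of Abhyankar's work \cite{Abhyankar} over a perfect field and of Cossart--Piltant's \cite{CPI} in general. Concretely, I would realize $f$ by a linear system on $Y$, principalize its base ideal by a sequence of blow-ups along regular centers, and observe that once the base ideal has become invertible the linear system is base-point-free, so that the rational map extends to a morphism; this morphism will be the desired~$h$.

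First I would fix a closed embedding $X\hookrightarrow\bP^M_k$ associated with a very ample line bundle $\sO_X(1)$, and let $g:Y\dashrightarrow\bP^M_k$ be the composition of $f$ with this embedding. On its (dense open) domain of definition $g$ is given by global sections $s_0,\dots,s_M$ of a line bundle $\sL$ on~$Y$; let $\sI\subseteq\sO_Y$ be the associated base ideal, that is, the image of the evaluation map $\sL^{\vee}\otimes\langle s_0,\dots,s_M\rangle\to\sO_Y$. Since $Y$ is regular, hence normal, and $\bP^M_k$ is proper, the indeterminacy locus of $g$ has codimension $\geq 2$, and the cosupport of $\sI$ is contained in it.

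Next I would invoke the principalization of ideal sheaves on regular quasi-projective threefolds over $k$ provided by \cite{Abhyankar} in the perfect case and by \cite{CPI} in general: there is a finite sequence of blow-ups $X'=Y_{N+1}\to\dots\to Y_{j+1}\xrightarrow{p_j}Y_j\to\dots\to Y_1=Y$ along integral regular centers $Z_j\subseteq Y_j$, contained in the cosupports of the successive total transforms of $\sI$, such that $\sI\cdot\sO_{X'}$ is invertible. As the blow-up of a regular scheme along a regular center is again regular, and as blow-ups along closed centers preserve projectivity, each $Y_j$ and $X'$ is a regular projective variety; one may take the $Z_j$ integral by blowing up the connected (hence, being regular, integral) components of the centers one at a time.

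Finally, set $p:=p_1\circ\dots\circ p_N:X'\to Y$. Because $\sI\cdot\sO_{X'}$ is invertible, the pulled-back sections $p^*s_0,\dots,p^*s_M$ generate the invertible sheaf $p^*\sL\otimes(\sI\cdot\sO_{X'})$, hence define a morphism $\phi:X'\to\bP^M_k$ extending $g\circ p$. On the dense open where $p$ is an isomorphism and $f$ is defined, $\phi$ coincides with the composite $Y\dashrightarrow X\hookrightarrow\bP^M_k$; as $X$ is closed in $\bP^M_k$ and $X'$ is integral, $\phi$ factors through a morphism $h:X'\to X$, which is projective (its source being projective over $k$ and its target separated) and birational. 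Comparing $h\circ p^{-1}$ with $f$ on a dense open and using that $X$ is separated yields $f=h\circ p_N^{-1}\circ\dots\circ p_1^{-1}$, as required. The only genuine difficulty here is the appeal to principalization: everything else is a formal manipulation of linear systems and birational maps, whereas the existence of a resolution of indeterminacies by \emph{regular} centers over an \emph{arbitrary, possibly imperfect} field is the deep theorem of Cossart--Piltant, and is precisely the input that forces one to allow centers that are regular but not necessarily smooth over~$k$.
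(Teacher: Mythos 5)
Your proof is correct and follows essentially the same route as the paper, whose own proof simply cites \cite[Proposition~2.11]{CGBW} together with \cite[Proposition~4.2]{CPI} (in place of Abhyankar's result when $k$ is imperfect): the argument behind that citation is exactly your linear-system/base-ideal principalization scheme, with the Cossart--Piltant theorem as the key input allowing regular (not necessarily smooth) centers over an arbitrary field. The remaining steps of your write-up (extension of the line bundle and sections across a codimension~$\geq 2$ locus, regularity and projectivity of the blow-ups, factorization of the resulting morphism through $X$) are all sound.
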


\begin{proof}
This follows from \cite{CPI}, as explained in \cite[Proposition 2.11]{CGBW}. The standing assumption that $k$ is perfect in \cite{CGBW} is irrelevant if one really uses \cite[Proposition~4.2]{CPI} (or \cite[Theorem 5.9]{CJS}) instead of \cite[(9.1.4)]{Abhyankar} in the proof of \cite[Proposition 2.11]{CGBW}.
\end{proof}

\begin{rem}
If $k$ is not perfect, the subschemes $Z_j\subset Y_j$ may not be smooth over~$k$.
\end{rem}

\subsubsection{Representability if $X$ is $k$-rational}
\label{repkrat}

In \S\ref{repkrat}, we fix $\tau\in\{\Zar,\et,\fppf\}$ and assume that $k=\ok$ if $\tau=\Zar$ and that $k=k_{\p}$ if $\tau=\et$.
We also let $X$ be a smooth projective $k$-rational threefold. 
By Proposition \ref{resolindet}, there exists a diagram~(\ref{resindetdiag}) with $Y=\bP^3_k$. Remark~\ref{isoloc} (ii), Corollary~\ref{corblup} and the isomorphism~(\ref{blowupfunct}) then give canonical decompositions
\begin{align}
\Z_{X'/k}&\isofrom\Z_{\bP^3_k/k}\rlap{,}\\
\Pic_{X'/k}&\isofrom\Pic_{\bP^3_k/k}\times\prod_{c_j\geq 2} \Z_{Z_j/k}\rlap{,}\\
\label{eq:decomposition of k0}
\K_{0,X'/k}&\isofrom \K_{0,\bP^3_k/k}\times \prod_{c_j=2} \K_{0,Z_j/k}\times \prod_{c_j=3} (\K_{0,Z_j/k})^2\rlap{.}
\end{align}
Identifying $(\rk,\det):\K_{0,X'/k}\to \Z_{X'/k}\times\Pic_{X'/k}$ in terms of these decompositions and using the isomorphisms $(\rk,\det):\K_{0,Z_j/k,\tau}\isoto \Z_{Z_j/k}\times\Pic_{Z_j/k,\tau}$ given by Proposition \ref{K0curve} yields an isomorphism
\begin{equation}
\label{SK0blup}
 \SK_{0,X'/k,\tau}\isofrom
\SK_{0,\bP^3_k/k,\tau}\times \prod_{c_j=2} \Pic_{Z_j/k,\tau}\times \prod_{c_j=3} \Z_{Z_j/k}\rlap{.}
\end{equation}

The morphisms $p_j$ are lci by \cite[\S 1.2]{Thomason}, hence so is the structural morphism $X'\to\Spec(k)$ by \cite[VIII, Proposition 1.5]{SGA6}. Any closed embedding $X'\hookrightarrow\bP^N_X$ of the $X$-scheme $X'$ is a regular immersion by \cite[VIII, Proposition 1.2]{SGA6}, which shows that $h$ is lci, hence perfect \cite[VIII, Proposition 1.7]{SGA6}. The functors $h^*:\K_{0,X/k}\to \K_{0,X'/k}$ and $h_*:\K_{0,X'/k}\to \K_{0,X/k}$ satisfy $h_*\circ h^*=\Id$ by Proposition~\ref{pushfwdSK0} (ii).
Since they restrict to $h^*:\SK_{0,X/k}\to \SK_{0,X'/k}$ and to $h_*:\SK_{0,X'/k}\to \SK_{0,X/k}$ (see Proposition~\ref{pushfwdSK0} (i)), we deduce a natural decomposition
\begin{equation}
\label{directfactor}
\SK_{0,X/k}\times\Ker\mkern-1mu\big(h_*:\SK_{0,X'/k}\to \SK_{0,X/k}\big)\isoto \SK_{0,X'/k}\rlap{.}
\end{equation}

The three summands of the right-hand side of (\ref{SK0blup}) are represented by group schemes locally of finite type over $k$, respectively by \S\ref{prblsheaf}, by \S\ref{Pic} and by Proposition \ref{locconsrep}. It follows that $\SK_{0,X'/k,\tau}$ is represented by a group scheme locally of finite type over $k$. So is $\SK_{0,X/k,\tau}$, by (\ref{directfactor}) and Lemma~\ref{repfact}.

Let $x'\in X'(\ok)$ be a general point, and let $x\in X(\ok)$ and $y\in \bP^3(\ok)$ be its images by $h$ and by $p_1\circ\dots\circ p_N$. Then $h^*[\sO_x]=[\sO_{x'}]=p_N^*\circ\dots\circ p_1^*[\sO_y]\in \SK_{0,X'/k,\tau}(\ok)$.
Consequently, $h^*\circ\nu_X:\Z\to \SK_{0,X'/k,\tau}$ and $p_N^*\circ\dots\circ p_1^*\circ\nu_{\bP^3_k}:\Z\to \SK_{0,X'/k,\tau}$ both send $1\in\Z(\ok)$ to $[\sO_{x'}]\in \SK_{0,X'/k,\tau}(\ok)$.
For $n\neq 0$, the class $\nu_{\bP^3_k}(n)$ does not belong to the identity component of $\SK_{0,\bP^3_k/k,\tau}$, by \S\ref{prblsheaf}. We deduce from the above and from (\ref{SK0blup}) that $n[\sO_{x'}]\notin (\SK_{0,X'/k,\tau})^0(\ok)$, hence that 
$\nu_{X}(n)\notin (\SK_{0,X/k,\tau})^0(\ok)$.
Lemma \ref{repquotZ} then shows that $\CH^2_{X/k,\tau}$ is represented by a group scheme locally of finite type over $k$.

Applying the above with $\tau=\fppf$ and combining (\ref{SK0blup}), (\ref{directfactor}) and the equality
$h^*\circ\nu_X=p_N^*\circ\dots\circ p_1^*\circ\nu_{\bP^3_k}$ yields isomorphisms
\begin{equation}
\label{CH2sum}
\bCH^2_{X/k}\times\mkern4muG \isoto \bCH^2_{X'/k} \isofrom
\bCH^2_{\bP^3_k/k}\times \prod_{c_j=2} \bPic_{Z_j/k}\times \prod_{c_j=3} \Z_{Z_j/k}
\end{equation}
of smooth group schemes over $k$, where $G$ denotes the group scheme representing the $\fppf$ sheafification of $\Ker\mkern-1mu\big(h_*:\SK_{0,X'/k}\to \SK_{0,X/k}\big)$ (whose representability follows from Lemma~\ref{repfact}) and where $\bCH^2_{X'/k}$ denotes the group scheme representing the presheaf cokernel $\Coker\big(h^*\circ\nu_X :\Z\to \SK_{0,X'/k,\fppf}\big)$ (whose representability follows from Lemma~\ref{repquotZ}).

\subsubsection{Representability if $X$ is $\ok$-rational}
\label{repokrat}
In \S\ref{repokrat}, we prove Theorem \ref{threp} (i)--(iii) for a smooth projective $\ok$-rational threefold $X$ over $k$. 

Choose $\tau\in\{\Zar,\et,\fppf\}$ and assume that $k=\ok$ if $\tau=\Zar$ and that $k=k_{\p}$ if $\tau=\et$.
Let $l$ be a finite extension of $k$ such that $X$ is $l$-rational.
 Then $\SK_{0,X_l/l,\tau}$ is represented by a group scheme locally of finite type over~$l$, by the arguments of~\S\ref{repkrat} applied to the $l$-variety $X_l$.
By Lemma~\ref{lem:locrep=>rep}, it follows that  $\SK_{0,X/k,\tau}$
is represented by a group scheme locally of finite type over~$k$.
As explained in~\S\ref{repkrat}, the morphism $\nu_X:\Z\to \SK_{0,X/k,\tau}$ defined in \S\ref{CH2def} has the property that $\nu_{X}(n)\notin (\SK_{0,X/k,\tau})^0(\ok)$ for all $n\neq 0$. It now follows from Lemma~\ref{repquotZ} that $\CH^2_{X/k,\tau}$ is represented by a group scheme locally of finite type over~$k$.

\begin{proof}[Proof of Theorem \ref{threp} (i)--(iii)]
Applying the above argument to the $k$\nobreakdash-variety~$X$
with $\tau=\fppf$, to the $k_{\p}$\nobreakdash-variety $X_{k_{\p}}$ with $\tau=\et$,
and to the $\ok$\nobreakdash-variety $X_{\ok}$ with $\tau=\Zar$
shows that the three functors
$\CH^2_{X/k,\fppf}$,
$\CH^2_{X_{k_{\p}}/k_{\p},\et}$ and
$\CH^2_{X_{\ok}/\ok,\Zar}$
are represented by group schemes locally of finite type over~$k$,
over~$k_{\p}$ and over~$\ok$, respectively.
In particular, the latter two are sheaves for the fppf topology,
which proves Theorem~\ref{threp}~(iii).
In addition, the arguments of \S\ref{repkrat} applied to the $\ok$\nobreakdash-variety $X_{\ok}$ show that $(\bCH^2_{X_{\ok}/\ok})^0$ is a direct factor of a product of Jacobians of smooth projective curves over $\ok$ (see (\ref{CH2sum})), hence is an abelian variety; in particular,
it is smooth.
As $((\bCH^2_{X/k})^0)_{\ok}=(\bCH^2_{X_{\ok}/\ok})^0$,
Theorem~\ref{threp}~(i)--(ii) follows.
\end{proof}

\subsubsection{Relation with Murre's work}

We now prove Theorem \hbox{\ref{threp}~(iv)--(vi)}.

\begin{proof}[Proof of Theorem \ref{threp} (iv)]
That $\psi^2_X(\CH^2(X_{\ok})_{\alg})\subseteq ((\bCH^2_{X/k})^0(\ok))$, and that the resulting morphism 
$\psi^2_X:\CH^2(X_{\ok})_{\alg}\to (\bCH^2_{X/k})^0(\ok)$ is a regular homomorphism follow at once from Lemma \ref{functforMurre} (i). It remains to show that (\ref{okopoints}) is surjective.

Since $\bCH^2_{X_{\ok}/\ok}$ represents $\CH^2_{X_{\ok}/\ok,\Zar}$ by  Theorem \ref{threp}~(iii), we can choose a connected Zariski neighbourhood $T$ of the identity in $(\bCH^2_{X_{\ok}/\ok})^0$ and a class $\beta\in \SK_{0}(X_T)$ inducing the natural inclusion $T\to \bCH^2_{X_{\ok}/\ok}$. Lemma \ref{functforMurre} (ii) then implies that $T(\ok)$ is contained in the image of (\ref{okopoints}). Since $(\bCH^2_{X/k})^0(\ok)$ is generated by $T(\ok)$ as a group, we have proved the surjectivity of (\ref{okopoints}).
\end{proof}

\begin{proof}[Proof of Theorem \ref{threp} (v)] 
That the \'etale $k$-group scheme $(\bCH^2_{X/k})/(\bCH^2_{X/k})^0$ corresponds to the $\Gamma_k$-module $\NS^2(X_{\ok})$ follows at once from (\ref{okpointsbis}) and (\ref{okopoints}).

Applying the discussion of \S\ref{repkrat}, and more precisely identity (\ref{CH2sum}), to the $\ok$\nobreakdash-variety $X_{\ok}$,
shows, in view of the isomorphism $\bCH^2_{\bP^3_{\ok}/\ok}\simeq \Z$ (see \S\ref{prblsheaf}), that $\NS^2(X_{\ok})$ is a free $\Z$\nobreakdash-module of finite rank, being a direct factor of such a module.
\end{proof}

\begin{proof}[Proof of Theorem \ref{threp} (vi)] 
 The regular homomorphism (\ref{okopoints}) induces a morphism $\iota_{X_{\ok}}:\Ab^2(X_{\ok})\to(\bCH^2_{X_{\ok}/\ok})^0$.
Since (\ref{okopoints}) is $\Gamma_k$-equivariant, and in view of the definition of $\Ab^2(X_{k_{\p}})$ recalled in \S\ref{parMurre}, this morphism descends by Galois descent to a morphism $\iota_{X_{k_{\p}}}:\Ab^2(X_{k_{\p}})\to(\bCH^2_{X_{k_{\p}}/k_{\p}})^0$ of abelian varieties over $k_{\p}$. To prove that $\iota_{X_{k_{\p}}}$ is an isomorphism, it suffices to prove that $\iota_{X_{\ok}}$ is an isomorphism.
From now on, we may thus assume that $k=\ok$.

By Proposition \ref{resolindet}, there exists a diagram (\ref{resindetdiag}). Since $k=\ok$, all the varieties~$Z_j$ and $Y_j$ that appear in it are smooth over $k$. Consider the diagram
\begin{equation}
\label{Murreblup}
\begin{aligned}
\xymatrix
@R=0.4cm
@C=1.1cm
{
\mkern20mu\Ab^2(\bP^3_k)\times\prod_{c_j=2}\bPic^0_{Z_j/k}\ar^(.65){\sim}[r]\ar@<-.71em>[d]^(.57){(\iota_{\bP^3_k},\Id)} &\Ab^2(X')\ar[d]^{\iota_{X'}}  \\
(\bCH^2_{\bP^3_k/k})^0\times\prod_{c_j=2}\bPic^0_{Z_j/k}\ar^(.65){\sim}[r]&(\bCH^2_{X'/k})^0\rlap{,}
}
\end{aligned}
\end{equation}
where the lower horizontal isomorphism is induced by (\ref{SK0blup}) and the upper horizontal isomorphism is the one provided by \cite[Lemma 2.10]{CGBW}. Since $\CH^2(\bP^3_k)_{\alg}=0$, one has $\Ab^2(\bP^3_k)=(\bCH^2_{\bP_k^3/k})^0=0$ and the left vertical arrow of (\ref{Murreblup}) is an isomorphism. We claim that (\ref{Murreblup}) commutes. Since $k=\ok$, it suffices to verify that it commutes at the level of $k$-points, which follows from unwinding the definitions and making use of Lemma~\ref{lemCH2SK0}. A glance at (\ref{Murreblup}) now shows that $\iota_{X'}$ is an isomorphism.

Now, consider the diagram
\begin{equation}
\label{Murremodif}
\begin{aligned}
\xymatrix
@R=0.4cm
@C=1.1cm
{
\Ab^2(X)\ar^{h^+}[r]\ar^{\iota_X}[d] &\Ab^2(X')\ar[r]^{h_+}\ar[d]_(.45){\wr}^{\iota_{X'}}&\Ab^2(X)\ar[d]^{\iota_{X}}   \\
(\bCH^2_{X/k})^0\ar^{h^*}[r]&(\bCH^2_{X'/k})^0\ar^{h_*}[r]&(\bCH^2_{X/k})^0\rlap{,}
}
\end{aligned}
\end{equation}
whose lower horizontal arrows are induced by (\ref{directfactor}) and hence satisfy $h_*\circ h^*=\Id$, and whose upper horizontal arrows are given by the functoriality of Murre's intermediate Jacobians (see \cite[\S 2.2.1]{CGBW}) and satisfy $h_+\circ h^+=\Id$ as a consequence of the identity $h_*\circ h^*=\Id:\CH^2(X)\to\CH^2(X)$ stemming from the projection formula \cite[Proposition~8.3(c)]{Fulton}. To show that (\ref{Murremodif}) commutes, it suffices to check that it commutes at the level of $k$-points, since $k=\ok$. This follows from Lemma~\ref{lemCH2SK0} for the left-hand square, and from the fact that the morphisms $\varphi^c$ considered in~\S\ref{coherent} are compatible with proper push-forwards \hbox{\cite[Example 15.1.5]{Fulton}} for the right-hand square. A diagram chase in (\ref{Murremodif}) shows that $\iota_X$ is an isomorphism since $\iota_{X'}$ is one, which concludes the proof.
\end{proof}

\subsubsection{Further analysis of k-rational varieties}
\label{biratbeh}

We resume the discussion of \S\ref{repkrat} with $\tau=\fppf$, and keep the notation introduced there.
Since $\bCH^2_{\bP^3_k/k}\simeq \Z$ by \S\ref{prblsheaf}, identity (\ref{CH2sum}) reads:
\begin{equation}
\label{CH2sumbis}
\bCH^2_{X/k}\times\mkern4muG \isoto \bCH^2_{X'/k} \isofrom
\Z\times \prod_{c_j=2} \bPic_{Z_j/k}\times \prod_{c_j=3} \Z_{Z_j/k}\rlap{.}
\end{equation}
The identity component of the right-hand side is isomorphic to $\prod_{c_j=2}\bPic^0_{Z_j/k}$, hence
it carries a natural principal polarization (see \S\ref{Piccurves}).
Via~\eqref{CH2sumbis}, we thus obtain
a principal polarization on~$\bCH^2_{X'/k}$
in the sense of~\S\ref{pp}.

\begin{prop}
\label{propppdirfact}
The isomorphism (\ref{CH2sumbis}) realizes $\bCH^2_{X/k}$ and $G$ as principally polarized direct factors, in the sense of \S\ref{pp}, of $\bCH^2_{X'/k}$, and the induced polarization on $\bCH^2_{X/k}$ coincides with the one defined in Theorem \ref{threp} (vi).
\end{prop}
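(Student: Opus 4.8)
The plan is to reduce the two polarization assertions to the perfect field~$k_\p$ and then to deduce them from a single adjunction identity on $\ell$-adic Tate modules, read off from the cohomological description of the polarizations in Remark~\ref{remvariantech2xk}~(iv). The essential difficulty, absent from the proof of Theorem~\ref{threp}~(vi), is that the centers~$Z_j$ need not be smooth, so that Murre's intermediate Jacobian of~$X'$ is unavailable.

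The group scheme isomorphism $\bCH^2_{X/k}\times G\isoto\bCH^2_{X'/k}$ is already given over~$k$ by~(\ref{CH2sumbis}); what must be shown is the compatibility with~$\theta_{X'}$, an equality of N\'eron--Severi classes on the abelian varieties $A((\bCH^2_{X/k})^0_{k_\p})$ and $A(G^0_{k_\p})$ over~$k_\p$. Since~(\ref{CH2sumbis}), the morphisms~$h^*$ and~$h_*$, and the polarization~$\theta_{X'}$ of~\S\ref{biratbeh} are all compatible with the flat base change $k_\p/k$ (using the remark after Definition~\ref{defch2xk} and the stability of regular immersions and blow-ups under flat base change), it suffices to verify these equalities after base change to~$k_\p$. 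I therefore assume~$k=k_\p$ from now on, keeping in mind that $X'$ may fail to be smooth because the centers~$Z_j$ may be singular.

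By Theorem~\ref{threp}~(vi) the abelian variety $\mathbf{A}:=(\bCH^2_{X/k})^0$ is identified with $\Ab^2(X_{k_\p})$, endowed with~$\theta_X$, whose Weil pairing on $V_\ell\mathbf{A}\cong H^3(X_{\ok},\Ql(2))$ is, up to sign, the cup-product pairing (Remark~\ref{remvariantech2xk}~(iv)). Passing to abelian parts, the isomorphism~(\ref{CH2sumbis}) provides a homomorphism $h^*\colon\mathbf{A}\to\mathbf{A}':=A((\bCH^2_{X'/k})^0)=\prod_{c_j=2}\bPic^0_{D_j/k}$, where $D_j:=\widetilde{Z_j^{\red}}$, split by $h_*\colon\mathbf{A}'\to\mathbf{A}$ with $h_*\circ h^*=\Id$ (Proposition~\ref{pushfwdSK0}~(ii) and~(\ref{directfactor})) and $\Ker(h_*)=A(G^0)$; by construction $\theta_{X'}$ is the sum $\sum_j\theta_{D_j}$ of the theta divisors. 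Writing $\langle\,,\,\rangle_X$ and $\langle\,,\,\rangle_{X'}$ for the Weil pairings of~$\theta_X$ and~$\theta_{X'}$, the whole proposition will follow from the adjunction identity
\[
\langle h^*a,\,b'\rangle_{X'}=\langle a,\,h_*b'\rangle_X\qquad(a\in V_\ell\mathbf{A},\ b'\in V_\ell\mathbf{A}').
\]
Indeed, taking $b'=h^*b$ and using $h_*h^*=\Id$ gives $\langle h^*a,h^*b\rangle_{X'}=\langle a,b\rangle_X$, i.e.\ $(h^*)^*\theta_{X'}=\theta_X$, while taking $b'\in V_\ell\Ker(h_*)$ gives $\langle h^*a,b'\rangle_{X'}=0$, i.e.\ $h^*(\mathbf{A})$ and $A(G^0)$ are orthogonal for~$\theta_{X'}$. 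These two facts say precisely that $\bCH^2_{X/k}$ and~$G$ are polarized direct factors of $(\bCH^2_{X'/k},\theta_{X'})$ with induced polarization~$\theta_X$ on~$\bCH^2_{X/k}$; as~$\theta_{X'}$ is principal, the induced polarizations are automatically principal by~\S\ref{pp}.

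To establish the adjunction identity I would combine Remark~\ref{remvariantech2xk}~(iv) for~$X$ with the autodual identification of $V_\ell\bPic^0_{D_j}$ with $H^1(D_{j,\ok},\Ql(1))$ and its intersection pairing. Applying the cycle class map to the blow-up and contraction classes of~\S\ref{repkrat} realizes the $j$-th components of~$h^*$ and~$h_*$ as mutually transpose maps between $H^3(X_{\ok},\Ql(2))$ and $H^1(D_{j,\ok},\Ql(1))$ --- the transposition being forced by $h_*h^*=\Id$ --- coming from the exceptional divisor of the blow-up of~$Z_j$ and the normalization $D_j\to Z_j^{\red}$, which live on the smooth varieties~$X_{k_\p}$ and~$D_j$. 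The identity then reduces to the projection formula on these smooth proper varieties. I expect this last step to be the main obstacle: because $X'_{k_\p}$ is singular, one has access neither to $\Ab^2(X')$ nor to Poincar\'e duality on~$X'$, so the comparison of the curve-theoretic polarization~$\theta_{X'}$ with the cup-product polarization~$\theta_X$ must be carried out entirely through correspondences between the smooth~$X_{k_\p}$ and the smooth normalized curves~$D_j$, tracking the $K$-theoretic classes of~\S\ref{repkrat} through the cycle class map.
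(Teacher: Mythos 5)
Your global strategy is in fact the paper's own: reduce everything to a statement about $\ell$\nobreakdash-adic pairings, where $\theta_X$ becomes (minus) the cup product on $H^3(X_{\ok},\Ql(2))$ via Bloch's Abel--Jacobi map and $\theta_{X'}$ becomes the cup product pairings on the $H^1$ of the normalized curves; orthogonality of the image of $h^*$ and of $\Ker(h_*)$, together with $(h^*)^*\theta_{X'}=\theta_X$, then give the proposition, principality of the induced polarizations being automatic from \S\ref{pp}. The gap is that your proof of the adjunction identity --- which is where the entire content of the proposition lies --- rests on a circular claim. You assert that the $j$\nobreakdash-th components of $h^*$ and $h_*$ are \emph{mutually transpose} maps between $H^3(X_{\ok},\Ql(2))$ and $H^1(D_{j,\ok},\Ql(1))$, ``the transposition being forced by $h_*h^*=\Id$''. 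It is not: a splitting $h_*\circ h^*=\Id$ never forces $\Ker(h_*)$ to be orthogonal to the image of $h^*$ (take any non-product polarization on a product of two abelian varieties, with the canonical inclusion and projection). Mutual transposeness with respect to the two Weil pairings \emph{is} the adjunction identity, so invoking it at this point assumes the conclusion.

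What a correct argument requires, and what the paper supplies, is a genuine cohomological description of the $K$\nobreakdash-theoretically defined maps $h^*$ and $h_*$, and this cannot be reduced to ``the projection formula on smooth varieties'', because these maps are defined through singular, possibly non-reduced intermediaries. For $h_*$, the paper proves a d\'evissage (Lemma~\ref{lem:lambda1lambda2}) comparing push-forward in $\GG_0$\nobreakdash-theory with push-forward in \'etale cohomology, applied to the possibly non-reduced surface $Z_j\times_{Y_j}X'\to X$; its proof decomposes the fundamental class into desingularized components and tracks their multiplicities through the filtration on $\GG_0$. For $h^*$, no correspondence description is available at all, and the paper instead introduces a resolution $X''\to X'_{\ok}$ (Cossart--Piltant \cite{CPII}) together with a weight argument --- $H^3(X'_{\ok},\Ql(2))$ is pure of weight $-1$ while $\Ker(\pi^*)$ has strictly smaller weights --- in order to transport the functoriality of Bloch's map, which is only defined for smooth varieties. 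Finally, your identity is stated for $\theta_{X'}=\sum_j\theta_{D_j}$, but every computation routed through the $K$\nobreakdash-theoretic decomposition (\ref{CH2sumbis}) naturally lands on the cohomology of the non-reduced curves $(Z_j)_{\ok}$ and produces the pairing of $\sum_j m_j\theta_{D_j}$, where $m_j$ is the multiplicity of $(Z_j)_{k_{\p}}$. Eliminating the $m_j$ is not formal: the paper does it with Debarre's uniqueness theorem for decompositions of polarized abelian varieties \cite{Debarre}, which yields $m_j=1$ precisely on the factors contributing to $(\bCH^2_{X/k})^0$. Your proposal never engages with these multiplicities, and any direct proof of adjunction for $\sum_j\theta_{D_j}$ would have to.
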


\begin{proof}
We fix once and for all a prime number~$\ell$ invertible in~$k$
and start with a few recollections about (Borel--Moore) $\ell$\nobreakdash-adic \'etale homology.
If~$V$ is a variety over~$\ok$, 
the $i$\nobreakdash-th \'etale homology group of~$V$ with coefficients in~$\Ql(j)$
is defined by
 $H_i(V,\Ql(j))=H^{-i}(V,R\varepsilon^!\Ql(j))$,
where $\varepsilon:V\to \Spec(\ok)$ denotes the
structural morphism (see \cite[Définition~2]{laumon}).
This group is covariantly functorial with respect to proper morphisms
(\emph{loc.\ cit.}, \textsection4)
and comes with a cap product operation
$H^s(V,\Ql(t)) \times H_i(V,\Ql(j)) \to H_{i-s}(V,\Ql(j+t))$
(\emph{loc.\ cit.}, p.\,VIII-09),
and with a cycle class map $\cl:\CH_i(V)\to H_{2i}(V,\Ql(-i))$
(\emph{loc.\ cit.}, \textsection6),
for any $i$, $j$, $s$, $t$.
If~$V$ is of pure dimension~$n$, we denote by~$[V]$ the fundamental cycle of~$V$
(see \cite[\textsection1.5]{Fulton}),
so that $\cl([V]) \in H_{2n}(V,\Ql(-n))$.
For all~$i$, $j$,
the map
\begin{align}
\kappa_V^{\et}:H^{2n-i}(V,\Ql(j+n)) \to H_{i}(V,\Ql(j))
\end{align}
defined by $\kappa_V^{\et}(x)= x \cap \cl([V])$
is an isomorphism
if in addition~$V$ is smooth (see \cite[Prop.~3.2]{laumon}).
Thus, given a proper morphism $f:V'\to V$ from a variety~$V'$ of pure dimension~$n'$
to a smooth variety~$V$ of pure dimension~$n$ over~$\ok$, one can define a push-forward in \'etale cohomology
\begin{align}
\label{eq:pushforward in cohomology}
f_*:H^s(V',\Ql(t)) \to H^{s+2n-2n'}(V,\Ql(t+n-n'))
\end{align}
as the composition $(\kappa_V^{\et})^{-1} \circ f_* \circ \kappa_{V'}^{\et}$.
When~$V$ is a proper variety over~$\ok$ of pure odd dimension~$n$, we will speak of
the ``cup product pairing on $H^n(V,\Ql((n+1)/2))$'' to refer to the pairing
$H^n(V,\Ql((n+1)/2)) \times H^n(V,\Ql((n+1)/2)) \to \Ql(1)$
induced by
the cup product
and by the push-forward map $H^{2n}(V,\Ql(n+1)) \to \Ql(1)$ along the structural morphism $V\to \Spec(\ok)$
(see~\eqref{eq:pushforward in cohomology}).

\begin{lem}
\label{lem:lambda1lambda2}
Let $f:D \to V$ be a morphism between projective varieties
over~$\ok$, where~$D$ has pure dimension~$d$
and~$V$ has pure dimension~$d+1$,
for some integer~$d$.
Suppose that~$V$ is smooth.
Let
$\lambda^1:\Pic(D)\{\ell\} \isoto H^1(D,\Ql/\Zl(1))$
and $\lambda^2:\CH^2(V)\{\ell\} \to H^3(V,\Ql/\Zl(2))$
respectively
denote the Kummer isomorphism
and
Bloch's $\ell$\nobreakdash-adic
Abel--Jacobi map.
Then the diagram
\begin{align}
\label{diag:lemmelambda1lambda2}
\begin{aligned}
\xymatrix@R=3ex@C=5em{
V_\ell(F^2\K_0(V)) \ar[r]^{-V_\ell(c_2)} &
V_\ell(\CH^2(V))
\ar[r]^{V_\ell(\lambda^2)} &  H^3(V,\Ql(2))\\
V_\ell(\K_0(D)) \ar[r]^{V_\ell(\det)}
\ar[u]_(.45){f_*} &
V_\ell(\Pic(D)))
 \ar[r]^{V_\ell(\lambda^1)} & H^1(D,\Ql(1)) \ar[u]_(.45){f_*}
}
\end{aligned}
\end{align}
commutes,
where the right-hand side vertical arrow is the map~\eqref{eq:pushforward in cohomology}
and
the left-hand side vertical arrow is induced by the composition of the canonical map $\K_0(D) \to \GG_0(D)$,
which sends the rank~$0$ subgroup of~$\K_0(D)$
to $F_{d-1}\GG_0(D)$
(see \cite[X, Corollaire~1.3.3]{SGA6}),
with $f_*:F_{d-1}\GG_0(D) \to F_{d-1}\GG_0(V)$.
\end{lem}

We stress that~$D$ is not assumed to be reduced in Lemma~\ref{lem:lambda1lambda2}.

\begin{proof}
Let $\widetilde{\K_0}(D)=\Ker(\rk:\K_0(D)\to\Z)$
and $\Gr^1_\gamma \K_0(D) = \widetilde{\K_0}(D) / \SK_0(D)$
(a piece of notation justified by the fact that
$\SK_0(D)$ and $\widetilde{\K_0}(D)$ form the beginning of
the $\gamma$\nobreakdash-filtration on~$\K_0(D)$).
As the canonical map
$\K_0(D) \to \GG_0(D)$
sends~$\widetilde{\K_0}(D)$ to $F_{d-1}\GG_0(D)$
and~$\SK_0(D)$ to $F_{d-2}\GG_0(D)$
(see \cite[X, Corollaire~1.3.3]{SGA6}),
there is an induced map $f_*:V_\ell(\Gr^1_\gamma \K_0(D)) \to V_\ell(\Gr^2_F\K_0(V))$
and it suffices to prove the commutativity of the diagram
\begin{align}
\label{diag:lemmelambda1lambda2bis}
\begin{aligned}
\xymatrix@R=3ex@C=5em{
V_\ell(\Gr^2_F \K_0(V)) \ar[r]^{-V_\ell(c_2)}_\sim &
V_\ell(\CH^2(V))
\ar[r]^{V_\ell(\lambda^2)} &  H^3(V,\Ql(2))\\
V_\ell(\Gr^1_\gamma \K_0(D)) \ar[r]^{V_\ell(\det)}_\sim
\ar[u]_(.45){f_*} &
V_\ell(\Pic(D)))\ar@{.>}[u]
 \ar[r]^{V_\ell(\lambda^1)} & H^1(D,\Ql(1))\rlap{,} \ar[u]_(.45){f_*}
}
\end{aligned}
\end{align}
without the dotted arrow.
We note that the leftmost horizontal arrows of~\eqref{diag:lemmelambda1lambda2bis} are isomorphisms;
 their inverses are induced by the map
$\varphi^2:\CH^2(V) \to \Gr^2_F \K_0(V)$ of~\eqref{CHK}
and by the map $\Pic(D)\to \Gr^1_\gamma \K_0(D)$
which sends the class of a Cartier divisor~$Z$ on~$D$
to the class of~$[\sO_D(Z)]-[\sO_D] \in \widetilde{\K_0}(D)$.

Let us complete this diagram with a dotted arrow induced
by the composition of the canonical map $\Pic(D) \to \CH_{d-1}(D)$ (see \cite[\textsection2.1]{Fulton})
with the push-forward
$f_*:\CH_{d-1}(D)\to \CH_{d-1}(V)$.

When~$D$ is smooth, the right half of~\eqref{diag:lemmelambda1lambda2bis}
commutes by \cite[Proposition~3.3, Proposition~3.6]{Bloch}
and
the left half by the description of the inverses of the horizontal arrows.
Thus~\eqref{diag:lemmelambda1lambda2bis} commutes in this case.

In general, let us
choose a family $(D_j)_{j \in J}$
of smooth projective varieties of pure dimension~$d$,
and for each~$j\in J$, a morphism $\nu_j:D_j\to D$ and an element $n_j \in \Z_\ell$,
such that the equality
of $d$\nobreakdash-cycles with coefficients in~$\Z_\ell$ 
\begin{align}
\label{eq:cycles decomposition D}
[D]=\sum_{j\in J} n_j \nu_{j*}[D_j]
\end{align}
holds.
When $\dim(D)\leq 2$ (which will be the case when we apply the lemma), one can choose
the~$D_j$ to be desingularisations of the irreducible components of~$D^\red$
and the~$n_j$ to be the multiplicities, in~$D$, of these irreducible components.
In arbitrary dimension, such~$D_j$, $\nu_j$ and~$n_j$ exist by the Gabber--de~Jong alteration
theorem \cite[Theorem~2.1]{illusietemkin} applied to the irreducible components of~$D^\red$.

For $j\in J$, let $f_j=f \circ \nu_j:D_j\to V$.  As~$D_j$ is smooth,
we have already seen that
the outer square of~\eqref{diag:lemmelambda1lambda2bis} with~$D$ and~$f$ replaced with~$D_j$ and~$f_j$
commutes.
In order
to show that the outer square of~\eqref{diag:lemmelambda1lambda2bis} itself commutes,
it therefore suffices,
by the contravariant functoriality
of the lower row of~\eqref{diag:lemmelambda1lambda2bis},
to check the equality
$f_* = \sum_{j\in J} n_j\mkern1muf_{j*} \circ \nu_j^*$
 of maps $V_\ell(\Gr^1_\gamma \K_0(D)) \to V_\ell(\Gr^2_F \K_0(V))$
and the same equality of maps $H^1(D,\Ql(1)) \to H^3(V,\Ql(2))$.
Let us set
 $\Gr^F_i\GG_0(D) = F_i\GG_0(D)/F_{i-1}\GG_0(D)$
and denote by
$\kappa_D:\Gr^1_\gamma \K_0(D) \to \Gr^F_{d-1}\GG_0(D)$
 the map induced
by the canonical map $\K_0(D)\to \GG_0(D)$.
Coming back to the definition of~$f_*$ in the two contexts,
we now see that it is enough to check the equalities
\begin{align}
\label{eq:desired equality kappaD}
\kappa_D = \sum_{j\in J} n_j\mkern1mu \nu_{j*} \circ \kappa_{D_j} \circ \nu_j^*:V_\ell(\Gr^1_\gamma \K_0(D)) \to
V_\ell(\Gr^F_{d-1}\GG_0(D))
\end{align}
and
\begin{align}
\label{eq:desired equality kappaDet}
\kappa_D^{\et} = \sum_{j\in J} n_j\mkern1mu \nu_{j*} \circ \kappa_{D_j}^{\et} \circ \nu_j^*:H^1(D,\Ql(1))\to H_{2d-1}(D,\Ql(1-d))
\rlap{.}
\end{align}
The
 $\K_0(D)$\nobreakdash-module structure of~$\GG_0(D)$
induces for any~$i$ a cap product operation
$\Gr_1^\gamma \K_0(D) \times \Gr_i^F\GG_0(D)\to \Gr_{i-1}^F \GG_0(D)$ 
(see \cite[X, Corollaire~1.3.3]{SGA6}).
Letting $[\sO_D]$ denote the class of~$\sO_D$ in~$\Gr_d^F\GG_0(D)$,
we have $\kappa_D(x)= x \cap [\sO_D]$
for any $x \in \Gr^1_\gamma \K_0(D)$;
moreover~\eqref{eq:cycles decomposition D}
implies
the equality $[\sO_D]=\sum_{j\in J} n_j\mkern1mu \nu_{j*}[\sO_{D_j}]$ in $\Gr^F_d\GG_0(D) \otimes_\Z\Zl$
(see \cite[Example~15.1.5]{Fulton}).
In view of the projection formula
\cite[IV, (2.11.1.2)]{SGA6}, we deduce~\eqref{eq:desired equality kappaD}.
Similarly,
the definition of~$\kappa_D^{\et}$,
the equality obtained by applying~$\cl$ to~\eqref{eq:cycles decomposition D}
and the projection formula \cite[Proposition~4.2]{laumon}
together imply~\eqref{eq:desired equality kappaDet}.
\end{proof}

Let us finally start the proof of Proposition~\ref{propppdirfact}.
As~$X$ is smooth over~$k$, the morphism~$h$ gives rise to a push-forward map
$h_*:H^3(X'_{\ok},\Ql(2)) \to H^3(X_{\ok},\Ql(2))$
(see~\eqref{eq:pushforward in cohomology}),
satisfying $h_* \circ h^* = \Id$ on $H^3(X_{\ok},\Ql(2))$,
since
$h_*(h^*x \cap \cl([X'_{\ok}])) = x \cap \cl(h_*[X'_{\ok}])= x \cap \cl([X_{\ok}])$
for $x \in H^3(X_{\ok},\Ql(2))$
(see \cite[Proposition~4.2]{laumon}).
Let $K=\Ker\mkern-1mu\big(h_*:H^3(X'_{\ok},\Ql(2)) \to H^3(X_{\ok},\Ql(2))\big)$.
We obtain a decomposition
\begin{align}
\label{eq:decomposition of h3xp}
H^3(X_{\ok},\Ql(2)) \oplus K \isoto H^3(X'_{\ok},\Ql(2))\rlap{.}
\end{align}
A second decomposition of the right-hand side can be obtained
using
the formula for the \'etale cohomology of the blow-up of a regularly immersed closed subscheme
\cite[Proposition~2.2.2.1]{RiouGysin} applied to the blow-ups appearing in
diagram~(\ref{resindetdiag}).
This yields a canonical isomorphism
\begin{align}
\label{eq:isoh3h1zj}
\bigoplus_{c_j=2} H^1((Z_j)_{\ok},\Ql(1)) \isoto H^3(X'_{\ok},\Ql(2))
\end{align}
even though both~$X'_{\ok}$ and~$(Z_j)_{\ok}$ may fail to be regular.

Let $Z'_j$ denote the normalization of~$(Z_j)_{\ok}^{\red}$
and $\nu_j:Z'_j\to (Z_j)_{\ok}$ the natural morphism.
The normality of~$Z_j$ implies that~$(Z_j)_{\ok}$ is geometrically unibranch
and hence that~$\nu_j$ is universally bijective
(see \cite[Proposition~6.15.6, Proposition~6.15.5]{EGA42}).
We deduce that
 $\nu_j^*:H^1((Z_{j})_{\ok},\Q_\ell(1)) \to H^1(Z'_{j},\Q_\ell(1))$
 is an isomorphism
(see \cite[VIII, Corollaire~1.2]{SGA42}).

Letting $L=V_{\ell}(A(G^0_{\ok}))$,
we now consider the diagram of isomorphisms
\begin{align}
\label{touslesisos}
\begin{aligned}
\xymatrix@R=3ex@C=1.4em{
*!<-1.9ex,0ex>\entrybox{H^3(X_{\ok},\Ql(2))\oplus K}
\ar[r]^(.57)\sim & 
H^3(X'_{\ok},\Ql(2)) &
\displaystyle\smash[b]{\bigoplus_{c_j=2}} H^1((Z_{j})_{\ok},\Q_\ell(1))
\ar[d]^(.45){\wr}_(.45){\nu_j^*}\ar[l]_(.54){\sim} \\
*!<-1.8ex,0ex>\entrybox{V_\ell((\bCH^2_{X/k})^0_{\ok})\oplus L} \ar@<1.75ex>[d]^(.45)\wr
 &  & 
\displaystyle\smash[b]{\bigoplus_{c_j=2}} H^1(Z'_{j},\Q_\ell(1))\ar[d]^(.45){\wr}\\
*!<0ex,0ex>\entrybox{V_\ell(A((\bCH^2_{X/k})^0_{\ok}))\oplus L}
\ar[r]^(.52)\sim &
V_\ell(A((\bCH^2_{X'/k})^0_{\ok})) &
\displaystyle\smash[b]{\bigoplus_{c_j=2}} V_\ell(A((\bPic^0_{Z_j/k})_{\ok}))
\ar[l]_(.52){\sim}\rlap{,}
}
\end{aligned}
\end{align}
\vskip 9pt\noindent{}%
whose upper horizontal arrows are~(\ref{eq:decomposition of h3xp}) and~(\ref{eq:isoh3h1zj}),
whose lower horizontal arrows stem
from (\ref{CH2sumbis}),
whose left vertical isomorphism results from Theorem~\ref{threp}~(ii)
and whose lower right vertical isomorphism
is the composition of the canonical isomorphism
$V_\ell(\Pic(Z'_j)) \isoto V_\ell(A((\bPic^0_{Z_j/k})_{\ok}))$
coming from~\textsection\ref{Pic}
and~\eqref{Chevanorm}
with the Kummer isomorphism
$H^1(Z'_{j},\Q_\ell(1)) \isoto V_\ell(\Pic(Z'_j))$.

Let $m_j$ denote the multiplicity of $(Z_j)_{k_{\p}}$, i.e.\ the length of its
generic local ring, and~$\theta_j$ the canonical principal polarization
of $\bPic^0_{Z_j/k}$ (see \S\ref{Piccurves}).

\begin{lem}
\label{lem:lemma which implies propppdirfact}
\begin{enumerate}[(i)]
\item Diagram (\ref{touslesisos}) transports the opposite of the cup product pairing on
$H^3(X'_{\ok},\Ql(2))$ to the pairing on $\bigoplus_{c_j=2} V_\ell(A((\bPic^0_{Z_j/k})_{\ok}))$ defined as the orthogonal sum of the $\Ql(1)$\nobreakdash-valued Weil pairings associated with the polarizations $m_j\theta_j$.
\item Diagram (\ref{touslesisos}) transports $H^3(X_{\ok},\Ql(2))$ to 
$V_\ell((\bCH^2_{X/k})^0_{\ok})$ and $K$ to $L$.
\item The isomorphism
$H^3(X_{\ok},\Ql(2))=V_\ell((\bCH^2_{X/k})^0_{\ok})$
resulting from~(ii)
coincides with the one induced by Bloch's $\ell$\nobreakdash-adic Abel--Jacobi map
and by the identification between
$V_\ell((\bCH^2_{X/k})^0_{\ok})$  and $V_\ell(\CH^2(X_{\ok}))$
that stems from Theorem~\ref{threp}~(iv), (v).
\end{enumerate}
\end{lem}

\begin{proof}
Let us consider the commutative diagram
\begin{align}
\label{diag:h1h1picpicch2ch2}
\begin{aligned}
\xymatrix@R=3ex@C=2em{
\displaystyle\smash[b]{\bigoplus_{c_j=2}}H^1((Z_j)_{\ok},\Ql(1)) \ar[d]_(.43)\wr \ar[rr]^{\nu_j^*} &&
\displaystyle\smash[b]{\bigoplus_{c_j=2}}H^1(Z'_j,\Ql(1)) \ar[d]^(.43)\wr  \\
\displaystyle\smash[b]{\bigoplus_{c_j=2}}V_\ell(\Pic((Z_j)_{\ok})) \ar@{=}[d] \ar[rr]^{\nu_j^*} &&
\displaystyle\smash[b]{\bigoplus_{c_j=2}}V_\ell(\Pic(Z'_j)) \ar@{=}[d] \\
\displaystyle\smash[b]{\bigoplus_{c_j=2}}V_\ell(\bPic_{Z_j/k}) \ar@{=}[d] &
*!<4ex,0ex>\entrybox{\displaystyle\smash[b]{\bigoplus_{c_j=2}}V_\ell(\bPic^0_{Z_j/k})} \ar@<-4ex>@{=}[d] \ar[r] \ar[l]_(.572)\sim &
\displaystyle\smash[b]{\bigoplus_{c_j=2}}V_\ell(A((\bPic^0_{Z_j/k})_{\ok})) \ar@{=}[d] \\
V_\ell(\bCH^2_{X'/k}) &
*!<3ex,0ex>\entrybox{V_\ell((\bCH^2_{X'/k})^0)} \ar[l]_(.572)\sim \ar[r] &
V_\ell(A((\bCH^2_{X'/k})^0_{\ok}))\rlap{,}
}
\end{aligned}
\end{align}
in which the unlabelled horizontal arrows are the obvious ones
(the bottom leftward
arrow being an isomorphism in view of~\eqref{CH2sumbis}),
the top vertical arrows are the Kummer isomorphisms,
the middle vertical isomorphisms come from~\textsection\ref{Pic}
and~\eqref{Chevanorm}, and the bottom vertical isomorphisms
are induced by~\eqref{CH2sumbis}.

Since the top horizontal arrow of this diagram is an isomorphism,
all of the maps appearing in~\eqref{diag:h1h1picpicch2ch2} have to be isomorphisms.

We note that as a consequence of the projection formula \cite[Proposition~4.2]{laumon}
and of the equality of cycles $[(Z_j)_{\ok}]=m_j \nu_{j*}[Z'_j]$,
the top horizontal isomorphism of~\eqref{diag:h1h1picpicch2ch2} transports
the cup product pairing on $H^1((Z_j)_{\ok},\Ql(1))$
to the cup product pairing on $H^1(Z'_j,\Ql(1))$ multiplied by~$m_j$.

As on the other hand~\eqref{diag:h1h1picpicch2ch2}
transports the Weil pairing
on $V_\ell(A((\bPic^0_{Z_j/k})_{\ok}))=V_\ell(\bPic^0_{Z'_j/{\ok}})$
(see~\eqref{Chevanorm})
to the cup product pairing on $H^1(Z'_j,\Ql(1))$,
we see that
Lemma~\ref{lem:lemma which implies propppdirfact}~(i)
amounts to the assertion that~\eqref{eq:isoh3h1zj} transports
the orthogonal sum of the cup product pairings on $H^1((Z_j)_{\ok},\Ql(1))$
to the opposite of the cup product pairing on $H^3(X'_{\ok},\Ql(2))$.
When the~$Z_j$ are smooth, this is shown in \cite[(2.9)]{CGBW};
the same argument applies in our setting.

Thus, it only remains to prove Lemma~\ref{lem:lemma which implies propppdirfact}~(ii) and~(iii).
For this, it suffices to check the commutativity of the squares
\begin{align}
\label{square:hupperstar}
\begin{aligned}
\xymatrix@R=3ex{
V_\ell(A((\bCH^2_{X'/k})^0_{\ok})) \ar[r]_(.54)\sim^(.54){\gamma'} &  H^3(X'_{\ok},\Ql(2)) \\
\ar[u]^{h^*}  V_\ell(A((\bCH^2_{X/k})^0_{\ok}))\ar[r]_(.54)\sim^(.54){\gamma} & H^3(X_{\ok},\Ql(2)) \ar[u]_{h^*}
}
\end{aligned}
\end{align}
and
\begin{align}
\begin{aligned}
\label{square:hlowerstar}
\xymatrix@R=3ex{
\ar[d]_{h_*}
V_\ell(A((\bCH^2_{X'/k})^0_{\ok})) \ar[r]_(.54)\sim^(.54){\gamma'} &   H^3(X'_{\ok},\Ql(2))
 \ar[d]^{h_*} \\
V_\ell(A((\bCH^2_{X/k})^0_{\ok}))\ar[r]_(.54)\sim^(.54)\gamma & H^3(X_{\ok},\Ql(2))\rlap{,}
}
\end{aligned}
\end{align}
where~$\gamma$ is the isomorphism constructed from Bloch's $\ell$\nobreakdash-adic Abel--Jacobi map
for the smooth variety~$X$ (see the statement of Lemma~\ref{lem:lemma which implies propppdirfact}~(iii))
and~$\gamma'$ is the isomorphism extracted
from~\eqref{eq:isoh3h1zj} and~\eqref{diag:h1h1picpicch2ch2},
and where the vertical arrows are those appearing
in the upper and lower rows of~\eqref{touslesisos}.

The square~\eqref{square:hlowerstar} fits into a larger diagram
\begin{align}
\label{diag:toobig}
\begin{aligned}
\xymatrix@R=3ex@C=.6em{
\displaystyle\smash[b]{\bigoplus_{c_j=2}} V_\ell(\K_0((Z_j)_{\ok})) \ar[d]^{\alpha} \ar[rr]^\det_\sim &&
\displaystyle\smash[b]{\bigoplus_{c_j=2}} V_\ell(\Pic((Z_j)_{\ok})) \ar@{=}[r] \ar@{=}[d] &
\displaystyle\smash[b]{\bigoplus_{c_j=2}} H^1((Z_j)_{\ok},\Ql(1)) \ar@{=}[d] \\
V_\ell(\SK_0(X'_{\ok})) \ar[d]^{h_*} \ar[r] &
*!<.5ex,0ex>\entrybox{V_\ell(\bCH^2_{X'/k})} \ar@<-.5ex>[d]^{h_*} \ar[r]^(.39){\beta'} & V_\ell(A((\bCH^2_{X'/k})^0_{\ok})) \ar[d]^{h_*}
\ar[r]_(.54)\sim^(.54){\gamma'} &   H^3(X'_{\ok},\Ql(2)) \ar[d]^{h_*} \\
V_\ell(\SK_0(X_{\ok})) \ar[r] &
*!<.5ex,0ex>\entrybox{V_\ell(\bCH^2_{X/k})} \ar[r]^(.39){\beta\phantom{'}} & V_\ell(A((\bCH^2_{X/k})^0_{\ok}))
\ar[r]_(.54)\sim^(.54){\gamma} & H^3(X_{\ok},\Ql(2))\rlap{,}
}
\end{aligned}
\end{align}
in which the map~$\alpha$
is induced by~\eqref{eq:decomposition of k0}
(see also~\eqref{SK0blup}),
the map~$\beta'$ comes from the bottom row of~\eqref{diag:h1h1picpicch2ch2},
the map~$\beta$ is constructed in the same way as~$\beta$'
(legitimate thanks to Theorem~\ref{threp}~(v))
and the isomorphisms of the square of the top right corner
all come from~\eqref{eq:isoh3h1zj} and~\eqref{diag:h1h1picpicch2ch2}.

In order for the square in the bottom right corner to commute,
it suffices that the outer
square of the diagram commute, since the other inner squares clearly commute.
That is, fixing~$j$ such that~$c_j=2$ and letting~$\alpha_j$
and~$\alpha^\et_j$ respectively denote the $j$\nobreakdash-th component of~$\alpha$
and of~\eqref{eq:isoh3h1zj}, we need only prove that the square
\begin{align}
\label{square:halphajhalphajet}
\begin{aligned}
\xymatrix@R=3ex{
V_\ell(\K_0((Z_j)_{\ok})) \ar[d]^{h_* \circ\mkern1mu \alpha_j} \ar[r]^(.47)\sim &
H^1((Z_j)_{\ok},\Ql(1)) \ar[d]^{h_* \circ\mkern1mu \alpha_j^\et} \\
V_\ell(\SK_0(X_{\ok})) \ar[r] &
H^3(X_{\ok},\Ql(2))\rlap{,}
}
\end{aligned}
\end{align}
whose horizontal arrows are extracted from~\eqref{diag:toobig}, commutes.

Set $D_j=Z_j \times_{Y_j} X'$
and $E_j=Z_j\times_{Y_j} Y_{j+1}$, where  the $Y_j$ are the varieties appearing in diagram~(\ref{resindetdiag}).
Let $q_j:D_j \to Z_j$ and $p'_j:E_j \to Z_j$ denote
the projections.  Let $\iota_j:Z_j \hookrightarrow Y_j$, $\iota'_j:E_j \hookrightarrow Y_{j+1}$
and $\delta_j:D_j\hookrightarrow X'$ be the inclusions, so that~$\iota_j$, $\iota'_j$,~$\delta_j$ are regular
closed immersions of codimensions~$c_j$,~$1$,~$1$, respectively.
Recall that
$\alpha^\et_j=(p_{j+1} \circ \dots \circ p_N)^* \circ \iota'_{j*} \circ p_j'^*$,
where $\iota'_{j*}$ denotes the map given by
cup product with the class of the Cartier divisor~$(E_j)_{\ok}$
in $H^2_{(E_j)_{\ok}}((Y_{j+1})_{\ok},\Ql(1))$
(see \cite[\textsection2.1]{RiouGysin})
composed with the map
$H^3_{(E_j)_{\ok}}((Y_{j+1})_{\ok},\Ql(2))\to H^3((Y_{j+1})_{\ok},\Ql(2))$
forgetting the support.
As~$E_j$ pulls back, as a Cartier divisor, to~$D_j$, we deduce that
\begin{align}
\label{eq:alphajdeltaqj}
\alpha_j^\et= \delta_{j*}\circ q_j^*\rlap{,}
\end{align}
where~$\delta_{j*}$ is again defined as in \emph{loc.\ cit.}, \textsection2.1.
Similarly, recall that $\alpha_j$ is
given by $x \mapsto (p_{j+1}\circ\dots\circ p_N)^*( \iota'_{j*\mkern1mu}  p_j'^* x \otimes [\sO_{Y_{j+1}}(E_j)])=
((p_{j+1}\circ\dots\circ p_N)^* \iota'_{j*\mkern1mu}  p_j'^* x) \otimes [\sO_{X'}(D_j)])$
(see~\eqref{blowup}).
Noting that
the morphisms $p_{j+1}\circ\dots\circ p_N$ and~$\iota'_j$ are Tor-independent
(indeed one has $\mathrm{Tor}^A_i(A/fA,B)=0$ for any $i>0$, any commutative ring~$A$, any $A$\nobreakdash-algebra~$B$
and any $f \in A$ such that neither~$f$ nor its image in~$B$ is a zero divisor),
the base change theorem \cite[Theorem 3.10.3]{Lipman}
allows us to rewrite this as
\begin{align}
\label{eq:alphajetdeltaqj}
\alpha_j(x)= (\delta_{j*}\circ q_j^*)(x) \otimes [\sO_{X'}(D_j)]
\end{align}
for any
$x\in V_\ell(\K_0((Z_j)_{\ok}))$.

Let $\alpha_j':V_\ell(\K_0((Z_j)_{\ok})) \to V_\ell(\SK_0(X'_{\ok}))$ be given
by $\alpha'_j(x)=(\delta_{j*}\circ q_j^*)(x)$.
In view of~\eqref{eq:alphajdeltaqj}
and of the contravariant functoriality of the first row of~\eqref{diag:toobig},
we deduce from Lemma~\ref{lem:lambda1lambda2} applied to $h\circ \delta_j:D_j\to X$
that the square obtained by
replacing, in~\eqref{square:halphajhalphajet}, the left-hand side vertical arrow
 $h_* \circ \alpha_j$
with $h_* \circ \alpha_j'$ commutes.
On the other hand, it follows from~\eqref{eq:alphajetdeltaqj}
that the map $h_*\circ \alpha_j-h_*\circ \alpha_j'$ takes its values
in $V_\ell(F^3\K_0(X_{\ok}))$,
since $\rk([\sO_{X'}(D_j)]-[\sO_{X'}])=0$
(see \cite[X, Corollaire~1.3.3]{SGA6}).
Now the lower horizontal map of~\eqref{square:halphajhalphajet}
vanishes on $V_\ell(F^3\K_0(X_{\ok}))$ since it factors through~$c_2$;
we conclude that the square~\eqref{square:halphajhalphajet} itself commutes,
and therefore so does~\eqref{square:hlowerstar}.

Let us turn to~\eqref{square:hupperstar}.
We introduce a desingularisation $\pi:X'' \to X'_{\ok}$
of~$X'_{\ok}$
(which exists by Cossart and Piltant \cite{CPII})
and consider the square
\begin{align}
\label{square:hupperstarbis}
\begin{aligned}
\xymatrix@R=3ex{
\ar[d]_(.45){\pi^*}
V_\ell(A((\bCH^2_{X'/{k}})^0_{\ok})) \ar[r]_(.54)\sim^(.54){\gamma'} & H^3(X'_{\ok},\Ql(2))
\ar[d]^(.45){\pi^*}\\
V_\ell(A((\bCH^2_{X''/{\ok}})^0))  \ar[r]_(.54)\sim^(.54){\gamma''} &  H^3(X'',\Ql(2))\rlap{,}
}
\end{aligned}
\end{align}
where~$\gamma''$ is constructed from Bloch's $\ell$\nobreakdash-adic Abel--Jacobi map
for the smooth variety~$X''$ in the same way as~$\gamma$ for~$X$.
One verifies the commutativity of the square~\eqref{square:hupperstarbis}
by proceeding exactly as we did with~\eqref{square:hlowerstar},
that is, by reducing to
Lemma~\ref{lem:lambda1lambda2}
using the diagram
obtained by replacing, in~\eqref{diag:toobig},
all occurrences of~$X$ with~$X''$ and all occurrences
of~$h_*$ with~$\pi^*$,
and using the equalities obtained by replacing, in~\eqref{eq:alphajdeltaqj} and~\eqref{eq:alphajetdeltaqj}
and in their proofs,
$\alpha_j$ and $\alpha_j^\et$ with $\pi^*\circ\alpha_j$ and $\pi^*\circ\alpha_j^\et$,
and
$D_j$, $q_j$, $\delta_j$ with $D_j''$, $q_j''$, $\delta_j''$,
where
 $D_j''=Z_j \times_{Y_j} X''$
and where $q_j'':D_j'' \to Z_j$
and $\delta_j'':D_j''\hookrightarrow X''$ denote the projections.

All the $\ok$-varieties in sight and all the $\ok$-morphisms between them are defined over a common subfield of~$\ok$ that is finitely generated over the prime field. Their $\ell$-adic cohomology groups are thus endowed with a weight filtration (see \cite[\textsection2]{jannsenweights}).
As the~$Z'_j$ are smooth and projective,
the groups $H^1(Z'_j,\Ql(1))$
are pure of weight~$-1$.
In view of the isomorphisms~\eqref{touslesisos},
we deduce that the group $H^3(X'_{\ok},\Ql(2))$ is pure of weight~$-1$ as well.
On the other hand,
the kernel of the right-hand side vertical map of~\eqref{square:hupperstarbis} has weights~$<-1$,
as follows from cohomological descent and Deligne's theorem on the Weil conjectures
(\emph{op.\ cit.}, \textsection9; proper smooth hypercoverings
of~$X'_{\ok}$ that start with~$\pi$ exist by \cite[Theorem~4.1]{dJ}).
Hence the right-hand side vertical map of~\eqref{square:hupperstarbis} is injective.

This injectivity, the commutativity of~\eqref{square:hupperstarbis}
and the commutativity of the square
\begin{align}
\begin{aligned}
\xymatrix@R=3ex{
V_\ell(A((\bCH^2_{X''/\ok})^0)) \ar[r]_(.54)\sim^(.54){\gamma''} &  H^3(X'',\Ql(2)) \\
\ar[u]^{\pi^* \circ h^*}  V_\ell(A((\bCH^2_{X/k})^0_{\ok}))\ar[r]_(.54)\sim^(.54){\gamma} & H^3(X_{\ok},\Ql(2)) \ar[u]_{\pi^*\circ h^*}
}
\end{aligned}
\end{align}
(which holds by functoriality of Bloch's Abel--Jacobi map \cite[Proposition~3.5]{Bloch})
together imply that~\eqref{square:hupperstar} commutes.
This concludes the proof of Lemma~\ref{lem:lemma which implies propppdirfact}.
\end{proof}

We resume the proof of Proposition \ref{propppdirfact}.  Consider the diagram
\begin{equation}
\label{isova}
 (\bCH^2_{X/k})^0_{\ok}\times A(G^0_{\ok})
\isoto A((\bCH^2_{X'/k})^0_{\ok})\isofrom
\prod_{c_j=2} A((\bPic^0_{Z_j/k})_{\ok})
\end{equation}
of isomorphisms of abelian varieties stemming from (\ref{CH2sumbis}) and whose $\ell$-adic Tate modules appear on the bottom line of (\ref{touslesisos}).
The product of the polarizations $m_j\theta_j$ on the right-hand side of (\ref{isova}) induces a polarization $\lambda$ on the left-hand side $(\bCH^2_{X/k})^0_{\ok}\times A(G^0_{\ok})$ of (\ref{isova}). Let us view the Weil pairing of $\lambda$ as a $\Q_{\ell}(1)$\nobreakdash-valued pairing on $H^3(X'_{\ok},\Ql(2))$
thanks to (\ref{touslesisos}). By
Lemma~\ref{lem:lemma which implies propppdirfact}~(i), it is equal to the opposite of the cup product pairing on $H^3(X'_{\ok},\Ql(2))$. Since,
by the projection formula \cite[Proposition~4.2]{laumon},
the decomposition~(\ref{eq:decomposition of h3xp}) is orthogonal with respect to the cup product,
it follows from Lemma \ref{lem:lemma which implies propppdirfact} (ii) that $\lambda$ is a product polarization on $(\bCH^2_{X/k})^0_{\ok}\times A(G^0_{\ok})$.
Since the restriction of the cup product pairing on $H^3(X'_{\ok},\Ql(2))$ to $H^3(X_{\ok},\Ql(2))$ coincides with the cup product pairing on $H^3(X_{\ok},\Ql(2))$, it follows from Lemma \ref{lem:lemma which implies propppdirfact} (iii) that the restriction of $\lambda$ to $(\bCH^2_{X/k})^0_{\ok}$ is the canonical principal polarization defined in Theorem \ref{threp} (vi).

By a theorem of Debarre \cite[Corollary 2]{Debarre}, polarized abelian varieties can be written in a unique way as a product of indecomposable polarized abelian varieties. As the $(A((\bPic^0_{Z_j/k})_{\ok}),m_j\theta_j)$ are indecomposable or trivial (since so are the $(A((\bPic^0_{Z_j/k})_{\ok}),\theta_j)$), we deduce the existence of a partition $\{j|\mkern2muc_j=2\}=J\sqcup J'$ such that (\ref{isova}) induces isomorphisms $\prod_{j\in J}A((\bPic^0_{Z_j/k})_{\ok})\isoto (\bCH^2_{X/k})^0_{\ok}$ and $\prod_{j\in J'}A((\bPic^0_{Z_j/k})_{\ok})\isoto A(G^0_{\ok})$. Since $\lambda$ restricts to a principal polarization on $(\bCH^2_{X/k})^0_{\ok}$, we see that $m_j=1$ for all the $j\in J$ such that $A((\bPic^0_{Z_j/k})_{\ok})$ is non-zero.

Thus, the product of the polarizations $\theta_j$ on the right-hand side of (\ref{isova}) induces 
on $(\bCH^2_{X/k})^0_{\ok}\times A(G^0_{\ok})$
a polarization  which is at the same time a principal polarization
and the product of two polarizations,
and which is therefore the product of two principal polarizations;
moreover, the first of these coincides with
the canonical principal polarization of Theorem~\ref{threp}~(vi).
Proposition~\ref{propppdirfact} is proved.
\end{proof}

Now that Proposition \ref{propppdirfact} is proved, we let $J_1$ (resp.\ $J_2$, resp.\ $J_3$) be the set of indices $j$ such that $c_j=2$ and the curve $Z_j$ is smooth over $k$ (resp.\ such that $c_j=2$ and $Z_j$ is not smooth over $k$, resp.\ such that $c_j=3$), and we proceed to show that the $(Z_j)_{j\in J_2}$ do not contribute to $(\bCH^2_{X/k})^0$.

\begin{lem}
\label{inducedmapzero}
The map $$\prod_{j \in J_2} \bPic^0_{Z_j/k} \to
(\bCH^2_{X/k})^0$$ induced by~(\ref{CH2sumbis}) vanishes.
\end{lem}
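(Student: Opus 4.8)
The plan is to argue by contradiction, using the splitting criterion of Corollary~\ref{corsplit} to rule out any nonzero contribution of a non-smooth blown-up curve. Since the source is a product, it suffices to fix $j_0\in J_2$ and show that the induced $k$-homomorphism $\phi_{j_0}\colon\bPic^0_{Z_{j_0}/k}\to(\bCH^2_{X/k})^0$ vanishes. As the target is an abelian variety and the source is a smooth connected $k$-group scheme, $\phi_{j_0}$ must kill the affine part of $\bPic^0_{Z_{j_0}/k}$ after base change to $k_{\p}$; thus $\phi_{j_0}$ factors over $k_{\p}$ through $A((\bPic^0_{Z_{j_0}/k})_{k_{\p}})$, and in particular $\phi_{j_0}=0$ whenever this abelian part vanishes. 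By~\eqref{Chevanorm} this abelian part is $\bPic^0_{D/k_{\p}}$ with $D$ the normalization of $(Z_{j_0,k_{\p}})^{\red}$, so I may assume it is nonzero, i.e.\ that $Z_{j_0}$ has genus~$\geq 1$, and aim for a contradiction.

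First I would locate where this abelian part is sent. Applying the functor $A(-)$ over $k_{\p}$ to~\eqref{CH2sumbis} identifies $A((\bCH^2_{X'/k})^0_{k_{\p}})$ with the polarized product $\prod_{c_j=2}A((\bPic^0_{Z_j/k})_{k_{\p}})$, whose factors are indecomposable or zero (see~\S\ref{Piccurves}). By Proposition~\ref{propppdirfact}, $(\bCH^2_{X/k})^0$ is a principally polarized direct factor of $(\bCH^2_{X'/k})^0$; the uniqueness of the decomposition of a principally polarized abelian variety into indecomposable factors \cite[Corollary~2]{Debarre} then yields a subset $J\subseteq\{j:c_j=2\}$ with $A((\bCH^2_{X/k})^0_{k_{\p}})=\prod_{j\in J}A((\bPic^0_{Z_j/k})_{k_{\p}})$, compatibly with the product decomposition, so that the map on abelian parts induced by $\phi$ is the corresponding projection. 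Hence $\phi_{j_0}$ vanishes as soon as $j_0\notin J$, and it remains to prove $j_0\notin J$.

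Suppose instead $j_0\in J$. Writing $s$ and $\mathrm{pr}$ for the inclusion and projection realizing $(\bCH^2_{X/k})^0$ as a direct factor of $(\bCH^2_{X'/k})^0$ over $k$, and $\iota_{j_0},\pi_{j_0}$ for the inclusion and projection of the $Z_{j_0}$-factor in $(\bCH^2_{X'/k})^0\cong\prod_{c_j=2}\bPic^0_{Z_j/k}$, I would set $A_0:=\Ima(\phi_{j_0})$, an abelian subvariety of $(\bCH^2_{X/k})^0$ defined over $k$, and $t:=\pi_{j_0}\circ s$. The endomorphism $\phi_{j_0}\circ t=\mathrm{pr}\circ\iota_{j_0}\circ\pi_{j_0}\circ s$ of $(\bCH^2_{X/k})^0$ induces on abelian parts over $k_{\p}$ the projector onto the $j_0$-th factor (here is where $j_0\in J$ is used); being idempotent with image $A_0$, and since $k\to k_{\p}$ is injective on homomorphisms of abelian varieties, it restricts to the identity of $A_0$. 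Thus $t|_{A_0}\colon A_0\to\bPic^0_{Z_{j_0}/k}$ is a section of the surjection $\phi_{j_0}\colon\bPic^0_{Z_{j_0}/k}\twoheadrightarrow A_0$ over $k$. A section of a surjection of commutative $k$-group schemes splits off a factor, so $\bPic^0_{Z_{j_0}/k}\cong\Ker(\phi_{j_0})\times t(A_0)$ with $t(A_0)\cong A_0$; comparing abelian parts over $k_{\p}$ and using $\dim A_0=\dim A((\bPic^0_{Z_{j_0}/k})_{k_{\p}})$ shows that $\Ker(\phi_{j_0})$ has trivial abelian part, hence is affine by Chevalley, and is smooth connected as a direct factor of $\bPic^0_{Z_{j_0}/k}$. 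So $\bPic^0_{Z_{j_0}/k}$ is the product of an abelian variety and of an affine group scheme over $k$. But $Z_{j_0}$ is a proper integral normal—hence geometrically unibranch—curve that is not smooth over $k$ (since $j_0\in J_2$) and has genus~$\geq 1$, so Corollary~\ref{corsplit} forbids any such splitting. This contradiction gives $j_0\notin J$, whence $\phi_{j_0}=0$ and the lemma.

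I expect the main obstacle to be the construction in the third paragraph of a genuine section of $\phi_{j_0}$ \emph{over $k$} rather than merely over $k_{\p}$ or $\ok$. This is exactly where the \emph{polarized} direct factor structure of Proposition~\ref{propppdirfact}, rather than a bare direct factor, is indispensable: it is Debarre's uniqueness theorem for principally polarized abelian varieties that produces the well-defined index set $J$ and pins the $j_0$-factor inside $(\bCH^2_{X/k})^0$. Without the polarization the decomposition into abelian subvarieties would not be unique, the index set $J$ would be meaningless, and the section—and hence the contradiction with Corollary~\ref{corsplit}—would be unavailable.
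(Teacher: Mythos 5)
Your proof is correct, and its skeleton is the same as the paper's: argue by contradiction on a single factor $j_0\in J_2$, feed Proposition~\ref{propppdirfact} into Debarre's uniqueness theorem together with the indecomposability of $A((\bPic^0_{Z_{j_0}/k})_{k_\p})$ from \S\ref{Piccurves}, produce from this a splitting of $\bPic^0_{Z_{j_0}/k}$ over $k$ as (abelian variety) $\times$ (affine group scheme) with non-trivial abelian factor, and contradict Corollary~\ref{corsplit} using that $Z_{j_0}$ is integral, normal (hence geometrically unibranch), not smooth, and of geometric genus $\geq 1$. Where you genuinely differ is the mechanism that manufactures the splitting over $k$. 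The paper works with the pair of maps $a\colon\bPic^0_{Z_{j_0}/k}\to(\bCH^2_{X/k})^0$ and $b\colon\bPic^0_{Z_{j_0}/k}\to G^0$ induced by~(\ref{CH2sumbis}), uses Debarre to show $A(b_{k_\p})=0$ while $A(a_{k_\p})$ is a closed immersion, and then splits $\bPic^0_{Z_{j_0}/k}=\Ker(a)\times\Ker(b)$, the required properties being verified over $k_\p$ by a diagram chase on the Chevalley exact sequences (the snake map vanishes as it goes from an abelian variety to an affine group) and then descended. You never use the complementary map $b$; instead you use the retraction $t=\pi_{j_0}\circ s$ back onto the $Z_{j_0}$-factor, check over $k_\p$ that $e=\phi_{j_0}\circ t$ is the projector onto the $j_0$-th indecomposable factor, descend idempotency and the identity $e|_{A_0}=\mathrm{id}$ by injectivity of $\Hom$ under the faithfully flat base change $k\to k_\p$, and split off $\Ker(\phi_{j_0})\times t(A_0)$ from the resulting $k$-section. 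Both devices are sound; yours trades the paper's Chevalley diagram chase for the idempotent/section trick, at the modest cost of first re-deriving the partition $J$ of the indices, which the paper had already obtained as a by-product of the proof of Proposition~\ref{propppdirfact} (penultimate paragraph) and which its own proof of the lemma re-derives inline in an equivalent form ($A(b_{k_\p})=0$ and $A(a_{k_\p})$ a closed immersion). Your closing remark correctly identifies the crux shared by both arguments: without the \emph{polarized} direct factor structure, Debarre's uniqueness would be unavailable and no splitting over $k$ could be extracted.
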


\begin{proof}
We fix $j\in J_2$.
Let $a:\bPic^0_{Z_j/k}\to(\bCH^2_{X/k})^0$
and $b:\bPic^0_{Z_j/k}\to G^0$ denote the maps
induced by~\eqref{CH2sumbis}.
Let us assume that $a\neq 0$ and derive a contradiction.
When $a\neq 0$, we claim that
$\bPic^0_{Z_j/k} = \Ker(a)\times\Ker(b)$,
that~$\Ker(a)$ is affine and that~$\Ker(b)$
is a non-trivial abelian variety;
Corollary~\ref{corsplit} then provides the desired contradiction.
It thus suffices to prove the claim.
To this end, it is enough to check
that
$(\bPic^0_{Z_j/k})_{k_{\p}} = \Ker(a_{k_{\p}})\times\Ker(b_{k_{\p}})$,
that~$\Ker(a_{k_{\p}})$ is affine and that~$\Ker(b_{k_{\p}})$
is a non-trivial abelian variety, as these three properties descend to~$k$.

By functoriality, the map~$b$ induces maps $A(b_{k_{\p}}):A((\bPic^0_{Z_j/k})_{k_{\p}}) \to A(G^0_{k_{\p}})$
and
 $L(b_{k_{\p}}):L((\bPic^0_{Z_j/k})_{k_{\p}}) \to L(G^0_{k_{\p}})$.
As $(\bCH^2_{X/k})^0_{k_{\p}}$ is an abelian variety
(see Theorem~\ref{threp}~(ii)),
we have $L(a_{k_{\p}})=0$
and the map $A(a_{k_{\p}})$
can be viewed as a map
 $A(a_{k_{\p}}):A((\bPic^0_{Z_j/k})_{k_{\p}}) \to (\bCH^2_{X/k})^0_{k_{\p}}$
through which $a_{k_{\p}}$ factors,
so that our assumption that $a_{k_{\p}}\neq 0$ implies that
$A(a_{k_{\p}})\neq 0$.
On the other hand, the map $L(b_{k_{\p}})$
is a closed immersion
since~$a\times b$
is one
and $L(a_{k_{\p}})=0$.

Proposition~\ref{propppdirfact}
allows us to view
$A((\bPic^0_{Z_j/k})_{k_{\p}})$ as a principally polarized direct factor of the
product of
principally polarized abelian varieties
$(\bCH^2_{X/k})^0_{k_{\p}} \times A(G^0_{k_{\p}})$
over~$k_{\p}$,
through $A(a_{k_{\p}}) \times A(b_{k_{\p}})$.
As the decomposition of a principally polarized abelian variety
into its indecomposable factors is unique, as
$A((\bPic^0_{Z_j/k})_{k_{\p}})$ is itself indecomposable (see~\S\ref{Piccurves}),
and as $A(a_{k_{\p}})\neq 0$, necessarily
$A(b_{k_{\p}})=0$ and $A(a_{k_\p})$ is a closed
immersion,
so that $\Ker(a_{k_{\p}})=L((\bPic^0_{Z_j/k})_{k_{\p}})$
(see~(\ref{Chevalley})).

All in all, the
exact sequences~(\ref{Chevalley}) fit into a commutative diagram
\begin{equation}
\label{diagChevs}
\begin{aligned}
\xymatrix@C=1.5em@R=3ex{
0\ar[r]& \Ker(a_{k_{\p}})\ar[r]
\ar@{^{(}->}+<0pt,-12pt>;[d]
 &(\bPic^0_{Z_j/k})_{k_{\p}}\ar[r]\ar[d]^{b_{k_{\p}}} & A((\bPic^0_{Z_j/k})_{k_{\p}})\ar[d]^0  \ar[r]&0\\
0\ar[r]&L(G^0_{k_{\p}})\ar[r]& G^0_{k_{\p}} \ar[r]& A(G^0_{k_{\p}})\ar[r]&0\rlap{.}
}
\end{aligned}
\end{equation}
An isomorphism $\Ker(b_{k_{\p}})\isoto  A((\bPic^0_{Z_j/k})_{k_{\p}})$
and then all of the desired statements now result from this diagram,
in view of the remark that the snake homomorphism is trivial since
it goes from an abelian variety
to an affine group.
\end{proof}

\begin{rem}
At the end of the proof of Proposition~\ref{propppdirfact} (penultimate
paragraph), we had seen, as a by-product of~\cite{Debarre},
that for any $j\in \{1,\dots,N\}$ such that $c_j=2$,
if the map $A((\bPic^0_{Z_j/k})_{\ok}) \to (\bCH^2_{X/k})^0_{\ok}$
induced by~(\ref{CH2sumbis}) does not vanish,
then the curve~$Z_j$ is geometrically reduced.  
Lemma~\ref{inducedmapzero} reproves this fact by other means, in the course of showing that~$Z_j$ is even
smooth.
Instead of reproving it, it would have been possible to use the geometric reducedness of~$Z_j$ to simplify
the proof of Lemma~\ref{inducedmapzero},
inasmuch as Corollary~\ref{corsplit} could have been applied over~$k_{\p}$.
\end{rem}

\begin{proof}[Proof of Theorem \ref{threp} (vii)]
In view of Lemma \ref{inducedmapzero}, we may consider the quotient $H$ of $G$ by its subgroup scheme $\prod_{j\in J_2} \bPic^0_{Z_j/k}$. Thanks to the exact sequences $0\to \bPic^0_{Z_j/k}\to \bPic_{Z_j/k}\to \Z_{Z_j/k}\to 0$ given by Proposition~\ref{NS} for $j\in J_2$,
we deduce from (\ref{CH2sumbis}) an isomorphism
\begin{equation}
\label{CH2sumter}
 \bCH^2_{X/k}\times\mkern4muH
\isofrom
\Z\times \prod_{j\in J_1} \bPic_{Z_j/k}\times \prod_{j\in J_2\cup J_3}\Z_{Z_j/k}\rlap{.}
\end{equation}
Let $B$ be the disjoint union of $\bP^1_k$, of the curves $\bP^1_{\pi_0(Z_j/k)}$ for all $j\in J_2\cup J_3$, and of the curves $Z_j$ for all $j\in J_1$. It is a smooth projective curve over $k$, and it has the property that 
$\bPic_{B/k}\simeq\Z\times \prod_{j\in J_1} \bPic_{Z_j/k}\times \prod_{j\in J_2\cup J_3}\Z_{Z_j/k}$ since $\bPic_{\bP^1_{\pi_0(Z_j/k)}/k}\simeq \Res_{\pi_0(Z_j/k)/k}(\Z)\simeq \Z_{Z_j/k}$ for $j\in J_2\cup J_3$ by Proposition \ref{locconsrep}.
The isomorphism $\bPic_{B/k}\isoto \bCH^2_{X/k}\times\mkern4muH$
deduced from (\ref{CH2sumter}) realizes $\bCH^2_{X/k}$ as a principally polarized direct factor of $\bPic_{B/k}$ by Proposition~\ref{propppdirfact}, as desired.
\end{proof}

The proof of Theorem \ref{threp} is now complete.

\subsection{Blow-ups}
\label{parblowup}

The next proposition, which relies on arguments already used in the proof of Theorem \ref{threp}, allows one to compute $\bCH^2_{X/k}$ in concrete situations.

\begin{prop}
\label{calculblowup}
Let $X$ be a smooth projective $\ok$-rational threefold over $k$, let $i:Y\to X$ be the inclusion of a smooth closed subvariety of pure codimension $c$, and let $p:X'\to X$ be its blow-up. Then the formula (\ref{blowupfunct}) induces an isomorphism
\begin{equation}
\label{decblowup}
\begin{alignedat}{4}
\bCH^2_{X/k}\times\bPic_{Y/k}\isoto\bCH^2_{X'/k}\hspace{2em}&\textrm{if $c=2$}\\
\textrm{(resp.}\hspace{1em} \bCH^2_{X/k}\times\mkern2.5mu \Z_{Y/k}\isoto\bCH^2_{X'/k}\hspace{2em}&\textrm{if $c=3$),}
\end{alignedat}
\end{equation}
respecting the principal polarizations furnished
by Theorem~\ref{threp} (applied to~$X$ and to~$X'$)
and by~\textsection\ref{Piccurves} (applied to~$Y$).
\end{prop}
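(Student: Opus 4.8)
The plan is to specialize the arguments of \textsection\ref{repkrat} and \textsection\ref{biratbeh} to the single blow-up $p:X'\to X$, noting first that $X'$ is again a smooth projective $\ok$-rational threefold (so that Theorem~\ref{threp} applies to both $X$ and $X'$), that $Y$ is a smooth curve when $c=2$, and that $Y$ is finite étale over $k$ when $c=3$. I would establish the group scheme isomorphism by computing $(\rk,\det)$ on each summand of the $\K_0$-blow-up formula~(\ref{blowupfunct}): the term $p^*x$ contributes the rank of $x$ and the determinant $p^*\det(x)$, while each exceptional term $i'_*p'^*y_j\otimes[\sO_{X'}(jY')]$ has rank $0$ and, being the pushforward of a sheaf supported on the exceptional divisor, has determinant a multiple of $Y'$, hence a class lying purely in the factor $\Z_{Y/k}$ of the decomposition $\Pic_{X'/k}\cong\Pic_{X/k}\times\Z_{Y/k}$ of Corollary~\ref{corblup}. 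Using Proposition~\ref{K0curve} to identify $\K_{0,Y/k,\fppf}$ with $\Z_{Y/k}\times\Pic_{Y/k,\fppf}$ (where $\Pic_{Y/k,\fppf}=0$ when $c=3$) and taking kernels exactly as in~(\ref{SK0blup}) would yield
\[
\SK_{0,X'/k,\fppf}\cong
\begin{cases}
\SK_{0,X/k,\fppf}\times\Pic_{Y/k,\fppf}&\textrm{if }c=2,\\
\SK_{0,X/k,\fppf}\times\Z_{Y/k}&\textrm{if }c=3.
\end{cases}
\]
Since a general $\ok$-point of $X$ avoids $Y$, one has $p^*\nu_X=\nu_{X'}$, and under~(\ref{blowupfunct}) the map $p^*$ is the inclusion of the first factor, so that $\nu_{X'}$ lies in the $\SK_{0,X/k,\fppf}$ summand. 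Passing to the presheaf cokernels of $\nu_X$ and $\nu_{X'}$ (Definition~\ref{defch2xk}), sheafifying, and invoking the representability results of Theorem~\ref{threp}~(i), Proposition~\ref{locconsrep} and \textsection\ref{Pic} would then give the isomorphism~(\ref{decblowup}) of group schemes.

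For the polarizations I would reduce to the case $k=\ok$: the isomorphism~(\ref{decblowup}) is defined over $k$, and since $\NS(A_{k_\p})\hookrightarrow\NS(A_\ok)$ for any abelian variety $A$ over $k_\p$, the assertion that it respects the polarizations of \textsection\ref{pp} may be checked after base change to $\ok$. When $c=3$ the factor $\Z_{Y/k}$ is étale, so its identity component is trivial and it does not affect the abelian variety parts; the claim then reduces to the compatibility of the canonical polarizations of $(\bCH^2_{X/k})^0$ and $(\bCH^2_{X'/k})^0$ under the induced isomorphism, which follows from Theorem~\ref{threp}~(vi) and the functoriality of Murre's intermediate Jacobian used in diagram~(\ref{Murreblup}). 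When $c=2$, I would rerun the argument of Proposition~\ref{propppdirfact} for the single smooth blow-up $p$: the blow-up formula in $\ell$-adic cohomology \cite[Proposition~2.7]{RiouGysin} produces a decomposition $H^3(X'_\ok,\Ql(2))\cong H^3(X_\ok,\Ql(2))\oplus H^1(Y_\ok,\Ql(1))$ that is orthogonal for the cup product, and, because $Y$ is smooth so that the relevant multiplicity equals $1$, \cite[(2.9)]{CGBW} shows the cup product pairing transports to the orthogonal sum of the two cup pairings. By Lemma~\ref{lem:lemma which implies propppdirfact}~(ii),~(iii), whose proof carries over to this single blow-up, this cohomological decomposition is carried, via Bloch's Abel--Jacobi map and the Kummer isomorphism, onto the group-scheme decomposition $(\bCH^2_{X'/k})^0_\ok\cong(\bCH^2_{X/k})^0_\ok\times\bPic^0_{Y_\ok}$. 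In view of the $\ell$-adic characterization of the polarizations (Remark~\ref{remvariantech2xk}~(iv) for $X$ and $X'$, and \textsection\ref{Piccurves} for $Y$), this would show that~(\ref{decblowup}) transports $\theta_{X'}$ to the product of $\theta_X$ and the canonical theta polarization of $\bPic_{Y/k}$.

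The hard part is the polarization statement for $c=2$. The group-scheme isomorphism is essentially bookkeeping with~(\ref{blowupfunct}), but matching the principal polarizations requires the full cohomological input of Proposition~\ref{propppdirfact}, namely the identification of $\theta_\bullet$ with the cup product pairing on $H^3$ through Abel--Jacobi and the transport of this pairing across the blow-up formula. The one point to verify carefully is that the $\ell$-adic decomposition of $H^3$ and the group-scheme decomposition of $(\bCH^2_{X'/k})^0$ are literally the same decomposition; this is exactly what Lemma~\ref{lem:lemma which implies propppdirfact} supplies, and I would check that its proof, which proceeds blow-up by blow-up via the general Lemma~\ref{lem:lambda1lambda2}, uses nothing specific to the tower over $\bP^3$ beyond the single regularly immersed smooth center at hand.
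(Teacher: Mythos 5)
Your construction of the group-scheme isomorphism is exactly the paper's: decompose $\K_{0,X'/k}$, $\Pic_{X'/k}$ and $\Z_{X'/k}$ via (\ref{blowupfunct}), Corollary~\ref{corblup} and Remark~\ref{isoloc}~(ii), identify $(\rk,\det)$ through these decompositions using Proposition~\ref{K0curve}, and note that $p^*\nu_X(1)=\nu_{X'}(1)$ because $p^*[\sO_x]=[\sO_{p^{-1}(x)}]$ for points $x$ of $X\setminus Y$. For the polarizations, however, you take a genuinely different route. The paper performs no $\ell$\nobreakdash-adic computation here at all: since $Y$ is smooth, the blow-up formula for Murre's intermediate Jacobian, together with its compatibility with principal polarizations, is already available over the perfect field $k_\p$ (this is \cite[Lemma~2.10]{CGBW}); the paper then writes the commutative square~(\ref{diagcomparCH2Ab}), whose vertical arrows are the comparison isomorphisms of Theorem~\ref{threp}~(vi) --- through which the principal polarizations of $(\bCH^2_{X/k})^0$ and $(\bCH^2_{X'/k})^0$ are \emph{defined} --- and concludes formally; for $c=3$ it uses the same diagram without the $\bPic^0$ factor (this is also what you propose for $c=3$). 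Your plan of rerunning Proposition~\ref{propppdirfact} for the single blow-up when $c=2$ is viable and, as you suspect, simplifies: since $X'$ is smooth over $k$ (blow-up of a smooth variety along a smooth center), the desingularization $X''$ and the weight argument used for the square~(\ref{square:hupperstar}) become unnecessary --- Bloch's functoriality \cite[Proposition~3.5]{Bloch} applies directly --- and all multiplicities equal~$1$. What your route buys is independence from the blow-up statement \cite[Lemma~2.10]{CGBW}; what it costs is length, and it still relies on \cite{CGBW} through Remark~\ref{remvariantech2xk}~(iv) and through \cite[(2.9)]{CGBW}, so it is not more self-contained in substance.

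One point needs correcting. You assert that under $H^3(X'_{\ok},\Ql(2))\simeq H^3(X_{\ok},\Ql(2))\oplus H^1(Y_{\ok},\Ql(1))$ ``the cup product pairing transports to the orthogonal sum of the two cup pairings''. The decomposition is indeed orthogonal, and the cup product of $H^3(X'_{\ok},\Ql(2))$ restricts to the cup product on the factor $H^3(X_{\ok},\Ql(2))$; but on the exceptional factor it restricts to the \emph{opposite} of the cup product on $H^1(Y_{\ok},\Ql(1))$ --- this sign is precisely the content of \cite[(2.9)]{CGBW}, as quoted in the proof of Lemma~\ref{lem:lemma which implies propppdirfact}~(i). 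Taken literally, your statement combined with Remark~\ref{remvariantech2xk}~(iv) (where $\theta_X$ and $\theta_{X'}$ correspond to the \emph{opposite} of the cup product) would yield that (\ref{decblowup}) transports $\theta_{X'}$ to the product of $\theta_X$ with the opposite of the theta polarization of $\bPic^0_{Y/k}$, which is absurd, the opposite of a polarization not being a polarization. With the sign tracked correctly, the opposite of the cup product on $H^3(X'_{\ok},\Ql(2))$ decomposes as the orthogonal sum of the opposite of the cup product on $H^3(X_{\ok},\Ql(2))$ and of the cup product on $H^1(Y_{\ok},\Ql(1))$, i.e.\ of the Weil pairings of $\theta_X$ and of the theta polarization of $\bPic^0_{Y/k}$, which is the desired conclusion.
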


\begin{proof}
The isomorphism~(\ref{blowupfunct}), Corollary~\ref{corblup} and Remark~\ref{isoloc} (ii) yield canonical isomorphisms $\K_{0,X/k}\times \K_{0,Y/k}^{c-1}\isoto \K_{0,X'/k}$, $\bPic_{X/k}\times\mkern2.5mu\Z_{Y/k}\isoto\bPic_{X'/k}$ and $\Z_{X/k}\isoto\Z_{X'/k}$.
Identifying $(\rk,\det):\K_{0,X'/k}\to \Z_{X'/k}\times\Pic_{X'/k}$ in terms of these decompositions and using Proposition \ref{K0curve}, we obtain an isomorphism
\begin{equation}
\label{decompoblowup}
\begin{alignedat}{4}
\SK_{0,X/k,\fppf}\times\Pic_{Y/k,\fppf}\isoto \SK_{0,X'/k,\fppf}\hspace{2em}&\textrm{if $c=2$}\\
\textrm{(resp.}\hspace{2.5em} \SK_{0,X/k,\fppf}\times\Z_{Y/k}\isoto \SK_{0,X'/k,\fppf}\hspace{2em}&\textrm{if $c=3$).}
\end{alignedat}
\end{equation}
If $l/k$ is a finite extension and $x\in(X\setminus Y)(l)$, one has $p^*[\sO_{x}]=[\sO_{p^{-1}(x)}]\in \K_0(X'_{l})$. It follows that $p^*\nu_X(1)=\nu_{X'}(1)\in \SK_{0,X'/k,\fppf}(k)\subseteq \SK_{0,X'/k,\fppf}(l)$. We thus deduce from (\ref{decompoblowup}) the required isomorphism (\ref{decblowup}) of $k$-group schemes.

If $c=2$, considering the commutative diagram
\begin{equation}
\label{diagcomparCH2Ab}
\begin{aligned}
\xymatrix
@R=0.4cm
@C=1.1cm
{
\mkern20mu\Ab^2(X_{k_\p})\times\bPic^0_{Y_{k_\p}/k_\p}\ar^(.60){\sim}[r]\ar@<.6em>[d]^(.5){\wr} &\Ab^2(X'_{k_\p})\ar[d]^{\wr}  \\
(\bCH^2_{X_{k_\p}/k_{\p}})^0\times\bPic^0_{Y_{k_\p}/k_\p}\ar^(.60){\sim}[r]&(\bCH^2_{X'_{k_\p}/k_{\p}})^0\rlap{}
}
\end{aligned}
\end{equation}
whose vertical arrows stem from Theorem \ref{threp} (vi), whose lower horizontal arrow is the above constructed isomorphism and whose upper horizontal arrow is that of \cite[Lemma 2.10]{CGBW} concludes the proof, as the latter arrow respects the principal polarizations by \cite[Lemma 2.10]{CGBW}. If $c=3$, one can argue in the same way, using a diagram similar to (\ref{diagcomparCH2Ab}) in which $\bPic^0_{Y_{k_\p}/k_\p}$ does not appear.
\end{proof}

\subsection{Obstructions to \texorpdfstring{$k$-rationality}{k-rationality}}
\label{parobs}

The most general obstruction to the $k$\nobreakdash-ratio\-nality of a smooth projective $\ok$-rational threefold obtained in this article is Theorem~\ref{threp}~(vii). In this subsection, we spell out concrete consequences of this theorem.

 We recall that a $\Gamma_k$-module $M$ is a \textit{permutation} $\Gamma_k$-module if it is free of finite rank as a $\Z$-module and admits a $\Z$-basis that is permuted by the action of $\Gamma_k$, and that it is \textit{stably of permutation} if there exists a $\Gamma_k$\nobreakdash-equivariant isomorphism $M\oplus N_1\simeq N_2$ for some permutation $\Gamma_k$-modules $N_1$ and $N_2$.

If $X$ is a smooth projective $\ok$-rational threefold, we associate with any class $\alpha\in \NS^2(X_{\ok})^{\Gamma_k}=(\bCH^2_{X/k}/(\bCH^2_{X/k})^0)(k)$ (see Theorem \ref{threp} (v)) its inverse image $(\bCH^2_{X/k})^{\alpha}$ in $\bCH^2_{X/k}$. It is an fppf torsor under $(\bCH^2_{X/k})^{0}$, hence an \'etale torsor under $(\bCH^2_{X/k})^{0}$ by \cite[III, Corollary 4.7, Remark 4.8 (a)]{Milne}. We let $[(\bCH^2_{X/k})^{\alpha}]\in H^1(k, (\bCH^2_{X/k})^{0})$ be its Galois cohomology class.

\begin{thm}
\label{listobstructions}
Let $X$ be a smooth projective $k$-rational threefold over $k$. Then:
\begin{enumerate}[(i)]
\item The $\Gamma_k$-module $\NS^2(X_{\ok})$ is a direct factor of a permutation $\Gamma_k$-module.
\item There exists an isomorphism
$(\bCH^2_{X/k})^0\simeq\bPic^0_{C/k}$ of principally polarized abelian varieties over $k$ for some smooth projective curve $C$ over $k$.
\item 
For all smooth projective geometrically connected curves $D$ of genus~$\geq2$ over~$k$, all morphisms 
$\psi:(\bCH^2_{X/k})^0\to\bPic^0_{D/k}$ identifying $\bPic^0_{D/k}$ with a principally polarized direct factor of $(\bCH^2_{X/k})^0$ and all $\alpha\in \NS^2(X_{\ok})^{\Gamma_k}$, there exists $d\in\Z$ such that $\psi_*[(\bCH^2_{X/k})^\alpha]=[\bPic^d_{D/k}]\in H^1(k, \bPic^0_{D/k})$. 
\item 
For all elliptic curves $E$ over~$k$ and all morphisms 
$\psi:(\bCH^2_{X/k})^0\to E$ identifying $E$ with a principally polarized direct factor of $(\bCH^2_{X/k})^0$, there exists a class $\eta\in H^1(k,E)$ such that for all $\alpha\in \NS^2(X_{\ok})^{\Gamma_k}$, there exists $d\in\Z$ with $\psi_*[(\bCH^2_{X/k})^\alpha]=d\eta\in H^1(k, E)$. 
\end{enumerate}
\end{thm}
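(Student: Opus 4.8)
The plan is to derive all four assertions from Theorem~\ref{threp}~(vii). First I would fix a smooth projective curve $B$ over $k$ together with a principally polarized decomposition $\bPic_{B/k}\simeq \bCH^2_{X/k}\times H$ as in that theorem, and write $B=\bigsqcup_i B_i$ as the disjoint union of its connected components over $k$, so that $\bPic^0_{B/k}=\prod_i\bPic^0_{B_i/k}$ with its product principal polarization. Each $B_i$ is smooth and connected, hence geometrically unibranch, so by \textsection\ref{Piccurves} the abelian variety $(\bPic^0_{B_i/k})_{k_\p}=\bPic^0_{D_i/k_\p}$ (with $D_i$ the normalization of $(B_i)_{k_\p}$, which equals $(B_i)_{k_\p}$) is indecomposable as a principally polarized abelian variety whenever it is nonzero. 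Thus over $k_\p$ the family $\{\bPic^0_{D_i/k_\p}\}_i$ constitutes the decomposition of $(\bPic^0_{B/k})_{k_\p}$ into indecomposables, which is unique at the level of subvarieties by \textsection\ref{pp}.

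For (i), I would pass to component groups in $\bPic_{B/k}\simeq\bCH^2_{X/k}\times H$: quotienting by identity components yields an isomorphism of \'etale $k$-group schemes $\Z_{B/k}\simeq(\bCH^2_{X/k}/(\bCH^2_{X/k})^0)\times(H/H^0)$, that is, of $\Gamma_k$-modules $\Z(B_{\ok})\simeq\NS^2(X_{\ok})\oplus(H/H^0)(\ok)$, using Theorem~\ref{threp}~(v) and Proposition~\ref{NS} applied to the smooth curve $B$. Since $\Z(B_{\ok})$ is by definition a permutation $\Gamma_k$-module (its canonical basis being the set of geometric connected components of $B$), this exhibits $\NS^2(X_{\ok})$ as a direct factor of a permutation module. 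For (ii), the identity component $(\bCH^2_{X/k})^0$ is, by (vii), a principally polarized direct factor of $\bPic^0_{B/k}$; hence $(\bCH^2_{X/k})^0_{k_\p}$ is a sub-product $\prod_{i\in S}\bPic^0_{D_i/k_\p}$ of the indecomposable factors, for some subset $S$. Setting $C:=\bigsqcup_{i\in S}B_i$, both $(\bCH^2_{X/k})^0$ and $\bPic^0_{C/k}=\prod_{i\in S}\bPic^0_{B_i/k}$ are closed $k$-subgroup schemes of $\bPic^0_{B/k}$ whose base changes to $k_\p$ coincide as subvarieties; by faithfully flat descent they coincide over $k$, and both principal polarizations are restrictions of the theta polarization of $\bPic_{B/k}$, so $(\bCH^2_{X/k})^0\simeq\bPic^0_{C/k}$ as principally polarized abelian varieties.

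For (iii) and (iv) I would first translate the torsor $(\bCH^2_{X/k})^\alpha$ into a multidegree component of $\bPic_{B/k}$. Given $\alpha\in\NS^2(X_{\ok})^{\Gamma_k}$, the product $(\bCH^2_{X/k})^\alpha\times H^0$ is a connected component of $\bPic_{B/k}$ defined over $k$, hence equals $\bPic^{\underline e}_{B/k}$ for some $\Gamma_k$-invariant multidegree $\underline e\in\Z(B_{\ok})^{\Gamma_k}$; projecting to the first factor gives $[(\bCH^2_{X/k})^\alpha]=\mathrm{pr}_*[\bPic^{\underline e}_{B/k}]$ in $H^1(k,(\bCH^2_{X/k})^0)$. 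In case (iii) the variety $\bPic^0_{D/k}$ is indecomposable ($D$ being geometrically connected of genus $\geq 2$), so $\psi$ is, by uniqueness of the indecomposable decomposition, the composite of the projection $(\bCH^2_{X/k})^0=\prod_{i\in S}\bPic^0_{B_i/k}\to\bPic^0_{B_{i_0}/k}$ with an isomorphism of principally polarized abelian varieties $g:\bPic^0_{B_{i_0}/k}\isoto\bPic^0_{D/k}$; comparing indecomposable decompositions over $\ok$ forces $B_{i_0}$ to be geometrically connected. Consequently $\psi_*[(\bCH^2_{X/k})^\alpha]=g_*[\bPic^{d}_{B_{i_0}/k}]$, where $d\in\Z$ is the value of $\underline e$ on the unique geometric component of $B_{i_0}$. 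Invoking the Torelli theorem, $g$ is $\pm$ the map induced by an isomorphism $D\simeq B_{i_0}$, whence $g_*[\bPic^d_{B_{i_0}/k}]=[\bPic^{\pm d}_{D/k}]$, giving (iii). In case (iv) the same analysis produces a geometrically connected genus~$1$ component $B_{i_0}$ and an isomorphism $g:\bPic^0_{B_{i_0}/k}\isoto E$; since $[\bPic^{d}_{B_{i_0}/k}]=d\,[\bPic^1_{B_{i_0}/k}]$ in $H^1(k,\bPic^0_{B_{i_0}/k})$, setting $\eta:=g_*[\bPic^1_{B_{i_0}/k}]$ (independent of $\alpha$) yields $\psi_*[(\bCH^2_{X/k})^\alpha]=d\eta$, which is (iv).

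The main obstacle I anticipate is twofold. The delicate geometric point is to carry out all the matching of factors and of the morphism $\psi$ over $k$ itself rather than merely over $k_\p$, which rests on faithfully flat descent of closed subschemes together with the uniqueness of the indecomposable decomposition of \textsection\ref{pp}; one must also verify that the factor $B_{i_0}$ matching the indecomposable target is genuinely geometrically connected, a fact I would extract from the indecomposability of $(\bPic^0_{D/k})_{\ok}$ (resp.\ $E_{\ok}$). The key external input, and the part most likely to need care over an imperfect field, is the Torelli theorem used in (iii) to promote the isomorphism $g$ of principally polarized abelian varieties to an isomorphism of curves; part (iv) sidesteps this by only recording the single class $\eta$, which is precisely why its conclusion is correspondingly weaker.
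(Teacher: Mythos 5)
Your proposal is correct and follows essentially the same route as the paper: everything is derived from Theorem~\ref{threp}~(vii), with (i) obtained by passing to component groups, (ii) and the identification of $\psi$ in (iii)--(iv) obtained from the uniqueness of the decomposition into indecomposable principally polarized factors (applied over $k_\p$ and descended to $k$, exactly as the paper does implicitly), the precise Torelli theorem of Serre handling (iii), and the class of $\bPic^1$ of the genus-$1$ component playing the role of $\eta$ in (iv). The only differences are cosmetic: you absorb the Torelli sign $\pm 1$ into the integer $d$ where the paper replaces $q$ by $-q$, and you work with the multidegree $\underline{e}$ of the component $(\bCH^2_{X/k})^{\alpha}\times H^0$ where the paper uses the Galois-invariant lift $\beta$ of $\alpha$ along the projection $q$ --- these are the same data.
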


\begin{proof}
Theorem \ref{threp} (vii) shows the existence of a smooth projective curve $B$ over~$k$ such that $\bCH^2_{X/k}$ is a principally polarized direct factor of $\bPic_{B/k}$. We denote by $q:\bPic_{B/k}\to \bCH^2_{X/k}$ the projection onto this direct factor and by $r:\bCH^2_{X/k}\to \bPic_{B/k}$ the inclusion of this direct factor.

Passing to the groups of connected components shows that $\NS^2(X_{\ok})$ is a direct factor of $\Z_{B/k}(\ok)$, which is a permutation $\Gamma_k$\nobreakdash-module, thus proving~(i).

Passing to the identity components shows that $(\bCH^2_{X/k})^0$ is a principally polarized direct factor of $\bPic^0_{B/k}$. In view of the uniqueness of the decomposition of a principally polarized abelian variety as a product of indecomposable ones, and in view of the description of the indecomposable factors of $\bPic^0_{B/k}$ (see \cite[\S 2.1]{CGBW}), there exists a union $C$ of connected components of $B$ such that $(\bCH^2_{X/k})^0\simeq\bPic^0_{C/k}$ as principally polarized abelian varieties over $k$, thus proving (ii).

Let us now fix~$D$, $\psi$ and~$\alpha$ as in~(iii).
The composition $p:=\psi\circ q^0:\bPic^0_{B/k}\to \bPic^0_{D/k}$ realizes $\bPic^0_{D/k}$ as a principally polarized direct factor of $\bPic^0_{B/k}$.
All indecomposable principally polarized direct factors of $\bPic^0_{B/k}$ are of the form $\bPic^0_{B'/k}$ for some connected component $B'$ of $B$ (as the connected components of $B$ are in bijection with those of $B_{k_{\p}}$, this statement reduces to the case of a perfect field, for which see \cite[\S 2.1]{CGBW}). Let $B'$ be the connected component of $B$ corresponding to $\bPic^0_{D/k}$. Since $D$ is geometrically connected of genus $\geq 2$, we see that $\bPic^0_{D/k}$, hence also $\bPic^0_{B'/k}$, is geometrically indecomposable of dimension~$\geq 2$, and it follows that $B'$ is geometrically connected of genus $\geq 2$. 
By the precise form of the Torelli theorem \cite[Th\'eor\`emes~1~et~2]{Serre}, after possibly replacing~$q$ with $-q$ (and $p$ with $-p$) if $D$ is not hyperelliptic, we can identify $D$ and $B'$ in such a way that
 $p$ is the pull-back by the inclusion \mbox{$i:D\simeq B'\hookrightarrow B$}.
Since $q: \bPic_{B/k}\to \bCH^2_{X/k}$ realizes $\bCH^2_{X/k}$ as a direct factor of $\bPic_{B/k}$, one can find $\beta\in\NS^1(B_{\ok})^{\Gamma_k}$ with $q(\beta)=\alpha\in \NS^2(X_{\ok})^{\Gamma_k}$. Letting 
$d:=i^*\beta\in \NS^1(D_{\ok})\simeq\Z$,
we obtain 
$\psi_*[(\bCH^2_{X/k})^\alpha]=p_*[\bPic^{\beta}_{B/k}]=[\bPic^d_{D/k}]\in H^1(k, \bPic^0_{D/k})$, which proves~(iii).

Fix $E$ and $\psi$ as in (iv). Arguing as above shows that $p:=\psi\circ q^0:\bPic^0_{B/k}\to E$ identifies $E$ with $\bPic^0_{B'/k}$ for some connected component $i:B'\hookrightarrow B$ of $B$ which is geometrically connected of genus $1$. The genus $1$ curve $B'$ has a natural structure of $\bPic^0_{B'/k}$-torsor, and we set $\eta:=[B']\in H^1(k,\bPic^0_{B'/k})=H^1(k,E)$ be its class. 
Let $\alpha\in \NS^2(X_{\ok})^{\Gamma_k}$.
Setting $s:=i^*\circ r: \bCH^2_{X/k}\to\bPic_{B'/k}$ and $d:=s_*\alpha\in \NS^1(B'_{\ok})\simeq \Z$, we get
$\psi_*[(\bCH^2_{X/k})^\alpha]=s_*[(\bCH^2_{X/k})^\alpha]=[\bPic^d_{B'/k}]=d\eta\in H^1(k, \bPic^0_{B'/k})=H^1(k,E)$, which completes the proof of (iv).
\end{proof}

\begin{rems}
\label{remobs}
(i) If $X$ is a smooth projective $k$-rational variety and if $k$ has characteristic $0$, one can apply the weak factorization theorem \cite[Theorem~0.3.1]{AKMW}
to show that the $\Gamma_k$-module $\NS^2(X_{\ok})$ is stably of permutation. This statement is stronger than Theorem \ref{listobstructions} (i).  We do not know if it holds if $k$ has characteristic~$p>0$ and $X$ has dimension $\geq 3$. We do not know either whether Theorem~\ref{listobstructions}~(i) holds for smooth projective $k$-rational varieties of dimension $\geq 4$.

(ii) The geometric N\'eron--Severi group $\NS^1(X_{\ok})$ of a smooth projective $k$\nobreakdash-rational variety $X$ is stably of permutation (see  \cite[Theorem 2.2]{Maninperfect} if $X$ is a surface and  \cite[Proposition 2.A.1]{descente2} in general).
Theorem~\ref{listobstructions}~(i) and Remark \ref{remobs} (i) may be viewed as analogues of this classical statement for codimension $2$ cycles.

(iii) To obtain a variant of Theorem \ref{listobstructions} (iii) in the case where $D$ is connected but not geometrically connected, one can apply Theorem \ref{listobstructions} (iii) to the $l$-rational variety $X_l$ over $l$, where $l$ is the algebraic extension $l:=H^0(D,\sO_D)$ of $k$.  The same remark applies to Theorem~\ref{listobstructions}~(iv).
\end{rems}

\section{Smooth complete intersections of two quadrics}
\label{sec4}

In this last section, we apply the above results to $k$-varieties $X$ that are three-dimensional smooth complete intersections of two quadrics, computing the variety $\bCH^2_{X/k}$ (in Theorem \ref{thintjac}) and providing a necessary and sufficient criterion for their $k$\nobreakdash-rationality (in Theorem \ref{thrat}). A more classical necessary and sufficient criterion for their (separable) $k$-unirationality (Theorem \ref{thunirat}) allows us to give examples, for any algebraically closed field~$\kappa$, of such varieties over $\kappa(\!(t)\!)$ that are separably $\kappa(\!(t)\!)$\nobreakdash-unirational but not $\kappa(\!(t)\!)$-rational (Theorem \ref{thmC((t))}).

Many of the geometric results that we need along the way are available in the literature only in characteristic $0$ or in characteristic~$\neq 2$ \cite{Reid,Donagi,Wang}, and we extend them to arbitrary characteristic.

\subsection{Lines in a complete intersection of two quadrics}
\label{subseclines}

Let $X\subset\bP^5_k$ be a three-dimensional smooth complete intersection of two quadrics over $k$. We let $F$ be the Hilbert scheme of lines in $X$ (also called the Fano variety of lines of $X$).

\begin{lem}
\label{lemdroites}
The following assertions hold:
\begin{enumerate}[(i)]
\item The variety $X$ does not contain any plane.
\item The normal bundle $N_{L/X}$ of a line $L\subset X$ is isomorphic either to $\sO_L^{\oplus 2}$ or to $\sO_L(1)\oplus\sO_L(-1)$.
\item The variety $F$ is a non-empty geometrically connected smooth projective surface with trivial canonical bundle. Its tangent space at a $k$\nobreakdash-point corresponding to a line $L\subset X$ is naturally isomorphic to $H^0(X,N_{L/X})$.
\end{enumerate}
\end{lem}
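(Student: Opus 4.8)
The plan is to handle the three assertions separately, taking care that every argument is valid in arbitrary characteristic (in particular in characteristic~$2$).

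For~(i), I would argue by contradiction. Suppose $X_{\ok}$ contained a plane $\Pi\cong\bP^2_{\ok}$, and choose coordinates $x_0,\dots,x_5$ so that $\Pi=\{x_3=x_4=x_5=0\}$. Each defining quadric $Q_i$ vanishes on $\Pi$, hence lies in the ideal $(x_3,x_4,x_5)$; consequently $\partial Q_i/\partial x_l$ vanishes on $\Pi$ for $l\in\{0,1,2\}$, and the Jacobian criterion shows that $X$ is smooth at a point $p\in\Pi$ if and only if the $2\times 3$ matrix $M(p)=\big(\partial Q_i/\partial x_l(p)\big)_{1\le i\le2,\,3\le l\le5}$ has rank~$2$. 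The entries of $M$ are linear forms on $\Pi$, so $M$ defines a morphism $\phi\colon\sO_\Pi^{\oplus2}\to\sO_\Pi(1)^{\oplus3}$. If $M$ had rank~$2$ everywhere, $\phi$ would be a subbundle inclusion with invertible quotient $\sQ$, and Whitney's formula would give $(1+h)^3=1+c_1(\sQ)$ (with $h$ the hyperplane class of $\Pi$); comparing degree-$2$ terms yields $3h^2=0$, which is absurd. Hence $M$ drops rank at some point of $\Pi$, producing a singular point of $X$ and contradicting smoothness. This step is clean and insensitive to the characteristic.

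For~(ii), I would exploit the normal bundle sequence
\[
0\to N_{L/X}\to N_{L/\bP^5}\to N_{X/\bP^5}\big|_L\to0,
\]
together with $N_{L/\bP^5}\cong\sO_L(1)^{\oplus4}$ and $N_{X/\bP^5}|_L\cong\sO_L(2)^{\oplus2}$, the latter because $X$ is a complete intersection of two quadrics. Taking degrees gives $\deg N_{L/X}=4-4=0$. Write $N_{L/X}\cong\sO_L(a)\oplus\sO_L(b)$ with $a\ge b$; then $a+b=0$ forces $a\ge0$, while the injection $N_{L/X}\hookrightarrow\sO_L(1)^{\oplus4}$ restricts to a non-zero map $\sO_L(a)\to\sO_L(1)$ and forces $a\le1$. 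Therefore $a\in\{0,1\}$, i.e.\ $N_{L/X}\cong\sO_L^{\oplus2}$ or $N_{L/X}\cong\sO_L(1)\oplus\sO_L(-1)$. For~(iii), the tangent space assertion is the standard identification of $T_{[L]}F$ with $H^0(L,N_{L/X})$, valid because $L$ is a local complete intersection in $X$. The crucial numerical consequence of~(ii) is that, in both cases, $h^0(L,N_{L/X})=2$ and $h^1(L,N_{L/X})=0$. The vanishing of the obstruction space $H^1(L,N_{L/X})$ shows that the Hilbert scheme $F$ is smooth of pure dimension~$2$ at every point; being a closed subscheme of the Grassmannian $\mathrm{Gr}(2,6)$ of lines in $\bP^5$, it is moreover projective. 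To prove non-emptiness and compute $K_F$, I would present $F$ as the zero locus of the section of $\sE:=(\Sym^2\sS^\vee)^{\oplus2}$ determined by $(Q_1,Q_2)$, where $\sS$ is the tautological subbundle of $\mathrm{Gr}(2,6)$; thus $\sE$ has rank~$6$ and $\dim\mathrm{Gr}(2,6)=8$. A nowhere-vanishing section would split off a trivial summand and force $c_6(\sE)=0$, so the non-vanishing of the Chern number $c_6(\sE)$—a characteristic-free computation—gives $F\ne\varnothing$. As $F$ then has the expected dimension, the section is regular, and adjunction yields $K_F\cong\big(K_{\mathrm{Gr}}\otimes\det\sE\big)\big|_F\cong\big(\sO(-6)\otimes\sO(6)\big)\big|_F\cong\sO_F$, using $\det\Sym^2\sS^\vee=\sO(3)$ and $K_{\mathrm{Gr}}=\sO(-6)$.

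The step I expect to be the main obstacle is the geometric connectedness of $F$ in arbitrary characteristic, since over $\bC$ one would simply invoke a Lefschetz-type connectedness theorem for zero loci of ample vector bundles. Having already shown that $F$ is non-empty, smooth, and of pure dimension~$2$, so that the defining section of $\sE$ is regular, I would compute $h^0(F,\sO_F)$ from the Koszul resolution $0\to\wedge^6\sE^\vee\to\cdots\to\sE^\vee\to\sO_{\mathrm{Gr}}\to\sO_F\to0$, reducing to the vanishing of the cohomology groups $H^j(\mathrm{Gr},\wedge^i\sE^\vee)$ for the relevant pairs $(i,j)$. These are cohomology groups of homogeneous bundles on the Grassmannian and can be controlled in every characteristic through Kempf-type vanishing; the delicate point is that, unlike in characteristic~$0$, the bundles $\wedge^i\sE^\vee$ do not split into line bundles via Bott's theorem, so the decomposition into pieces with computable cohomology must be carried out with care. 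Once $h^0(F,\sO_F)=\ok$ is established, $F$ is connected and reduced, completing the proof; the triviality of $K_F$ and the remaining assertions then follow formally from~(i) and~(ii).
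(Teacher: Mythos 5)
Your parts (i) and (ii) are correct, and most of (iii) is too. For (ii) you reproduce the paper's argument verbatim (normal bundle sequence, degree count, injection into $\sO_L(1)^{\oplus 4}$). For (i) you take a genuinely different but valid route: the paper invokes the Lefschetz hyperplane theorem to see that $\Pic(X)$ is generated by $\sO_X(1)$ and derives the contradiction $1=\sO_X(P)\cdot\sO_X(1)\cdot\sO_X(1)=4l$ from intersection numbers, whereas your Jacobian-matrix/Whitney-formula computation on the putative plane is more elementary and equally characteristic-free. In (iii), your non-emptiness argument also differs from the paper's and is sound: a nowhere-vanishing section would force $c_6(\sE)=0$, while $c_6(\sE)=16\mkern2mu c_3(\Sym^2\sS^\vee)^2/c_3(\Sym^2\sS^\vee)\cdot c_3(\Sym^2\sS^\vee)$ is a non-zero integer combination of Schubert classes in $\CH^6(\Gr(2,6))$, which is torsion-free and independent of the characteristic. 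Your adjunction computation of $K_F$ agrees with the paper's.

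The genuine gap is the geometric connectedness of $F$ --- exactly the step you flag as the main obstacle. Your plan is to compute $h^0(F_{\ok},\sO_{F_{\ok}})$ from the Koszul resolution $\wedge^{\bullet}\sE^\vee\to\sO_{F}$ and to establish the vanishing of $H^i(\Gr(2,6),\wedge^i\sE^\vee)$ for $1\leq i\leq 6$ by ``Kempf-type vanishing''. But this is precisely what you have not done, and it is not a routine verification: in positive characteristic Bott's theorem on Grassmannians fails, Kempf vanishing applies only to dominant weights while the bundles $\wedge^i\sE^\vee$ involve non-dominant ones, and in characteristic $2$ --- the case this paper most needs, since its main new examples live over fields of characteristic $2$ --- there is the added wrinkle that $(\Sym^2\sS^\vee)^\vee$ is the divided power $\Gamma^2\sS$ rather than $\Sym^2\sS$, so even the decomposition of $\wedge^i\sE^\vee$ into homogeneous pieces with computable cohomology is unclear. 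As written, connectedness is asserted, not proved. The paper sidesteps all of this by citing Debarre--Manivel \cite[Th\'eor\`eme 2.1 b) c)]{DeMa}, which establishes both non-emptiness and connectedness of Fano schemes of linear subspaces of complete intersections over an algebraically closed field of arbitrary characteristic, by geometric (incidence-variety) methods rather than cohomology of homogeneous bundles. To close your gap you would either have to carry out the characteristic-$p$ cohomology computation in full --- a substantial undertaking of uncertain feasibility --- or replace that step by such a citation.
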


\begin{proof}[Proof of~(i)]
By the Lefschetz hyperplane theorem \cite[XII, Corollary 3.7]{SGA2}, $\Pic(X)$ is generated by $\sO_X(1)$. If $X$ contained a plane $P$, we would have $\sO_X(P)\simeq\sO_X(l)$ for some $l\in\Z$, hence an equality of intersection numbers
$$1=\sO_P(1)\cdot\sO_P(1)=\sO_X(P)\cdot\sO_X(1)\cdot\sO_X(1)=4l\rlap{,}$$
which is a contradiction.
\let\qed\relax\end{proof}
\begin{proof}[Proof of~(ii)]
Since $L$ and $X$ are complete intersections in $\bP^5_k$, the normal exact sequence  
$0\to N_{L/X}\to N_{L/\bP^5_k}\to N_{X/\bP^5_k}|_L\to 0$
of the inclusions $L\subset X\subset \bP^5_k$ reads:
\begin{equation}
\label{normal}
0\to N_{L/X}\to \sO_L(1)^{\oplus 4}\to \sO_L(2)^{\oplus 2}\to 0.
\end{equation}
It follows that $N_{L/X}$ is a rank $2$ vector bundle of degree $0$ on $L$, hence is of the form $\sO_L(l)\oplus\sO_L(-l)$ for some $l\geq 0$ (see \cite{HazeM}). Since it admits an injective morphism to $\sO_L(1)^{\oplus 4}$, one has $l\in\{0,1\}$.
\let\qed\relax\end{proof}
\begin{proof}[Proof of~(iii)]
The computation of the tangent space is \cite[Chapter I, Theorem~2.8.1]{Kollarbook}. To prove that $F$ is smooth, geometrically connected and non-empty, we may work over $\ok$. For all $L\in F(\ok)$, one has 
$h^1(L,N_{L/X_{\ok}})=0$ and $h^0(L,N_{L/X_{\ok}})=2$ by (ii). The variety $F_{\ok}$ is thus smooth of dimension $2$ at $L$ by \cite[Chapter~I, Theorem~2.8.3]{Kollarbook}. That $F_{\ok}$ is non-empty and connected follows from \cite[Th\'eor\`eme~2.1~b)~c)]{DeMa}. To compute the canonical bundle of $F$, we let $G$ be the Grassmannian of lines in $\bP^5_{\ok}$ and $0\to\sS\to\sO_G^{\oplus 6}\to\sQ\to 0$ be the short exact sequence of tautological bundles on $G$, where $\sS$ and $\sQ$ respectively have rank $4$ and rank $2$. The variety of lines $F$ is defined in $G$ by the vanishing of a section of $(\Sym^2\sQ)^{\oplus 2}$. Since $F$ is smooth of the expected dimension, the normal short exact sequence reads $0\to T_F\to T_G|_F\to N_{F/G}\to 0$, where $N_{F/G}\simeq (\Sym^2\sQ)^{\oplus 2}|_F$ and $T_G|_F\simeq(\sS^{\vee}\otimes\sQ)|_F$. We deduce that
$$K_F\simeq(\det((\Sym^2\sQ)^{\oplus 2})\otimes\det(\sS\otimes\sQ^{\vee}))|_F\simeq\det(\sQ)|_F^{\otimes 6}\otimes\det(\sQ)|_F^{\otimes -6}\simeq\sO_F\rlap{.}\eqno\qed$$
\let\qed\relax
\end{proof}

We will later show that $F_{\ok}$ is actually an abelian surface (see Theorem \ref{thintjac}).

Let $Z\subset X\times F$ be the universal line in $X$. If $\Lambda\subset X$ is a line, the second projection $Z\cap( \Lambda\times(F\setminus \{\Lambda\}))\to F\setminus \{\Lambda\}$ is a closed immersion by \cite[Proposition~8.11.5]{EGA43}.
Its image is a subscheme $W(\Lambda)\subseteq F\setminus \{\Lambda\}$ parametrizing the lines of $X$ that are distinct from $\Lambda$ and that intersect $\Lambda$. 
If $L\in W(\Lambda)(k)$ and $x=L\cap\Lambda$, then $T_L W(\Lambda)\subseteq T_LF=H^0(X,N_{L/X})$ is the subset of those $\sigma \in H^0(X,N_{L/X})$ such that $\sigma_x\in\langle T_xL,T_x\Lambda\rangle/T_xL$
(as can be seen using \cite[Remark~4.5.4~(ii)]{sernesi}). 

\begin{lem}
\label{lemdeuxdroites}
If $L$ and $\Lambda$ are two distinct lines in $X$ that intersect, the inclusion of tangent spaces $T_LW(\Lambda)\subseteq T_LF$ is strict.
\end{lem}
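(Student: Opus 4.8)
The plan is to use the two tangent-space descriptions recalled just before the statement. Writing $x=L\cap\Lambda$ and $V:=\langle T_xL,T_x\Lambda\rangle/T_xL$, a line inside the fibre $N_{L/X}\otimes k(x)=T_xX/T_xL$, we have $T_LF=H^0(X,N_{L/X})$ while $T_LW(\Lambda)$ is the set of sections $\sigma$ with $\sigma_x\in V$. Thus $T_LW(\Lambda)$ is the preimage of $V$ under the evaluation map $\mathrm{ev}_x\colon H^0(X,N_{L/X})\to N_{L/X}\otimes k(x)$, and the inclusion $T_LW(\Lambda)\subseteq T_LF$ is strict if and only if the image of $\mathrm{ev}_x$ is not contained in $V$. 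By Lemma~\ref{lemdroites}~(ii) there are two cases for $N_{L/X}$. When $N_{L/X}\simeq\sO_L^{\oplus2}$ the bundle is globally generated and $\mathrm{ev}_x$ is an isomorphism onto the $2$-dimensional fibre, which cannot lie in the line $V$; so I would only need to treat the case $N_{L/X}\simeq\sO_L(1)\oplus\sO_L(-1)$.

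In that case $H^0(X,N_{L/X})=H^0(L,\sO_L(1))$, since the summand $\sO_L(-1)$ has no sections, so the image of $\mathrm{ev}_x$ is exactly the fibre $V'$ at $x$ of the positive sub-line-bundle $\sO_L(1)\subseteq N_{L/X}$, and it remains to show $V'\ne V$. Here I would exploit the normal bundle sequence~\eqref{normal}, which realizes $N_{L/X}$ as the kernel of the bundle map $M\colon\sO_L(1)^{\oplus4}\to\sO_L(2)^{\oplus2}$ whose two rows are the restrictions to $L$ of the partial derivatives of the two quadrics cutting out $X$. Because $\Hom(\sO_L(1),\sO_L(1)^{\oplus4})$ consists of constant vectors, the composite $\sO_L(1)\hookrightarrow N_{L/X}\hookrightarrow\sO_L(1)^{\oplus4}$ is given by a single constant direction $c$, necessarily annihilated by $M$; hence $V'=\langle c\rangle$ inside the fibre, and what I must establish is that $c$ is not the direction of $\Lambda$.

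The geometric input for this last point comes from Lemma~\ref{lemdroites}~(i). The plane $\Pi:=\langle L,\Lambda\rangle$ is not contained in $X$, so, comparing the restrictions to $\Pi$ of the two quadrics and noting that any conic containing $L\cup\Lambda$ equals the degenerate conic $L+\Lambda$, the defining pencil contains a quadric $Q$ with $\Pi\subseteq Q$ whose complementary generator $Q'$ satisfies $Q'|_\Pi=L+\Lambda$. Choosing coordinates so that $x=[1:0:\dots:0]$, $L=\{x_2=\dots=x_5=0\}$ and $\Lambda=\{x_1=x_3=x_4=x_5=0\}$, the inclusion $\Pi=\{x_3=x_4=x_5=0\}\subseteq Q$ forces $\partial Q/\partial x_2$ to vanish identically along $L$, whereas $Q'|_\Pi=x_1x_2$ yields $\partial Q'/\partial x_2|_L=x_1$, a nonzero form vanishing only at $x$. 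Consequently the column of $M$ in the $\Lambda$-direction $e_2=\partial_{x_2}$ is nonzero, so $Me_2\ne0$; since $c$ satisfies $Mc=0$, the lines $\langle e_2\rangle=V$ and $\langle c\rangle=V'$ differ, as required. The main obstacle is precisely this second case, where the equality $V=V'$ has to be ruled out: the crux is to recognize $V'$ as the constant direction cut out by $M$ and to inject the absence of planes on $X$ through the quadric $Q\supseteq\Pi$, which is exactly what separates the $\Lambda$-direction from $c$.
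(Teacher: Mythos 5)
Your proof is correct, and in the essential case it runs along a genuinely different line from the paper's. The skeleton is shared: both proofs reduce the statement to the evaluation map $\mathrm{ev}_x:H^0(L,N_{L/X})\to N_{L/X}\otimes k(x)$, split according to Lemma~\ref{lemdroites}~(ii), settle the case $N_{L/X}\simeq\sO_L^{\oplus 2}$ by global generation, and invoke the absence of planes (Lemma~\ref{lemdroites}~(i)) in the case $N_{L/X}\simeq\sO_L(1)\oplus\sO_L(-1)$. In that hard case, though, the paper argues by contradiction: it adapts coordinates to the positive summand, exhibits two explicit $k[\varepsilon]/(\varepsilon^2)$-points of $F$, deduces from $T_LW(\Lambda)=T_LF$ that $\Lambda$ lies in the plane $P$ spanned by $L$ and the positive normal direction, and performs a monomial analysis of the quadrics restricted to $P$ to conclude $P\subset X$. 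You argue directly: presenting $N_{L/X}$ as $\ker\big(M:\sO_L(1)^{\oplus 4}\to\sO_L(2)^{\oplus 2}\big)$ with $M$ the Jacobian in the normal directions, you identify the positive summand with a constant direction $c$ satisfying $Mc=0$ (rigidity of $\Hom(\sO_L(1),\sO_L(1)^{\oplus 4})$), and you show that the column $Me_2$ in the $\Lambda$-direction is nonzero by normalizing the pencil on the plane $\Pi=\langle L,\Lambda\rangle$; the no-planes lemma enters precisely there, to rule out both quadrics vanishing on $\Pi$ and hence to produce the adapted pair $Q\supseteq\Pi$, $Q'|_\Pi=L+\Lambda$. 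Your route buys directness: no infinitesimal deformations, no contradiction, and the whole case rests on the formal identities $\partial Q/\partial x_2|_L=0$ and $\partial Q'/\partial x_2|_L=x_1$, which are valid in every characteristic, including~$2$. The paper's route stays entirely within the deformation-theoretic picture of $F$ and never needs the identification of the positive summand with a constant vector. Two points you leave implicit are immediate but worth recording: $c\neq 0$, since the composite $\sO_L(1)\hookrightarrow\sO_L(1)^{\oplus 4}$ is injective; and replacing the original generators of the pencil by the adapted pair $(Q,Q')$ changes $M$ only by left multiplication by a constant invertible matrix, so neither $\ker M$ nor the condition $Me_2\neq 0$ is affected.
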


\begin{proof}
 In view of Lemma \ref{lemdroites} (ii), we may distinguish two cases according to the isomorphism class of the normal bundle $N_{L/X}$. Suppose first that $N_{L/X}\simeq\sO_L^{\oplus 2}$. Consider the point $x=L\cap \Lambda$.
Since $N_{L/X}$ is globally generated, we can choose a section $\sigma\in H^0(L,N_{L/X})$ such that $\sigma_x\notin\langle T_xL, T_x\Lambda\rangle/T_xL$. Then, the tangent vector of $F$ at $L$ associated to $\sigma$ by Lemma \ref{lemdroites} (iii) is not tangent to $W(\Lambda)$.

Assume now that $N_{L/X}\simeq\sO_L(1)\oplus\sO_L(-1)$. Choose homogeneous coordinates $X_0,\dots,X_5$ on $\bP^5_k$ such that $L=\{X_0=\dots=X_3=0\}$ and use $X_0,\dots,X_3$ to identify $N_{L/\bP^5_k}$ with $\sO_L(1)^{\oplus 4}$. After a coordinate change, we may assume that the composition
$\sO_L(1)\to\sO_L(1)\oplus\sO_L(-1)\to\sO_L(1)^{\oplus 4}$ of the inclusion of the first factor and of the first arrow of (\ref{normal}) is the inclusion of the first factor. In such coordinates, $\{X_0+\varepsilon X_4=X_1=X_2=X_3=0\}$ and $\{X_0+\varepsilon X_5=X_1=X_2=X_3=0\}$ are two $k[\varepsilon]/(\varepsilon^2)$\nobreakdash-points  of~$F$. 
Assuming for contradiction that $T_LW(\Lambda)=T_LF$ and using
the characterization recalled above of $T_LW(\Lambda)$ viewed as a subspace of $H^0(X,N_{L/X})$,
we see that $\Lambda\subset P:=\{X_1=X_2=X_3=0\}$.
Since $X$ contains $L$, the monomials $X_4^2$, $X_4X_5$ and $X_5^2$ do not appear in the equations of $X$. Since $X$ also contains the above two infinitesimal deformations of $L$, neither do the monomials $X_0X_4$ and $X_0X_5$. It follows that the intersection of $X$ with the plane $P$ is equal either to $P$ or to the double line $\{X_0^2=0\}\subset P$. Since $\Lambda\subset X\cap P$ but $\Lambda\neq L$, we deduce that $P\subset X$, which contradicts Lemma \ref{lemdroites} (i).
\end{proof}

\subsection{Projecting from a line}
\label{subsecproj}

We keep the notation of \S\ref{subseclines}.

Assume that $X$ contains a line $\Lambda\subset X$, which we fix. We denote by $\mu:X'\to X$ and $\widetilde{\mu}:(\bP^5_k)'\to\bP^5_k$ the blow-ups of $\Lambda$ in $X$ and in $\bP^5_k$, and by $\nu:X'\to \bP^3_k$ and $\widetilde{\nu}:(\bP^5_k)'\to\bP^3_k$ the morphisms obtained by projecting away from~$\Lambda$.

\begin{prop}
\label{propprojectline}
There exists a smooth projective geometrically connected curve $\Delta\subset \bP^3_k$ of genus $2$ such that $\nu$ can be identified with the blow-up of $\Delta$ in $\bP^3_k$.
\end{prop}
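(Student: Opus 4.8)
The plan is to realize $\nu:X'\to\bP^3_k$ by an explicit linear-algebra computation and then to recognize it as a blow-up. First I would choose homogeneous coordinates $x_0,\dots,x_5$ on $\bP^5_k$ so that $\Lambda=\{x_2=\dots=x_5=0\}$ and $\widetilde\nu$ is the projection $[x_0:\dots:x_5]\mapsto[x_2:\dots:x_5]$; blowing up $\Lambda$ exhibits $(\bP^5_k)'$ as a $\bP^2$-bundle over $\bP^3_k$, so the whole picture is uniform over the fibres. As $\Lambda\subset X$, the two quadrics cutting out $X$ can be written $Q_i=x_0a_i+x_1b_i+c_i$ with $a_i,b_i$ linear and $c_i$ quadratic in $x_2,\dots,x_5$. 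Over $q=[x_2:\dots:x_5]\in\bP^3_k$, the fibre of $\widetilde\nu$ is the plane $\Pi_q=\langle\Lambda,q\rangle\cong\bP^2$, and restricting $Q_1,Q_2$ to $\Pi_q$ (with coordinates $[\alpha:\beta:\gamma]$) shows that $X\cap\Pi_q$ is $\Lambda$ together with the residual scheme $R_q$ cut out by the two forms $a_i\alpha+b_i\beta+c_i\gamma$ ($i=1,2$). When the rows of $M=\begin{pmatrix}a_1&b_1&c_1\\ a_2&b_2&c_2\end{pmatrix}$ are linearly independent at $q$, these are two distinct lines of $\Pi_q$ meeting in a single reduced point, which is the unique point of $X$ over $q$; hence $\nu$ is birational.

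Next I would define $\Delta\subset\bP^3_k$ to be the degeneracy locus $\{\operatorname{rank}M\le1\}$, cut out by the three maximal minors $a_1b_2-a_2b_1$, $a_1c_2-a_2c_1$, $b_1c_2-b_2c_1$, of degrees $2,3,3$. Over a point of $\Delta$ the two lines defining $R_q$ coincide, so $R_q$ is a line $\ell\subset X$ meeting $\Lambda$, and the fibre of $\nu$ over $q$ is the strict transform of $\ell$, a $\bP^1$. The association $\ell\mapsto\langle\Lambda,\ell\rangle$ identifies the curve $W(\Lambda)\subset F$ of lines of $X$ meeting $\Lambda$ with $\Delta^{\mathrm{red}}$, and I would transport smoothness through this identification: by Lemma~\ref{lemdeuxdroites} the inclusion $T_LW(\Lambda)\subsetneq T_LF$ is strict at every point, so $W(\Lambda)$ has one-dimensional tangent spaces everywhere and is smooth. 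Since $\Delta$ is Cohen--Macaulay (being determinantal of the expected codimension) and generically reduced, it is reduced, so $\Delta\cong W(\Lambda)$ is a smooth curve; its geometric connectedness follows from the fact that $\Delta$ is arithmetically Cohen--Macaulay.

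The genus is then a Hilbert--Burch computation. The three minors generate a codimension-$2$ Cohen--Macaulay ideal whose resolution reads
\begin{equation*}
0\to\sO_{\bP^3_k}(-4)^{\oplus2}\to\sO_{\bP^3_k}(-2)\oplus\sO_{\bP^3_k}(-3)^{\oplus2}\to\sO_{\bP^3_k}\to\sO_\Delta\to0,
\end{equation*}
whence $\chi(\sO_\Delta(n))=5n-1$, so $\Delta$ has degree $5$ and arithmetic genus $2$; being smooth and connected, it has genus $2$. To finish, I would recognize $\nu$ as the blow-up of $\Delta$: over $\bP^3_k\setminus\Delta$ the morphism $\nu$ is a bijection with reduced fibres, hence an isomorphism by Zariski's main theorem, while over $\Delta$ its fibres are the $\bP^1$'s above, so $R:=\nu^{-1}(\Delta)$ is an irreducible divisor forming a $\bP^1$-bundle over $\Delta$. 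Once one checks that $\nu^{-1}\sI_\Delta\cdot\sO_{X'}=\sO_{X'}(-R)$ is invertible, the universal property factors $\nu$ through $\pi:\mathrm{Bl}_\Delta\bP^3_k\to\bP^3_k$ by a proper birational morphism $g:X'\to\mathrm{Bl}_\Delta\bP^3_k$ that restricts to a bijection on exceptional divisors; as both targets are smooth, $g$ is an isomorphism.

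The main obstacle I anticipate lies in the last two steps and is sharpened by the requirement that everything hold in all characteristics, notably characteristic $2$, where intersections of quadrics are most delicate: proving the smoothness of $\Delta$ uniformly is exactly what the tangent-space comparison of Lemma~\ref{lemdeuxdroites} is engineered to supply, and the blow-up recognition requires genuinely verifying that the scheme-theoretic preimage of $\Delta$ is the reduced Cartier divisor $R$, so that $\nu^{-1}\sI_\Delta\cdot\sO_{X'}$ is invertible and the universal property applies. By contrast, the birationality and the degree-and-genus computation are routine.
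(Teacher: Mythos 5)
Your overall architecture — projection coordinates, the determinantal curve $\Delta$ cut out by the maximal minors of $M$, the Eagon--Northcott resolution for the degree and genus, and the universal property of the blow-up plus a no-contracted-divisor argument at the end — is the same as the paper's. The genuine gap is in the key step, the smoothness of $\Delta$. Your identification of $\Delta^{\red}$ with $W(\Lambda)$ breaks down at those points $q\in\Delta$ where all four linear entries $a_1,b_1,a_2,b_2$ of $M$ vanish (nothing in the hypotheses excludes a common zero in $\bP^3_k$ of these four forms). At such a point the two residual forms are $c_1(q)\gamma$ and $c_2(q)\gamma$, with $c_1(q),c_2(q)$ not both zero by Lemma~\ref{lemdroites}~(i), so $R_q$ is the line $\{\gamma=0\}=\Lambda$ itself: scheme-theoretically $\Pi_q\cap X$ is the double line $2\Lambda$, and the fibre of $\nu$ over $q$ is contained in the exceptional divisor of $\mu$ and maps onto $\Lambda$, not onto a line of $X$ distinct from $\Lambda$. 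These points lie in $\Delta$ (the matrix has rank $\leq 1$ there) but are not in the image of $W(\Lambda)$, so Lemma~\ref{lemdeuxdroites} gives no control on $T_q\Delta$ there. This is exactly the first case of the paper's smoothness argument, which requires a genuinely different proof: assuming $\dim T_q\Delta_{\ok}\geq 2$, one manipulates the two quadratic equations to produce, after linear combinations, an equation of the form $L''X_5+Q''$ whose zero locus is singular at a point of $X_{\ok}$, contradicting the smoothness of $X$. Without an argument at these points, the smoothness (and even the reducedness) of $\Delta$, and hence the whole blow-up identification, is unproved.

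A second, smaller, defect sits in the same step: even at the good points, what must be bounded is the tangent space of the scheme $\Delta$, not of $\Delta^{\red}$ or of $W(\Lambda)$. Smoothness of $\Delta^{\red}$ does not imply that $\Delta$ is reduced (a double line defined by Fitting ideals is Cohen--Macaulay with smooth reduction), and your appeal to generic reducedness is precisely what needs proof; note also that in characteristic $p$ a bijective morphism need not be an isomorphism, so ``identifies'' cannot be taken for granted. The paper's mechanism repairs both issues at once: the Fitting-ideal computation (using Lemma~\ref{lemdroites}~(i) to exclude rank $0$) shows that $\nu^{-1}(\Delta)\to\Delta$ is a flat family of lines in the $\bP^2$-bundle, whose classifying morphism from $\Delta$ itself to the relative Hilbert scheme of lines of $\widetilde{\nu}$ is a section of the projection to $\bP^3_k$ and therefore injective on tangent spaces; away from the exceptional case above it factors through $W(\Lambda)$, whence $\dim T_q\Delta_{\ok}\leq\dim T_LW(\Lambda)\leq 1$ by Lemma~\ref{lemdeuxdroites} and Lemma~\ref{lemdroites}~(iii). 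With these two repairs (and the standard lower bound $\dim\Delta\geq 1$ for determinantal loci), your outline coincides with the paper's proof; the remaining steps you describe — birationality, the resolution computing $h^0(\Delta,\sO_\Delta)=1$ and $h^1(\Delta,\sO_\Delta)=2$, and the factorization through the blow-up — are carried out in the paper essentially as you propose.
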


\begin{proof}
Write $\Lambda=\{X_0=\dots=X_3=0\}$ in appropriate homogeneous coordinates $X_0,\dots,X_5$ of $\bP^5_k$. The morphism $\widetilde{\nu}:(\bP^5_k)'\to\bP^3_k$ realizes $(\bP^5_k)'$ as the projectivization (in Grothendieck's sense) of the vector bundle $\mathcal{E}=\sO_{\bP^3_k}\oplus \sO_{\bP^3_k}\oplus\sO_{\bP^3_k}(1)$ on $\bP^3_k$. In a natural way, we use homogeneous coordinates $X_0,\dots,X_3$ on $\bP^3_k$, and we let $X_4, X_5$ (resp.\ $X_6$) denote the global sections of $\mathcal{E}$ (resp. of $\mathcal{E}(-1)$) on $\bP^3_k$ corresponding to the direct sum decomposition of $\mathcal{E}$.
If $X$ has equations $\{L_1X_4+L_2X_5+Q=0\}$ and $\{L'_1X_4+L'_2X_5+Q'=0\}$ in $\bP^5_k$, with $L_1,L_2,L_1',L_2'$ (resp.\ $Q,Q'$) linear (resp. quadratic) in $X_0,\dots,X_3$, then $X'$ has equations 
$\{L_1X_4+L_2X_5+QX_6=0\}$ and $\{L'_1X_4+L'_2X_5+Q'X_6=0\}$ in~$(\bP^5_k)'$.

The fibers of $\nu: X'\to\bP^3_k$ are defined by two linear equations in a $2$-dimensional projective space, hence are isomorphic to $\bP^0$, to $\bP^1$ or to~$\bP^2$. 
The determinantal subscheme $\Delta\subset\bP^3_k$ defined by the vanishing of the maximal minors of the matrix
\begin{equation}
\label{matrix}
\begin{pmatrix}
L_1& L_2 &Q \\
L_1'& L_2' &Q'
\end{pmatrix}
\end{equation}
endows the subset of $\bP^3_k$ over which the fibers of $\nu$ are positive-dimensional with a schematic structure. We claim that $\nu|_{\nu^{-1}(\Delta)}:\nu^{-1}(\Delta)\to\Delta$ is a flat family of lines in the projective bundle $\widetilde{\nu}|_{\widetilde{\nu}^{-1}(\Delta)}:\widetilde{\nu}^{-1}(\Delta)\to\Delta$. To see it, we work on an affine open subset of $\Delta$ with coordinate ring $R$. One has to show that the cokernel $M$ of the linear map $R^2\to R^3$ given by the transpose of the matrix (\ref{matrix}) is free of rank~$2$. This follows from \cite[Proposition~20.8]{Eisenbud} since $\Fitt_1(M)=0$ by the definition of~$\Delta$ and $\Fitt_2(M)=R$ by Lemma \ref{lemdroites} (i).

Let us show that $\Delta$ is smooth of the expected dimension (equal to $1$).
To this end, we fix $x\in \Delta(\ok)$
such that $T_x\Delta_{\ok}$ has dimension $\geq 2$ and derive a contradiction.

We first assume that $\nu^{-1}(x)$ is the line $\{X_6=0\}\subset\widetilde{\nu}^{-1}(x)$.
Then the linear forms $L_1,L_2,L_1'$ and $L_2'$ vanish at $x$.
As $T_x\Delta_{\ok}$ has dimension $\geq 2$, the differentials at~$x$ of the cubic forms $L_1Q'-L_1'Q$ and $L_2Q'-L_2'Q$ are linearly dependent.
After replacing $(L_1,L_1')$ and $(L_2,L_2')$ with suitable $\ok$-linear combinations of  $(L_1,L_1')$ and $(L_2,L_2')$ (which is possible by a change of coordinates), we may therefore assume that the cubic $\{L_1Q'-L_1'Q=0\}$ is singular at $x$.
Since $Q$ and~$Q'$ do not both vanish at~$x$ (otherwise $X_{\ok}$ would contain the plane $\widetilde{\mu}(\widetilde{\nu}^{-1}(x))$, contradicting Lemma~\ref{lemdroites}~(i) over $\ok$), and since $L_1$ and $L_1'$ vanish at $x$, we deduce that $L_1$ and $L_1'$ are linearly dependent. Consequently, an appropriate $\ok$-linear combination of the degree~$2$ equations defining $X_{\ok}$ is of the form $L_2''X_5+Q''$, where $L_2''$ and $Q''$ are respectively linear and quadratic in $X_0,\dots,X_3$. Since $V:=\{L_2''X_5+Q''=0\}$ is singular at $p:=[0:0:0:0:1:0]$ and $X_{\ok}\subset V$ is a Cartier divisor containing $p$, we see that $X_{\ok}$ is also singular at $p$, which is absurd. 

Thus the line $\nu^{-1}(x)$ is not equal to $\{X_6=0\}\subset\widetilde{\nu}^{-1}(x)$.
In other words,
its image by~$\mu$ is a line $L\subset X$ distinct from $\Lambda$. We note that the open subset of the relative Hilbert scheme of lines of $\widetilde{\nu}:(\bP^5_k)'\to \bP^3_k$ consisting of those lines that are not defined by the equation $\{X_6=0\}$ in a fiber of $\widetilde{\nu}$ is naturally isomorphic to the scheme parametrizing the lines in $\bP^5_k$ that are distinct from $\Lambda$ but intersect~$\Lambda$. Since moreover $\nu|_{\nu^{-1}(\Delta)}:\nu^{-1}(\Delta)\to\Delta$ is a family of lines over $\Delta$, two independent tangent vectors of $\Delta$ at $x$ give rise to two independent tangent vectors of $W(\Lambda)$ at~$L$. As $F$ is smooth of dimension~$2$ at $L$ by Lemma~\ref{lemdroites}~(iii), this contradicts Lemma \ref{lemdeuxdroites}
and finishes the proof that~$\Delta$ is a smooth curve.

It then follows from \cite[Theorem A2.60, Example A2.67]{EisenbudSyz} that $\sO_\Delta$ is resolved by the Eagon--Northcott complex
$$0\to\sO_{\bP^3_k}(-4)^{\oplus 2}\to\sO_{\bP^3_k}(-2)\oplus\sO_{\bP^3_k}(-3)^{\oplus 2}\to\sO_{\bP^3_k} \to\sO_\Delta\to 0\rlap{,}$$
where the first arrow is given by the matrix (\ref{matrix}) and the second one by its maximal minors. Using it allows one to compute that $h^0(\Delta,\sO_\Delta)=1$ and $h^1(\Delta,\sO_\Delta)=2$, hence that the smooth projective curve $\Delta$ is geometrically connected of genus~$2$.

To conclude, we denote by $\lambda:Y\to \bP^3_k$ the blow-up of $\Delta$ in $\bP^3_k$. Since $\nu|_{\nu^{-1}(\Delta)}:\nu^{-1}(\Delta)\to\Delta$ is a flat family of lines, hence a smooth morphism, the subscheme $\nu^{-1}(\Delta)\subset X'$ is a smooth divisor. Since $X'$ is smooth, $\nu^{-1}(\Delta)$ is a Cartier divisor in $X'$, and the universal property of a blow-up yields a morphism $\phi:X'\to Y$ such that $\lambda\circ\phi=\nu$. Both $\lambda$ and $\nu$ are birational (the latter because the generic fiber of $\nu$ is a $0$-dimensional projective space), hence so is $\phi$. We deduce that $\phi$ is an isomorphism, as is any birational
morphism between smooth projective varieties with the same Picard number.
\end{proof}

\begin{rem}
\label{remokline}
By Lemma \ref{lemdroites} (iii), a smooth complete intersection of two quadrics in $\bP^5_k$  contains a line over $\ok$, hence is $\ok$-rational by Proposition \ref{propprojectline}.
\end{rem}

\subsection{The intermediate Jacobian}

In Theorem \ref{thintjac}, we compute $\bCH^2_{X/k}$ for threefolds $X$ that are smooth complete intersections of two quadrics. Note that such varieties are $\ok$-rational by Remark \ref{remokline}, so that Theorem \ref{threp} applies to them.

Theorem \ref{thintjac} (iv) will be used in a crucial manner in the proof of Theorem \ref{thrat}.
In characteristic $\neq 2$, it goes back to the work of Wang \cite{Wang}.

In the statement of Theorem~\ref{thintjac}~(ii), we denote by $\Alb_{V/k}^0$
(resp.\ $\Alb_{V/k}^1$) the Albanese variety (resp.\ torsor)
of a smooth proper geometrically connected variety~$V$ over~$k$.
This is the abelian variety over~$k$ (resp.\ torsor under $\Alb_{V/k}^0$)
which underlies the
solution of the universal problem of morphisms from~$V$ to torsors under abelian varieties over~$k$.
We recall that $\Alb_{V/k}^0$ is canonically dual to
the abelian variety $(\bPic^0_{V/k})^\red$ and that the formation
of $\Alb_{V/k}^0$ is compatible with arbitrary
extensions of scalars
(see \cite[Exp.~236, Théorème~3.3~(iii)]{FGA}, in which the geometric fibers of $X\to S$ should be assumed to
be connected).
We also recall that $\Alb_{V/k}^0=\bPic_{V/k}^0$ and $\Alb_{V/k}^1=\bPic_{V/k}^1$
if in addition $V$ is a curve.

By a \emph{conic} on~$X$ we mean a $1$\nobreakdash-dimensional closed subscheme of~$X$ which, when viewed as a subscheme
of~$\bP^5_k$, is
the intersection of a quadric and a plane.

\begin{thm}
\label{thintjac}
Let $X\subset\bP^5_k$ be a smooth complete intersection of two quadrics, let~$F$ be its variety of lines, and let $Z\subset X\times F$ be the universal line.
Denote by $\psi^2_X:\CH^2(X_{\ok})\isoto \bCH^2_{X/k}(\ok)$
the isomorphism of Theorem~\ref{threp}~(iv).
Then:
\begin{enumerate}[(i)]
\item The degree map $\deg:\CH^2(X_{\ok})\to\Z$ induces, via $\psi^2_X$, a short exact sequence $0\to(\bCH^2_{X/k})^0\to\bCH^2_{X/k}\xrightarrow{\delta}\Z\to 0$ of $k$-group schemes.
\item The class $[\sO_Z]\in \K_0(X_F)$ induces isomorphisms $F\isoto(\bCH^2_{X/k})^1:=\delta^{-1}(1)$ and $\Alb^0_{F/k}\isoto(\bCH^2_{X/k})^0$.
\item Let $\Xi\subset \CH^2(X_{\ok})$ be the subset of classes represented
by a conic on~$X_{\ok}$.
There exists a unique reduced closed subscheme $D \subset (\bCH^2_{X/k})^2:=\delta^{-1}(2)$ such that $\psi^2_X(\Xi)=D(\ok)$. The scheme~$D$ is a smooth projective geometrically connected curve of genus~$2$
over~$k$.
\item
Via the identifications $\bPic^0_{D/k}=\Alb^0_{D/k}$
and
$\bPic^1_{D/k}=\Alb^1_{D/k}$,
the inclusion $D \subset (\bCH^2_{X/k})^2$
induces
 isomorphisms of principally polarized abelian varieties
$\bPic^0_{D/k} \isoto (\bCH^2_{X/k})^0$
and of torsors
$\bPic^1_{D/k} \isoto (\bCH^2_{X/k})^2$.
\end{enumerate}
\end{thm}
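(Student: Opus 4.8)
The plan is to reduce the entire statement to the single geometric assertion that the curve $D\subset(\bCH^2_{X/k})^2$ represents the principal polarization $\theta_X$ of $A:=(\bCH^2_{X/k})^0$ furnished by Theorem~\ref{threp}~(vi). First I would reduce to the case $k=\ok$: whether a homomorphism of abelian varieties over $k$ is an isomorphism respecting principal polarizations may be tested after base change to $\ok$ (the polarizations are $\Gamma_k$-invariant elements of the torsion-free groups $\NS$ computed over $\ok$), and an equivariant morphism of torsors lying over an isomorphism of structure groups is automatically an isomorphism; so the torsor assertion will follow formally once the assertion on identity components is known. Over $\ok$, the inclusion $\iota\colon D\hookrightarrow(\bCH^2_{X/k})^2$ into a torsor under $A$ factors, by the universal property of the Albanese, through the Abel--Jacobi immersion $a_D\colon D\to\bPic^1_{D/k}$ and a morphism of torsors $\phi\colon\bPic^1_{D/k}\to(\bCH^2_{X/k})^2$ lying over a homomorphism $j\colon\bPic^0_{D/k}\to A$, with $\phi\circ a_D=\iota$. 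Since $D$ has genus~$2$, its image $a_D(D)$ is a theta divisor representing the canonical principal polarization $\theta_D$ of $\bPic^0_{D/k}$ (the one of \S\ref{Piccurves}).

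The key reduction is the following. If one knows that the class $[D]\in\NS((\bCH^2_{X/k})^2_{\ok})$ coincides with $\theta_X$ (under the translation identification $\NS((\bCH^2_{X/k})^2_{\ok})\simeq\NS(A_{\ok})$), then $\theta_X$ is a principal polarization whose theta divisor is the smooth genus-$2$ curve $D$; by the classical recognition of genus-$2$ Jacobians among principally polarized abelian surfaces with smooth theta divisor (valid in all characteristics), the Albanese homomorphism $j$ is then an isomorphism of principally polarized abelian varieties, and $\phi$ an isomorphism of torsors. Thus the whole theorem comes down to proving $[D]=\theta_X$.

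To prove $[D]=\theta_X$ I would argue over $\ok$ by projecting from a line. Choosing a line $\Lambda\subset X$ (which exists over $\ok$ by Remark~\ref{remokline}), Proposition~\ref{propprojectline} identifies the blow-up $X'=\mathrm{Bl}_\Lambda X$ with $\mathrm{Bl}_\Delta\bP^3_{\ok}$ for a smooth geometrically connected curve $\Delta$ of genus~$2$. Two applications of Proposition~\ref{calculblowup} --- once for the blow-up of the line $\Lambda\simeq\bP^1$ (contributing no abelian part) and once for the blow-up of $\Delta$ in $\bP^3$ (where $\bCH^2_{\bP^3/\ok}$ contributes none) --- yield isomorphisms of principally polarized abelian varieties $A\simeq(\bCH^2_{X'/k})^0\simeq\bPic^0_{\Delta/k}$, so that $\theta_X$ becomes the theta divisor of the genus-$2$ Jacobian $\bPic^0_{\Delta/k}$. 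It then remains to identify this theta divisor with $D$: the positive-dimensional fibres of the projection $\nu$ in Proposition~\ref{propprojectline} exhibit $\Delta$ as the curve of lines $L\subset X$ meeting $\Lambda$, sitting inside $F\simeq(\bCH^2_{X/k})^1$ via Theorem~\ref{thintjac}~(ii); translating by the class $[\Lambda]$ carries this curve to the locus of classes of the degenerate conics $L\cup\Lambda$, which lies in $D$ and, being an irreducible curve inside the irreducible curve $D$, equals $D$.

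The hard part is this last compatibility, namely checking that the principal polarization transported onto $A$ through the two blow-up decompositions of $X'$ really corresponds to the intrinsic class $[D]$; concretely, one must match the isomorphism $\Alb^0_{F/k}\simeq A$ induced by the universal line $[\sO_Z]$ of Theorem~\ref{thintjac}~(ii) with the exceptional-divisor push-forward occurring in the decomposition~(\ref{decblowup}) used by Proposition~\ref{calculblowup}. This is exactly the place where the geometry of lines and conics (in the spirit of Reid, Donagi and Wang) enters, now made characteristic-free by \S\ref{subsecproj} and Proposition~\ref{calculblowup}. An alternative, purely cohomological, route to $[D]=\theta_X$ is to compute the alternating form attached to $[D]$ on $H^1((\bCH^2_{X/k})^2_{\ok},\Q_\ell)$ through the correspondence given by the universal conic over $D$, and to compare it with the opposite of the cup-product pairing on $H^3(X_{\ok},\Q_\ell(2))$ via the characterization of $\theta_X$ recorded in Remark~\ref{remvariantech2xk}~(iv).
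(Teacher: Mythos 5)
Your plan never supplies the geometric fact on which part~(iii) actually rests, and the place where you wave at it is circular. You propose to identify $D$ with the locus of classes of degenerate conics $L\cup\Lambda$ (the image of $\Delta$ under $x\mapsto[\mu(\nu^{-1}(x))]+[\Lambda]$), arguing that this locus is ``an irreducible curve inside the irreducible curve $D$, hence equals $D$''. But at that stage nothing tells you that $D$ --- the reduced structure on the set $\psi^2_X(\Xi)$ of \emph{all} conic classes --- is a curve, let alone irreducible: a priori the classes of conics on $X_{\ok}$ (which form a large family) could sweep out a two-dimensional subset of the torsor $(\bCH^2_{X/k})^2$, and then your inclusion of curves proves nothing. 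What is needed is precisely the reverse containment: every conic $C$ on $X_{\ok}$ satisfies $[C]=[\mu(\nu^{-1}(x))]+[\Lambda_{\ok}]$ for some $x\in\Delta(\ok)$. The paper proves this by projective geometry of the pencil (Lemma~\ref{lem:quadriqueschow}): writing $C=X_{\ok}\cap P$ for a unique plane $P$, taking the unique quadric $Y$ of the pencil containing $X_{\ok}\cup P$, and moving $P$, rationally equivalently inside $Y$, to a plane $P'$ containing $\Lambda_{\ok}$. Nothing in your proposal plays this role, and your alternative cohomological computation of $[D]$ does not break the circle either, since computing a class in $\NS$ presupposes that $D$ is a divisor, i.e.\ a curve. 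Note also that you quote part~(ii) of the very theorem ($F\simeq(\bCH^2_{X/k})^1$) without proving it; in the paper this step is not formal --- it uses the classification of surfaces of Kodaira dimension zero to see that $F_{\ok}$ is an abelian surface, which is what makes the map $a:F\to(\bCH^2_{X/k})^1$ an isomorphism rather than merely a morphism.

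Your opening reduction to $k=\ok$ also discards exactly what the theorem is for. Part~(iii) asserts the existence of $D$ as a smooth curve \emph{over $k$}, and $k$ may be imperfect: a $\Gamma_k$-stable reduced closed subscheme of $(\bCH^2_{X/k})^2_{\ok}$ descends to the perfect closure $k_\p$ by Galois descent, but in general not to $k$ (purely inseparable descent is not automatic --- over $k=\bF_p(t)$ the reduced point $t^{1/p}$ of $\bA^1_{\ok}$ is Galois-stable yet is the base change of no closed subscheme of $\bA^1_k$). This is why the paper extends scalars only to a \emph{finite Galois} extension, just enough to have a line $\Lambda\in F(k)$, constructs $D$ over $k$ directly as the image of the genus-$2$ curve $\Delta$ of Proposition~\ref{propprojectline} under the closed immersion $x\mapsto a(b(x))+a(\Lambda)$, and then descends. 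Finally, what you single out as ``the hard part'' --- matching the polarization transported through the blow-up decompositions with $\theta_X$ --- is in fact dispatched in one line in the paper: the composite $\bPic^0_{\Delta/k}\to(\bCH^2_{X/k})^0$ and the isomorphism furnished by Proposition~\ref{calculblowup} are morphisms of (geometrically reduced) $k$-varieties that agree on $\ok$-points, hence are equal. Your idea of concluding via $[D]=\theta_X$ and the recognition of genus-$2$ Jacobians among principally polarized abelian surfaces with smooth irreducible theta divisor is a genuinely different and attractive route for part~(iv), but it only becomes available after the two gaps above are filled, at which point the paper's direct argument is already complete.
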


\begin{proof}
The sheaf $\sO_Z$ induces a class $[\sO_Z]\in F_3\GG_0(X_F)=F^2\K_0(X_F)=\SK_0(X_F)$ by \S\ref{coherent} (as $F$ is smooth by Lemma \ref{lemdroites} (iii)). It therefore induces a morphism $F\to \bCH^2_{X/k}$. This morphism factors through $(\bCH^2_{X/k})^1$ because it sends a point $x\in F(\ok)$ to the class in $\bCH^2_{X/k}(\ok)=\CH^2(X_{\ok})$ of the line in $X_{\ok}$ associated with~$x$ (as
$Z$ is flat over $F$), which has degree $1$.

A morphism $a:F \to (\bCH^2_{X/k})^1$ having been constructed, we are now free to extend the scalars from~$k$
to any finite Galois extension of~$k$: indeed, the existence, unicity and smoothness of~$D$ can be tested
over such an extension, and all other conclusions of the theorem can even be tested over~$\ok$.
As~$F$ is smooth, we may therefore, and will, assume that $F(k)\neq\varnothing$
(see \cite[2.2/13]{BLR}).

Let us fix a line $\Lambda \subset X$ defined over~$k$.
Proposition~\ref{propprojectline} yields a diagram $X\xleftarrow{\mu}X'\xrightarrow{\nu}\bP^3_{k}$, where $\mu$ is the blow-up of~$\Lambda$ and $\nu$ is the blow-up of a smooth projective geometrically connected curve $\Delta\subset\bP^3_{k}$ of genus~$2$. Our knowledge of the Chow groups of a blow-up \cite[Proposition~6.7~(e)]{Fulton} shows the existence of a $\Gamma_k$-equivariant short exact sequence $0\to \CH^2(X_{\ok})_{\alg}\to\CH^2(X_{\ok})\xrightarrow{\deg}\Z\to 0$. Assertion (i) now follows from 
(\ref{okpointsbis}) and~(\ref{okopoints}).

Let $f:\Alb^0_{F/k}\to\Alb^0_{(\bCH^2_{X/k})^1/k}=(\bCH^2_{X/k})^0$
denote the morphism between Albanese varieties induced by $a:F\to(\bCH^2_{X/k})^1$.
The morphism $b:\Delta\to F$ associating with $x\in \Delta$ the line $\mu(\nu^{-1}(x))$ induces a morphism $g:\bPic_{\Delta/k}^0\to \Alb^0_{F/k}$ between Albanese varieties.

The composition $f\circ g:\bPic_{\Delta/k}^0\to(\bCH^2_{X/k})^0$
is an isomorphism of principally polarized abelian varieties.  Indeed,
it coincides at the level of $\ok$-points with the principally polarized isomorphism 
$\bPic_{\Delta/k}^0\isoto(\bCH^2_{X/k})^0$
obtained by applying Proposition \ref{calculblowup} to $\mu$ and $\nu$, hence is equal to it.
It follows that the kernel of $g$ is trivial and that $\Alb^0_{F/k}$ has dimension $\geq 2$, hence that the first Betti number of $F_{\ok}$ is $\geq 4$. Lemma \ref{lemdroites} (iii) and the classification of surfaces with Kodaira dimension $0$ (see \cite[Table p.25 and Theorem 6]{BoMu}) now show that $F_{\ok}$ is an abelian surface. We deduce that $g$ is an isomorphism as its kernel is trivial and as $\bPic_{\Delta/k}^0$ and $\Alb^0_{F/k}$ are both abelian surfaces. It follows that $f$ is an isomorphism.

The variety~$F$ is isomorphic to an abelian variety since so is~$F_{\ok}$ and since $F(k)\neq\varnothing$.  Thus~$a$ and~$f$ can be identified;
hence~$a$ is an
isomorphism as well, and~(ii) is proved.

Let $c:\Delta \to (\bCH^2_{X/k})^2$ be defined by $c(x)=a(b(x))+a(\Lambda)$,
where $\Lambda$ denotes the rational point of~$F$ corresponding to~$\Lambda$.
Passing to Albanese torsors yields
a morphism $\bPic^1_{\Delta/k} \to  (\bCH^2_{X/k})^2$
which is an isomorphism since the underlying morphism of abelian varieties
is the (principally polarized) isomorphism $f\circ g$.
It follows, in particular, that~$c$ is a closed immersion.

Let us set $D=c(\Delta)$
and check
that $D(\ok)=\psi^2_X(\Xi)$, where $\Xi \subseteq \CH^2(X_{\ok})$
is the subset appearing in~(iii).  The theorem will then be proved.

We take up the notation $\widetilde{\mu}$, $\widetilde{\nu}$ of~\textsection\ref{subsecproj}
and the notation introduced in the proof of Proposition~\ref{propprojectline}.
The subvariety $\widetilde{\mu}^{-1}(X) \subset (\bP^5_k)'$ is given by the system of equations
$\{(L_1X_4+L_2X_5+QX_6)X_6=0\}$ and $\{(L'_1X_4+L'_2X_5+Q'X_6)X_6=0\}$.
If $X_0,\dots,X_3$ are the homogeneous coordinates of $x \in \Delta(\ok)$,
this system of equations defines
 a conic
in the plane $\widetilde{\nu}^{-1}(x)$
since the matrix~\eqref{matrix} has rank~$\leq 1$.
Thus, for any $x \in \Delta(\ok)$,
we have exhibited a (singular) conic on~$X_{\ok}$ whose class in $\CH^2(X_{\ok})$
is visibly equal to $[\mu(\nu^{-1}(x))]+[\Lambda_{\ok}]$, that is, to $(\psi^2_X)^{-1}(c(x))$.
Hence $D(\ok) \subseteq \psi^2_X(\Xi)$.

Let us now fix a conic~$C$ on~$X_{\ok}$ and prove that $\psi^2_X([C]) \in D(\ok)$.
There exist a (unique) plane $P \subset \bP^5_{\ok}$ such that $C=X_{\ok}\cap P$
and a (unique) quadric $Y \subset \bP^5_{\ok}$
containing $X_{\ok} \cup P$.
As~$X$ is smooth, the singular locus of~$Y$ is
 disjoint from~$X_{\ok}$ and has dimension~$\leq 0$.
Lemma~\ref{lem:quadriqueschow} below provides a plane $P' \subset Y$
containing~$\Lambda_{\ok}$ such that $[P]=[P'] \in \CH_2(Y)$.
As $X_{\ok}$ is an effective Cartier divisor on~$Y$ and as~$X_{\ok}$
contains
neither~$P$ nor~$P'$
(see Lemma~\ref{lemdroites}~(i)), it follows that
$[C]=[X_{\ok}\cap P]=[X_{\ok}\cap P'] \in \CH^2(X_{\ok})$
\cite[Proposition~2.6(a)]{Fulton}.
Now $P'=\widetilde{\mu}(\widetilde{\nu}^{-1}(x))$ for a unique $x \in \bP^3(\ok)$;
as $X_{\ok} \cap P'$ is a conic in~$P'$, a glance at the equations
of $\widetilde{\mu}^{-1}(X) \cap \widetilde{\nu}^{-1}(x)$ shows that~\eqref{matrix} has
rank~$\leq 1$, hence $x \in \Delta(\ok)$ and $[X_{\ok}\cap P']=[\mu(\nu^{-1}(x))]+[\Lambda_{\ok}]$,
so that $\psi^2_X([C])=c(x)\in D(\ok)$, as desired.
\end{proof}

\begin{lem}
\label{lem:quadriqueschow}
Let $Y \subset \bP^5_{\ok}$ be a quadric whose singular locus has dimension~$\leq 0$.
Let $P \subset Y$ be a plane.
Let $\Lambda \subset Y$ be a line along which~$Y$ is smooth.
There exists a (unique) plane $P' \subset Y$ rationally equivalent to~$P$ on~$Y$
such that $\Lambda \subset P'$.
\end{lem}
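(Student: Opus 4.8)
The plan is to distinguish two cases according to the singularities of $Y$. The hypothesis $\dim\operatorname{Sing}(Y)\le 0$ means that $Y$ is either a smooth quadric fourfold, or has exactly one singular point $v=\operatorname{Sing}(Y)$, in which case $Y$ is a quadric cone with vertex $v$ over a smooth quadric threefold $R\subset\bP^4$. Since $\Lambda$ lies in the smooth locus of $Y$, in the cone case one has $v\notin\Lambda$. I would stress at the outset that all the structural facts about linear subspaces of quadrics used below are meant over the algebraically closed field $\ok$ in every characteristic; verifying them in characteristic $2$, where the bilinear form attached to $Y$ is degenerate, is the most delicate bookkeeping.

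In the cone case the argument is short. First I would observe that every plane contained in $Y$ passes through $v$: a plane corresponds to a totally isotropic $3$-dimensional linear subspace, and a dimension count for totally isotropic subspaces of the defining form shows that such a subspace must contain $v$. Projection $\pi\colon Y\setminus\{v\}\to R$ from $v$ then identifies the planes through $v$ with the lines of $R$, the plane over a line $\ell$ being the cone $\langle v,\ell\rangle$. Because $\CH^2(R)=\Z\cdot[\mathrm{line}]$, any two lines of $R$ are rationally equivalent on $R$; pulling back along the flat morphism $\pi$ and using that $\CH_2(\{v\})=0$ (so that the localization sequence gives $\CH_2(Y)\isoto\CH_2(Y\setminus\{v\})$), I would deduce that all planes of $Y$ are rationally equivalent. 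Existence is then automatic once one produces any plane through $\Lambda$: the projection $\pi(\Lambda)$ is a line of $R$, and $P':=\langle v,\Lambda\rangle$ is the corresponding plane, which is contained in $Y$ and automatically rationally equivalent to $P$. For uniqueness, any plane containing $\Lambda$ contains $v$ as well, hence contains $\langle v,\Lambda\rangle$, and being a plane it must equal $P'$.

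In the smooth case the class group is genuinely larger, and this is where the real content lies. I would invoke the standard computation $\CH_2(Y)\cong\Z^2$, the two generators being the two families (rulings) of planes, two planes being rationally equivalent precisely when they belong to the same family. The key geometric input is that through $\Lambda$ there pass exactly two planes of $Y$, one in each family: fixing a point $p\in\Lambda$, the tangent cone $T_pY\cap Y$ is a cone with vertex $p$ over a smooth quadric surface $\bar Q\cong\bP^1\times\bP^1$, the lines of $Y$ through $p$ correspond to the points of $\bar Q$, and the planes of $Y$ through $p$ correspond to the lines of $\bar Q$; since exactly two lines of $\bar Q$ (one from each ruling) pass through the point representing $\Lambda$, there are exactly two such planes, and they lie in the two distinct families of $Y$. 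Granting this, I would take $P'$ to be the one of these two planes lying in the same family as $P$; it contains $\Lambda$ and satisfies $P'\sim P$, and it is unique with these properties since the only other plane through $\Lambda$ lies in the opposite family and is therefore not rationally equivalent to $P$.

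The main obstacle is not the case-by-case geometry itself but making the structural facts about linear subspaces of quadrics—the dichotomy on singularities, the description of the two families and the isomorphism $\CH_2(Y)\cong\Z^2$ in the smooth case, the "two planes through a line" count, and the fact that all planes are rationally equivalent in the cone case—uniformly valid in all characteristics. The characteristic $2$ case requires particular care, since the symmetric bilinear form associated to $Y$ is then degenerate and the naive rank is unavailable; the tangent-cone and isotropic-subspace computations should nevertheless go through over $\ok$ after a suitable choice of coordinates.
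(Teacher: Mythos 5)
Your proof is correct, and the classical facts you invoke do hold over $\ok$ in every characteristic; structurally it follows the same dichotomy as the paper's proof (smooth quadric versus cone over a smooth quadric threefold), but the two cases are implemented differently. The paper first puts $Y$ and $\Lambda$ in normal form by splitting off two hyperbolic planes from the quadratic form, citing Elman--Karpenko--Merkurjev, whose theory of quadratic (rather than bilinear) forms is characteristic-free; this is how it disposes, once and for all, of the characteristic-$2$ bookkeeping you flag. In the smooth case the paper then exhibits the two planes through $\Lambda$ explicitly as the linear section $\{X_0=X_2=0\}\cap Y$ and quotes the computation of Chow groups of split quadrics (\emph{op.\ cit.}, Proposition~68.2) to conclude that one of them is rationally equivalent to $P$; your tangent-hyperplane analysis proves more, namely that these are the \emph{only} planes through $\Lambda$, which yields the parenthetical uniqueness that the paper leaves implicit (and never uses). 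In the cone case the paper takes $P'=\langle v,\Lambda\rangle$ exactly as you do, but instead of computing $\CH_2(Y)$ by localization and flat pullback along the projection from the vertex, it intersects with a hyperplane $H\supset\Lambda$ avoiding $v$, notes that $P\cap H$ and $\Lambda$ are rationally equivalent lines on the smooth threefold $Y\cap H$, and concludes via Fulton's compatibility of rational equivalence with cones (Example~2.6.2); your localization argument is equally valid and proves the slightly stronger statement that \emph{all} planes on the cone are rationally equivalent. The only points where your write-up leans on facts you do not verify are (a) that on a smooth quadric fourfold two planes are rationally equivalent if and only if they lie in the same family, and (b) that the two planes through $\Lambda$ lie in opposite families; both are standard and characteristic-free, (a) being precisely the Elman--Karpenko--Merkurjev result the paper cites, and (b) following from the parity criterion, since those two planes meet exactly along the line $\Lambda$. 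So the characteristic-$2$ care you call for is genuinely available off the shelf rather than something to redo by hand.
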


\begin{proof}
Let $X_0,\dots,X_5$ denote the homogeneous coordinates of~$\bP^5_{\ok}$.
After a linear change of coordinates, we may assume that~$\Lambda$
is the line $X_0=X_2=X_4=X_5=0$ and that~$Y$ is defined
by the equation $X_0X_1+X_2X_3+X_4X_5=0$ (if~$Y$ is smooth)
or
by the equation $X_0X_1+X_2X_3+X_4^2=0$ (otherwise).
Indeed, letting $H \subset \bP^5_{\ok}$ be a hyperplane
containing~$\Lambda$
and avoiding the singular locus of~$Y$, if~$Y$ is singular (so that~$Y$ is in this case a cone
over the quadric $Y \cap H$, which is smooth),
and setting $H = \bP^5_{\ok}$ otherwise,
one can use~$\Lambda$ to split off two hyperbolic planes from
a quadratic form defining the smooth quadric $Y \cap H$
\cite[Proposition~7.13, Lemma~7.12]{Kniga}; the remaining regular
quadratic form of dimension~$1$ or~$2$ has the desired shape
since the ground field is algebraically closed.

If~$Y$ is smooth,
the intersection of~$Y$ with the linear subspace $\{X_0=X_2=0\}$
is the union of two planes containing~$\Lambda$.
According to \emph{op.\ cit.}, Proposition~68.2,
one of them is rationally equivalent to~$P$ on~$Y$.

If~$Y$ is singular, let~$P'$ be the plane that contains~$\Lambda$ and the singular point of~$Y$.
The smooth quadric $Y \cap H$ cannot contain a plane
(\emph{op.\ cit.}, Lemma~8.10), therefore $P \cap H$ is a line.
The two lines $P \cap H$ and~$\Lambda$ are rationally equivalent on~$Y \cap H$
(\emph{op.\ cit.}, Proposition~68.2); hence~$P$ and~$P'$,
being the cones over $P \cap H$ and over~$\Lambda$
inside~$Y$,
are rationally equivalent on~$Y$ \cite[Example~2.6.2]{Fulton}.
\end{proof}

\subsection{Rationality}

We can now prove a necessary and sufficient criterion for the $k$-rationality of three-dimensional smooth complete intersections of two quadrics.

\begin{thm}
\label{thrat}
Let $X\subset\bP^5_k$ be a smooth complete intersection of two quadrics. Then $X$ is $k$-rational if and only if it contains a line defined over $k$.
\end{thm}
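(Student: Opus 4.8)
The plan is to prove the two implications separately. The converse implication is an immediate consequence of the geometry developed in \S\ref{subsecproj}, while the direct implication rests on the intermediate Jacobian obstructions of \S\ref{parobs} together with the explicit computation of $\bCH^2_{X/k}$ in Theorem~\ref{thintjac}. For the easy direction, suppose $X$ contains a line $\Lambda$ defined over~$k$. Then Proposition~\ref{propprojectline} produces a diagram $X\xleftarrow{\mu}X'\xrightarrow{\nu}\bP^3_k$ in which $\mu$ is the blow-up of~$\Lambda$ and $\nu$ is the blow-up of a smooth curve $\Delta\subset\bP^3_k$. As both $\mu$ and $\nu$ are birational, $X$ is birational to $\bP^3_k$, hence $k$-rational; this is already recorded over~$\ok$ in Remark~\ref{remokline}.

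For the hard direction, assume that $X$ is $k$-rational. I want to show that $X$ contains a $k$-line, equivalently that $F(k)\neq\varnothing$, where $F$ is the variety of lines. By Theorem~\ref{thintjac}~(ii), $F$ is isomorphic to the torsor $(\bCH^2_{X/k})^1$ under the abelian variety $(\bCH^2_{X/k})^0$; hence $F(k)\neq\varnothing$ if and only if the class $[(\bCH^2_{X/k})^1]\in H^1(k,(\bCH^2_{X/k})^0)$ vanishes. I would transport the whole problem to the Jacobian of the genus-$2$ curve~$D$ of Theorem~\ref{thintjac}~(iii)--(iv): let $\psi:(\bCH^2_{X/k})^0\isoto\bPic^0_{D/k}$ be the principally polarized isomorphism of Theorem~\ref{thintjac}~(iv), and set $t:=\psi_*[(\bCH^2_{X/k})^1]$ and $u:=[\bPic^1_{D/k}]$ in $H^1(k,\bPic^0_{D/k})$. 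Since $\psi_*$ is an isomorphism, it suffices to prove that $t=0$.

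Three facts combine to force $t=0$. First, since $\delta:\bCH^2_{X/k}\to\Z$ is a group homomorphism (Theorem~\ref{thintjac}~(i)), one has $[(\bCH^2_{X/k})^n]=n\,[(\bCH^2_{X/k})^1]$ for all~$n$; and Theorem~\ref{thintjac}~(iv) identifies $(\bCH^2_{X/k})^2$ with $\bPic^1_{D/k}$ compatibly with~$\psi$. Together these give $2t=u$. Second, the canonical class of the genus-$2$ curve~$D$ is a $k$-rational point of $\bPic^2_{D/k}$, so $[\bPic^2_{D/k}]=0$; as $n\mapsto[\bPic^n_{D/k}]$ is a homomorphism this reads $2u=0$, whence $4t=2u=0$. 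Third, this is where $k$-rationality enters: one has $\NS^2(X_{\ok})^{\Gamma_k}=\Z$ by Theorem~\ref{thintjac}~(i), and applying Theorem~\ref{listobstructions}~(iii) with $D$, $\psi$ and $\alpha=1$ yields an integer $d$ with $t=[\bPic^d_{D/k}]=d\,u=2d\,t$. Iterating, $t=2d\,t=4d^2\,t=d^2(4t)=0$. Thus $[(\bCH^2_{X/k})^1]=0$, so $F(k)\neq\varnothing$ and $X$ contains a $k$-line.

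The genuinely hard part is not this final bookkeeping but the inputs it relies on: the identification of~$F$ with $(\bCH^2_{X/k})^1$ and of $(\bCH^2_{X/k})^2$ with $\bPic^1_{D/k}$ via the curve~$D$ (Theorem~\ref{thintjac}), and the availability over an arbitrary field of the torsor obstruction Theorem~\ref{listobstructions}~(iii). Within the argument itself, the delicate point is that the obstruction only delivers $t$ as \emph{some} integer multiple of~$u$, which alone is insufficient; what pins~$t$ down is the interplay of the relations $2t=u$ and $2u=0$, the latter being the $2$-torsion furnished by the canonical class of the genus-$2$ curve. One must check that this $2$-torsion is coprime to the odd multiple $2d$ appearing in the obstruction, which is exactly what the iteration $t=2d\,t=4d^2\,t=0$ exploits.
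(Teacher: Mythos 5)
Your proof is correct and follows essentially the same route as the paper: projection from the line for the easy direction, and for the converse the identification $F\simeq(\bCH^2_{X/k})^1$, the relation $[\bPic^1_{D/k}]=2\,\psi_*[F]$ from Theorem~\ref{thintjac}~(iv), the triviality of $[\bPic^2_{D/k}]$ via the canonical class, and Theorem~\ref{listobstructions}~(iii). The only (cosmetic) difference is the final arithmetic, where the paper notes that one of $d$ and $1-d$ is even while you iterate $t=2dt=4d^2t=0$; both exploit exactly the same relations $4t=0$ and $(1-2d)\,t=0$.
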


\begin{proof}
If $X$ contains a line $\Lambda$, then projecting from $\Lambda$ induces a birational map $X\dashrightarrow\bP^3_k$ (see the more precise Proposition \ref{propprojectline}).

Assume, conversely, that $X$ is $k$-rational. Let $D$ be as in Theorem~\ref{thintjac}~(iii).
Let $\psi:(\bCH^2_{X/k})^0\isoto\bPic^0_{D/k}$ be the inverse of the isomorphism
of Theorem~\ref{thintjac}~(iv).
 Using Theorem \ref{thintjac} (ii), we identify the variety of lines $F$ of $X$ with the torsor $(\bCH^2_{X/k})^1$ under $(\bCH^2_{X/k})^0$. Theorem~\ref{listobstructions}~(iii) shows the existence of $d\in\Z$ such that $\psi_*[F]=[\bPic^d_{D/k}]\in H^1(k,\bPic^0_{D/k})$ and Theorem~\ref{thintjac}~(iv) yields the identity $[\bPic^1_{D/k}]=2\hspace{.15em}\psi_*[F]\in H^1(k,\bPic^0_{D/k})$. Combining these two equalities shows that
\begin{equation}
\label{d1-d}
\psi_*[F]=[\bPic^d_{D/k}]=[\bPic^{1-d}_{D/k}]\in H^1(k,\bPic^0_{D/k}).
\end{equation}
Noting that $K_D\in\bPic^{2}_{D/k}(k)$ since $D$ has genus $2$, we see that the $\bPic^0_{D/k}$\nobreakdash-torsor $\bPic^{2}_{D/k}$ is trivial. As one of $d$ and $1-d$ is even, it follows from (\ref{d1-d}) that $\psi_*[F]=0\in H^1(k,\bPic^0_{D/k})$. Consequently, $F(k)\neq\varnothing$ and $X$ contains a line defined over $k$.
\end{proof}

\subsection{Unirationality}
\label{parunirat}

We now study the  $k$-unirationality and the separable $k$\nobreakdash-uni\-rationality of smooth complete intersections of two quadrics. 

\begin{thm}
\label{thunirat}
Fix $N\geq 4$, and let $X\subset\bP^N_k$ be a smooth complete intersection of two quadrics. 
The following assertions are equivalent:
\begin{enumerate}[(i)]
\item The variety $X$ is separably $k$-unirational.
\item The variety $X$ is $k$-unirational.
\item One has $X(k)\neq \varnothing$.
\end{enumerate}
\end{thm}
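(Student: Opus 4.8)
The plan is to establish the cycle (i) $\Rightarrow$ (ii) $\Rightarrow$ (iii) $\Rightarrow$ (i), the entire force of the statement lying in the last implication.

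The implication (i) $\Rightarrow$ (ii) holds by definition, a separable dominant rational map being in particular dominant. For (ii) $\Rightarrow$ (iii): when $k$ is infinite, a dominant rational map $\bA^n_k \dashrightarrow X$ is defined on a dense open $U \subseteq \bA^n_k$, and since the $k$-points are Zariski-dense in $\bA^n_k$, some lies in $U$ and maps to a point of $X(k)$. When $k$ is finite, (iii) holds unconditionally and there is nothing to prove: writing $X = \{q_1 = q_2 = 0\}$ with $q_1, q_2$ quadratic forms in the $N+1 \geq 5$ variables, one has $\deg q_1 + \deg q_2 = 4 < N+1$, so the Chevalley--Warning theorem produces a common zero other than the origin, hence a point of $X(k)$. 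Thus the substance of the theorem is the implication (iii) $\Rightarrow$ (i).

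So assume $x_0 \in X(k)$; the goal is to build a dominant separable rational map from a rational $k$-variety to $X$. The classical idea is to sweep out $X$ by a rationally parametrized family of rational curves through $x_0$. Concretely, projection away from $x_0$ identifies $X$ birationally, over $k$, with a cubic hypersurface $W \subset \bP^{N-1}_k$ containing the codimension-$2$ linear subspace cut out by the two polar conditions $B_1(x_0,-) = B_2(x_0,-) = 0$ of the quadrics at $x_0$; projecting $W$ further from this linear space exhibits $X$ as birational to a quadric bundle over $\bP^1_k$ with fibres of dimension $N-3 \geq 1$, and the rulings of these quadric fibres pull back to the rational curves (conics, or lines when $N$ is large) through $x_0$ that one parametrizes. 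The input governing their deformations is precisely the normal bundle computation of Lemma \ref{lemdroites} and its analogue for conics: one wants a curve $C \ni x_0$ on $X$, defined over $k$, with $N_{C/X}$ globally generated (indeed ample, for a general conic), and one forms the map sending a point of $C$ together with a deformation direction to the corresponding nearby point of $X$.

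The main obstacle is twofold and specific to the generality sought here. First, the parametrizing family must be defined over $k$: this is where the hypothesis $X(k) \neq \varnothing$ is genuinely used, via Galois descent of curves through $x_0$ constructed over $\ok$ together with the point-existence supplied on the auxiliary quadric fibres. Second, and most seriously in characteristic $2$, one must verify that the resulting sweeping map is \emph{separable}. Generic smoothness is unavailable in positive characteristic, and the quadric geometry underlying the construction is inseparable in characteristic $2$ (polarities and Gauss maps degenerate), so dominance and separability cannot be read off formally: they must be checked by a direct computation of the differential of the parametrization at $x_0$, using the explicit description of $N_{C/X}$ and of the tangent space $T_{x_0}X$. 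Carrying out this tangent-space computation uniformly in $N \geq 4$ and in all characteristics is the heart of the argument; once separability is secured, the map furnishes a dominant separable rational map $\bA^{N-2}_k \dashrightarrow X$, witnessing the separable $k$-unirationality of $X$ and closing the cycle of implications.
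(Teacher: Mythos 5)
Your implications (i)$\Rightarrow$(ii)$\Rightarrow$(iii) are fine (the Chevalley--Warning observation handles finite fields cleanly; the paper instead invokes the general fact that projective $k$-unirational varieties have $k$-points). The problem is (iii)$\Rightarrow$(i): you set up the classical geometry correctly --- projection from $x_0$ realizes $X$ as birational to a cubic hypersurface $W\subset\bP^{N-1}_k$ containing a codimension-$2$ linear space, and projection from that space gives a quadric bundle over $\bP^1_k$ --- but then the argument stops exactly where the theorem begins. You yourself call the separability verification ``the heart of the argument'' and describe it as ``a direct computation of the differential of the parametrization,'' yet this computation is never performed, and no indication is given of why it should succeed in characteristic $2$, where the polar forms $B_i(x_0,-)$ are alternating and the projective duality underlying the quadric-bundle geometry genuinely degenerates (cf.\ Lemma~\ref{quaddual}: in characteristic $2$ and even $N$ the dual of a smooth quadric is a hyperplane, not a quadric). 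Likewise, the descent of the parametrizing family to $k$ is only asserted (``via Galois descent of curves through $x_0$ constructed over $\ok$''), with no equivariant construction offered. As written, this is a plan, not a proof.

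Compare with what the paper actually does to close these gaps, none of which appears in your sketch. First, it proves Lemma~\ref{lemkol}, a separable analogue of Koll\'ar's lemma, which permits replacing $k$ by $k(t)$ and hence assuming $k$ infinite; without this reduction every ``choose a general $k$-point'' step in a sweeping construction fails over finite fields, an issue your construction over $k$ never addresses. Second, the paper argues by induction on $N$, and in the base case $N=4$ separability is obtained not by differentiating a parametrization but by a geometric mechanism: blow up a general $x\in X(k)$ (available by Manin's unirationality theorem once $k$ is infinite) to get a cubic surface $\wX\subset\bP^3_k$, project from the exceptional line $E$ to get a conic bundle $\pi:\wX\to\bP^1_k$, and prove that the degree-$2$ multisection $\pi|_E:E\to\bP^1_k$ is separable because, by Lemma~\ref{odp}, the hyperplane section of $X$ singular at a general $x$ has an \emph{ordinary} double point there --- a fact proved via the description of $X_{\ok}$ as a blow-up of $\bP^2_{\ok}$, and precisely the kind of input your proposed differential computation would have to reproduce; base-changing $\pi$ by $E$ then yields a conic bundle with a section, whence separable unirationality. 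Third, the induction step $N\geq 5$ requires Lemma~\ref{lemdual} (a general hyperplane section through $x$ is smooth), whose characteristic-free proof again rests on the duality analysis of Lemma~\ref{quaddual}. Until you supply substitutes for these three ingredients --- the reduction to infinite $k$, a separability mechanism valid in characteristic $2$, and the smoothness of the members of the sweeping family --- the implication (iii)$\Rightarrow$(i) remains unproved.
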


 If $N=4$, Theorem~\ref{thunirat}~(ii)$\Leftrightarrow$(iii) is due to Manin \cite[Theorems 29.4 and~30.1]{Manin} and Knecht \cite[Theorem~2.1]{Knecht}.
Over infinite perfect fields of characteristic not~$2$, Theorem~\ref{thunirat} can be found in \cite[Remark 3.28.3]{ctssd1}.
 The counterpart of Theorem~\ref{thunirat}~(ii)$\Leftrightarrow$(iii) for cubic hypersurfaces is due to Koll\'ar \cite[Theorem~1.1]{Kollarcubic}, and we verify, in Theorem \ref{thuniratcub} below, that the equivalence with~(i)
holds for cubic hypersurfaces as well.

We start with a few lemmas. The first one is an analogue for separable $k$\nobreakdash-uni\-rationality of a statement proved by Koll\'ar \cite[Lemma 2.3]{Kollarcubic} for $k$-unirationality.

\begin{lem}
\label{lemkol}
Let $X$ be an integral variety over $k$. The following are equivalent:
\begin{enumerate}[(i)]
\item The variety $X$ is separably $k$-unirational.
\item The variety $X_{k(t)}$ is separably $k(t)$-unirational.
\item There exists a separable dominant rational map $\bA^m_k\dashrightarrow X$ for some $m\geq 0$.
\end{enumerate}
\end{lem}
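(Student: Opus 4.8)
The plan is to prove the four implications (i)$\Rightarrow$(ii), (ii)$\Rightarrow$(iii), (iii)$\Rightarrow$(ii) and (ii)$\Rightarrow$(i), following the strategy of Kollár for plain unirationality \cite[Lemma~2.3]{Kollarcubic} while tracking separability throughout. The key bookkeeping device is that a dominant rational map $f\colon Y\dashrightarrow X$ of integral $k$-varieties is separable exactly when $k(Y)/f^{*}k(X)$ is separably generated, equivalently when the generic fibre of a model of $f$ over the generic point of $X$ is geometrically reduced; this is an open condition in families, it is stable under composition (a tower of separably generated extensions is separably generated), and it is preserved by base change of $k$ along a separable field extension. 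For (i)$\Rightarrow$(ii) I would base change a separable dominant $\bA^{n}_{k}\dashrightarrow X$ (with $n=\dim X$) along the \emph{regular} extension $k(t)/k$: regularity keeps $X_{k(t)}$ integral, and dominance and separability are preserved. For (ii)$\Rightarrow$(iii) I would spread out a separable dominant $\bA^{n}_{k(t)}\dashrightarrow X_{k(t)}$ over a nonempty open $U\subseteq\bA^{1}_{k}$; composing the resulting family $\bA^{n}_{k}\times_{k}U\dashrightarrow X$ with the projection to $X$ gives, after shrinking $U$, a separable dominant map from an open subscheme of $\bA^{n+1}_{k}$ to $X$, i.e.\ (iii) with $m=n+1$.

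The content of (iii)$\Rightarrow$(ii) is a dimension reduction carried out over the \emph{infinite} field $k(t)$. Given a separable dominant $g\colon\bA^{m}_{k}\dashrightarrow X$ with $m>n$, I would base change to $k(t)$ and cut the source by a generic hyperplane with indeterminate coefficients. Since these coefficients remain algebraically independent over $k(t)(X)$, such a hyperplane is generic from the viewpoint of the geometrically reduced generic fibre $\Gamma$ of $g$, which has dimension $\geq 1$; here the delicate positive-characteristic point is that one cannot invoke Bertini for smoothness but only the Bertini theorem preserving \emph{geometric reducedness}, which still guarantees that the generic hyperplane section of $\Gamma$ is geometrically reduced and nonempty. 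Thus the restricted map stays separable and dominant, and as this is an open condition on the (infinite) space of $k(t)$-hyperplanes one obtains an honest $k(t)$-rational good hyperplane, lowering the source dimension by one. Iterating $m-n$ times yields a separable dominant $\bA^{n}_{k(t)}\dashrightarrow X_{k(t)}$, which is (ii). (If $k$ is itself infinite, running this same argument over $k$ rather than over $k(t)$ proves (iii)$\Rightarrow$(i) directly.)

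The main obstacle is (ii)$\Rightarrow$(i), the descent from $k(t)$ back to $k$. From a separable generically finite dominant $\bA^{n}_{k(t)}\dashrightarrow X_{k(t)}$ I would spread out over a nonempty open $U\subseteq\bA^{1}_{k}$ and specialise at a rational point $t_{0}\in U(k)$: dominance and separability persist on a dense open $U'\subseteq U$, so any $t_{0}\in U'(k)$ produces a separable dominant $\bA^{n}_{k}\dashrightarrow X$, which is exactly (i). When $k$ is infinite this is immediate, since $U'(k)\neq\varnothing$; equivalently, over an infinite field a good hyperplane defined over $k$ exists because the good locus is a nonempty open subset of the dual space $\check{\bP}^{n+1}_{k}$. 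The genuine difficulty is a \emph{finite} ground field $k$, for which the relevant good locus may contain no $k$-rational point at all (a proper closed subset of $\check{\bP}^{n+1}_{\mathbb{F}_q}$ can cover every $\mathbb{F}_q$-point), so neither specialising $t$ nor passing to covers of the parameter curve creates a usable rational point. Producing a good section defined over a fixed finite field—via a rational-point existence input of Lang--Weil type, with the requisite control on the geometry and degree of the parameter space, as in Kollár's treatment—is the one genuinely arithmetic step, and it is precisely where the separability bookkeeping of the previous paragraphs has to be combined with a point-counting argument.
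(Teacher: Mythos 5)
Your implications (i)$\Rightarrow$(ii), (ii)$\Rightarrow$(iii) and (iii)$\Rightarrow$(ii) are essentially sound: the first two are the same ``immediate'' observations as in the paper, and your dimension reduction over the infinite field $k(t)$ can be made to work (either via a Bertini theorem preserving geometric reducedness, or, more simply, by noting that a separable dominant map is smooth on a dense open subset of the source and cutting transversally there, which is what the paper does). The problem is architectural: with your four implications, all of the difficulty is funnelled into (ii)$\Rightarrow$(i), the descent from $k(t)$ to a possibly \emph{finite} field $k$, and there you do not give a proof. You correctly observe that for finite $k$ the good locus of parameters may contain no $k$\nobreakdash-point at all, and then appeal to ``a rational-point existence input of Lang--Weil type\dots as in Koll\'ar's treatment''. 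This appeal cannot be made to work: Lang--Weil produces rational points only when $q$ is large compared to the degree and dimension of a geometrically irreducible parameter space, whereas here $q$ is fixed --- indeed your own remark that a proper closed subset of $\check{\bP}^{N}_{\bF_q}$ can contain every $\bF_q$\nobreakdash-point is precisely the reason why no counting argument over the fixed field can succeed. Moreover, this is not what Koll\'ar does.

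The missing idea --- the actual content of the lemma, and how both Koll\'ar and the paper handle finite fields --- lives in the implication (iii)$\Rightarrow$(i), proved by induction on $m$: given a separable dominant $\phi:\bA^m_k\dashrightarrow X$ with $m>\dim X$, smooth on a dense open $U\subset\bA^m_k$, one must cut the source by a hypersurface isomorphic to $\bA^{m-1}_k$ \emph{defined over $k$} and transversal to a fibre. If $k$ is finite, choose a prime $\ell$ invertible in $k$ and a $\Zl$\nobreakdash-extension $k'$ of $k$: it is infinite, so $U(k')\neq\varnothing$, and the finite subextensions of $k'/k$ are linearly ordered, so after renumbering the coordinates of a point $u=(u_1,\dots,u_m)\in U(k')$ one has $k(u_m)\subseteq k(u_1)=k[u_1]$, whence $u_m=P_1(u_1)$ for some $P_1\in k[X_1]$. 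The ideal of the closed point $u$ is then generated by polynomials of the form $X_m-P(X_1,\dots,X_{m-1})$; since $k(u)/k$ is separable, the differentials of these generators span the cotangent space at $u$, so the zero locus $Z\simeq\bA^{m-1}_k$ of one of them meets the fibre of $\phi|_U$ through $u$ transversally at $u$, and $\phi|_Z$ is then smooth at $u$, hence dominant and separable, and the induction proceeds. This device --- a point over a $\Zl$\nobreakdash-tower whose coordinates generate linearly ordered subfields, and a graph hypersurface through it --- replaces any point count and works over every finite field; without it (or an equivalent substitute) your proposal does not establish the lemma.
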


\begin{proof}
 The implications (i)$\Rightarrow$(ii)$\Rightarrow$(iii) are immediate.
 To prove (iii)$\Rightarrow$(i), we adapt \cite[Lemma 2.3]{Kollarcubic}. We argue by induction on~$m$. Let $\phi:\bA^m_k\dashrightarrow X$ be a  separable dominant rational map with $m>\dim(X)$, and let $U\subset \bA^m_k$ be a dense open subset such that $\phi|_U$ is a smooth morphism. If $k$ is infinite, set $k'=k$. If $k$ is finite, choose a prime number $\ell$ invertible in~$k$ and a $\Zl$\nobreakdash-extension $k'$ of $k$. As $k'$ is infinite, one can choose a point $u=(u_1,\dots,u_m)\in U(k')$. After renumbering the $u_i$, we may assume that $k(u_m)\subseteq k(u_1)$, so that there exists $P_1 \in k[X_1]$ with $u_m=P_1(u_1)$. The ideal $I(u) \subset k[X_1,\dots,X_m]$
of $u$ viewed as a closed point in~$\bA^m_k$ is then generated by $X_m-P_1(X_1)$ and by elements of $k[X_1,\dots,X_{m-1}]$. It is thus generated by polynomials of the form $X_m-P(X_1,\dots,X_{m-1})$. Since~$u$ is \'etale over~$k$, the zero locus $Z\subset\bA^m_k$ of one of these polynomials intersects the fiber of $\phi|_U$ through $u$ transversally at $u$. Then $\phi|_Z:Z\dashrightarrow X$ is smooth at $u$, hence it is dominant and separable. As $Z\simeq\bA^{m-1}_k$, the induction hypothesis applies.
\end{proof}

\begin{lem}
\label{odp}
Let~$X$ be either a smooth complete intersection of two quadrics in~$\bP^4_{\ok}$ or a smooth
cubic surface in~$\bP^3_{\ok}$.  In the cubic case, assume that the characteristic of~$k$ is not~$2$.
Then there exists a hyperplane section of~$X$ that contains an ordinary double point.
\end{lem}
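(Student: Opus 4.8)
The plan is to produce the required singular section by a tangency construction and to verify, through a local computation, that the tangency can be arranged to be an ordinary double point. First I set up the basic reduction. Fix a point $x\in X$ and choose affine coordinates $y_1,\dots,y_N$ on $\bP^N_{\ok}$ centred at $x$ so that $T_xX=\{y_3=\dots=y_N=0\}$; locally $X$ is then the graph $y_j=\phi_j(y_1,y_2)$ for $3\le j\le N$ with each $\phi_j$ vanishing to order $\ge 2$, and the quadratic parts $q_3,\dots,q_N$ of the $\phi_j$ span the second fundamental form $\mathrm{II}_x$, a linear system of binary quadratic forms in $y_1,y_2$. A hyperplane $H$ tangent to $X$ at $x$ is one containing $T_xX$, that is, $H=\{\sum_{j\ge 3}c_jy_j=0\}$; the section $X\cap H$ is singular at $x$ with tangent cone the binary form $\sum_j c_jq_j$, and $x$ is an ordinary double point of $X\cap H$ if and only if this form is reduced, i.e.\ has two distinct roots. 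Thus it suffices to exhibit a single point $x$ at which $\mathrm{II}_x$ contains a reduced member.

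For the cubic surface, with $\operatorname{char}k\ne 2$, I would argue through a line. It is classical that $X$ contains a line $L$ (in any characteristic). The planes $H\supset L$ form a pencil, and for each of them the residual curve is a conic, $X\cap H=L\cup C_H$, so that $X\cap H$ is a plane cubic of arithmetic genus $1$ and $L\cdot C_H=2$. Since $X$ carries only finitely many lines, the general $C_H$ is irreducible, hence a smooth conic; and since $\operatorname{char}k\ne 2$ a smooth conic has no strange point, so for the general $H$ the line $L$ meets $C_H$ transversally in two distinct points. At either of these points $L\cup C_H$ is locally the union of two smooth transverse branches, i.e.\ an ordinary double point. (In characteristic $2$ a smooth conic has a strange point and $L$ may be tangent to every $C_H$, producing only tacnodes; this is precisely why the cubic case excludes $\operatorname{char}k=2$.)

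For the complete intersection of two quadrics $X=Q_0\cap Q_1\subset\bP^4_{\ok}$ I would instead use the tangent hyperplane to one of the quadrics. Take $H=T_xQ_0$. Writing the affine equations of $Q_0,Q_1$ as $\ell_i+q_i$ with $\ell_i$ linear and $q_i$ quadratic, one has $T_xX=\{\ell_0=\ell_1=0\}$, and a direct computation identifies the tangent cone of $X\cap T_xQ_0$ at $x$ with the restriction $Q_0|_{T_xX}$. Geometrically this is the slice, by the plane $T_xX$, of the tangent cone of the quadric $Q_0$ at $x$; it is a pair of distinct lines through $x$, hence $x$ is an ordinary double point of $X\cap T_xQ_0$, exactly when $T_xX$ is not tangent to $Q_0$ along a line. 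Since this condition is read off the binary form $Q_0|_{T_xX}$ directly, without passing through the ill-behaved polar bilinear form, the criterion is insensitive to the characteristic, which is why the two-quadrics case survives in $\operatorname{char}k=2$.

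The main obstacle in both cases is the global step: showing that the locus of bad points — where $\mathrm{II}_x$ (resp.\ $Q_0|_{T_xX}$) has no reduced member — is a proper closed subset of $X$, so that a good point exists. I would handle this through the classical dichotomy: if the bad locus were all of $X$, the second fundamental form of $X$ would be everywhere non-reduced, forcing $X$ to be developable and hence swept out by a one-parameter family of lines lying on $X$, contradicting the fact that $X$ (a smooth surface of this type, containing no plane) carries only finitely many lines. In characteristic $\ne 2$, ``everywhere non-reduced second fundamental form'' is equivalent to ``everywhere rank-deficient'' and the developability conclusion is standard. The delicate point, and the real heart of the argument in the two-quadrics case, is to run this implication in characteristic $2$, where non-reduced binary forms fill an entire hyperplane of perfect squares rather than a smooth conic, so that the degeneracy must be extracted from the explicit geometry of the pencil of quadrics and the smoothness of $X$ rather than from a naive rank count.
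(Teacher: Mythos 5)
Your local dictionary (a tangent hyperplane section has an ordinary double point at $x$ if and only if the corresponding member of the second fundamental form at $x$ is a binary form with distinct roots) is correct, and your cubic-surface argument is essentially sound, though the transversality step needs a different justification from the one you give: the strange-point property concerns the tangent lines of a \emph{single} conic, whereas you must rule out that the fixed line $L$ is tangent to every member of a family of conics lying in varying planes. The clean argument is that the scheme $L\cap C_H$ is the fibre of the degree-$2$ morphism $L\to\bP^1$ obtained by restricting to $L$ the conic-bundle map defined by the pencil of planes through $L$; if every fibre were non-reduced, this morphism would be inseparable, which is impossible in characteristic $\neq 2$. (Under Manin's identification of $X$ with a blow-up of $\bP^2_{\ok}$ at six points, taking $L$ to be the strict transform of the conic through $p_1,\dots,p_5$ makes your residual conics the lines through $p_6$, and your configuration becomes exactly the one in the paper's proof, where the characteristic-$2$ failure is precisely the strange point you mention.)

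The genuine gap is the two-quadrics case, which is the only case the paper needs in characteristic $2$: it is what makes quartic del Pezzo surfaces separably unirational in characteristic $2$ (Theorem \ref{thunirat}), and hence it feeds into Theorem \ref{thmC((t))} when $\kappa$ has characteristic $2$. You correctly reduce this case to showing that the locus of points $x$ where every member of the pencil of binary forms $\langle q_0|_{T_xX},\,q_1|_{T_xX}\rangle$ is a square (or zero) is a proper subset of $X$, but you then argue this only in characteristic $\neq 2$ (rank count, developability) and explicitly defer characteristic $2$ as ``the delicate point \dots to be extracted from the explicit geometry of the pencil''. That deferred point is the entire content of the lemma where it matters, and it cannot be waved through: in characteristic $2$ the squares form a \emph{hyperplane} of the space of binary quadratic forms, the polar bilinear form of any quadratic form in five variables is alternating and hence of rank at most $4$ everywhere, and smooth projective varieties with everywhere-degenerate second fundamental form (strange varieties, inseparable Gauss maps) do exist in characteristic $2$. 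So no general-position or rank argument can rule out that your bad locus is all of $X$; an argument specific to the pencil of quadrics (for instance via Bhosle's characteristic-$2$ normal forms, which the paper uses elsewhere) would be required, and your proposal does not contain one. As written, it proves the lemma only in characteristic $\neq 2$.

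By contrast, the paper's proof bypasses all infinitesimal considerations: by Manin's theorem, $X$ is the blow-up of $\bP^2_{\ok}$ at five points, no three collinear; hyperplane sections correspond to the cubics through the five points; and the reducible cubic $(p_1p_2)\cup(p_3p_4)\cup M$, with $M$ a general line through $p_5$, has only nodes, all located away from the blown-up points, so its strict transform is a hyperplane section of $X$ with an ordinary double point. Every step there is visibly characteristic-free, which is exactly what the application demands.
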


\begin{proof}
Set $N=4$ if~$X$ is an intersection of two quadrics in~$\bP^4_{\ok}$ and $N=3$ if~$X$ is a cubic surface
in~$\bP^3_{\ok}$.
By \cite[Theorem 24.4]{Manin}, the variety $X$ is isomorphic to a blow-up of~$\bP^2_{\ok}$ in $9-N$ points $p_1,\dots,p_{9-N}\in\bP^2(\ok)$, no three of which lie on the same line.
As the embedding $X\subset\bP^N_{\ok}$ is given by the anticanonical bundle of~$X$, the hyperplane sections of $X$ correspond bijectively to the cubic curves in~$\bP^2_{\ok}$ through the $p_i$. When $N=4$,
the hyperplane section associated with the union of the lines $(p_1p_2)$
and $(p_3p_4)$ and of a general line through $p_5$ has the required
property.
When $N=5$ and the characteristic of~$k$ is not~$2$,
the hyperplane section associated with the union of the unique conic through $p_1,\dots,p_5$
and of a general line through $p_6$ has the required
property.
(If~$k$ had characteristic~$2$, it could happen that a general line through~$p_6$ is tangent to this conic.)
\end{proof}

\begin{lem}
\label{lemdual}
Fix $N\geq 4$, let $X\subset\bP^N_{\ok}$ be a smooth complete intersection of two quadrics, and let $x\in X(\ok)$. Then, if $H\subset \bP^N_{\ok}$ is a general hyperplane containing~$x$, the variety $X\cap H$ is smooth of dimension $N-3$.
\end{lem}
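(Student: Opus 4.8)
The plan is to reduce the statement to a genericity condition on the dual variety $X^\vee \subset (\bP^N_{\ok})^\vee$ and then to verify that condition by a dimension count valid in every characteristic.

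First I would record two elementary facts about $X$. Since $X$ is a complete intersection of two quadrics, its homogeneous ideal is generated by the two defining forms and so contains no linear form; hence $X$ is not contained in any hyperplane. Consequently, for every hyperplane $H$ the intersection $X \cap H$ is a Cartier divisor on the irreducible variety $X$ (a smooth complete intersection of dimension $N-2\geq 2$ is irreducible), hence pure of dimension $N-3$. Writing $\mathbf{T}_yX \subseteq \bP^N_{\ok}$ for the embedded projective tangent space to $X$ at a point $y$, one checks directly on tangent spaces that $X \cap H$ is singular at $y \in X \cap H$ if and only if $\mathbf{T}_yX \subseteq H$. Thus $X \cap H$ is smooth if and only if $H$ is not tangent to $X$, i.e.\ $H \notin X^\vee$, and the lemma amounts to the assertion that a general member of the linear system $\Pi_x := \{H : x \in H\} \cong \bP^{N-1}$ does not lie on $X^\vee$, equivalently that $X^\vee \cap \Pi_x$ is a proper closed subset of $\Pi_x$.

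To estimate $X^\vee \cap \Pi_x$ I would introduce the incidence variety
$$I = \{(y,H) \in X \times \Pi_x : \mathbf{T}_yX \subseteq H\},$$
a closed subscheme whose second projection is closed with image exactly $X^\vee \cap \Pi_x$, and bound $\dim I$ through the first projection $I \to X$. As $\mathbf{T}_yX$ has dimension $N-2$, the fibre over $y$ is the set of hyperplanes containing the span $\langle x, \mathbf{T}_yX\rangle$, which has dimension $0$ when $x \notin \mathbf{T}_yX$ and dimension $1$ when $x \in \mathbf{T}_yX$. Setting $B := \{y \in X : x \in \mathbf{T}_yX\}$, this yields $\dim I \leq \max(\dim X,\, \dim B + 1)$, so it suffices to show $B$ is a proper closed subset of $X$, i.e.\ $\dim B \leq N-3$: then $\dim(X^\vee \cap \Pi_x) \leq \dim I \leq N-2 < \dim \Pi_x$, as wanted.

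The crux, and the only place where smoothness of $X$ is used, is the properness of $B$, which I would prove by an explicit computation insensitive to the characteristic. Choosing coordinates with $x = [1:0:\cdots:0]$ and writing the defining forms as $f_i = X_0\ell_i + g_i$ with $\ell_i,g_i$ depending only on $X_1,\dots,X_N$ (possible as $x \in X$), one has $\partial f_i/\partial X_0 = \ell_i$, so the condition $x \in \mathbf{T}_yX$ reads $\ell_1(y)=\ell_2(y)=0$; that is, $B = X \cap V(\ell_1) \cap V(\ell_2)$. On the other hand the Jacobian of $(f_1,f_2)$ at $x$ has vanishing $X_0$-column and remaining columns given by the coefficients of $\ell_1,\ell_2$, so the Jacobian criterion for smoothness of $X$ at $x$ forces $\ell_1,\ell_2$ to be linearly independent; in particular $\ell_1\neq 0$, and since $X$ is nondegenerate it is not contained in $V(\ell_1)$, whence $B \subseteq X \cap V(\ell_1)$ has dimension $\leq N-3$. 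I expect the verification of the equivalence ``$X\cap H$ singular $\Leftrightarrow H\in X^\vee$'' and this coordinate computation of $B$ to be the only steps demanding care, precisely because over an imperfect or positive-characteristic base one cannot appeal to generic smoothness of the projection from $x$.
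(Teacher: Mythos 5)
Your proof is correct, and it follows a genuinely different route from the paper's. The paper argues by contradiction via the pencil of quadrics: it introduces the incidence variety $W$ of pairs $(Q,H)$ with $Q$ in the pencil, $x\in H$, and $Q\cap H$ not smooth of dimension $N-2$ along some point of $X$, bounds the fibres of the projection $W\to\bP^1_{\ok}$ over the singular members of the pencil using an explicit, characteristic-free description of projective duals of smooth quadrics and quadric cones (Lemma \ref{quaddual}), and concludes because otherwise some smooth quadric of the pencil would have projective dual equal to the hyperplane dual to $x$, contradicting Lemma \ref{quaddual} since $x\in X\subset Q$. You instead fibre your incidence variety over $X$ itself, and the whole weight of the argument falls on the locus $B=\{y\in X:\ x\in\mathbf{T}_yX\}$, which your coordinate computation identifies with $X\cap V(\ell_1)\cap V(\ell_2)$; the two characteristic-free inputs are that the $X_0^2$-coefficients of the $f_i$ vanish because $x\in X$ (so that $\partial f_i/\partial X_0=\ell_i$ is indeed a linear form) and that the Jacobian criterion at the smooth point $x$ forces $\ell_1,\ell_2$ to be linearly independent. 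Your approach is more self-contained: it uses neither the pencil structure nor any duality theory of quadrics, avoids the case analysis of singular versus smooth members, and would make Lemma \ref{quaddual} (which the paper proves only for use in this argument) dispensable; it also isolates the pleasant geometric fact that the points of $X$ whose embedded tangent space passes through a fixed point of $X$ form a proper linear section of $X$. The paper's proof, in exchange, stays within classical quadric geometry. The only steps a careful reader might ask you to expand are the standard facts that a smooth complete intersection of positive dimension is irreducible and that $X$ is nondegenerate; for the latter, instead of invoking saturation of $(f_1,f_2)$, one can simply note that $X\subseteq V(\ell_1)$ would make the irreducible degree-$4$ variety $X$ a component of the quadric $V(f_1)\cap V(\ell_1)$ inside the hyperplane $V(\ell_1)$, whose components have degree at most $2$.
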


\begin{proof}
Suppose for contradiction that the conclusion of the lemma does not hold. It follows that for every hyperplane $H\subset\bP^N_{\ok}$ containing~$x$, there exists a quadric $Q\subset \bP^N_{\ok}$ in the pencil defining $X$ such that $Q\cap H$ is not smooth of dimension $N-2$ along some point of~$X$. Consequently, the projective variety $W\subset \bP (H^0(\bP^N_{\ok},\sO(2)))\times\bP (H^0(\bP^N_{\ok},\sO(1)))$ parametrizing such pairs $(Q,H)$ is at least $(N-1)$\nobreakdash-dimensional.

Let $Q_0\subset\bP^N_{\ok}$ be any singular quadric in the pencil defining
$X$.  (Such a $Q_0$ exists since the locus of quadrics with a unique
singular point has codimension~$1$ in the projective space of quadrics
in~$\bP^N_{\ok}$, so that its closure contains an ample divisor.)
Note that $x\in X\subset Q_0$. As $X$ is smooth, the singular locus of $Q_0$ is zero-dimensional, hence $Q_0$ is a cone over a smooth quadric $Q'_0$, and $X$ does not contain the vertex of this cone. The projective dual $(Q_0)^{\vee}$ of $Q_0$ can be naturally identified with the projective dual of $Q_0'$. By Lemma \ref{quaddual} below, the variety $(Q_0)^{\vee}$ has dimension $N-2$ and is either a smooth quadric or a linear space whose dual is a line that meets~$Q_0$ in its vertex and nowhere else. In both cases, the subset of $(Q_0)^{\vee}$ consisting of the hyperplanes that contain $x$ is a non-trivial hyperplane section of~$(Q_0)^{\vee}$, hence is $(N-3)$-dimensional. It follows that the variety parametrizing the hyperplanes~$H\subset\bP^N_{\ok}$ with $(Q_0,H)\in W$ is at most $(N-3)$-dimensional.

Let $\pi: W\to\bP^1_{\ok}\subset \bP (H^0(\bP^N_{\ok},\sO(2)))$ be the projection of $W$ to the pencil of quadrics defining $X$.  The fibers of $\pi$ over singular quadrics have dimension $\leq~N-3$, and there exist such fibers. As $\dim(W)\geq N-1$, some fiber of $\pi$ over a smooth quadric must have dimension $\geq N-1$. In other words, 
 there exists a smooth quadric $Q\subset \bP^N_{\ok}$ in the pencil defining $X$ such that $(Q,H)\in W$ for all hyperplanes $H\subset\bP^N_{\ok}$ containing~$x$. Consequently, the projective dual of $Q$ is the hyperplane dual to the point $x$. This contradicts Lemma \ref{quaddual} below since $x\in X\subset Q$.
\end{proof}

\begin{lem}
\label{quaddual}
Fix $N\geq 1$ and let $Q\subset \bP^N_{\ok}$ be a smooth quadric. Then the projective dual of $Q$ is a smooth quadric if $k$ has characteristic $\neq 2$ or if $N$ is odd; otherwise it is a hyperplane whose dual is a point not contained in $Q$.
\end{lem}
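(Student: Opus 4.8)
The plan is to work directly with a quadratic form $q$ on $V:=\ok^{N+1}$ defining $Q$ and with its polar form $b(x,y):=q(x+y)-q(x)-q(y)$. A direct computation of the partial derivatives of $q$ shows, in every characteristic, that the gradient of $q$ at $x$ is the linear form $b(x,-)\in V^\vee$ (the off-diagonal terms contribute $b(e_k,-)$, and the diagonal contributes $2a_{kk}x_k$, matching $b(e_k,e_k)$). Hence the Gauss map sending a point of $Q$ to its embedded tangent hyperplane is $\gamma\colon[x]\mapsto[b(x,-)]$, and the dual $Q^\vee$ is the closure of its image. I would then analyse $b$ according to the characteristic and the parity of $N$.

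First I would record the dictionary between smoothness of $Q$ and the polar form: the singular points of $Q$ are exactly the points of $Q$ lying in $\bP(\mathrm{rad}(b))$, since $[x]$ is singular iff $b(x,-)$ vanishes. In characteristic $\neq 2$ the form $b$ is symmetric, and any $v\in\mathrm{rad}(b)$ satisfies $q(v)=\tfrac12 b(v,v)=0$, so a nonzero radical would produce a singular point; hence $b$ is nondegenerate. In characteristic $2$ the form $b$ is alternating (as $b(x,x)=2q(x)=0$), so $\dim\mathrm{rad}(b)\equiv\dim V=N+1\pmod 2$; moreover, since $q(tv)=t^2q(v)$ and $q(v+w)=q(v)+q(w)$ for $v,w\in\mathrm{rad}(b)$, the restriction of $q$ to $\mathrm{rad}(b)$ is additive and $t^2$-homogeneous, and over the algebraically closed field $\ok$ such a form has a nonzero zero as soon as $\dim\mathrm{rad}(b)\geq 2$. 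Smoothness therefore forces $\dim\mathrm{rad}(b)\leq 1$: when $N$ is odd this gives $\mathrm{rad}(b)=0$, and when $N$ is even it gives a one-dimensional radical $\ok v$ with $q(v)\neq 0$, so that $[v]\notin Q$.

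In the nondegenerate case — characteristic $\neq 2$, or characteristic $2$ with $N$ odd — the map $x\mapsto b(x,-)$ is a linear isomorphism $V\isoto V^\vee$, so $\gamma$ is the restriction to $Q$ of a linear isomorphism $\bP^N\isoto(\bP^N)^\vee$. Thus $Q^\vee$ is the image of the irreducible quadric $Q$ under this isomorphism, namely the zero locus of $q^\vee(\ell):=q(b^{-1}\ell)$; its polar form is $(\ell,m)\mapsto b(b^{-1}\ell,b^{-1}m)$, which is nondegenerate, so $Q^\vee$ is again a smooth quadric, as claimed.

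In the remaining case — characteristic $2$ and $N$ even — I would use a decomposition $V=\ok v\oplus V'$ with $b|_{V'}$ nondegenerate, giving $q(tv+x')=q(v)t^2+q(x')$. Since $\partial q/\partial t=2q(v)t=0$, every tangent hyperplane $[b(tv+x',-)]=[b(x',-)]$ is independent of $t$ and annihilates $v$, so $\gamma(Q)$ lies in the hyperplane $v^\perp\subset(\bP^N)^\vee$ dual to the point $[v]$; conversely $x'\mapsto[b(x',-)]$ is a linear isomorphism onto $\bP((V')^\vee)=v^\perp$, and each $x'\in V'\setminus\{0\}$ arises from a point of $Q$ because $q(v)t^2=-q(x')$ is uniquely solvable by bijectivity of Frobenius on $\ok$. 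Hence $Q^\vee=v^\perp$ is a hyperplane whose dual is $[v]\notin Q$. The only genuine obstacle here is the characteristic-$2$ linear algebra of the second paragraph — relating the alternating polar form and the parity of $N$ to the precise structure of the radical, and showing that smoothness forces it to be at most a single point disjoint from $Q$; once this is in hand, both conclusions follow at once from the explicit form of $\gamma$.
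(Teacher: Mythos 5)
Your proof is correct. It takes a genuinely different route from the paper's: the paper simply invokes the classification of smooth quadrics over an algebraically closed field (\cite[XII, Proposition~1.2]{SGA72}) to put $q$ in the normal form $X_0X_1+\dots+X_{N-1}X_N$ ($N$ odd) or $X_0^2+X_1X_2+\dots+X_{N-1}X_N$ ($N$ even), and then concludes ``by a direct computation'' of the Gauss map in those coordinates. You instead work coordinate-freely: you translate smoothness of $Q$ into the structure of the radical of the polar form $b$ --- nondegenerate when $\mathrm{char}(k)\neq 2$ or $N$ is odd, and one-dimensional spanned by a vector $v$ with $q(v)\neq 0$ when $\mathrm{char}(k)=2$ and $N$ is even, the parity argument resting on the fact that alternating forms have even rank --- and then both conclusions fall out of the identity $\gamma([x])=[b(x,-)]$ for the Gauss map (in the nondegenerate case $Q^\vee$ is the image of $Q$ under the projective linear isomorphism induced by $b$, hence a smooth quadric; in the degenerate case the image is exactly the hyperplane of linear forms annihilating $v$, whose dual point $[v]$ lies off $Q$). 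In effect, your second paragraph reproves the piece of the char-$2$ classification that the paper outsources to SGA~7. What the paper's approach buys is brevity; what yours buys is self-containedness, and it makes visible exactly where the characteristic and the parity of $N$ enter. Both arguments are complete.
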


\begin{proof}
In appropriate homogeneous coordinates, the quadric $Q$ is defined by the equation $\{X_0X_1+\dots+X_{N-1}X_N=0\}$ if $N$ is odd, and by the equation $\{X_0^2+X_1X_2+\dots+X_{N-1}X_N=0\}$ if $N$ is even (see \cite[XII, Proposition~1.2]{SGA72}). The lemma follows by a direct computation.
\end{proof}

We finally reach the goal of this subsection.

\begin{proof}[Proof of Theorem \ref{thunirat}]
That (i) implies (ii) is clear. Since projective $k$-unirational varieties always have $k$-points, (ii) implies (iii). It remains to show that (iii) implies~(i). In view of Lemma \ref{lemkol}, we may replace $k$ with $k(t)$ to prove this implication. We thus assume from now on that $k$ is infinite.

We argue by induction on $N\geq 4$. Suppose first that $N=4$. Since $k$ is infinite, \cite[Theorems 29.4 and~30.1~(i)]{Manin} shows that $X$ is $k$-unirational. (The hypothesis that $k$ is perfect is not used in the proof of \cite[Theorem~30.1~(i)]{Manin} for degree~$4$ del Pezzo surfaces. One only needs to notice that the variety of lines in a degree~$4$ del Pezzo surface over $k$ is \'etale over $k$.) In particular, again because $k$ is infinite, we see that $X(k)$ is Zariski dense in $X$. 
Choose $x\in X(k)$ general. Let $\nu:\wX\to X$ be the blow-up of $X$ at $x$, with exceptional divisor $E\subset \wX$.
By \cite[Theorems~24.4 and 24.5]{Manin} applied to $X_{\ok}$ and $\wX_{\ok}$, the linear system $|{-}K_{\wX}|$ yields an embedding of~$\wX$ in $\bP^3_k$ as a cubic surface, the image of $E$ being a line. Let $\pi:\wX\to\bP^1_{k}$ be the conic bundle obtained by projecting $\wX$ away from the line $E$.  Choose $y\in\bP^1(k)$ general. The scheme $\pi^{-1}(y)\cap E$ can be identified with the exceptional divisor of the blow-up at $x$ of a hyperplane section of $X$ that contains~$x$ and is singular at~$x$. In view of Lemma \ref{odp}, since $x$ and $y$ have been chosen general, the singularity at $x$ of this hyperplane section is an ordinary double point
(see \cite[XVII, (4.1)]{SGA72}).
 We deduce that $\pi^{-1}(y)\cap E$ is smooth over $k$, hence that the morphism $\pi|_E:E\to\bP^1_k$ is separable. The base change $Y:=\wX\times_{\bP^1_k} E\to E$ of $\pi$ by $\pi|_E$ is a conic bundle over~$E$ with a section, hence is birational to $\bP^1_k\times E=\bP^1_k\times\bP^1_k$. The projection $Y\to\wX$ is dominant and separable because so is $\pi|_E$. The variety~$\wX$ is thus separably $k$-unirational. This concludes the proof since $X$ is birational to $\wX$.

 If $N\geq 5$, choose $x\in X(k)$, and consider the space $B$ of hyperplanes in $\bP^N_k$ that contain~$x$. Let
$Z = \{(w,b) \in X\times B\mkern1mu;\mkern1mu w \in b\}$
and let $p:Z\to B$ and $q:Z\to X$ be the natural projections.
The generic fiber of $p$ is a smooth complete intersection of two quadrics in $\bP^{N-1}_{k(B)}$ by Lemma \ref{lemdual}, and has a $k(B)$-point induced by $x$, hence is separably $k(B)$-unirational by the induction hypothesis. Since $B$ is a projective space, it follows that $Z$ is separably $k$-unirational. As the generic fiber of the dominant map $q$ is a projective space, hence is smooth, Lemma \ref{lemkol} shows that $X$ is also separably $k$-unirational. 
\end{proof}

The strategy of the proof of Theorem \ref{thunirat} can also be applied to smooth cubic hypersurfaces, as we now briefly explain.  Theorem~\ref{thuniratcub} generalises a theorem of Kollár, who proved
the equivalence between (ii) and (iii) in \cite[Theorem~1.1]{Kollarcubic}.

\begin{thm}
\label{thuniratcub}
Fix $N\geq 3$, and let $X\subset\bP^N_k$ be a smooth cubic hypersurface. 
The following assertions are equivalent:
\begin{enumerate}[(i)]
\item The variety $X$ is separably $k$-unirational.
\item The variety $X$ is $k$-unirational.
\item One has $X(k)\neq \varnothing$.
\end{enumerate}
\end{thm}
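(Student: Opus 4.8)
The plan is to follow the strategy of the proof of Theorem~\ref{thunirat}. The implication (i)$\Rightarrow$(ii) is immediate, and (ii)$\Rightarrow$(iii) holds because a projective $k$-unirational variety always has a $k$-point. To prove (iii)$\Rightarrow$(i), I would first invoke Lemma~\ref{lemkol} to reduce to the case where $k$ is infinite: that implication is unaffected by replacing $k$ with $k(t)$. I would then induct on $N$, the step $N\geq 4$ being a routine hyperplane-slicing argument and the base case $N=3$ of cubic surfaces being the crux.

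For the inductive step, fix $x\in X(k)$ and let $B\cong\bP^{N-1}_k$ be the space of hyperplanes of $\bP^N_k$ through~$x$. Put $Z=\{(w,b)\in X\times B\,;\,w\in b\}$, with projections $p\colon Z\to B$ and $q\colon Z\to X$. By a Bertini argument in the spirit of Lemma~\ref{lemdual} (a general hyperplane through~$x$ is distinct from the embedded tangent hyperplane to~$X$ at~$x$ and cuts~$X$ transversally away from~$x$), the generic fibre of~$p$ is a smooth cubic hypersurface of dimension $N-2\geq 2$ in $\bP^{N-1}_{k(B)}$ carrying the $k(B)$-point induced by~$x$; by the induction hypothesis it is separably $k(B)$-unirational. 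Since~$B$ is rational, $Z$ is then separably $k$-unirational, and as the fibres of~$q$ are projective spaces—so that a general one is smooth—the map~$q$ is dominant and separable. Lemma~\ref{lemkol} then shows that~$X$ is separably $k$-unirational.

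It remains to treat a cubic surface $X\subset\bP^3_k$ with $X(k)\neq\varnothing$ and~$k$ infinite. By Kollár's theorem \cite[Theorem~1.1]{Kollarcubic}, $X$ is $k$-unirational, so $X(k)$ is Zariski dense. Following the pattern of the degree~$4$ del~Pezzo case, I would choose a general point $x\in X(k)$, blow it up to obtain $\wX\to X$ with exceptional curve~$E$ defined over~$k$, and equip~$\wX$ with a conic bundle $\pi\colon\wX\to\bP^1_k$ over~$k$ admitting~$E$ as a multisection, set up so that for general $y\in\bP^1(k)$ the scheme $\pi^{-1}(y)\cap E$ is the exceptional locus, over a $k$-point of~$X$, of a plane section of~$X$ that is singular at that point. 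When the characteristic of~$k$ is not~$2$, Lemma~\ref{odp} furnishes such a plane section with an ordinary double point; as the relevant family of singular sections is irreducible, a general singular fibre of~$\pi$ is then reduced, $\pi^{-1}(y)\cap E$ is \'etale over~$k$, and $\pi|_E\colon E\to\bP^1_k$ is separable. Base-changing~$\pi$ along $\pi|_E$ produces a conic bundle over~$E$ with a tautological section, hence a $k$-rational variety dominating~$\wX$ separably; thus~$\wX$, and therefore~$X$, is separably $k$-unirational.

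I expect the main obstacle to be this base case in characteristic~$2$, where Lemma~\ref{odp} is unavailable: as its proof indicates, a general singular plane section of a cubic surface may acquire a non-reduced tangent cone rather than a node, so the conic bundle~$\pi$ can fail to be separable along~$E$ and the construction collapses. To circumvent this I would seek a replacement for Lemma~\ref{odp} valid in all characteristics—exhibiting, for a general $x\in X(k)$, a plane section of~$X$ with a genuine ordinary double point whose two branches are separated over~$k$—most plausibly through a dimension count on the dual variety of~$X$, or on the incidence of~$X$ with its singular plane sections, showing that not every such section can be cuspidal. Verifying that the Bertini step in the inductive argument also produces a \emph{smooth} generic hyperplane section in all characteristics is a milder point that I would check along the way.
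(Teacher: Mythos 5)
Your handling of the easy implications and of the inductive step tracks the paper's proof: the paper also reduces to infinite $k$ via Lemma~\ref{lemkol} and slices by hyperplanes through a point. One caveat there: the paper takes $x\in X(k)$ \emph{general}, which is available because $X$ is $k$-unirational by Koll\'ar's theorem, and this matters in positive characteristic. For a general $x$, a general hyperplane through $x$ is a general hyperplane of $\bP^N_k$, so its section of $X$ is smooth in every characteristic (the dual variety is a proper closed subvariety of $(\bP^N_k)^\vee$); for an \emph{arbitrary} $x$, the statement you assert "in the spirit of Lemma~\ref{lemdual}" is exactly the kind of fact that Lemma~\ref{lemdual} had to prove by hand for intersections of quadrics, and it is not free for cubics.

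The genuine gap is the base case $N=3$, and it is not a characteristic~$2$ issue: your construction fails in every characteristic. The degree~$4$ del Pezzo argument of Theorem~\ref{thunirat} works because blowing up $x\in X(k)$ embeds $\wX$ anticanonically as a cubic surface in $\bP^3_k$ in which $E$ is a \emph{line} defined over $k$, and projection away from that line is what produces the conic bundle: the fibre class is $H-E$ with $(H-E)^2=3-2-1=0$. For a cubic surface $X\subset\bP^3_k$, blowing up a $k$-point yields a degree~$2$ del Pezzo surface, and in general no conic bundle structure over $k$ exists on it: for $X$ and $x$ general one has $\Pic(\wX)=\Z H\oplus\Z E$, and a fibre class $aH+bE$ would have to satisfy $3a^2-b^2=0$, which has no nonzero integer solutions. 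So there is no morphism $\pi$ with the properties you postulate, already over $\mathbf{C}$; what is missing is a line over $k$, which is precisely what one cannot assume. The paper's base case is entirely different: it uses Koll\'ar's third-intersection-point map $\phi:C_x\times C_y\dashrightarrow X$, where $C_x$, $C_y$ are the (rational) tangent plane sections at two general $k$-points. This map is dominant of degree~$9$, hence automatically separable unless $k$ has characteristic~$3$; and in characteristic~$3$ (where in particular the characteristic is $\neq 2$, so Lemma~\ref{odp} does apply) one shows that $\phi$ is \'etale at a general point by a tangent-line argument exploiting that $C_y$ is nodal at $y$. Thus your diagnosis of characteristic~$2$ as the main obstacle is doubly off: the delicate characteristic for cubics is~$3$, and the conic bundle you want to build does not exist even in characteristic~$0$.
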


The proof we give below differs from the one that appears
in the published version of this article (which was invalid when~$k$ has characteristic~$2$). It takes into account the erratum published as~\cite{erratum}.

\begin{proof}
The implication (i)$\Rightarrow$(ii) is trivial, and the equivalence (ii)$\Leftrightarrow$(iii) is \cite[Theorem~1.1]{Kollarcubic}. We prove (ii)$\Rightarrow$(i). By Lemma~\ref{lemkol}, we may assume that~$k$ is infinite. By (ii), one can thus choose $(x, y)\in (X \times X)(k)$ general. We argue by induction on $N\geq 3$. 
The induction step is identical to the one in the proof of Theorem~\ref{thunirat}, replacing Lemma~\ref{lemdual} with Bertini's theorem, which can be applied as $x$ is general.

We now suppose that $N=3$. Let $C_x$ (resp.\ $C_y$) be the hyperplane section of~$X$ that is singular at $x$ (resp.\ at $y$). By \cite[First unirationality construction of~\S 2, Proposition 3.1, Lemma 3.2]{Kollarcubic}, 
the curves $C_x$ and $C_y$ are $k$\nobreakdash-rational, and the third intersection point map $\phi:C_x\times C_y\dashrightarrow X$ is dominant.

Let us prove that~$\phi$ is separable. We may assume that~$k$ is algebraically closed.
Let $(x',y') \in (C_x \times C_y)(k)$
be general.
Assume for contradiction that $\phi$ is not \'etale at~$(x',y')$.
  This exactly means that the lines~$T_{y'}C_y$ and~$T_{x'}C_x$ meet at some point $z$.
  As $(x,y)$ is general, one has $y\notin T_xX$, so the planes $T_xX$ and~$T_yX$ intersect along a line $D$ not containing $y$.  In addition, as $x'$ is general, one has~$x'\notin D$.  It follows that $z=D\cap T_{x'}C_x$ is independent of $y'$. The point $z$ is therefore characterized by the property that it lies on all the tangents to $C_y$ at smooth points.  By symmetry, it also lies on all the tangents to $C_x$ at smooth points, and hence it is also independent of the choice of~$x$ and of~$y$.  

We have now shown that the point $z$ belongs to the tangent hyperplane to $X$ at a general point. The projective dual $\check{X} \subset \check{\bP}^3_k$ of $X$ is therefore contained in the projective dual of~$z$,
which is a hyperplane in~$\check{\bP}^3_k$.

It follows that $X$ is not ordinary in the sense of \mbox{\cite[I-6]{Kleiman}}, and we deduce from \cite[Theorem~17~(iii)$\Rightarrow$(i)]{Kleiman} that no hyperplane section of $X$ admits a nondegenerate double point.  In particular, no hyperplane section of $X$  consists of a line and a conic intersecting transversally.  It therefore follows from \cite[Theorem 1.1 (iii)$\Rightarrow$(i)]{Homma} that $k$ has characteristic~$2$ and that~$X$ is projectively equivalent to the Fermat cubic surface $F \subset \bP^3_k$,
defined by $a^3+b^3+c^3+d^3=0$.  As the tangent plane to~$F$ at $[a:b:c:d]\in F(k)$ has coordinates $[a^2:b^2:c^2:d^2]$ in~$\check{\bP}^3_k$,  the projective dual $\check{F}$ of~$F$ is the Fermat cubic surface in~$\check{\bP}^3_k$.  In particular, 
it is not contained in a hyperplane of~$\check{\bP}^3_k$, and hence neither is~$\check{X}$. This is the required contradiction.
\end{proof}

\subsection{Examples over fields of Laurent series} 

Here are examples of smooth complete intersections of two quadrics in $\bP^5_{\kappa(\!(t)\!)}$ which are not $\kappa(\!(t)\!)$\nobreakdash-rational.
The equations we use in characteristic~$2$ are borrowed from Bhosle \cite{Bhosle}.

\begin{thm}
\label{thmC((t))}
Let $\kappa$ be an algebraically closed field. 
If the characteristic of~$\kappa$ is $\neq 2$, let $a_0,\dots,a_5\in\kappa$ be pairwise distinct elements, and consider the smooth projective variety $X\subset\bP^5_{\kappa(\!(t)\!)}$ with equations
\begin{align*}
\left\{\mkern5mu
\begin{aligned}
 t X_0^2 +tX_1^2+X_2^2+\dots +X_5^2 &= 0\\
ta_0X_0^2 +ta_1 X_1^2+a_2X_2^2+ \dots + a_5 X_5^2 &= 0.
\end{aligned}
\right.
\end{align*}
If $\kappa$ has characteristic $2$, let $a,b,c\in\kappa$ be pairwise distinct elements and consider the smooth projective variety $X\subset\bP^5_{\kappa(\!(t)\!)}$ with equations
\begin{align*}
\left\{\mkern5mu
\begin{aligned}
t X_0X_1 +X_2X_3+X_4X_5& = 0\\
t(X_0^2+aX_0X_1+X_1^2)+(X_2^2+bX_2X_3+X_3^2)&+(X_4^2+cX_4X_5+X_5^2) = 0.
\end{aligned}
\right.
\end{align*}
Then $X$ is separably $\kappa(\!(t)\!)$-unirational, $\kappa(\!(t^{\frac{1}{2}})\!)$-rational, but not $\kappa(\!(t)\!)$-rational.
\end{thm}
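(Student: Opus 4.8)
The plan is to treat the three assertions separately, using the criteria of Theorems \ref{thunirat} and \ref{thrat} together with the description of the intermediate Jacobian in Theorem \ref{thintjac}. First I would record that in either case the subscheme $\{X_0=X_1=0\}\cap X$ is a complete intersection of two quadrics in the $\bP^3$ with coordinates $X_2,\dots,X_5$, defined over $\kappa$; being a nonempty projective $\kappa$-variety it has a $\kappa$-point, which is a $\kappa(\!(t)\!)$-point of $X$. (Smoothness of $X$ itself is a direct Jacobian-criterion check, relying on Bhosle's equations \cite{Bhosle} in characteristic $2$.) By the implication (iii)$\Rightarrow$(i) of Theorem \ref{thunirat}, $X$ is separably $\kappa(\!(t)\!)$-unirational. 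For $\kappa(\!(t^{\frac12})\!)$-rationality I set $s=t^{\frac12}$ and apply over $l:=\kappa(\!(s)\!)$ the linear change of coordinates $Y_0=sX_0$, $Y_1=sX_1$, $Y_i=X_i$ ($i\ge 2$); in both displays this turns the two quadrics into quadrics with coefficients in $\kappa$, identifying $X_l$ with $X'_l$ for a smooth complete intersection of two quadrics $X'$ defined over $\kappa$. By Remark \ref{remokline} (and Lemma \ref{lemdroites}) $X'_\kappa$ contains a line over $\kappa$, hence $X_l$ contains an $l$-line, and Theorem \ref{thrat} makes $X$ $\kappa(\!(t^{\frac12})\!)$-rational.

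By Theorem \ref{thrat}, the main and remaining point is that $X$ contains no line over $k:=\kappa(\!(t)\!)$, i.e. that $F\cong(\bCH^2_{X/k})^1$ (Theorem \ref{thintjac}~(ii)) has no $k$-point. I write $A:=(\bCH^2_{X/k})^0$, and let $A'$ and $F'$ be the intermediate Jacobian and Fano surface of $X'$, the latter an abelian surface over $\kappa$ with a $\kappa$-point (Theorem \ref{thintjac}). Over $l$ the isomorphism $X_l\simeq X'_l$ identifies $F_l$ with $F'_l$ and $A_l$ with $A'_l$; since $F'$ has a $\kappa$-point, $F_l$ is a trivial torsor, so the class $[F]\in H^1(k,A)$ dies over $l$, and the whole problem is to show $[F]\neq 0$.

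In characteristic $\neq 2$, $l/k$ is Galois with group $\langle\sigma\rangle\simeq\Z/2$, $\sigma(s)=-s$, and comparing the two coordinate systems exhibits $X$ as the twist of $X'$ by the $\kappa$-involution $\eta=\mathrm{diag}(-1,-1,1,1,1,1)$. As $\det\eta=1$, $\eta$ preserves the two rulings of each quadric of the pencil, hence acts trivially on the associated genus-$2$ curve and as the identity on $A'=\Alb^0(F')$; in particular $A\simeq A'_k$ is constant and $\eta$ acts on $F'$ as a translation $t_b$. A direct check—no line of $X'$ lies in either eigenspace of $\eta$, and none joins a point of each—shows $\eta$ has no fixed line, so $b\in A'[2](\kappa)\smallsetminus\{0\}$. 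Thus $[F]$ is represented by the cocycle $\sigma\mapsto b$ in $H^1(\langle\sigma\rangle,A'(l))$, where $A'(l)=A'(\kappa[[s]])$. Since the reduction map $A'(\kappa[[s]])\to A'(\kappa)$ is $\sigma$-equivariant, kills $(\sigma-1)A'(l)$, and sends the constant point $b$ to $b\neq 0$, we get $b\notin(\sigma-1)A'(l)$, so $[F]\neq 0$ and $F(k)=\varnothing$.

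In characteristic $2$ the same substitution identifies $X_l$ with a constant $X'_l$, but now $l/k$ is purely inseparable of degree $2$, so there is no Galois group and the argument must be replaced by its infinitesimal, fppf-descent counterpart: the datum relating $X$ and $X'$ along $l/k$ is governed by the derivation $\partial_s$ and should induce a nonzero translation-invariant vector field on the abelian surface $F'$; such a field is nowhere vanishing, and $F(k)$ is its zero locus, whence again $F(k)=\varnothing$. I expect the precise verification here—that $A$ is still constant and that this invariant field is nonzero, i.e. the fppf computation of $[F]\in H^1_{\fppf}(k,A)$ for the inseparable extension, carried out on Bhosle's equations—to be the main technical obstacle. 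This is exactly the feature that lets these examples be $\kappa(\!(t^{\frac12})\!)$-rational (indeed rational over the perfect closure) yet not $\kappa(\!(t)\!)$-rational.
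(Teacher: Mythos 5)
Your treatment of the $\kappa(\!(t)\!)$-point, of separable unirationality via Theorem~\ref{thunirat}, and of $\kappa(\!(t^{\frac{1}{2}})\!)$-rationality via the substitution $s=t^{\frac{1}{2}}$ and Theorem~\ref{thrat} agrees with the paper. For irrationality in characteristic $\neq 2$, your argument is correct and genuinely different from the paper's: you exhibit $X$ as the twist of the constant variety $X'$ by the involution $\eta=\mathrm{diag}(-1,-1,1,1,1,1)$, argue (via the equivariant identification, classical in characteristic $\neq 2$, of the intermediate Jacobian with the Jacobian of the genus-$2$ curve of rulings) that $\eta$ acts on the Fano surface $F'$ as translation by some $b\in A'[2](\kappa)$, check geometrically that $\eta$ fixes no line so that $b\neq 0$, and rule out $b\in(1-\sigma)A'(l)$ by reducing modulo $s$, using properness of $A'$ and the $\sigma$-equivariance of reduction. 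This cohomological route is sound, and its geometric core (no line of $X'$ lies in an eigenspace of $\eta$, and $X'$ misses the $(-1)$-eigenspace) is exactly the fact the paper also exploits.

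The gap is characteristic $2$, which you leave as a sketch and yourself flag as ``the main technical obstacle.'' This is not a minor omission: as Remarks~\ref{remsLuroth}~(iii) and~(iv) stress, characteristic $2$ is precisely the case producing the headline examples (a $k$-point, $k_\p$-rational, not $k$-rational), and it is the case where your method breaks, since $\kappa(\!(t^{\frac{1}{2}})\!)/\kappa(\!(t)\!)$ is purely inseparable and there is no Galois cohomology to twist with. Your proposed replacement --- descent data viewed as a derivation inducing a translation-invariant vector field on $F'$ whose zero locus is $F(k)$ --- is a plausible program, but none of its steps is carried out: you do not show that $(\bCH^2_{X/k})^0$ is constant in this case, you do not compute the descent datum from the displayed equations, and the assertion that $F(k)$ is the zero locus of a vector field on $F'$ requires a precise formulation of height-one inseparable descent that you do not supply. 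The paper instead uses a single argument valid in every characteristic, with no cohomology at all: assuming a line $L\subset X$ over $k=\kappa(\!(t)\!)$ exists, it transports $L$ to a $\kappa(\!(u)\!)$-point $L'$ of the Fano surface of the constant variety $Y$, observes that the two orbit maps $\bmu_2\times\Spec(\kappa(\!(u)\!))\to F$ --- one induced by the geometric $\bmu_2$-action $\tau$ on $Y$, the other by the $\bmu_2$-torsor structure of $\Spec(\kappa(\!(u)\!))$ over $\Spec(k)$ --- coincide because $L$ is defined over $k$, and then specializes $L'$ at $u=0$ to obtain a $\tau$-invariant line $L''\subset Y$; such a line must lie in $\{X_0=X_1=0\}$, whose intersection with $Y$ is an elliptic curve, a contradiction. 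Since $\bmu_2$ is a group scheme (infinitesimal in characteristic $2$), this specialization argument needs neither separability nor any torsor computation. To complete your proof you would have to either carry out the inseparable fppf-descent computation in full, or replace it by an argument of this specialization type.
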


\begin{proof}
In view of Theorem~\ref{thrat} and Theorem~\ref{thunirat},
the conclusion of the theorem
is equivalent to the assertion
 that~$X$ contains a point over~$\kappa(\!(t)\!)$,
a line over $\kappa(\!(t^{\frac{1}{2}})\!)$, but no line over $\kappa(\!(t)\!)$,
and this is what we shall now prove.

Let $Y\subset\bP^5_{\kappa}$ be the subvariety with equations $\{\sum_i X_i^2=\sum_i a_iX_i^2=0\}$ if $\kappa$ has characteristic $\neq 2$, with equations 
\begin{align*}
\left\{\mkern5mu
\begin{aligned}
X_0X_1 +X_2X_3+X_4X_5& = 0\\
(X_0^2+aX_0X_1+X_1^2)+(X_2^2+bX_2X_3+X_3^2)&+(X_4^2+cX_4X_5+X_5^2) = 0
\end{aligned}
\right.
\end{align*}
if $\kappa$ has characteristic $2$.
Let $\kappa(\!(t)\!)\subset \kappa(\!(u)\!)$ be the quadratic extension with $u^2=t$ and $\phi:X_{\kappa(\!(u)\!)}
\myxrightarrow{\sim} Y_{\kappa(\!(u)\!)}$ the isomorphism $(X_0,\dots,X_5)\mapsto (uX_0,uX_1,X_2,\dots,X_5)$.

As $\kappa$ is algebraically closed,
the variety $X$ has a $\kappa(\!(t)\!)$-point in the subspace $\{X_0=X_1=0\}$.
The variety~$Y$
contains a line (by Lemma~\ref{lemdroites} (iii)), so that
$X_{\kappa(\!(u)\!)}$ contains a line as well.
Let us now assume that~$X$ itself contains a line $L\subset X$, and derive a contradiction.

We denote by $\tau:\bmu_2\times\bP^5_{\kappa}\to\bP^5_{\kappa}$ the action of the $\kappa$-group scheme $\bmu_2$ on~$\bP^5_{\kappa}$, via its non-trivial character on $X_0$ and $X_1$ and trivially on $X_2$, $X_3$, $X_4$ and~$X_5$. The subvariety $Y\subset\bP^5_{\kappa}$ is $\tau$-invariant. Let $\sigma:\bmu_2\times \Spec(\kappa(\!(u)\!))\to \Spec(\kappa(\!(u)\!))$ be the $\bmu_2$-action endowing $\Spec(\kappa(\!(u)\!))$ with its natural structure of $\bmu_2$-torsor over  $\Spec(\kappa(\!(t)\!))$. It extends to an action of $\bmu_2$ on $\Spec(\kappa[[u]])$ for which the closed point is an invariant subscheme. 

Let us regard the line $L':=\phi(L_{\kappa(\!(u)\!)})\subset Y_{\kappa(\!(u)\!)}$ as a $\kappa(\!(u)\!)$-point of the variety of lines~$F$ of $Y$. In view of the equation defining $\phi$ and since $L$ is defined over~$\kappa(\!(t)\!)$, the morphisms $\bmu_2 \times \Spec(\kappa(\!(u)\!))\to F$ given by
the orbits of $L'$ with respect to the actions of~$\bmu_2$ on $F_{\kappa(\!(u)\!)}$ induced by $\tau$ and by  $\sigma$ coincide. It follows that the specialization $L''\subset Y$ of $L'$ with respect to the $u$-adic valuation of~$\kappa(\!(u)\!)$ is $\tau$\nobreakdash-invariant. Since $Y$ does not meet the projectivization of the subspace of $\kappa^6$ where $\bmu_2$ acts via its non-trivial character, the line $L''$ must be contained in the projectivization $\{X_0=X_1=0\}$ of the subspace  where $\bmu_2$ acts trivially. But the intersection of $Y$ with $\{X_0=X_1=0\}$ is an elliptic curve, which contains no line. This is the required contradiction, and the proposition is proved.
\end{proof}

\begin{rems}
\label{remsLuroth}
(i)
We do not know whether the variety~$X$
appearing in Proposition~\ref{thmC((t))} is stably rational over~$\kappa(\!(t)\!)$,
even when $\kappa=\bC$.

(ii)
If $\kappa$ has characteristic $2$, the variety $X$ considered in Proposition~\ref{thmC((t))} is not $\kappa(\!(t)\!)$\nobreakdash-rational, but it becomes rational over the perfect closure $\kappa(\!(t)\!)_{\p}$ of~$\kappa(\!(t)\!)$.  It follows that one cannot prove Proposition~\ref{thmC((t))} by applying Theorem~\ref{thrat} over~$\kappa(\!(t)\!)_{\p}$. A theory of intermediate Jacobians over imperfect fields is therefore crucial for our proof of Theorem~\ref{thmC((t))}. 

(iii)
The variety $X$ appearing in Proposition \ref{thmC((t))} when $\kappa$ has characteristic $2$ is the first example of a smooth projective variety over a field $k$ which is $k_{\p}$-rational, which has a $k$-point, but which is not $k$-rational. There are no such examples in dimension $\leq 2$
(see Proposition~\ref{surfacesimparfaites} below).

(iv)
Over fields~$k$ of characteristic $p>2$,
smooth complete intersections of two quadrics $X \subset \bP^5_k$ cannot satisfy
the conditions of Remark~\ref{remsLuroth}~(iii).
Indeed, if~$X$ is $k_{\p}$\nobreakdash-rational,
then the period of the variety of lines on~$X$,
viewed as a torsor under the intermediate Jacobian of~$X$,
must be a power of~$p$, by Theorem~\ref{thrat} and \cite[Proposition~5]{langtate};
on the other hand, it divides~$4$ (see the proof of Theorem~\ref{thrat}),
hence it is equal to~$1$, so that~$X$  contains a line defined over~$k$ and is therefore $k$\nobreakdash-rational.
Adapting to imperfect fields
the work of Kuznetsov and Prokhorov  \cite{KP} on prime Fano threefolds
of genus~$10$ might lead to examples, over fields of characteristic~$3$,
of smooth projective varieties that are $k_{\p}$-rational, have a $k$-point, but are not $k$\nobreakdash-rational.
It remains a completely open problem whether varieties (resp.\ threefolds) satisfying these conditions
exist in all characteristics $p\geq 5$.

(v)
We do not know whether there exists a smooth projective variety over a separably closed field $k$ which is $\ok$-rational but not $k$-rational.
\end{rems}

We include the following proposition, which generalises
\cite[Theorem~1]{coombes}, to justify Remark~\ref{remsLuroth}~(iii).

\begin{prop}[Segre, Manin, Iskovskikh]
\label{surfacesimparfaites}
Let $X$ be a smooth projective $\ok$\nobreakdash-rational surface over $k$. The following assertions are equivalent:
\begin{enumerate}[(i)]
\item $X$ is $k$-rational,
\item $X$ is $k_{\p}$-rational and $X(k)\neq\varnothing$.
\end{enumerate}
If $X$ is minimal, they are also equivalent to
\begin{enumerate}[(i)]
\setcounter{enumi}{2}
\item $K_X^2\geq 5$ and $X(k)\neq\varnothing$.
\end{enumerate}
\end{prop}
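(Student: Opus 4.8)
The plan is to reduce the whole statement to the classical theory of Segre, Manin and Iskovskikh over the \emph{perfect} field $k_{\p}$, the single genuinely new ingredient being the behaviour of $k$\nobreakdash-minimality under the purely inseparable extension $k_{\p}/k$. Throughout one exploits that $\Gamma_k=\Gal(\ok/k_{\p})=\Gal(k_s/k)$ is at once the absolute Galois group of $k_{\p}$ and of $k$, so that the $\Gamma_k$\nobreakdash-module $\Pic(X_{\ok})$ and all its Galois-cohomological invariants are the same whether $X$ is viewed over $k$ or over $k_{\p}$. The implication (i)$\Rightarrow$(ii) is immediate: a $k$\nobreakdash-rational smooth projective variety has a $k$\nobreakdash-point by Lang--Nishimura, and $k$\nobreakdash-rationality is preserved by the base change to $k_{\p}$.

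For (ii)$\Rightarrow$(i) I would first reduce to the case where $X$ is $k$\nobreakdash-minimal. Contracting $k$\nobreakdash-rational configurations of $(-1)$\nobreakdash-curves (the minimal model program for geometrically rational surfaces, which terminates over an arbitrary field since the Picard rank drops at each step) yields a birational morphism $X\to X_0$ onto a $k$\nobreakdash-minimal surface $X_0$. Each of (i), (ii), (iii) is invariant under this morphism: for (i) this is birational invariance, for $X(k)\neq\varnothing$ this is Lang--Nishimura, and for (ii) one notes that $X_{k_{\p}}\to (X_0)_{k_{\p}}$ remains birational. It therefore suffices to prove the three-way equivalence (i)$\Leftrightarrow$(ii)$\Leftrightarrow$(iii) for a $k$\nobreakdash-minimal geometrically rational surface carrying a $k$\nobreakdash-point.

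The heart of the argument, and the step I expect to be the main obstacle, is the following descent of minimality: \emph{if $X$ is geometrically rational with $X(k)\neq\varnothing$ and $X$ is $k$\nobreakdash-minimal, then $X_{k_{\p}}$ is $k_{\p}$\nobreakdash-minimal.} To prove it I would argue by contraposition. Suppose $X_{k_{\p}}$ admits a nontrivial birational contraction $X_{k_{\p}}\to Z'$ onto a smooth projective $k_{\p}$\nobreakdash-surface $Z'$, defined by a base-point-free system $|L'|$ with $L'\in\Pic(X_{k_{\p}})$. Since $X(k)\neq\varnothing$, the natural maps $\Pic(X)\to\Pic(X_{k_{\p}})\to\Pic(X_{\ok})^{\Gamma_k}$ are isomorphisms (the Brauer obstruction vanishes in the presence of a rational point, and $\Pic(X_{\ok})=\Pic(X_{k_s})$ for these surfaces), so $L'$ descends to a line bundle $L$ on $X$. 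By flat base change $H^0(X,L)\otimes_k k_{\p}=H^0(X_{k_{\p}},L')$, whence $|L|$ is base-point-free and defines a morphism $\pi\colon X\to Z$ whose base change to $k_{\p}$ is the contraction $X_{k_{\p}}\to Z'$. Thus $\pi$ is birational and is not an isomorphism; moreover $Z$ is smooth over $k$ by fppf descent of smoothness along the faithfully flat cover $\Spec(k_{\p})\to\Spec(k)$, since $Z_{k_{\p}}=Z'$ is smooth. This contradicts the $k$\nobreakdash-minimality of $X$. It is precisely this clean descent of a contraction that is unavailable for threefolds, which is why no analogue of the proposition holds there (Remark~\ref{remsLuroth}~(iii)).

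With the descent claim in hand, the equivalences for $k$\nobreakdash-minimal $X$ follow from the perfect-field theory applied over $k_{\p}$, together with the invariance $K_X^2=K_{X_{k_{\p}}}^2$. If (ii) holds, then by the claim $X_{k_{\p}}$ is a $k_{\p}$\nobreakdash-minimal $k_{\p}$\nobreakdash-rational surface, so $K_X^2\geq 5$ by Iskovskikh's classification over the perfect field $k_{\p}$, giving (iii); conversely, if (iii) holds then $X_{k_{\p}}$ is $k_{\p}$\nobreakdash-minimal of degree $\geq 5$ with a $k_{\p}$\nobreakdash-point and is thus $k_{\p}$\nobreakdash-rational, giving (ii). Finally, to pass from (iii) to (i) I would invoke the fact that a del Pezzo surface of degree $\geq 5$ (equivalently, a minimal geometrically rational surface with $K_X^2\geq 5$) carrying a $k$\nobreakdash-point is $k$\nobreakdash-rational over an \emph{arbitrary} field, the standard projection and conic-bundle constructions being insensitive to inseparability once a rational point is given. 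Combining (iii)$\Rightarrow$(i) with the already-established (i)$\Rightarrow$(ii) closes the loop for $k$\nobreakdash-minimal $X$, and the reduction of the second paragraph then yields the general equivalence (i)$\Leftrightarrow$(ii).
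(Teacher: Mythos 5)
Your reduction to the $k$\nobreakdash-minimal case and your direct proof that $k$\nobreakdash-minimality plus a $k$\nobreakdash-point forces $X_{k_{\p}}$ to be $k_{\p}$\nobreakdash-minimal are correct: the descent of the contraction via $\Pic(X)\isoto\Pic(X_{k_{\p}})$ (valid here because $\bPic_{X/k}$ is \'etale for a geometrically rational surface and the Brauer obstruction vanishes) is a legitimate, self-contained substitute for the paper's citation of Poonen's book, which gives the same preservation of minimality under the extension $k_{\p}/k$ without any rational point hypothesis. Likewise, your derivation of (ii)$\Leftrightarrow$(iii) for minimal surfaces from the Segre--Manin--Iskovskikh theory over the perfect field $k_{\p}$ matches the paper.

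The gap is in your final step, (iii)$\Rightarrow$(i). Your parenthetical assertion that a minimal geometrically rational surface with $K_X^2\geq 5$ is ``equivalently'' a del Pezzo surface of degree $\geq 5$ is false: the Hirzebruch surface $\bF_2$ (and, over a general $k$, any projective bundle over a conic without further structure) is minimal, geometrically rational, has $K^2=8$, and is not del Pezzo, since $-K$ fails to be ample on the $(-2)$\nobreakdash-curve. Moreover, one must rule out minimal non-del-Pezzo surfaces with $K_X^2\in\{5,6,7\}$ over an arbitrary, possibly imperfect, field, which is a genuine theorem and not a formality. Since the entire point of the proposition is the imperfect case, the statement you invoke --- that over an arbitrary field a minimal geometrically rational surface with $K^2\geq5$ and a $k$\nobreakdash-point is $k$\nobreakdash-rational --- is precisely what remains to be proved, and your justification for it (del Pezzo projections and conic-bundle constructions) does not cover the non-del-Pezzo surfaces at all. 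The paper closes this step by a case analysis resting on Iskovskikh's classification of minimal geometrically rational surfaces over arbitrary fields: in the del Pezzo case it cites V\'arilly-Alvarado's rationality theorem, and otherwise $X$ is of conic-bundle type, Iskovskikh's results force $K_X^2=8$ and exhibit $X$ as a product of genus\nobreakdash-$0$ curves or a projective bundle over a genus\nobreakdash-$0$ curve, which the $k$\nobreakdash-point turns into $\bP^1_k$'s, whence $k$\nobreakdash-rationality. Without this (or a replacement), your chain (ii)$\Rightarrow$(iii)$\Rightarrow$(i) does not close, so even the main equivalence (i)$\Leftrightarrow$(ii) is left unproved.
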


\begin{proof}
To prove the proposition, we may assume that~$X$ is minimal,
from which it follows that~$X_{k_{\p}}$ is minimal
(see \cite[Corollary~9.3.7]{poonenbook}).
That (i) implies~(ii) is obvious. 
That (ii) and the minimality of~$X_{k_{\p}}$ imply (iii) results from the birational classification of geometrically
rational surfaces over perfect
fields, due to Segre, Manin and Iskovskikh (see \cite[p.~642]{Iskobirat}).
It remains to prove that (iii) implies~(i).
If $X$ is a del Pezzo surface, this is \cite[Theorem 2.1]{VAdP}. Suppose now that $X$ is not a del Pezzo surface. Then $X$ belongs to the family II described in \cite[Theorem~1]{iskovskikhimperfect}. Since $5\leq K_X^2\leq 8$ by \cite[Theorem~3 (2)--(3)]{iskovskikhimperfect}, one has $K_X^2=8$ by \cite[Theorem~5]{iskovskikhimperfect}. It then follows from \cite[Theorem~3~(2)]{iskovskikhimperfect} that $X$ is either a product of curves of genus $0$, or a projective bundle over a curve of genus $0$. As $X(k)\neq \varnothing$, these curves of genus $0$ are isomorphic to~$\bP^1_k$, and $X$ is $k$-rational.
\end{proof}

\bibliographystyle{myamsalpha}
\bibliography{ch2xk}
\end{document}